\numberwithin{equation}{section}
\theoremstyle{plain}
\newtheorem*{theorem*}{Theorem}
\newtheorem{theorem}[equation]{Theorem}
\newtheorem{obs}[equation]{Observation}
\newtheorem{proposition}[equation]{Proposition}
\newtheorem{lemma}[equation]{Lemma}
\newtheorem{corollary}[equation]{Corollary}
\newtheorem{question}[equation]{Question}
 \theoremstyle{definition}
\newtheorem{definition}[equation]{Definition}
\newtheorem{remark}[equation]{Remark}
 \newtheorem{example}[equation]{Example}
 \newcommand\A{{\mathcal A}}
\newcommand\C{{\mathcal C}}
 \newcommand\F{{\mathbb F}}
\newcommand{\f}{{\mathcal F}}
 \renewcommand\H{{\mathcal H}}
\newcommand{\M}{{\mathcal M}}
\newcommand{\N}{{\mathcal N}}
 \renewcommand{\P}{{\mathcal P}}
\newcommand{\R}{\mathbf{R}}
\newcommand{\V}{{\mathcal V}}
 \newcommand\zz{{\mathbf Z}\oplus{\mathbf Z}}
 \newcommand\Z{{\mathbf Z}}
\newcommand{\acts}{\curvearrowright}
\newcommand{\al}{\alpha}
\newcommand{\aut}{\operatorname{Aut}}
\newcommand{\be}{\beta}
\newcommand{\cat}{\operatorname{CAT}}
\newcommand{\coarse}{\infty}
\newcommand{\comm}{\operatorname{Comm}}
\newcommand{\defeq}{:=}
\newcommand{\De}{\Delta}
\newcommand\fone{{\mathbb F}^{(1)}}
\newcommand{\ga}{\gamma}
\newcommand{\Ga}{\Gamma}
\newcommand{\hd}{\operatorname{Hd}}
\newcommand{\id}{\operatorname{id}}
\newcommand{\inn}{\operatorname{Inn}}
\newcommand{\Int}{\operatorname{Int}}
\newcommand{\isom}{\operatorname{Isom}}
\newcommand{\la}{\lambda}
\newcommand{\lra}{\longrightarrow}
\newcommand{\ol}{\overline}
\newcommand{\om}{\omega}
 \newcommand{\out}{\operatorname{Out}}
\renewcommand{\parallel}{\mathbb{P}}
\newcommand\ps{{\phi_\sharp}}
\newcommand{\pt}{\operatorname{pt}}
\newcommand{\qi}{\operatorname{QI}}
\newcommand{\qp}{{\mathcal Q}}
\newcommand{\ra}{\rightarrow}
\newcommand{\restr}{\mbox{\Large \(|\)\normalsize}}
\newcommand{\Si}{\Sigma}
\newcommand{\stab}{\operatorname{Stab}}
\newcommand{\Star}{\operatorname{Star}}
\begin{document}

\title[RAAG rigidity]{The asymptotic geometry of right-angled Artin groups, I}
\author{Mladen Bestvina}
\address{Department of Mathematics\\ University of Utah\\ 155 South 1400 East, Room 233\\
 Salt Lake City, UT 84112-0090}
\email{bestvina@math.utah.edu}

\author{Bruce Kleiner}
\address{Yale University\\
Mathematics Department\\
PO Box 208283\\
New Haven, CT 06520-8283}
\thanks{M.B. is supported by an NSF grant.
B.K. is supported by NSF Grant
DMS-0505610. M.S. is supported by ISI \#580/07}
\email{bruce.kleiner@yale.edu}
\author{Michah Sageev}
\address{Department of Mathematics\\
Technion - Israel Institute of Technology\\
Haifa 32000, Israel}
\email{sageevm@techunix.technion.ac.il} 
\date{\today}
\maketitle 

\begin{abstract}
We study {\em atomic} right-angled Artin groups -- those whose defining graph has no
cycles of length $\leq 4$, and
no separating vertices, separating edges, or separating vertex stars.
We show that these groups are not quasi-isometrically rigid, but that an intermediate
form of rigidity  does hold.  
We deduce  from this that two atomic
groups are quasi-isometric iff they are isomorphic. 
\end{abstract}

 \tableofcontents

 \section{Introduction}

\subsection{Background}
We recall that to every finite simplicial graph $\Ga$, one may
associate a presentation with one generator for each vertex of $\Ga$
and one commutatation relation $[g,g']=1$ for every pair of adjacent
vertices $g,g'\in\Ga$.  The resulting group is the {\em right-angled
Artin group (RAAG) defined by $\Ga$}, and will be denoted $G(\Ga)$ (we
will often shorten this to $G$ when the defining graph $\Ga$ is
understood).  This class of groups contains the free group $F_k$,
whose defining graph has $k$ vertices and no edges, and is closed
under taking products; in particular it contains $\Z^k$ and $F_k\times
F_l$. Every RAAG
$G(\Ga)$ has a canonical Eilenberg-MacLane space $\bar K(\Ga)$ which
is a nonpositively curved cube complex (called the {\it Salvetti
  complex} in \cite{charney}); when $G$ is $2$-dimensional
$\bar K(\Ga)$ is homeomorphic to the presentation complex.  We let
$K(\Ga)$ denote the universal cover of $\bar K(\Ga)$.

RAAG's have been studied by many authors. The solution to the
isomorphism problem has the strongest form: if $G(\Gamma)\cong
G(\Gamma')$ then $\Gamma\cong\Gamma'$
\cite{MR891135}, \cite{kimetal}. Servatius \cite{MR1023285} conjectured
a finite generating set for $Aut(G(\Gamma))$ and his conjecture was
proved by Laurence \cite{laurence}. The group $G(\Gamma)$ is
commensurable with a suitable right angled Coxeter group
\cite{MR1875609}. There is an analog of Outer space in the case when
$\Gamma$ is connected and triangle-free \cite{ccv}.  For a nice
introduction to and more information about RAAG's see Charney's survey
\cite{charney}.

Our focus is on quasi-isometric rigidity properties of right-angled Artin 
groups.  Some special cases have been treated earlier: 

$\bullet$ The free group $G=F_k$.  Here the standard complex $K(\Ga)$
is a tree of valence $2k$.  Quasi-isometries are not rigid -- there
are quasi-isometries which are not at bounded distance from isometries
-- but nonetheless any finitely generated group quasi-isometric to a
free group acts geometrically on some tree
\cite{stallings}, \cite{dunwoody}, \cite[1.C1]{MR1253544} and is
commensurable to a free group \cite{MR0349850}.  Furthermore, any
quasi-action $G'\acts K$ is quasi-isometrically conjugate to an
isometric action on a tree, see \cite{moshersageevwhyte} and Section
\ref{secprelim}.

$\bullet$ $G=F_k\times F_l$.  The model space $K$ is a product of simplicial 
trees;
as with free groups, quasi-isometries are not rigid.
However, by \cite{moshersageevwhyte,kapovichkleinerleeb,ahlin,klinduced}, quasi-actions are 
quasi-isometrically conjugate to isometric actions on some product of trees.
It is a standard fact that there are groups quasi-isometric
to $G$ which are not commensurable to it (see \cite{wise,burgermozes}
for examples that are non-residually finite, or simple, respectively).

$\bullet$ $G=\Z^k$.  The model space is $\R^k$, 
which is not quasi-isometrically  
rigid.  In general, quasi-actions are not quasi-isometrically conjugate to 
isometric actions,  although this is the case for discrete cobounded 
quasi-actions \cite{polygrowth,bass}, i.e. any group quasi-isometric to 
$\Z^k$ is commensurable to it.

$\bullet$ $G(\Ga)$ where $\Ga$ is a tree of diameter at least $3$.  
Behrstock-Neumann \cite{behrstockneumann} showed that any two such Artin 
groups are quasi-isometric.
Using  work of Kapovich-Leeb \cite{kapovichleeb}, they also showed  
that a finitely generated group $G$ is quasi-isometric to such an Artin
group iff it is commensurable to one.

\subsection{Statement of results}
Our first result is that quasi-isometries of $2$-dimensional RAAG's 
preserve flats (recall that a flat is a subset isometric to $\R^2$ with
the usual metric):
\begin{theorem}
\label{thmflatspreserved}
Let $\Ga,\Ga'$ be finite, triangle-free graphs, and let 
$K=K(\Ga),\,K'=K(\Ga')$.    Then there is a constant $D=D(L,A)$
such that if $\phi:K\lra K'$ is an $(L,A)$-quasi-isometry
and $F\subset K$ is a flat, then there is a flat $F'\subset K'$
such that the Hausdorff distance satisfies
$$
\hd(\phi(F),F')<D.
$$
\end{theorem}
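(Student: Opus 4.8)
The plan is to reduce the statement to an intrinsic fact about the target: \emph{every $(L,A)$-quasiflat in a $2$-dimensional RAAG complex lies within Hausdorff distance $D=D(L,A)$ of a flat}. Indeed, $\phi|_F\colon F\to K'$ is an $(L,A)$-quasi-isometric embedding and $F$ is isometric to $\R^2$, so $Q:=\phi(F)$ is an $(L,A)$-quasiflat in $K'$, and any flat $F'$ with $\hd(Q,F')<D$ is what the theorem asks for. So from now on the task is to straighten a quasiflat $Q\subset K'$ to a flat.

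First I would record the structure of honest flats in $K'$. Every flat in $K'$ is within uniformly bounded Hausdorff distance of a combinatorial flat plane $P\subset K'$ — a subcomplex that is $\ell^1$-isometric to the square tiling of $\R^2$, produced by examining the squares the flat meets — and the hyperplanes of $K'$ crossing $P$ fall into two families $\mathcal V=\{V_i\}_{i\in\Z}$, $\mathcal W=\{W_j\}_{j\in\Z}$, each linearly ordered by how it separates $P$, with every $V_i$ crossing every $W_j$ and no two members of the same family crossing (three pairwise-crossing hyperplanes would span a $3$-cube, which is absent because $\Ga'$ is triangle-free). Since crossing hyperplanes carry $\Ga'$-adjacent labels and each square of $P$ carries a pair of commuting labels, $\{\ell(V_i)\}$ and $\{\ell(W_j)\}$ span a complete bipartite subgraph of $\Ga'$; the column of $P$ between $V_i$ and $V_{i+1}$ is a flat strip, and $P$ is the union of these strips glued along the $V_i$ — note the labels of the $V_i$ may vary, so $P$ can be quite ``wiggly.'' The goal is to produce such a $P$ within bounded distance of $Q$.

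To recover this grid from $Q$ I would single out the hyperplanes of $K'$ that \emph{essentially cross} $Q$: those $H'$ for which, for suitable $r$, both components of $Q\setminus N_r(H')$ are unbounded and meet $N_r(H')$. Using the hyperplane and halfspace combinatorics of the $2$-dimensional cube complex $K'$ one should obtain: (i) the essentially-crossing hyperplanes split into two sub-families $\mathcal V',\mathcal W'$, each linearly ordered by its separation of $Q$, with members of the same family pairwise non-crossing and each member of $\mathcal V'$ crossing each member of $\mathcal W'$ — no facing triple can occur because a quasiflat is not cut into three unbounded pieces by a bounded set, and no pairwise-crossing triple can occur by $2$-dimensionality of $K'$; (ii) since $\Ga'$ is triangle-free each hyperplane of $K'$ is a tree, and the trace of $Q$ on a member of $\mathcal V'\cup\mathcal W'$ is a quasiline in that tree, hence close to a geodesic. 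One then takes $F'$ to be the flat cut out by this grid — the CAT(0) convex hull of the union of those geodesic traces, equivalently the union of the combinatorial strips between consecutive members of $\mathcal V'$ — and checks that the grid is genuinely bi-infinite in both directions and glues up without branching, so $F'$ is an honest flat and $F'\subset N_D(Q)$.

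The main obstacle is the reverse inclusion $Q\subset N_D(F')$: ruling out that $Q$ \emph{frays} away from its grid. Because hyperplanes of $K'$ branch, a point $q\in Q$ could a priori lie on a side branch of some $H'\in\mathcal W'$, off the geodesic trace; such a $q$ is separated from the bulk of $Q$ by only finitely many hyperplanes, none of which essentially crosses $Q$, so it sits in a region of $K'$ bounded by at most one essentially-crossing hyperplane and otherwise tree-like — hence ``$1$-dimensional at infinity'' — and a quasiflat, having quadratic volume growth and at most linear intrinsic divergence, cannot place a far-out point there. Turning this into a quantitative bound requires separation and divergence estimates in $K'$, which in turn come from analyzing its standard flats, standard lines and standard trees and their coarse intersection pattern; and every constant must be tracked so that it depends only on $L,A$ (with $\Ga,\Ga'$ fixed). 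I expect this non-fraying step — together with arranging the notion of ``essential crossing'' to interact cleanly with the coarse structure of $Q$ — to be the hard part; the reductions and the grid combinatorics above should be comparatively routine.
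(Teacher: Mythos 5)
Your opening reduction is where the argument breaks: the claimed intrinsic fact --- ``every $(L,A)$-quasiflat in a $2$-dimensional RAAG complex lies within Hausdorff distance $D(L,A)$ of a flat'' --- is false, and the paper itself shows why. By the structure theorem for quasiflats, a quasiflat is Hausdorff-close to a union of quarter-planes forming a cycle in the quarter-plane complex $\qp$, and this cycle is an invariant of the Hausdorff class; conversely (this is a lemma in Section 3 of the paper) \emph{every} cycle in $\qp$ is realized by a quasiflat. Flats realize exactly the $4$-cycles, but already when $\Ga'$ is a $4$-cycle (so $K'$ is a product of trees), and likewise for the pentagon, $\qp$ contains cycles of length $6$ and more: ``staircase'' quasiflats built from six or more pairwise inequivalent quarter-planes. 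Such a $Q$ is a quasi-isometrically embedded copy of $\R^2$ with uniform constants, yet it is at infinite Hausdorff distance from every flat. So knowing only that $\phi|_F$ is an $(L,A)$-quasi-isometric embedding cannot give the theorem; by passing to $Q=\phi(F)$ and forgetting the rest of $\phi$ you discard exactly the hypothesis that does the work. Concretely, your grid step also fails on these examples: for a bent quasiflat the two families of ``essentially crossing'' hyperplanes do not pairwise cross (a hyperplane crossing one quarter-plane of the cycle need not cross the quarter-planes further along it), so the asserted complete bipartite crossing pattern, and with it the flat $F'$, never materializes.

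What is actually needed, and what the paper does, is to use that $\phi$ is a \emph{global} quasi-isometry (with quasi-inverse): images and preimages of all quasiflats are quasiflats, so $\phi$ induces an isomorphism of the posets $\N,\N'$ generated by the cycles $\qp_Q$ under finite intersection and union, preserving dimension, hence a bijection of minimal elements that preserves the number of quarter-planes in each. Since a flat $F$ gives a $4$-cycle, $\qp'_{\phi(F)}$ is again a $4$-cycle; then a separate argument (quadratic area growth of the support set, from the quasiflats paper) upgrades ``$4$-cycle'' to ``the support set is an honest flat,'' giving $\hd(\phi(F),F')<D(L,A)$. If you want to salvage your outline, the missing ingredient is precisely some mechanism by which the global $\phi$ forbids the image cycle from having length $>4$; without that, no amount of non-fraying or divergence estimates on $Q$ alone can succeed, because the statement you reduced to is false.
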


The remaining results in this paper concern a special class of RAAG's:

\medskip
\begin{definition}
\label{defatomic}
A finite simplicial graph $\Ga$ is {\em atomic} if 
\begin{enumerate}[(1)]
\item $\Ga$ is connected and has no vertex of valence $<2$.
\item
$\Ga$ contains no cycles of length
$<5$.
\item
$\Ga$ has no separating closed vertex stars.
\end{enumerate}
A RAAG is {\em atomic} if its defining graph is atomic.
\end{definition}

\medskip
The pentagon is the simplest example of an atomic graph.  Our main
results and most of the issues in the proofs are well illustrated by
the pentagon case.  Conditions (1)-(3) above exclude some of the known
phenomena from the examples above.  For instance, Condition (2) rules
out abelian subgroups of rank greater than $2$ and subgroups
isomorphic to $F_k\times F_l$ where $\min(k,l)>1$.  Condition (3)
prevents $G(\Ga)$ from splitting in an obvious way over a subgroup
with a nontrivial center; such a splitting would lead to a large
automorphism group.  

\begin{remark}
The main result of  \cite{laurence} implies that the
outer automorphism group of an atomic RAAG is generated by the
symmetries of $\Ga$ and inversions of generators.  It also
implies that an arbitrary  simplicial graph $\Ga$
with no cycles of length $<5$  is atomic iff 
$G(\Ga)$ has finite outer automorphism group.  
\end{remark}

The following example shows that Condition (3) is necessary in order for 
a RAAG to be determined up to isomorphism by its quasi-isometry class.

\begin{example}\label{doubled pentagon}
Let $\Ga$ be any connected graph with no triangles and choose a vertex
$v\in\Ga$. Let $f:G(\Ga)\to\Z_2$ be the homomorphism that sends $v$ to
$1\in\Z_2$ and sends all other generators to $0\in\Z_2$. Then
$G'=\ker(f)$ is a RAAG whose defining graph $\Ga'$ can be obtained from $\Ga$
by doubling along the closed star of $v\in\Ga$. Concretely, when $\Ga$
is the pentagon, $G$ and $G'$ are commensurable but not
isomorphic. Thus the atomic property for RAAG's is not a commensurability
invariant (in particular, it is not quasi-isometry invariant).
See Section \ref{secflexibility} for more discussion.
\end{example}

Until further notice, $\Ga$ will denote a fixed atomic graph, $G=G(\Ga)$,
 $\bar K=\bar K(\Ga)$ and $K=K(\Ga)$, and $V=V(K)$ will denote the vertex
set of $K$.  Whenever $\Ga'$ appears, it will
also be an atomic graph, and the associated objects will be denoted by primes.

Before stating our main rigidity theorem we need another definition:

\begin{definition}
A {\em standard circle} (respectively {\em standard torus}) is a
circle (respectively torus) in $\bar K$ associated with a vertex
(respectively edge) in $\Ga$.  A {\em standard geodesic} (respectively
{\em standard flat}) is a geodesic $\ga\subset K$ (respectively flat
$F\subset K$ ) which covers a standard circle (respectively torus).
\end{definition}
Note that if $p\in K$ is a vertex, then the  standard geodesics  
passing through $p$ are in 1-1 correspondence with    vertices in $\Ga$,
and the standard flats are in 1-1 correspondence with edges of $\Ga$.
As a consequence, the defining graph $\Ga$ is isomorphic to the 
associated incidence pattern.

\begin{theorem}[Rigidity, first version]
\label{thmmain1}
Suppose $\phi:K\ra K'$
is an $(L,A)$-quasi-isometry.  Then there is a unique bijection $\psi:V\ra V'$
of vertex sets with the following properties: 
\begin{enumerate}[(1)]
\item
$d(\psi,\phi\restr_V)<D=D(L,A)$.
\item
(Preservation of standard flats) Any two vertices $v_1,v_2\in V$ lying in a standard
flat $F\subset K$ are mapped by $\psi$ to a pair of vertices lying in a standard flat
$F'\subset K'$.
\item
(Preservation of standard geodesics)   Any two vertices $v_1,v_2\in V$ lying in a 
standard geodesic $\ga\subset K$ are mapped by $\psi$ to a pair of vertices lying on
a standard geodesic $\ga'\subset K'$.
\end{enumerate}
\end{theorem}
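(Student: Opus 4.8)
The plan is to build $\psi$ out of the flat-preservation property established in Theorem \ref{thmflatspreserved} and to combinatorialize it using the incidence geometry of standard flats and geodesics. First I would record the basic structure of $K$: it is a CAT(0) cube complex whose top-dimensional flats are exactly the standard flats (using triangle-freeness, which caps the dimension at $2$), and whose standard geodesics are precisely the bi-infinite geodesics lying in the $1$-skeleton that are ``straight'' — i.e.\ that do not turn at vertices. Two distinct standard flats meet, if at all, in a single standard geodesic (the one associated to the common vertex of the two defining edges of $\Ga$), and every vertex $v\in V$ lies in a family of standard geodesics in bijection with the vertices of $\Ga$ and a family of standard flats in bijection with the edges of $\Ga$. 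This is the ``incidence pattern'' the excerpt refers to, and the atomic hypotheses (girth $\geq 5$, no separating vertex stars, valence $\geq 2$) are what make this pattern rigid enough to be detected quasi-isometrically.

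Next I would use Theorem \ref{thmflatspreserved} to push standard flats forward: for each standard flat $F\subset K$, $\phi(F)$ is within Hausdorff distance $D$ of a flat $F'\subset K'$, and since $\Ga'$ is triangle-free and $2$-dimensional, such $F'$ must itself be a \emph{standard} flat (a flat in $K'$ at finite Hausdorff distance from another flat must coincide with a standard one, because non-standard flats are ``diagonal'' and drift away from every standard flat). The map $F\mapsto F'$ is well-defined because $D$ is uniform. One then checks that this correspondence on standard flats preserves the incidence relation: if $F_1\cap F_2$ is a standard geodesic $\ga$, then $\phi(F_1)$ and $\phi(F_2)$ coarsely intersect along $\phi(\ga)$, forcing $F_1'\cap F_2'$ to be a nonempty standard geodesic $\ga'$ at bounded Hausdorff distance from $\phi(\ga)$; running this for all flats through a fixed vertex lets me define $\psi$ on $V$ by declaring $\psi(v)$ to be the common vertex of the images of the standard flats and standard geodesics through $v$. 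The atomic conditions guarantee there are enough standard flats through each vertex (and that their images genuinely have a unique common vertex) for $\psi(v)$ to be well-defined; this is where ``no separating vertex stars'' and ``valence $\geq 2$'' get used, since they ensure the local pattern at $v$ is both rich and connected. The estimate $d(\psi,\phi\restr_V)<D$ and bijectivity then follow by running the same argument for a quasi-inverse of $\phi$ and checking the two constructions are mutually inverse up to bounded error; uniqueness of $\psi$ is immediate from (1) since two bijections $V\to V'$ within bounded distance of the same map, on a space where distinct vertices are uniformly separated, must agree.

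Properties (2) and (3) are then essentially built into the construction: (2) is exactly the statement that $\psi$ carries the vertex set of $F$ into the vertex set of $F'$, and (3) follows because a standard geodesic $\ga$ either is an intersection of two standard flats — in which case it maps into $\ga'=F_1'\cap F_2'$ — or, in the degenerate case where $\ga$ lies in only one standard flat through some vertex, one argues directly that $\phi(\ga)$ is a quasigeodesic at bounded distance from a standard geodesic of $K'$ using that quasigeodesics in $K'$ coarsely track geodesics and that a bi-infinite geodesic must be standard if it stays close to the $1$-skeleton and is periodic under the stabilizing $\Z$-action. (Atomicity, via girth $\geq 5$, ensures every standard geodesic through a vertex $v$ actually lies in a standard flat, since the corresponding vertex of $\Ga$ has a neighbor, so the degenerate case does not in fact arise.)

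The main obstacle I anticipate is the step identifying $\phi(F)$ with a \emph{standard} flat and, more delicately, showing that the induced map on standard flats preserves incidence along standard geodesics with uniform constants — i.e.\ that ``coarse intersection of flats'' in $K'$ is controlled. The difficulty is that two standard flats in $K'$ that are Hausdorff-close to a common quasi-line need not a priori intersect at all; one has to rule out the configuration where $\phi(F_1)$ and $\phi(F_2)$ shadow two \emph{parallel} but distinct standard geodesics. This is where the global structure of $K'$ — and in particular the fact that parallel standard geodesics bound a flat strip, combined with the no-separating-star hypothesis forcing the pattern of flats through a vertex to be essentially connected — has to be invoked to promote coarse intersection to genuine intersection, and then a careful bookkeeping of the CAT(0) geometry pins down the constants. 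Everything else (the quasi-inverse argument, uniqueness, the passage from $\psi$ on $V$ to statements about flats and geodesics) is comparatively routine bookkeeping once this geometric core is in place.
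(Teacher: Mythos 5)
Your outline reproduces the easy half of the paper's strategy (use Theorem \ref{thmflatspreserved} to send standard flats to flats, then read off the vertex map from incidences), but both of the genuinely hard steps are missing. First, your Step 1 claims the flat $F'$ shadowing $\phi(F)$ must be \emph{standard} ``because non-standard flats drift away from every standard flat.'' That justification does not work: Theorem \ref{thmflatspreserved} only produces \emph{some} flat $F'$ Hausdorff-close to the quasiflat $\phi(F)$, and $K'$ contains many non-standard flats -- e.g.\ inside a parallel set $T\times\R$ take $\ell\times\R$ for any non-standard line $\ell\subset T$ -- and nothing you say excludes $F'$ from being one of these. In the paper this is Theorem \ref{StandardFlats}, and its proof is not a remark: it first shows that maximal standard product subcomplexes (parallel sets) are coarsely preserved (Corollary \ref{MaxProdSubcomplexes}, via Lemma \ref{lemtripodinparallelset} and Theorem \ref{thmcontainedinproductsubcomplex}), and then uses that in an atomic complex every standard flat is the coarse intersection of two parallel sets (Lemma \ref{2 parallel sets}).

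The decisive gap is the step you flag at the end and defer to ``careful bookkeeping of the CAT(0) geometry'': promoting \emph{coarse} intersection of the image flats to an actual common standard geodesic, hence a common vertex, so that $\psi(v)$ is well-defined. No local argument at a vertex can do this: inside a single parallel set $P'=T\times\R$ of $K'$ \emph{all} standard flats pairwise coarsely intersect in a line, so the quasi-isometry-invariant relation you are working with (Lemma \ref{lemstandardalternative}) imposes no local constraint on which image flats genuinely meet, and a bijection of standard flats preserving that relation could a priori scramble the true incidences within $P'$. This is exactly the content of Theorem \ref{thmcombinatorial}, and its proof is the bulk of the paper: one introduces the flat space $\F$, proves taut cycle rigidity (Theorem \ref{TautCycleRigidity}) by small cancellation arguments on dual disk diagrams (Lemmas \ref{Shells} and \ref{QuasiCuts}), and -- because incident flats need not lie on a common taut cycle (the doubled dodecahedron example in Section \ref{secfrigidity}) -- then runs the global coloring argument of Lemma \ref{no cuts} together with the Whitehead-graph Lemma \ref{whitehead}; the no-separating-closed-star hypothesis enters precisely there, not in any flat-strip or connectivity analysis at a single vertex, and the conclusion is assembled in Theorem \ref{frigidity}. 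Without a substitute for this machinery your definition of $\psi(v)$ as ``the common vertex of the image flats'' is not known to exist. A smaller slip: uniqueness of $\psi$ does not follow from (1) alone, since two bijections within distance $D>1$ of $\phi\restr_V$ need not agree; it comes instead from the uniqueness of the standard flat Hausdorff-close to $\phi(F)$ and the fact that a vertex is the intersection of the standard flats through it.
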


This theorem is proved, in the language of flat spaces, as Theorem
\ref{frigidity}. 
An immediate corollary is: 

\begin{corollary}
\label{corqiiffisom}
Atomic RAAG's are quasi-isometric iff they are isomorphic.
\end{corollary}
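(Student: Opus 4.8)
The plan is to deduce the corollary from Theorem~\ref{thmmain1}. One implication is immediate: isomorphic finitely generated groups are quasi-isometric. For the converse, assume the atomic RAAGs $G=G(\Ga)$ and $G'=G(\Ga')$ are quasi-isometric. Since $G$ acts geometrically on the CAT(0) cube complex $K$ (and likewise $G'$ on $K'$), there is an $(L,A)$-quasi-isometry $\phi\colon K\to K'$; fix a coarse inverse $\bar\phi\colon K'\to K$. Applying Theorem~\ref{thmmain1} to $\phi$ and, with the roles of $\Ga,\Ga'$ reversed, to $\bar\phi$ yields bijections $\psi\colon V\to V'$ and $\bar\psi\colon V'\to V$, each at bounded distance from the corresponding quasi-isometry and each satisfying the flat- and geodesic-preservation properties (2),(3).

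First I would check $\bar\psi=\psi^{-1}$, so that (2),(3) become available for $\psi$ in both directions. The composite $\bar\phi\circ\phi\colon K\to K$ lies at bounded distance from $\id_K$, so $\id_V$ satisfies conditions (1)--(3) of Theorem~\ref{thmmain1} for this quasi-isometry and is therefore the unique bijection doing so. But $\bar\psi\circ\psi$ also satisfies (1) -- it is at bounded distance from $(\bar\phi\circ\phi)\restr_V$, since $\bar\psi$ is coarsely Lipschitz and $\psi$ is close to $\phi$ -- and it satisfies (2),(3), a composite of two maps with those properties having them. By uniqueness $\bar\psi\circ\psi=\id_V$, and symmetrically $\psi\circ\bar\psi=\id_{V'}$; hence $\bar\psi=\psi^{-1}$.

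Next come two combinatorial facts about $K$, both proved by pushing to the abelianization $G\to\Z^{V(\Ga)}$ and using that, after translating a chosen vertex to the identity, a standard geodesic is $\{w^k\mid k\in\Z\}$ for a generator $w$ and a standard flat is the coset $\langle s,t\rangle$ for an edge $st$ of $\Ga$: (i) two distinct standard geodesics meet in at most one vertex, and every standard geodesic lies in some standard flat; and (ii) if a set of vertices is pairwise co-geodesic (any two lie on a common standard geodesic) it lies on a single standard geodesic, and similarly ``co-flat'' sets lie in a single standard flat. Granting these, property (3) and (ii) show that for each standard geodesic $\ga\subset K$ the set $\psi(V(\ga))$ lies in a standard geodesic, call it $\Psi(\ga)$; applying the same to $\psi^{-1}$ and invoking (i) upgrades this to: $\psi$ carries $V(\ga)$ bijectively onto $V(\Psi(\ga))$, and $\Psi$ is a bijection between the standard geodesics of $K$ and of $K'$ with $v\in\ga\iff\psi(v)\in\Psi(\ga)$. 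Likewise, property (2) and the co-flat half of (ii) show $\psi$ sends the vertex set of each standard flat $F$ into a standard flat $F'$; combined with the previous sentence, $\ga\subset F$ then forces $\Psi(\ga)\subset F'$. The analogous statements hold for $\psi^{-1}$.

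It remains to read off $\Ga\cong\Ga'$. Fix $p\in K$ and set $q=\psi(p)\in K'$. The standard geodesics through $p$ correspond to $V(\Ga)$, those through $q$ to $V(\Ga')$, and since $v=p\in\ga\iff q\in\Psi(\ga)$, the bijection $\Psi$ restricts to a bijection $V(\Ga)\to V(\Ga')$. For adjacency: $st\in E(\Ga)$ exactly when the $s$- and $t$-geodesics through $p$ lie in a common standard flat, namely the type-$\{s,t\}$ flat through $p$; by the flat-preservation of $\psi$ and of $\psi^{-1}$ just established, this holds iff the $\Psi$-images of these geodesics lie in a common standard flat, i.e.\ iff the corresponding generators of $\Ga'$ are adjacent -- here one uses that a standard flat of type $\{a,b\}$ contains only $a$- and $b$-geodesics. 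Thus $\Psi$ induces an isomorphism $\Ga\cong\Ga'$, whence $G(\Ga)\cong G(\Ga')$. The substantive work is all in Theorem~\ref{thmmain1}; given it, the two points requiring care are the identity $\bar\psi=\psi^{-1}$ -- without which property (3) cannot be promoted from pairs of vertices to entire geodesics -- and the abelianization computation behind (i)--(ii), which is routine but does use triangle-freeness and the absence of valence-$1$ vertices in an atomic graph.
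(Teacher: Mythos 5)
Your proof is correct and follows essentially the same route as the paper: the paper's own argument likewise applies Theorem~\ref{thmmain1}, fixes a vertex $p$ with image $p'$, and reads off an isomorphism $\Ga\to\Ga'$ from the preserved incidence pattern of standard geodesics and standard flats at $p$. The extra steps you supply (that $\bar\psi=\psi^{-1}$ via the uniqueness clause, and the abelianization lemmas promoting pairwise coincidence to genuine geodesic/flat preservation) are details the paper leaves implicit, since its stronger Theorem~\ref{frigidity} already delivers $\Ga_1\cong\Ga_2$ directly.
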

\noindent
This follows from Theorem \ref{thmmain1} because the pattern of standard
geodesics and standard flats passing through a vertex $p\in V$ determines
the defining graph, and by the theorem, it is preserved by quasi-isometries.

Theorem \ref{thmmain1} has two further corollaries, which may 
also be deduced from \cite{laurence}: 

\begin{corollary}[Mostow-type rigidity]
\label{cormostow}
Every isomorphism $G\ra G'$ between atomic RAAG's
is induced by a unique isometry $K\ra K'$, where we identify $G$ and $G'$
with subsets of $\isom(K)$, $\isom(K')$.  
\end{corollary}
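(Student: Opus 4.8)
The plan is to deduce Corollary~\ref{cormostow} directly from Theorem~\ref{thmmain1} (equivalently, Theorem~\ref{frigidity}).  So suppose $\theta:G\to G'$ is an isomorphism of atomic RAAG's.  Fixing base vertices in $K$ and $K'$ identifies $G$ with the orbit of the basepoint, hence with $V$ up to finite distance, and since $G$ acts on $K$ with finite quotient, $\theta$ induces an $(L,A)$-quasi-isometry $\phi:K\to K'$ for some $L,A$ (any equivariant map extending $\theta$ on the $0$-skeleton will do).  Apply Theorem~\ref{thmmain1} to $\phi$ to get the bijection $\psi:V\to V'$ satisfying (1)--(3).

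Next I would upgrade $\psi$ to an isometry.  The key point is that properties (2) and (3) say that $\psi$ preserves the incidence pattern of standard geodesics and standard flats.  By the remark following the definition of standard circles/tori, the local pattern at each vertex recovers $\Ga$; but more is true: $K$ is a CAT(0) cube complex whose combinatorial structure is entirely determined by how standard geodesics and standard flats meet.  Concretely, two vertices of $V$ are joined by an edge of $K$ iff they are consecutive vertices on a common standard geodesic, and the higher cubes are determined by collections of pairwise-commuting such edges, i.e.\ by standard flats.  Since $\psi$ is a bijection preserving "lying on a common standard geodesic" and "lying on a common standard flat", together with the metric structure along each standard geodesic (which is a bi-infinite line subdivided into unit edges, so the combinatorial distance along it is intrinsic), $\psi$ extends to a cellular isomorphism $\Psi:K\to K'$, hence to an isometry.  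I would carry this out by first checking that $\psi$ sends each standard geodesic of $K$ \emph{onto} a standard geodesic of $K'$ (not just mapping pairs of its vertices into \emph{some} standard geodesic): this uses that $\psi$ is a bijection and that an atomic graph has no vertex of valence $<2$, so standard geodesics are unbounded in both directions and the image must exhaust a full standard geodesic; a short argument rules out the image sitting inside a proper arc.  Then an isometry along each such geodesic is forced up to the choice of orientation and offset, and $d(\psi,\phi|_V)<\infty$ pins these down.

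Then I would verify that the resulting isometry $\Psi$ induces $\theta$.  Both $\Psi_*$ and $\theta$ are isomorphisms $G\to G'$ when we identify $G\le\isom(K)$, $G'\le\isom(K')$ via the deck actions; and by construction $\Psi$ moves the $G$-orbit of the basepoint a bounded distance the same way $\phi$ does, which is the same way $\theta$ does.  So $\Psi_*$ and $\theta$ agree up to a bounded error on $G$, i.e.\ $\Psi\circ g\circ\Psi^{-1}$ and $\theta(g)$ differ by a bounded amount as isometries of $K'$ for every $g\in G$.  Two isometries of $K'$ at bounded distance differ by an isometry fixing a point at infinity in every direction; but atomicity (no separating closed stars, girth $\ge5$) makes $\isom(K')$ act with trivial "bounded" part --- more precisely, the only isometry of $K'$ at bounded distance from the identity is the identity, because $K'$ has no Euclidean de~Rham factor and the center of $G'$ is trivial.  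Hence $\Psi_*=\theta$.  Finally, uniqueness of $\Psi$: any two isometries inducing $\theta$ differ by an isometry of $K'$ commuting with $G'$, which again is trivial by triviality of the center of an atomic RAAG (and directly: such an isometry is at distance $0$, hence the identity).

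The main obstacle I expect is the middle step: promoting the combinatorial data in Theorem~\ref{thmmain1}(2)--(3) --- which only asserts that \emph{pairs} of vertices on a standard flat/geodesic map to \emph{pairs} on a standard flat/geodesic --- to the statement that $\psi$ maps each standard geodesic \emph{bijectively} onto a standard geodesic and each standard flat bijectively onto a standard flat, compatibly, so that the cube-complex structures match.  This is where the structural hypotheses on $\Ga$ (connectedness, minimum valence $2$, girth $\ge 5$, no separating stars) must be used to prevent degenerations, e.g.\ a standard geodesic collapsing into a bounded set, or two distinct standard geodesics through a vertex being sent into a single one; once that rigidity of the incidence pattern is in hand, reconstructing the isometry and checking it induces $\theta$ is comparatively formal.
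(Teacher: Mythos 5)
Your middle step contains a genuine gap. You try to upgrade $\psi$ to a cellular isomorphism using only the conclusions (1)--(3) of Theorem~\ref{thmmain1}: that $\psi$ preserves ``lying on a common standard geodesic'' and ``lying on a common standard flat,'' together with $d(\psi,\phi\restr_V)<D$. But this data does \emph{not} force $\psi$ to preserve adjacency along a standard geodesic, and in fact the claim ``an isometry along each such geodesic is forced up to orientation and offset'' is false. This is exactly the content of the flexibility results in the paper: quasi-isometric rigidity fails for atomic RAAG's (Theorem~\ref{thmflexibility}), and the constructions in Section~\ref{secflexibility} produce quasi-isometries $\phi:K\ra K$ whose associated bijections $\phi_0:V\ra V$ satisfy all of (1)--(3) and yet restrict, on the vertex set $\Z_P$ of the line factor of a parallel set $P$, to bijective quasi-isometries of $\Z$ that are not isometries (e.g.\ maps equivariant under $k\Z$ that descend to an arbitrary permutation of $\Z/k\Z$). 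Such a $\phi_0$ is a counterexample to your reconstruction step: it preserves the incidence pattern of standard geodesics and flats, lies at bounded distance from $\phi\restr_V$, but is not the restriction of an isometry. So no argument that uses only the output of Theorem~\ref{thmmain1} for an arbitrary quasi-isometry can close this step; some group-theoretic input from the isomorphism $\theta$ is indispensable.

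The missing ingredient --- and the paper's actual route --- is equivariance. Since $\phi$ is chosen $G$-equivariant (with $G$ acting on $K'$ via $\theta$), the \emph{uniqueness} clause of Theorem~\ref{thmmain1} forces $\phi_0=\psi$ to be $G$-equivariant as well. Now restrict to a standard flat $F=\al\times\be$: by (2), $\psi$ carries $F\cap V$ bijectively to $F'\cap V'$ for a standard flat $F'$, respecting the product structure, and this restriction is $\stab(F)$-equivariant. The $\Z^2\leq\stab(F)$ acts transitively on the vertex sets of $F$, $\al$, $\be$ and (via $\theta$) of $F'$, $\al'$, $\be'$, so equivariance forces $\psi\restr_{F\cap V}$ to be the restriction of an isometry $F\ra F'$; doing this over all standard flats yields the isometry $K\ra K'$. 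Your construction of the equivariant $\phi$, your verification that the resulting isometry induces $\theta$, and your uniqueness argument (an isometry commuting with $G'$, equivalently at bounded displacement and preserving all standard flats, fixes every vertex) are all fine and consistent with the paper; it is only the combinatorial-reconstruction step that must be replaced by the equivariance-plus-transitivity argument above.
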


\begin{corollary}
\label{coroutisom}
Every homotopy equivalence 
$\bar K(\Ga)\ra \bar K(\Ga)$ is homotopic to a unique isometry;
equivalently,  the homomorphism $\isom(\bar K)\ra \out(G)$
is an isomorphism.  In particular, there is an extension
$$
1\lra     H \lra \out(G)\lra \aut(\Ga)\ra 1
$$
where $\aut(\Ga)$ denotes the automorphism group of the graph $\Ga$, and $H$
consists of automorphisms that send each generator $g$ to $g^{\pm 1}$.
Thus $H$ is isomorphic to $\Z_2^V$, where $V$
is the number of vertices in $\Ga$.  
\end{corollary}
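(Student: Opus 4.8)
The plan is to deduce all three assertions from the Mostow-type rigidity of Corollary~\ref{cormostow}, together with two soft facts: $\bar K$ is aspherical (it is a nonpositively curved cube complex, hence a $K(G,1)$), and $\bar K$ has a single $0$-cell $\bar p$. The map in the statement is the one induced by the action of $\isom(\bar K)$ on $\pi_1(\bar K,\bar p)=G$.

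First I would show that $\isom(\bar K)\to\out(G)$ is an isomorphism. Because $\bar K$ has one vertex, every $f\in\isom(\bar K)$ fixes $\bar p$ and has a unique lift $\tilde f$ to $K$ fixing a chosen preimage $p\in V$ of $\bar p$; this $\tilde f$ normalizes $G\le\isom(K)$ and so induces $\Phi_f\in\aut(G)$, with $f\mapsto\Phi_f$ a homomorphism whose composite with $\aut(G)\to\out(G)$ is the map in the statement. For surjectivity onto $\out(G)$: given $\alpha\in\aut(G)$, Corollary~\ref{cormostow} (applied with $G'=G$) produces an isometry $\Psi$ of $K$ with $\Psi g=\alpha(g)\Psi$ for all $g\in G$; then $\Psi$ normalizes $G$, hence descends to an element of $\isom(\bar K)$ mapping to $[\alpha]$. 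For injectivity: if $\Phi_f$ equals the inner automorphism $\inn_h$ (conjugation by $h$), then the lift $h^{-1}\tilde f$ of $f$ is easily checked to be $G$-equivariant, i.e.\ to induce $\id_G$; since $\id_K$ is also such an isometry, the uniqueness clause of Corollary~\ref{cormostow} forces $h^{-1}\tilde f=\id_K$, so $\tilde f=h\in G$ and $f=\id_{\bar K}$. The statement about homotopy equivalences is then formal: by asphericity, $f\mapsto f_*$ is a bijection from homotopy classes of homotopy equivalences $\bar K\to\bar K$ onto $\out(G)$, and by the injectivity just proved distinct isometries of $\bar K$ have distinct images in $\out(G)$, so each homotopy equivalence is homotopic to a unique isometry.

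For the extension, I would construct a homomorphism $\rho:\isom(\bar K)\to\aut(\Ga)$: an isometry of $\bar K$ fixes $\bar p$, permutes the standard circles of $\bar K$ (they are the shortest closed geodesics) and the standard tori, hence permutes the incidence pattern of standard circles and standard tori through $\bar p$, which is $\Ga$. Equivalently, the lift $\tilde f$ permutes the standard geodesics and standard flats of $K$; this preservation of the standard pattern is exactly where the work of the paper enters (Theorem~\ref{thmmain1} applied to the isometry $\tilde f$), and I expect it to be the only nonroutine input. Since each graph automorphism of $\Ga$ is realized by an isometry of the Salvetti complex, $\rho$ is onto, and transporting $\rho$ across $\isom(\bar K)\cong\out(G)$ gives the exact sequence with $H=\ker\rho$.

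Finally, to identify $H$: if $\rho(f)=\id$ then the canonical lift $\tilde f$ fixes $p$ and fixes each standard geodesic $\ga_v$ through $p$ setwise (there is a unique such geodesic over each standard circle), hence acts on $\ga_v\cong\R$ as $\pm\id$; therefore $\Phi_f$ is a sign change $g_v\mapsto g_v^{\pm1}$ on the standard generators. Conversely, every sign change is an automorphism of $G$ (it preserves all commutator relations) and its realizing isometry lies in $H$ (it fixes each standard circle), and the composite $\Z_2^V\to\out(G)\to\mathrm{GL}\big(H_1(G;\Z)\big)$, $H_1(G;\Z)\cong\Z^V$, is injective since inner automorphisms act trivially on $H_1$ while a sign change acts by the corresponding diagonal matrix. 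Hence $H\cong\Z_2^V$, completing the proof.
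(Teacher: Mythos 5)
Your argument is correct, and on the part of the statement the paper actually writes out it rests on exactly the same input: Corollary~\ref{cormostow}. The paper's proof lifts a homotopy equivalence to a $G$-equivariant quasi-isometry of $K$, replaces it by the equivariant isometry furnished by Corollary~\ref{cormostow}, and descends; you instead first prove that $\isom(\bar K)\to\out(G)$ is an isomorphism (surjectivity from Corollary~\ref{cormostow}, injectivity from its uniqueness clause) and then recover the homotopy statement from asphericity --- the same mechanism, packaged at the group level rather than via lift-and-descend. What your write-up adds is a proof of the second half of the corollary (the extension $1\to H\to\out(G)\to\aut(\Ga)\to 1$ and $H\cong\Z_2^V$), which the paper's proof omits entirely; your identification of the kernel via the canonical basepoint-fixing lift acting as $\pm\id$ on each standard geodesic, and the injectivity of $\Z_2^V\to\out(G)$ via the action on $H_1(G;\Z)$, are both fine. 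Two small remarks. First, where you invoke Theorem~\ref{thmmain1} to see that the lift $\tilde f$ permutes standard flats and geodesics, that theorem only gives bounded Hausdorff distance; for an isometry you should add that two flats (or a geodesic and a standard geodesic obtained as an intersection of two standard flats) at finite Hausdorff distance in the $2$-dimensional complex $K$ must coincide, by the sandwich argument as in Lemma~\ref{lemalmostcontainedimpliescontained}. Alternatively, the permutation of standard circles and tori by an isometry of $\bar K$ can be seen elementarily, since the $1$-skeleton is exactly the set of non-manifold points and the standard tori are the closures of the components of its complement; so, contrary to your aside, the genuinely nonroutine input is not this permutation but Corollary~\ref{cormostow} itself (i.e.\ surjectivity onto $\out(G)$), which is where Theorem~\ref{thmmain1} really enters.
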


To complement the rigidity theorem, we construct examples showing that
 $K$ is not quasi-isometrically rigid, and that the failure of
 rigidity cannot be accounted for by the automorphism group, or even
 the commensurator:

\begin{theorem}
\label{thmflexibility}
Let $\comm(G)$ denote the commensurator of $G$.
Then both of the canonical homomorphisms 
$$
\aut(G)\ra\comm(G),\quad\quad \comm(G)\ra \qi(G)
$$
are injective, and have infinite index images.
\end{theorem}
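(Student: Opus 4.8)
The plan is to prove Theorem~\ref{thmflexibility} by treating the three assertions in turn, building on the rigidity statement of Theorem~\ref{thmmain1} and the flexibility construction underlying Example~\ref{doubled pentagon}. First I would handle the injectivity statements. The map $\aut(G)\to\comm(G)$ is injective because $G$ has trivial center (a consequence of atomicity, since $\Ga$ is connected and has no dominated vertex), so conjugation by $G$ embeds $G$ into $\aut(G)$ and the composite $G\to\aut(G)\to\comm(G)$ is the standard inclusion; more directly, $G$ is centerless, so $\aut(G)\hookrightarrow\comm(G)$. For $\comm(G)\to\qi(G)$: an element of $\comm(G)$ is represented by an isomorphism between finite-index subgroups $G_1,G_2\le G$, which extends to a quasi-isometry of $K$; this quasi-isometry is trivial in $\qi(G)$ only if it is bounded distance from the identity, and by Theorem~\ref{thmmain1} the induced bijection $\psi$ on $V$ would then be the identity, forcing the original partial isomorphism to be (the restriction of) an inner automorphism, hence trivial in $\comm(G)$. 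So the content here is just unwinding Theorem~\ref{thmmain1}; the rigidity theorem does the work.

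The heart of the matter is the two infinite-index claims. For $\aut(G)\to\comm(G)$ having infinite-index image, I would exhibit an infinite family of pairwise non-conjugate (in $\qi(G)$, or even already in $\comm(G)$) commensurations that are not automorphisms. The natural source is subgroups of the form $\ker(f)$ for homomorphisms $f\colon G\to F$ to finite groups, as in Example~\ref{doubled pentagon}: for the atomic $\Ga$, taking $f$ to kill all but one generator $v$ and send $v$ into $\Z/n$, one gets finite-index subgroups $G_n\le G$ whose abstract isomorphism type (a RAAG on the $n$-fold branched cover of $\Ga$ along $\Star(v)$) changes with $n$, and the associated partial self-isomorphisms of $G$ (inclusion $G_n\hookrightarrow G_m$ composed with graph symmetries of the covers) give elements of $\comm(G)$ that cannot be absorbed into $\aut(G)$ because $\aut(G)/\inn(G)$ is finite (Corollary~\ref{coroutisom}: $\out(G)$ is a finite extension of $\aut(\Ga)$ by $\Z_2^V$) while the index $[\comm(G):G\cdot\aut(G)]$ captures infinitely many distinct double cosets. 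Concretely: $\out(G)$ finite plus $G$ centerless forces $[\aut(G):\inn(G)]<\infty$, so it suffices to produce infinitely many elements of $\comm(G)$ lying in distinct cosets of $G$, i.e. infinitely many commensurated-but-not-inner maps, which the covering constructions supply.

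For $\comm(G)\to\qi(G)$ having infinite-index image, I would invoke the flexibility construction promised in Section~\ref{secflexibility}: one builds a quasi-isometry $\phi\colon K\to K$ — for instance by a ``shearing'' or ``shuffling'' move along a standard flat or a family of parallel standard geodesics, which Theorem~\ref{thmmain1} permits since it only constrains $\psi$ up to the incidence pattern, not up to isometry — that is not at bounded distance from any isometry, indeed not from any element in the image of $\comm(G)$. To get infinite index rather than merely a nontrivial quotient, I would make the construction depend on an integer (or real) parameter, say the amount of shear, chosen so that distinct parameters give quasi-isometries that differ by an unbounded amount even after pre- and post-composition with elements of $\comm(G)$; since $\comm(G)$ is countable and acts on $K$ with uniformly bounded-geometry constraints, a one-parameter family of genuinely distinct shears cannot all be accounted for. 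The main obstacle, and the place where the real work lies, is this last step: producing the explicit non-rigid quasi-isometries and \emph{proving} they represent infinitely many distinct $\qi(G)$-classes modulo $\comm(G)$ — this requires a careful analysis of how quasi-isometries act on the pattern of standard flats (using Theorem~\ref{thmflatspreserved} and Theorem~\ref{thmmain1}) together with a quantitative argument that the shearing parameter is a quasi-isometry invariant up to commensuration. I would expect to defer the detailed verification to Section~\ref{secflexibility}, where the model space $K$ and its flat structure are available in full.
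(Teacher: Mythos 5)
Your outline reproduces the easy half of the paper's argument but is missing the mechanism that makes the two infinite-index statements work, and that mechanism is the actual content of the proof. You correctly point to the finite cyclic covers $G_k=\ker(G\to\Z/k\Z)$ (the paper uses exactly these, together with the $S_k$-symmetries of the glued-up graph $\Ga_k$, and the analogous infinite cyclic cover for the $\qi$ statement), but your reduction ``it suffices to produce infinitely many commensurated-but-not-inner maps'' is not valid: infinitely many non-inner elements of $\comm(G)$ could perfectly well lie in finitely many cosets of $\aut(G)$, and finiteness of $\out(G)$ does nothing to separate them. For the second claim you candidly defer ``the place where the real work lies,'' and your proposed ``shear along a standard flat'' is not obviously a self-quasi-isometry of $K$ at all; the paper's flexibility comes instead from permuting the copies of $\bar K(\Ga_1)$ attached along a (finite or infinite) cyclic cover of $\bar K(\Star v)$. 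What the paper adds, in Section \ref{secfurtherimplications}, is a coset-separating invariant: every maximal standard product subcomplex is a parallel set $P\cong\R\times T$, a quasi-isometry $\phi$ induces a bijective quasi-isometry $\phi_{\Z_P}\colon\Z_P\to\Z_{\phi(P)}$ of the integer line $\Z_P$ (Definition \ref{defzp}, Lemma \ref{lemparallelsetpreservation}), and Lemma \ref{lemphip} shows this invariant is an isometry when $\phi$ is an isometry and is equivariant for cocompact isometric $\Z$-actions when $\phi$ comes from $\comm(G)$. The cover constructions realize, on $\Z_P$, arbitrary permutations of $\Z/k\Z$ (equivariantly for $k\Z$) in the first case and arbitrary bijective quasi-isometries of $\Z$ in the second; if the relevant index were finite, composing with finitely many correcting maps whose $\Z_P$-behavior is so constrained could not account for all of these. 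Without this invariant (or a substitute), your outline never shows that the constructed elements occupy infinitely many distinct cosets, so neither infinite-index claim is proved.

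Two smaller but genuine inaccuracies in your injectivity discussion: centerlessness of $G$ does \emph{not} imply $\aut(G)\to\comm(G)$ is injective — the kernel consists of automorphisms restricting to the identity on a finite-index subgroup, and excluding these requires triviality of centralizers of finite-index subgroups (the infinite dihedral group is centerless and fails this). The paper avoids the issue geometrically: such an automorphism is realized by an isometry of $K$ (Corollary \ref{cormostow}) commuting with a finite-index subgroup, hence of bounded displacement, hence preserving each standard flat, hence fixing every vertex since the standard flats through a vertex intersect exactly in that vertex. Similarly, your conclusion that the partial isomorphism is ``an inner automorphism, hence trivial in $\comm(G)$'' is wrong as stated — inner automorphisms are generally nontrivial in $\comm(G)$; the bounded-distance argument, run as in the paper via Theorem \ref{thmmain1} and the flats-through-a-vertex observation, shows the partial isomorphism is the identity on a finite-index subgroup, which is what triviality in $\comm(G)$ means.
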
 

The proof is in Section \ref{secflexibility}.

Every atomic RAAG is commensurable to groups which do not admit a
geometric action on its associated $CAT(0)$ complex; see Remark
\ref{remnogeometricaction} for more discussion. In Example
\ref{doubled pentagon} the obvious involution of $G'$ cannot be
realized by an isometric involution on $K$. Nonetheless, there is 
a form of rigidity for quasi-actions of atomic RAAG's, which will
appear in a forthcoming paper:

\begin{theorem}
\label{thmquasiactions}
If $H\stackrel{\rho}{\acts} K$ is a quasi-action of a group on the standard 
complex $K$ for an
atomic RAAG, then $\rho$ is quasi-conjugate to an isometric action
$H\acts \hat X$, where $\hat X$ is a $CAT(0)$ $2$-complex.  
\end{theorem}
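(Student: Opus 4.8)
The plan is to feed the quasi-action into the Rigidity Theorem to produce a genuine action of $H$ on the vertex set $V$ preserving the standard combinatorial pattern, and then to cubulate this action using a wall structure extracted from the $CAT(0)$ cube complex $K$.

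\textbf{Step 1: from a quasi-action to a pattern-preserving action on $V$.} Since $\rho$ is a quasi-action there are constants so that every $\rho(h)\colon K\to K$ is an $(L,A)$-quasi-isometry, with $\rho(h)\rho(h')$ uniformly close to $\rho(hh')$ and $\rho(e)$ uniformly close to $\id$. Applying Theorem~\ref{thmmain1} with $\phi=\rho(h)$ gives, for each $h$, a unique bijection $\psi_h\colon V\to V$ with $d(\psi_h,\rho(h)|_V)<D=D(L,A)$ preserving standard geodesics and standard flats. By uniqueness — in the strong form that a pattern-preserving bijection lying within bounded distance of a given quasi-isometry is unique — one gets $\psi_e=\id$ and $\psi_h\psi_{h'}=\psi_{hh'}$, since both sides are pattern-preserving bijections at uniformly bounded distance from $\rho(hh')|_V$. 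Thus $\bar\rho\colon h\mapsto\psi_h$ is a homomorphism from $H$ into the group $\aut(\P)$ of permutations of $V$ preserving the incidence pattern of standard geodesics and flats. As $V\hookrightarrow K$ is a coarsely $H$-equivariant quasi-isometry, $\bar\rho$ is quasi-conjugate to $\rho$, and it remains to show $\bar\rho$ is quasi-conjugate to an isometric action on a $CAT(0)$ $2$-complex.

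\textbf{Step 2: an invariant wall space and cubulation.} Each hyperplane $\mathfrak h$ of $K$ cuts $K$ into two half-spaces, and since $\psi_h$ preserves the standard pattern it should coarsely permute these half-spaces, with error uniform over $\bar\rho(H)$. The next step is to rigidify this coarse combinatorics into a genuine $\bar\rho(H)$-invariant wall space $(V,\mathcal W)$ together with a coarsely $\bar\rho(H)$-equivariant quasi-isometry $(V,d)\to(V,d_{\mathcal W})$, and then to apply Sageev's construction. For that I would verify that any two vertices are separated by only finitely many walls, that $\mathcal W$ is locally finite, that $(V,d_{\mathcal W})$ is quasi-isometric to $V$, and that no three walls pairwise cross — the last because three pairwise crossing walls would arise from three pairwise crossing hyperplanes of $K$, forcing a $3$-cube, impossible since $\Ga$ is triangle-free. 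Sageev's construction then yields a $CAT(0)$ cube complex $\hat X$; it is $2$-dimensional (at most $2$ by the no-triple-crossing property, at least $2$ since $\hat X$ is quasi-isometric to $K$, which contains $2$-flats), it is locally finite, and there is a quasi-isometry $V\to\hat X$. The $\bar\rho(H)$-action on $\mathcal W$ induces an isometric action $H\acts\hat X$, and composing the quasi-isometries $K\to V\to\hat X$ shows it is quasi-conjugate to $\rho$; a $2$-dimensional $CAT(0)$ cube complex is in particular a $CAT(0)$ $2$-complex.

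\textbf{Main obstacle.} The crux is the rigidification in Step~2. The data retained by Theorem~\ref{thmmain1} is purely coarse — only which standard geodesics and flats a vertex lies on — and one must extract from it a genuinely, not merely coarsely, $\bar\rho(H)$-invariant wall structure on $V$ and check it meets the cubulation hypotheses. Because of the flexibility phenomenon (Theorem~\ref{thmflexibility} and Example~\ref{doubled pentagon}), $\hat X$ genuinely differs from $K$ in general, so none of this can be deduced from properties of $K$ via an equivariant map: the new complex must be manufactured from the combinatorics. A variant of the plan would instead reconstruct a $CAT(0)$ complex directly on $V$, with edges and squares read off from standard geodesics and flats; this is no easier, since one must first recover from the incidence pattern the combinatorial data it omits, notably the betweenness relation along each standard geodesic.
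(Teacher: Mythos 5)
First, note that this paper does not actually prove Theorem \ref{thmquasiactions}: it is stated as a result ``which will appear in a forthcoming paper,'' so there is no internal proof to compare yours against. Judged on its own terms, your Step 1 is sound and is the expected opening move: the uniqueness clause of Theorem \ref{thmmain1} (equivalently, Theorem \ref{thmmain2}) upgrades the quasi-action to an honest action of $H$ on $V$ by pattern-preserving bijections, and equivalently to an honest isometric action on the flat space $\F$. (Be aware, though, that the action on $\F$ does not finish anything, since $\F$ is Gromov hyperbolic and hence not quasi-isometric to $K$; and the action on $V$ is by bijections that are only uniform quasi-isometries, which is exactly why the theorem is not yet proved at that point.)

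The genuine gap is Step 2, and you have correctly located it yourself: the passage from the coarse, pattern-level invariance to a genuinely $H$-invariant wall structure is precisely the content of the theorem, and your proposal only states it as a goal. Moreover, the specific plan of taking the walls to be (classes of) hyperplanes of $K$ cannot be rigidified naively. The group $H$ does not permute the hyperplanes of $K$; it only coarsely permutes their halfspaces, and the flexibility analysis of Section \ref{secflexibility} shows this coarse symmetry cannot in general be corrected by a bounded perturbation: for a parallel set $P$ the induced maps $\Z_P\to\Z_{P'}$ of Lemma \ref{lemphip} can be arbitrary bijective quasi-isometries, and Remark \ref{remnogeometricaction} exhibits a group $H$ (commensurable to $G$) whose induced action on $\Z_P$ is not conjugate to an isometric action. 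For such $H$ there is no equivariant way to decide ``which hyperplane of $K$ crossing $P$ goes to which,'' so the wall set along each parallel-set direction must be rebuilt from the quasi-action itself (e.g.\ by producing a new line, quasi-line, or tree on which the coarse stabilizer of $P$ acts isometrically) and these local models must then be assembled compatibly across intersecting parallel sets. That construction --- which is why $\hat X$ differs from $K$ and is the heart of the forthcoming paper --- is absent from the proposal; the subsidiary claims in Step 2 (local finiteness, no triple crossings, quasi-isometry to $K$) are likewise assertions about walls that have not yet been defined. So what you have is a reasonable strategy outline with the decisive step missing, not a proof.
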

In fact, the $2$-complex $\hat X$ is closely related to $K$.

 The following remains open:
\begin{question}
If $G$ is an atomic RAAG and $H$ is a finitely generated group quasi-isometric to 
$G$,  are $H$ and $G$ commensurable?  Does $H$ admit a finite index
subgroup which acts isometrically on $K$?
\end{question}

\subsection{Discussion of the proofs}

\mbox{ }

The proof of Theorem \ref{thmmain1} bears some resemblance to proof 
of quasi-isometric rigidity for higher rank symmetric spaces.  One may view both
proofs as proceeding in two steps.  In the first step one shows that quasi-isometries
map top dimensional flats to top dimensional flats, up to finite Hausdorff distance; 
 in the second step one uses
the asymptotic incidence of standard flats to deduce that the quasi-isometry
has a special form.  In both proofs, the implementation of the first step 
proceeds via a structure theorem for top dimensional quasiflats, but the methods
are rather different.  The arguments used in the second step are completely different,
although in both cases they are ultimately combinatorial in nature; in the
symmetric space case one relies on Tits' theorem on building automorphisms, while
our proof requires the development and analysis of a new combinatorial object.
Another significant difference is that quasi-isometric rigidity is false in our case,
and so the rigidity statement itself is more subtle.

The proof of   Theorem \ref{thmflatspreserved}
goes roughly as follows.  We begin by invoking a result from 
\cite{quasiflats} which says that, modulo a bounded subset, every quasiflat in $K$ 
is at finite Hausdorff distance from a finite union of quarter-planes.  Here
a {\em quarter-plane} is a subcomplex isometric to a Euclidean quadrant, and 
the Hausdorff distance is controlled by the quasiflat constants.
The pattern of asymptotic incidence of quarter-planes can be encoded in the
{\em quarter-plane complex}, which is a $1$-complex analogous to the Tits
boundary of a Euclidean building or higher rank symmetric space.  
Flats in $K$ correspond to minimal length cycles in the quarter-plane complex.
Using the fact that the image of a quasiflat under a quasi-isometry is a quasiflat, 
one argues that
quasi-isometries induce isomorphisms between quarter-plane complexes.   Hence
they carry minimal length cycles to minimal length cycles, and flats to flats
(up to controlled Hausdorff distance).

The outline of the proof of Theorem \ref{thmmain1} goes as follows.

{\em Step 1. $\phi$ maps standard flats in $K$ to within uniformly bounded
Hausdorff distance of standard flats in $K'$.}   Theorem \ref{thmflatspreserved}
and standard $CAT(0)$ geometry imply that $\phi$ maps maximal product subcomplexes
to within controlled Hausdorff distance of maximal product subcomplexes.  Because
$\Ga$ is atomic, every standard flat is the intersection of two maximal product 
subcomplexes.  Since maximal product subcomplexes are preserved, their coarse 
intersections
are also preserved, and this leads to preservation of standard flats.

Before proceeding further we introduce an auxiliary object, the {\em
flat space $\F=\F(\Ga)$}, which is a locally infinite $CAT(0)$
$2$-complex associated with $\Ga$. This complex coincides with the
{\it modified Deligne complex} of Charney-Davis \cite{charneydavis}
(see also \cite{buildingsarecat0,harlandermeinert}); we
are giving it a different name since we expect that the appropriate
analog in other rigid situations will not coincide with the modified
Deligne complex. 
Start with the discrete set
$\F^{(0)}$, namely the collection of standard flats in $K$. Join two
of these points by an edge iff the corresponding flats intersect in a
standard geodesic; this defines the $1$-dimensional subset $\F^{(1)}$
of $\F$. It is convenient to think of $\F^{(0)}$ and $\F^{(1)}$ as the
$0$- and $1$-skeleton of $\F$ even though formally this is rarely the case.

To specify the rest of $\F$, we recall that the standard flats passing
through a vertex $p\in K$ correspond bijectively to the edges in
$\Ga$.  Using this correspondence, for each $p\in K$ we cone off the
corresponding subcomplex of $\F^{(1)}$ to obtain the $2$-complex $\F$.
With an appropriately chosen metric (see Section \ref{secflatspace}),
this becomes a $CAT(0)$ complex.  We use the notation $\F(p)$ to
denote the subcomplex of $\F$ corresponding to the flats passing
through $p\in K$.  It is easy to see that Theorem \ref{thmmain1} is
equivalent to saying that quasi-isometries $K\ra K'$ induce isometries
between flat spaces:

\begin{theorem}[Rigidity, second version]
\label{thmmain2}
There is a constant $D=D(L,A)$
with the following property.  If $\phi:K\ra K'$ is an $(L,A)$-quasi-isometry,
then there is a unique isometry $\phi_*:\F \ra \F'$ such that for each 
vertex $F\in \F$, the image of $F$ under  $\phi$ has  Hausdorff
distance at most $D$ from $\phi_*(F)\in\F'$.
\end{theorem}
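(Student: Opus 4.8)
The plan is to deduce Theorem \ref{thmmain2} from Theorem \ref{thmmain1}, showing that the two are equivalent by translating the combinatorial data of $\psi$ into an isometry of flat spaces. First I would establish the setup: given the $(L,A)$-quasi-isometry $\phi:K\to K'$, invoke Theorem \ref{thmmain1} to produce the bijection $\psi:V\to V'$ with $d(\psi,\phi\restr_V)<D$, preserving standard flats and standard geodesics. The key point is that $\psi$ induces a well-defined map on the vertex set $\F^{(0)}$, i.e.\ on standard flats of $K$: given a standard flat $F\subset K$, its vertex set $F\cap V$ is mapped by $\psi$ into some standard flat $F'\subset K'$, and I would argue $F'$ is unique. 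Uniqueness follows because two distinct standard flats in an atomic RAAG complex share at most a standard geodesic, hence have bounded coarse intersection, while $\psi(F\cap V)$ is coarsely dense in $F'$ (being the image under a quasi-isometry, up to distance $D$, of a coarsely dense subset of a flat) — so if $\psi(F\cap V)$ were coarsely contained in two standard flats, those flats would have unbounded coarse intersection, forcing them to coincide. This defines $\phi_*$ on $\F^{(0)}$; applying the same reasoning to $\phi^{-1}$ (a quasi-inverse, which is again a quasi-isometry) shows $\phi_*$ is a bijection on vertex sets.

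Next I would check that $\phi_*$ respects the incidence structure of $\F^{(1)}$: two standard flats $F_1,F_2\subset K$ are joined by an edge in $\F$ iff they intersect in a standard geodesic $\ga$. If they do, pick vertices $v_1,v_2\in\ga$; by preservation of standard geodesics these map to vertices on a standard geodesic $\ga'\subset K'$, and $\ga'$ must lie in both $\phi_*(F_1)$ and $\phi_*(F_2)$ by coarse density considerations, so $\phi_*(F_1)$ and $\phi_*(F_2)$ meet in a standard geodesic and are adjacent in $\F'$. Conversely one runs the argument with $\phi^{-1}$. Then I would handle the $2$-cells: the coning vertices of $\F$ correspond bijectively to vertices $p\in K$, and $\F(p)$ is the subcomplex spanned by the standard flats through $p$, which is a single cycle in $\F^{(1)}$ of length equal to the valence of the corresponding vertex in $\Ga$ (for the pentagon, a $5$-cycle). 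Since $\psi$ sends standard flats through $p$ to standard flats through $\psi(p)$ (because $\psi$ preserves the incidence pattern of flats and geodesics through a vertex, which by the discussion after the definition of standard flat/geodesic recovers $\Ga$), $\phi_*$ carries the cycle $\F(p)$ isomorphically onto $\F'(\psi(p))$, so it extends canonically to a simplicial isomorphism $\F\to\F'$. Because the metric on $\F$ (resp.\ $\F'$) is determined by $\Ga$ (resp.\ $\Ga'$) in a uniform combinatorial way — all edges have a fixed length and the $2$-cells are metrized according to a fixed recipe — and because $\Ga\cong\Ga'$ via the graph isomorphism induced by $\psi$ on incidence patterns, this simplicial isomorphism is in fact an isometry. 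Uniqueness of $\phi_*$ follows from uniqueness of $\psi$ in Theorem \ref{thmmain1}: any isometry $\F\to\F'$ moving each vertex $F$ to within $D$ of $\phi(F)$ must agree with $\phi_*$ on $\F^{(0)}$, hence everywhere.

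The main obstacle, I expect, is not any single hard estimate but rather the careful bookkeeping needed to show that $\phi_*$ is genuinely \emph{well-defined and bijective} on $\F^{(0)}$ with all the uniformity constants tracked — in particular, verifying that the coarse intersection of two distinct standard flats is bounded by a constant depending only on $\Ga$ (not on the flats), and that a quasi-isometry cannot collapse a coarsely dense subset of one standard flat into a neighborhood of another standard flat unless the two flats coincide. This requires some $CAT(0)$ geometry of $K$: the product structure of standard flats, the fact that in an atomic RAAG complex any two standard flats intersecting in more than a standard geodesic are equal (using the triangle-free, girth $\geq 5$ hypotheses to rule out higher-dimensional flats and unexpected intersections), and a Hausdorff-distance estimate saying that if two flats are both within bounded distance of a common coarsely dense set then they are within bounded Hausdorff distance of each other, hence equal. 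Once these coarse-geometric lemmas are in place — most of which are already implicit in Step 1 of the proof of Theorem \ref{thmmain1} and in the preservation-of-standard-flats argument — the remaining verification that $\phi_*$ is a simplicial, and then metric, isomorphism is essentially formal, since $\F$ and $\F'$ are built from $\Ga$ and $\Ga'$ by the same universal construction.
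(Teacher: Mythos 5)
Your proposal is circular relative to the paper's logical structure. You deduce Theorem \ref{thmmain2} from Theorem \ref{thmmain1}, but the paper never proves Theorem \ref{thmmain1} independently: the two statements are explicitly presented as equivalent reformulations (the introduction says Theorem \ref{thmmain1} ``is proved, in the language of flat spaces, as Theorem \ref{frigidity}''), and Theorem \ref{frigidity} \emph{is} Theorem \ref{thmmain2}. So the hypothesis you start from --- a vertex bijection $\psi:V\to V'$ that preserves actual incidence of standard flats and standard geodesics --- is precisely the conclusion to be established. What is genuinely available at this point in the paper is much weaker: Theorem \ref{StandardFlats} (a quasi-isometry carries each standard flat to within bounded Hausdorff distance of a unique standard flat, giving a bijection $\phi_\#$ on $\F^{(0)}$) together with Lemma \ref{lemstandardalternative} (the quasi-isometry-invariant relation is ``coarsely intersect in a line,'' i.e.\ lie in a common parallel set). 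Your deduction of \ref{thmmain2} from \ref{thmmain1} is itself fine --- the paper even remarks that this equivalence is easy --- but it is not a proof of \ref{thmmain2}.

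The missing content is exactly the hard part of the paper: upgrading coarse incidence to exact incidence. If $F_1\cap F_2$ is a standard geodesic, one only knows a priori that $\phi_\#(F_1)$ and $\phi_\#(F_2)$ lie in a common parallel set, not that they actually meet in a line (Section \ref{secfrigidity} opens with precisely this caveat), and one must further show that all flats through a single vertex $p\in K$ are sent to flats through a single vertex of $K'$. The paper achieves this by proving Theorem \ref{TautCycleRigidity} (taut cycles in $\F^{(1)}$ go to taut cycles), which rests on the dual disk diagram machinery and the Greendlinger-type Lemma \ref{Shells} and Lemma \ref{QuasiCuts}, and then promoting taut-cycle rigidity to full $\F$-rigidity in Theorem \ref{frigidity} via the graph lemmas (Whitehead graph connectivity, Lemma \ref{whitehead}; the three-coloring Lemma \ref{no cuts}, which uses the atomic hypotheses; and Lemma \ref{edges to isos}). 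None of this appears in your proposal, and without it the claimed isometry $\phi_*$ cannot be constructed from the data you actually have. A secondary inaccuracy: $\F(p)$ is a copy of the cone $C(\Gamma)$ whose link at the cone vertex is all of $\Gamma$, not ``a single cycle of length equal to the valence of the corresponding vertex''; that description is only correct when $\Gamma$ is itself a cycle, as in the pentagon case.
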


We now switch to proving Theorem \ref{thmmain2}.  Step 1 produces a bijection
$\phi_0:\F^{(0)}\ra \F^{'(0)}$ between $0$-skeleta.

{\em Step 2. The bijection $\phi_0$ extends to an isomorphism
$\phi_*:\F\ra\F'$.}  We know that
if two distinct standard flats $F_1,\,F_2\in \F^{(0)}$ intersect in a standard
geodesic, then $\phi_0(F_1)$ and $\phi_0(F_2)$ intersect coarsely in
a geodesic, i.e. for some $D_1=D_1(L,A)$, the intersection 
$N_{D_1}(\phi_0(F_1))\cap N_{D_1}(\phi_0(F_2))$ is at controlled Hausdorff
distance from a standard geodesic in $K'$.  The property of intersecting coarsely
in a geodesic defines a relation on the collection of standard flats
which is quasi-isometry invariant.
The remainder of the argument establishes the following:

\begin{theorem}
\label{thmcombinatorial}
Any bijection
$\F^{(0)}\ra \F'^{(0)}$  which preserves the relation of intersecting
coarsely in a geodesic, is the restriction of an isometry
$\F\ra \F'$. 
\end{theorem}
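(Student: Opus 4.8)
The plan is to recover the combinatorial structure of $\F$ purely from the relation ``$F_1$ and $F_2$ intersect coarsely in a geodesic'' on the vertex set $\F^{(0)}$, and then show this structure determines the simplicial complex $\F$ and its metric. First I would observe that two standard flats $F_1,F_2$ meet coarsely in a geodesic precisely when the corresponding edges $e_1,e_2$ of $\Ga$ share exactly one vertex; in the language of $\F$ this is exactly the condition that $F_1$ and $F_2$ span an edge of $\F^{(1)}$. Thus the adjacency relation on $\F^{(0)}$ given by ``intersecting coarsely in a geodesic'' reconstructs the $1$-skeleton $\F^{(1)}$ as an abstract graph, and any bijection $\F^{(0)}\ra\F'^{(0)}$ preserving the relation extends canonically to a graph isomorphism $\F^{(1)}\ra\F'^{(1)}$.

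Next I would reconstruct the $2$-cells. Recall the $2$-cells of $\F$ are the cones $\F(p)$ over the subcomplexes of $\F^{(1)}$ corresponding to the (finitely many) standard flats through a vertex $p\in K$; combinatorially, $\F(p)$ is the cone over the link $\Ga_p$ of... more precisely, the flats through $p$ correspond to edges of $\Ga$, and two of them are $\F^{(1)}$-adjacent iff the edges share a vertex, so $\F(p)$ is recorded inside $\F^{(1)}$ as a specific subgraph. The key point is to characterize these subgraphs intrinsically in $\F^{(1)}$ — they should appear as the maximal subgraphs isomorphic to the ``edge graph'' of the star of a vertex of $\Ga$, or equivalently as certain maximal cliques/cycles forced by the atomic hypothesis (no cycles of length $<5$, no separating closed vertex stars). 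Since the isomorphism $\F^{(1)}\ra\F'^{(1)}$ carries subgraphs to subgraphs, it carries these distinguished subgraphs to the corresponding ones on the primed side, hence extends to a simplicial isomorphism $\F\ra\F'$. Finally, because the metric on $\F$ (Section \ref{secflatspace}) is defined combinatorially — each edge has a fixed length and each $2$-cell $\F(p)$ is a fixed Euclidean cone determined by the combinatorics of $\Ga_p$ — a simplicial isomorphism is automatically an isometry.

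I expect the main obstacle to be the middle step: giving a purely graph-theoretic characterization of the subgraphs $\F(p)\cap\F^{(1)}\subset\F^{(1)}$ that does not secretly use the geometry of $K$. One has to rule out ``spurious'' subgraphs of $\F^{(1)}$ that look locally like a vertex-star pattern but do not arise from any $p\in K$, and here is exactly where the atomic conditions must be used — condition (2) to control short cycles in $\F^{(1)}$ (which reflect short cycles in $\Ga$), and condition (3) to prevent $\F^{(1)}$ from decomposing in a way that would create ambiguity in how the $2$-cells attach. A secondary subtlety is uniqueness of the resulting isometry: this should follow because the vertex set $\F^{(0)}$ is already pinned down by Step 1, and an isometry of $\F$ fixing $\F^{(0)}$ pointwise fixes all of $\F$ since the cone points $\F(p)$ are the unique points equidistant (in the cone metric) from their boundary vertices. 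Modulo these points the argument is bookkeeping with the incidence combinatorics already set up in the paper.
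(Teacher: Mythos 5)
There is a genuine gap, and it occurs at your very first step: you identify the hypothesis relation with adjacency in $\F^{(1)}$, but these are different relations. The relation preserved by the bijection is ``intersecting coarsely in a geodesic,'' i.e.\ (by Lemma \ref{lemstandardalternative} and the discussion of $D^\coarse$ in Section \ref{secflatspace}) the relation of lying in a common parallel set of a standard geodesic. That relation is strictly coarser than spanning an edge of $\F^{(1)}$, which requires the two flats to \emph{actually} intersect in a standard geodesic. For example, inside the parallel set $P\cong T\times\R$ of a standard geodesic labelled $v$, two flats $l_1\times\R$ and $l_2\times\R$ with $l_1,l_2$ disjoint standard lines in $T$ coarsely intersect in a line (so they are related in the hypothesis) yet are disjoint in $K$ and span no edge of $\F^{(1)}$; restricted to a single parallel set the hypothesis relation is simply the complete graph on the flats it contains. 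Note also that the relation is on specific flats in $K$, not on their labels, so ``the edges $e_1,e_2$ of $\Ga$ share a vertex'' is neither necessary nor sufficient as you state it. Consequently your claim that the bijection ``extends canonically to a graph isomorphism $\F^{(1)}\ra\F'^{(1)}$'' does not follow from the hypothesis --- and proving exactly this (that the coarse, parallel-set relation forces preservation of actual incidence) is the heart of the matter, occupying Sections \ref{secdualdiskdiagrams}--\ref{secfrigidity}: one studies taut cycles in $\F^{(1)}$, proves via dual disk diagrams and a Greendlinger-type lemma that they are preserved (Theorem \ref{TautCycleRigidity}), and then uses the Whitehead-graph and coloring lemmas (Lemmas \ref{whitehead} and \ref{no cuts}, which is where the no-separating-closed-star condition enters) together with separation by parallel sets (Lemma \ref{parallel sets}) to conclude that flats through a common point go to flats through a common point.

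Even granting your Step 1, the remaining steps are not carried out: you correctly identify that one must characterize the subcomplexes $\F(p)$ (equivalently, decide which families of flats pass through a common vertex of $K$) intrinsically, but you leave this as ``the main obstacle'' with only a gesture toward maximal cliques and the atomic hypothesis. In the paper this is precisely where the work is: Step 1 of the proof of Theorem \ref{frigidity} uses taut cycle rigidity plus Lemma \ref{no cuts} to show all flat vertices of a fundamental cone map into a single cone, and Lemma \ref{edges to isos} then upgrades the resulting edge bijection to a graph isomorphism $\Ga\ra\Ga'$, after which the extension over singular and cone vertices and the isometry statement are routine. So the proposal assumes the difficult implication at the outset and defers, without proof, the secondary difficulty it does identify.
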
 
This boils down to showing that if $p\in K$, then
there is a point $p'\in K'$ such that all the flats passing through
$p$ are mapped by $\phi_0$ to flats passing through $p'$.  To
establish this we exploit {\em taut cycles}, which are special class
of cycles in the $1$-skeleton of flat space; in the pentagon case
these are just the $5$-cycles.  The heart of the proof is Theorem
\ref{TautCycleRigidity}, that the map $\phi_0$ carries the vertices of a
taut cycle to the vertices of a taut cycle.  The proof of this theorem
is based on small cancellation theory.  Due to the abundance of taut
cycles and their manner of overlap, one deduces from this that the
flats in $\F(p)$ are mapped by $\phi_0$ to $\F(p')$ for some $p'\in
K'$ (see Section \ref{secfrigidity}).

\subsection{Organization of the paper}

In Section \ref{secprelim} we discuss some background material on
RAAG's, $\cat(0)$ spaces, and cube complexes.
Sections \ref{secquarterplanes} and \ref{secpreservationofproductcomplexes}
develop the geometry of quasiflats, culminating 
in the proof of Theorem \ref{thmflatspreserved}.
Sections \ref{secflatspace}, \ref{secdualdiskdiagrams}, and 
\ref{secrigiditytautcycles} build toward the proof that (the
standard flats corresponding to) the 
vertices of a taut cycle in $\F$ are mapped by a quasi-isometry
to (the
standard flats corresponding to) the vertices of a  taut cycle (cf.
Theorem \ref{TautCycleRigidity}).
This is proved by studying cycles in $\F^{(1)}$, and certain disk fillings
of them; the argument is developed in Sections 
 \ref{secdualdiskdiagrams} and 
\ref{secrigiditytautcycles}. Section
\ref{secfrigidity} promotes taut cycle rigidity (Theorem \ref{TautCycleRigidity})
to full $\F$-rigidity (Theorem \ref{thmmain2}).
Sections \ref{proofs of corollaries}, \ref{secfurtherimplications}, and
\ref{secflexibility} work out various implications of $\F$-rigidity.

\section{Preliminaries}
\label{secprelim}

\subsection{The structure of the model space  $K(\Ga)$}
We begin by introducing some notation and terminology connected with
RAAG's.

Let $\Ga$ be a finite simplicial graph. If $\Ga$ contains no
triangles, denote by $\bar K(\Ga)$ the presentation complex for
$G(\Ga)$. Thus $\bar K(\Ga)$ has one vertex, one oriented edge for
every vertex of $\Gamma$, and one 2-cell, glued in a commutator
fashion, for every edge of $\Gamma$. The closed 2-cells are tori.
More generally, one may define
$\bar K(\Ga)$ for arbitrary $\Gamma$ by adding higher dimensional
cells (tori), one $k$-cell for every complete subgraph of $\Gamma$ on
$k$ vertices.
Then $\bar K(\Ga)$ is a nonpositively curved
complex.  The universal cover is a $CAT(0)$ cube complex denoted $K(\Ga)$.
We will often use the notation $\bar K$ or $K$, suppressing the graph
$\Ga$, when there is no risk of confusion.

We label the edges of $\bar K$ and $K$ by vertices of $\Ga$,
and the squares by edges of $\Ga$.  More generally,
each $k$-dimensional cubical face of $\bar K$ or $K$ is labelled by a $k$-tuple of vertices
in $\Ga$, or equivalently, by a
face of the flag complex of $\Ga$.  
If $Y\subset K$ is a subcomplex, we define the {\em label of $Y$}
to be the collection $\Ga_Y$ of faces of the flag complex of $\Ga$ arising
as labels of faces of $Y$.  In this paper we will be concerned primarily
with $2$-dimensional complexes, when the flag complex of $\Ga$ is $\Ga$
itself. 

Recall that a {\em full subgraph of $\Ga$} is a subgraph 
$\Ga'\subset \Ga$ such that two vertices  $v,w\in \Ga'$ 
span an edge in $\Ga'$ iff they  span an edge in $\Ga$.  (This is
closely related to the notion of the {\em induced subgraph} 
of a set of vertices of $\Ga$.)

If $\Ga'\subset \Ga$ is a full subgraph, then  there is a canonical embedding
$\bar K(\Ga')\hookrightarrow \bar K(\Ga)$,  which is locally convex and locally
isometric.  We call the image of such an embedding a {\em standard
subcomplex of $\bar K$}, or simply a {\em standard subcomplex}.  The inverse image 
of a standard subcomplex  under the
universal covering $K\ra \bar K$ is a disjoint union of convex subcomplexes,
each of which is isometric to $K(\Ga')$;  we also refer to these as
{\em standard subcomplexes}.  A {\em standard product subcomplex} is
a standard subcomplex associated with a full subgraph $\Ga'\subset \Ga$
which decomposes as a nontrivial join.  A {\em standard flat} is a 
standard subcomplex $F\subset K$ which is isometric to $\R^2$, i.e. it
is associated with a single edge in $\Ga$. 
A {\em standard geodesic} is a standard subcomplex $\ga\subset K$
associated with a single vertex in $\Ga$.
If $V\subset \Ga$ is a set of vertices in $\Ga$, then the {\em orthogonal
complement of $V$} is the set of vertices $w\in \Ga$ which
are adjacent to every element of $V$:
\begin{equation}
\label{deforthogonalcomplement}
V^\perp\defeq \{w\in\Ga\mid d(w,v)=1\;\mbox{for every}\;v\in V\}.
\end{equation}
Thus the join $V\circ V^\perp$ is a (bipartite) subgraph of $\Gamma$.
Note that the subgroup generated by $V^\perp$ lies in the centralizer
of the subgroup generated by $V$.

A {\em singular geodesic} is a geodesic $\ga\subset K$ contained in the 
$1$-skeleton of $K$.  Note that standard geodesics are singular, but
singular geodesics need not be standard.  (As an example, consider the 
case when $\Ga$ is a finite set, and $\ol{K}$ is a bouquet of circles,
and $K$ is a tree.  Then every geodesic is singular, and standard geodesics
are those which project to a single circle.)  Singular rays are defined similarly.
 A {\em quarter-plane} is a $2$-dimensional subcomplex
of $K$ isometric to a Euclidean quadrant.

For the remainder of this section, we will assume that $\Ga$ is triangle-free, 
i.e. $\dim K\leq 2$.

\begin{lemma}
\label{lemstandardalternative}
Two standard flats $F,\,F'\subset K$ lie in the parallel set $\P(\ga)$ of some geodesic
$\ga$  iff the intersection $N_r(F)\cap N_r(F')$ is unbounded for all sufficiently
large $r\in (0,\infty)$. 
\end{lemma}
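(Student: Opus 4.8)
The plan is to prove both implications by direct $\cat(0)$ geometry, using the fact that $K$ is a $\cat(0)$ cube complex and that standard flats are convex subcomplexes isometric to $\R^2$.

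\textbf{The easy direction.} Suppose $F,F'\subset\P(\ga)$ for some geodesic $\ga$. Recall that the parallel set $\P(\ga)$ splits isometrically as $\ga\times C$ where $C$ is the cross-section (a convex subset of $K$), and every geodesic parallel to $\ga$ is of the form $\ga\times\{c\}$. Since $F$ is a flat contained in $\P(\ga)=\ga\times C$, convexity forces $F=\ga\times\si$ for some geodesic $\si\subset C$ (the flat must contain a line parallel to $\ga$ through each of its points, and its projection to $C$ is a convex flat subset of dimension $1$); similarly $F'=\ga\times\si'$. Now for any $r$, the tube $N_r(F)\cap N_r(F')$ contains $\ga\times(N_r(\si)\cap N_r(\si'))$ in the cross-section, which is nonempty (e.g. contains a segment of $\ga$ through any point of $\si$), so the intersection contains a line parallel to $\ga$ and is therefore unbounded. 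This handles the ``only if'' direction with no hypotheses beyond $r>0$.

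\textbf{The hard direction.} Assume $N_r(F)\cap N_r(F')$ is unbounded for all large $r$. First I would produce a geodesic line $\ga$ that stays within bounded distance of both $F$ and $F'$: take a sequence of points $x_n\in N_r(F)\cap N_r(F')$ with $d(x_0,x_n)\to\infty$, and extract a limiting geodesic line (using local compactness of $K$ near $F$, or an ultralimit argument) through/near $x_0$; the whole line lies in $N_{r'}(F)\cap N_{r'}(F')$ for a slightly larger $r'$, using that $F,F'$ are convex and the nearest-point projections to them are $1$-Lipschitz. Projecting $\ga$ to $F\cong\R^2$ gives a line $\ga_F\subset F$ at bounded distance from $\ga$, hence at bounded Hausdorff distance from $\ga$; similarly $\ga_{F'}\subset F'$. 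So $\ga_F$ and $\ga_{F'}$ are parallel (bounded Hausdorff distance between geodesic lines in $\cat(0)$ implies parallel, by the flat strip theorem), and $\ga_F\subset F\subset\P(\ga_F)$. It remains to see $F'\subset\P(\ga_F)$: since $\ga_{F'}$ is parallel to $\ga_F$ we have $\P(\ga_F)=\P(\ga_{F'})\supset F'$ because a flat containing a geodesic line is always contained in that line's parallel set (the flat strip theorem again, applied within the Euclidean plane $F'$). Hence both $F$ and $F'$ lie in $\P(\ga_F)$.

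\textbf{Main obstacle.} The delicate point is the extraction of the geodesic line $\ga$ from the unboundedness hypothesis and the verification that it stays uniformly close to both flats — one must be careful that ``unbounded intersection of neighborhoods'' genuinely yields a \emph{bi-infinite} geodesic near both $F$ and $F'$, not merely long segments drifting apart. I expect the cleanest route is: pick $x_n\in N_r(F)\cap N_r(F')$ going to infinity, join $x_0$ to $x_n$ by geodesic segments $\si_n$; since the nearest-point projection $\pi_F:K\to F$ is $1$-Lipschitz and $d(x_i,\pi_F(x_i))\le r$, the segment $\si_n$ lies in $N_{r+?}(F)$ (one needs that a geodesic between two points each within $r$ of the convex set $F$ stays within $r$ of $F$ — this is standard $\cat(0)$ convexity of the distance function to a convex set), and likewise within bounded distance of $F'$; then $\si_n$ subconverges to a ray, and re-centering at a far-out point and taking a further limit gives the bi-infinite geodesic $\ga$ lying in $N_{r}(F)\cap N_{r}(F')$. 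Everything after that is the flat strip theorem and the product structure of parallel sets, which are routine.
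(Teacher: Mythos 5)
Your ``only if'' direction is fine (and essentially the paper's: once both flats lie in $\P(\ga)$, the geodesic $\ga$ itself sits in $N_r(F)\cap N_r(F')$ for suitable $r$). The gap is exactly at the point you flag as the main obstacle, and your proposed fix does not close it. From unboundedness of the convex set $C=N_r(F)\cap N_r(F')$ you do get a geodesic ray (limits of the segments $[x_0,x_n]$, based at the fixed point $x_0$, stay in $C$ by convexity). But the next step --- ``re-centering at a far-out point and taking a further limit gives a bi-infinite geodesic lying in $C$'' --- is not valid: when the new basepoints $\rho(t_n)$ escape to infinity there are no isometries bringing the picture back into a fixed compact region, so the limit lives in an ultralimit/asymptotic-cone--type space, not in $K$, and certainly not in $C$. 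Indeed an unbounded closed convex subset of a CAT(0) space need not contain any complete geodesic (e.g.\ the region $\{(x,y):y\ge x^2\}\subset\R^2$, or a horoball), so no purely local-compactness argument can produce the line from unboundedness alone.

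The deeper issue is that your argument only uses that $F,F'$ are convex flats, and at that level of generality the statement fails: glue two Euclidean planes along a closed quadrant (a CAT(0) space by the gluing theorem); then $N_r(F)\cap N_r(F')$ is unbounded for every $r$, yet no geodesic line is parallel to lines in both planes, so there is no common parallel set. What rules this out for standard flats in $K$ is the group action, and this is precisely what the paper's proof uses: the stabilizers of the standard flats $F,F'$ act cocompactly on them, hence (by a standard argument) the stabilizer of the convex set $C$ acts cocompactly on $C$; an unbounded convex set admitting a cocompact isometric action does contain a complete geodesic $\ga$, which then lies at bounded distance from both flats, and your concluding steps (constancy of the bounded convex function $d(\cdot,F)$ along $\ga$, flat strip theorem, $\P(\ga_F)=\P(\ga)$) go through from there. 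So you need to inject the cocompactness/periodicity of standard flats at the point where the line is produced; without it the hard direction is not proved.
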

\proof
Clearly, if $F\cup F'\subset \P(\ga)$ then $N_r(F)\cap N_r(F')$ contains $\ga$
when $r\geq \max(d(F,\ga),d(F',\ga))$.

By a standard argument, since the respective stabilizers in $G$ of $F$ and $F'$ 
act cocompactly on $F$ and $F'$, the stabilizer in $G$ of 
$C\defeq N_r(F)\cap N_r(F')$ acts cocompactly on $C$.  Therefore, if the convex set
$C$ is unbounded, it contains a complete geodesic $\ga$, and we have
$F\cup F'\subset \P(\ga)$.  
\qed

Note that the label $\Ga_E$ of a quarter-plane $\al\times\be=E\subset K$ 
is a bipartite graph
which is the join  $\Ga_{\al}\circ \Ga_{\be}$.

\bigskip
\begin{lemma}
\label{lemproductsarestandard}
 Suppose $C\subset K$ is a convex $2$-dimensional subcomplex which 
 splits (abstractly) as a nontrivial product of trees $C=T_1\times T_2$; here
 $T_i$ may be finite, and have no branch points.  Then $C$ is contained
 in a standard product subcomplex.  
 \end{lemma}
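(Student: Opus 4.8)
The plan is to analyze the two directions of edges (hyperplanes) crossing $C$ and show that the product structure forces their labels to be compatible with a join in $\Ga$. First I would recall that $K$ is a CAT(0) cube complex, so $C$, being convex and two-dimensional, is itself a CAT(0) square complex. Its abstract product decomposition $C=T_1\times T_2$ partitions the hyperplanes of $C$ into two classes $\H_1,\H_2$ (those dual to the edges of the $T_1$-factor and those dual to the $T_2$-factor), and every hyperplane in $\H_1$ crosses every hyperplane in $\H_2$; within $\H_i$ no two hyperplanes cross. Each hyperplane of $C$ extends to a hyperplane of $K$, and hence carries a label in $V(\Ga)$ (the label of any edge dual to it); I would first check that a hyperplane of $C$ has a well-defined label, i.e. all its dual edges in $C$ carry the same $\Ga$-label. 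This follows because two dual edges of a hyperplane in a CAT(0) cube complex are connected by a sequence of squares, each of which forces the opposite edges to have equal labels.

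Next I would show that the set $L_i\subset V(\Ga)$ of labels occurring on hyperplanes of $\H_i$ has the key property: every vertex of $L_1$ is adjacent in $\Ga$ to every vertex of $L_2$, and $L_1\cap L_2=\emptyset$. Adjacency comes from the crossing condition: if $h\in\H_1$ meets $h'\in\H_2$ in $C$, then the corresponding hyperplanes of $K$ cross, which for a triangle-free RAAG means their labels are either equal or span an edge of $\Ga$; disjointness of $L_1,L_2$ then rules out the equal case, so the labels span an edge. To get $L_1\cap L_2=\emptyset$, I would argue that if some label $v$ appeared in both $\H_1$ and $\H_2$, one would produce in $C\subset K$ two crossing hyperplanes with the same label $v$, which is impossible in a CAT(0) cube complex (two hyperplanes with the same label, i.e. ``parallel'' in the RAAG sense, never cross — they lift the single square $v\times v$... which does not exist since $\Ga$ is simplicial). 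Hence $L_1\circ L_2$ is a genuine nontrivial join subgraph $\Ga'$ of $\Ga$, and $C$ is labelled by (a subcomplex of the flag complex of) $\Ga'$.

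Finally I would promote the label statement to an inclusion into a standard product subcomplex. Since all edges of $C$ carry labels in $V(\Ga')$ and $C$ is connected, $C$ is contained in the full preimage of the standard subcomplex $\bar K(\Ga')\subset\bar K$; that preimage is a disjoint union of convex copies of $K(\Ga')$, and $C$ being connected lies in a single one of them. As $\Ga'=L_1\circ L_2$ is a nontrivial join, this copy of $K(\Ga')$ is by definition a standard product subcomplex, completing the proof. The main obstacle I anticipate is the bookkeeping in the previous paragraph — making precise that crossing hyperplanes in a two-dimensional triangle-free RAAG complex must have labels spanning an edge, and that equal-labelled hyperplanes cannot cross — but both are standard facts about the combinatorics of $K(\Ga)$ and its hyperplanes that can be cited or checked directly from the square structure of $\bar K$.
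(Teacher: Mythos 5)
Your proposal is correct and follows essentially the same route as the paper: the hyperplane classes $\H_1,\H_2$ with their well-defined labels are just the paper's observation that the edge labelling descends to $T_1$ and $T_2$, the crossing squares give the complete bipartite join subgraph $\Ga'$, and connectedness places $C$ in a single copy of $K(\Ga')$. Your extra step ruling out equal labels on crossing hyperplanes is a harmless elaboration of what the paper leaves implicit (a square of $K$ is labelled by an edge of the simplicial graph $\Ga$, hence by two distinct adjacent vertices).
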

\proof
 $C$ is a square complex, and isomorphic to a product.  Since opposite
 edges of a square have the same label, the edge labelling of $C$
 descends to  edge labellings of $T_1$ and $T_2$.  Note if for $i\in\{1,2\}$,
 $a_i$ is an edge label appearing in $T_i$, then there is a corresponding
 square in $C=T_1\times T_2$, and hence the corresponding vertices of 
 $\Ga$ are joined by an edge.  Therefore $C$ defines a complete bipartite
 subgraph $\Ga'\subset\Ga$.  If we choose a vertex $v\in C$ and let $P$
 be the copy of $K_{\Ga'}$ passing through $v$, then clearly $C\subset P$,
 since $C$ is connected and projects to $K_{\Ga'}$.  
 \qed

\begin{lemma}
\label{lemdistanceattained}
 Suppose $C, C'$ are convex  subsets of a 
  $CAT(0)$ space  $X$, and let
 $\De\defeq d(C,C')$  (here $d(C,C')$ denote the minimum distance).  Then 
\begin{enumerate}[(1)]
\item
The sets 
 \begin{gather}
 Y\defeq \{x\in C\mid d(x,C')=\De\}\\
 Y'\defeq \{x\in C'\mid d(x,C)=\De\},
 \end{gather}
are  convex.

\item 
The nearest point map $r:X\ra C$ maps $Y'$ isometrically onto
$Y$; similarly, the nearest point map $r':X\ra C'$ maps $Y$
isometrically onto $Y'$.

\item
$Y$ and $Y'$ cobound a convex subset $Z\stackrel{\isom}{\simeq}Y\times[0,\De]$.
\item
If in addition $X$ is a locally finite $CAT(0)$ complex with cocompact
isometry group, and $C,C'$ are subcomplexes, then the sets $Y$
and $Y'$ are nonempty, and there is a constant $A>0$ such that if
$p\in C,\,p'\in C'$, and if $d(p,Y)\ge 1$, $d(p',Y')\ge 1$ then
\begin{equation}
\label{eqnlineargrowthofdistance}
d(p,C')\geq \Delta+A\,d(p,Y),\quad d(p',C)\geq \Delta+A\,d(p',Y').
\end{equation}
Furthermore, the constant $A$ depends only on $\De$ and $X$ (but not
on $C$ and $C'$).
\end{enumerate}
 \end{lemma}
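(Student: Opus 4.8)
The plan is to prove the four parts in order, using standard CAT(0) convexity and the structure of the nearest-point projection, and then add the combinatorial/cocompactness input for part (4).

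First I would establish (1)–(3), which are essentially classical and hold in any complete CAT(0) space $X$. For (1): the function $x\mapsto d(x,C')$ is convex on $X$ (being the composition of the convex distance function with the convex set $C'$), so its sublevel sets are convex; intersecting with the convex set $C$ shows $Y=\{x\in C\mid d(x,C')=\De\}$ is convex, since $\De$ is the minimum value of a convex function and hence $Y$ is a sublevel set of that restricted function. The same argument gives convexity of $Y'$. For (2) and (3): given $y\in Y$, let $y'=r'(y)$ be its nearest point on $C'$; the CAT(0) geometry of the "flat strip"-type argument shows $d(y,y')=\De$, that $r(y')=y$, and that the geodesics $[y,r'(y)]$ for $y\in Y$ are pairwise parallel, sweeping out a convex region isometric to $Y\times[0,\De]$. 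The cleanest route is to first show that for $y_1,y_2\in Y$ the convex hull of $\{y_1,y_2,r'(y_1),r'(y_2)\}$ is a flat rectangle (using that both $d(\cdot,C)$ and $d(\cdot,C')$ are affine along $[y_1,y_2]$ and along $[r'(y_1),r'(y_2)]$ because they are convex functions attaining their minimum on these segments), and then assemble these rectangles; this simultaneously yields that $r'$ restricted to $Y$ is an isometry onto $Y'$ and that $Z$ exists with the asserted product structure.

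For part (4), the first claim is that $Y,Y'\neq\emptyset$ when $X$ is a locally finite CAT(0) complex with cocompact isometry group and $C,C'$ are subcomplexes. Here $d(C,C')=\De$ is realized: take sequences $p_n\in C$, $p_n'\in C'$ with $d(p_n,p_n')\to\De$; apply isometries $g_n$ of $X$ to bring $p_n$ into a fixed compact fundamental domain, pass to a subsequence using local finiteness so that $g_np_n\to p\in C$ and $g_np_n'\to p'\in C'$ (the $g_np_n'$ stay in a bounded set and subcomplexes are closed), and conclude $d(p,p')=\De$, so $p\in Y$. The linear growth estimate \eqref{eqnlineargrowthofdistance} is then extracted by a compactness argument: suppose it fails, so there are subcomplexes $C_n,C_n'$ with $d(C_n,C_n')=\De$, points $p_n\in C_n$ with $d(p_n,Y_n)\ge 1$, but $d(p_n,C_n')<\De+\tfrac1n\,d(p_n,Y_n)$. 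Choosing the nearest point $q_n\in Y_n$ to $p_n$ and rescaling — actually not rescaling, but instead normalizing via isometries $g_n$ taking $q_n$ into the fundamental domain, and separately controlling the point on $[p_n,q_n]$ at distance exactly $1$ from $q_n$ — one passes to a limit configuration $C_\infty, C_\infty'$ with $d(C_\infty,C_\infty')=\De$, a limiting $Y_\infty$, and a point $\tilde p$ with $d(\tilde p,Y_\infty)=1$ yet $d(\tilde p,C_\infty')=\De$. I would then derive a contradiction with the strict convexity inherent in part (3): the flat strip $Z_\infty$ forces the nearest point of $\tilde p$ on $C_\infty'$ to project back to a point of $Y_\infty$ at distance $\ge 1$ from $\tilde p$ inside $C_\infty$, and comparing the two geodesics from $\tilde p$ — one of length $\De$ to $C_\infty'$ and one realizing $d(\tilde p,C_\infty)$ — violates the convexity of $x\mapsto d(x,C_\infty')$ unless $\tilde p\in Y_\infty$. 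That $A$ depends only on $\De$ and $X$ is automatic from this compactness setup, since the only data entering the limiting configuration are $\De$ and the isometry type of $X$.

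The main obstacle I expect is part (4): making the compactness/limiting argument fully rigorous requires care about what "limit of subcomplexes" means — one should work with the based pointed Gromov–Hausdorff / Chabauty topology on the space of subcomplexes of $X$ through a fixed fundamental domain, use local finiteness to get convergence on balls of every radius, and check that $d(C_n,C_n')$, the sets $Y_n$, and the relevant geodesic segments all pass to the limit correctly (closedness of the limiting sets, lower semicontinuity that is actually continuity here). The CAT(0) parts (1)–(3) are routine; the real content is that the uniform linear lower bound survives the limiting process, and that it depends only on $\De$ and $X$ and not on the particular pair $C,C'$.
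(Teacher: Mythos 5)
Your proposal is correct and follows essentially the same route as the paper: parts (1)–(3) are treated as standard CAT(0) convexity facts, nonemptiness of $Y,Y'$ is obtained by translating a minimizing sequence into a compact region and using local finiteness (finitely many subcomplexes meet a bounded set) to pass to a limiting configuration, and the linear estimate comes from a compactness/contradiction argument combined with convexity of $d_{C'}$ along the geodesic to the nearest point of $Y$. If anything, your version is slightly more explicit than the paper's in letting the pair $C,C'$ vary along the contradiction sequence, which is exactly what is needed for the claim that $A$ depends only on $\De$ and $X$.
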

\proof Assertions (1)-(3) are standard $CAT(0)$ facts, so we only
prove assertion (4).

Suppose $\{p_k\}\subset C$, $\{p_k'\}\subset C'$ are sequences such that
$d(p_k,p_k')\ra \De$.   After passing to a subsequence if necessary,
we may find a sequence $\{g_k\}\subset\isom(X)$
such that the sequences of pairs 
$$(g_k(C),g_kp_k), (g_k(C'),g_kp_k')$$
converge in the pointed Hausdorff topology to pairs
$(C_\infty,p_\infty)$, $(C_\infty',p_\infty')$.   Since there are only
finitely many subcomplexes of $X$ which are contained in a given bounded
set, we will have $p_\infty\in g_k(C)$, $p_\infty'\in g_k(C')$ for
sufficiently large $k$.  Therefore $g_k^{-1}(p_\infty)\in C$, 
$g_k^{-1}(p_\infty')\in C'$, and 
the distance $\De$ is realized.  This shows that the sets $Y_1$ and 
$Y_2$ are nonempty.  

We now prove the first estimate in (\ref{eqnlineargrowthofdistance});
the second one has a similar proof.  Observe that a convergence
argument as above implies that there is a constant $A>0$ such that if
$x\in C$ and $d(x,Y)=1$, then $d(x,C')>\Delta+A$.  If $p\in C\setminus
N_1(Y)$, then $d_{C'}$ (the distance from $C'$)   equals $\Delta$ at $r(p)$ 
and at least $\Delta+A$ at the point $x\defeq \ol{p\,r(p)}\cap S(r(p),1)$;
here $\ol{p\,r(p)}$ denotes the geodesic segment with endpoints $p$ and $r(p)$.  
Since $d_{C'}$ is convex, this implies (\ref{eqnlineargrowthofdistance}).
\qed

\begin{remark}
Note that the convex sets $Y$ and $Y'$ in 4 of Lemma \ref{lemdistanceattained}
need not be subcomplexes.
\end{remark}

\begin{corollary}
\label{corasymptoticlabels}
Suppose $\al$, $\al'$ are asymptotic singular rays in
$K$.  Then there are singular rays $\be\subset\al$, $\be'\subset \al'$
which bound a flat half-strip  subcomplex, and hence 
have the same labels.
\end{corollary}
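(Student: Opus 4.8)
The plan is to derive Corollary \ref{corasymptoticlabels} from the quantitative version of Lemma \ref{lemdistanceattained}(4) applied to the convex subcomplexes that naturally carry the two asymptotic rays. First I would observe that a singular ray $\al$ in a $CAT(0)$ cube complex is an increasing union of edge-paths, but more usefully, $\al$ is contained in its parallel set $\P(\al)$ (in the sense of the union of all geodesics parallel to $\al$), which by standard $CAT(0)$ geometry splits isometrically as $\al \times Z$ for a convex set $Z$; since $\al$ is in the $1$-skeleton and $X=K$ is a locally finite $CAT(0)$ complex with cocompact isometry group, one checks that $\P(\al)$ may be taken to be a subcomplex. (Alternatively, one works with the combinatorial ``hyperplane'' structure: the hyperplanes crossed by $\al$ form a coherently oriented family, and the union of the carriers gives the desired product subcomplex.) The point is to set up two convex subcomplexes $C, C'$ — thickenings of $\al$ and $\al'$ — each isometric to a product with an $\R$-factor, to which part (4) applies.

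Next I would use that $\al$ and $\al'$ are \emph{asymptotic}, so the Busemann functions (equivalently, the function $t \mapsto d(\al(t), \al')$) are bounded; hence, with $C \supset \al$, $C' \supset \al'$ the product subcomplexes above and $\De \defeq d(C,C')$, the sets $Y \subset C$ and $Y' \subset C'$ of points realizing $\De$ are nonempty (by Lemma \ref{lemdistanceattained}(4)) and, crucially, unbounded: if $Y$ were bounded, then by the estimate \eqref{eqnlineargrowthofdistance} the distance $d(\al(t), C') \geq \De + A\, d(\al(t), Y)$ would grow linearly in $t$ (since $\al$ escapes every bounded set), contradicting that $d(\al(t), \al') \le d(\al(t), C') + \text{(const)}$ stays bounded. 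So $Y$ contains points arbitrarily far out along (a subray of) $\al$; since $Y$ is convex, it contains a subray $\be \subset \al$, and likewise $Y'$ contains a subray $\be' \subset \al'$. By Lemma \ref{lemdistanceattained}(3), $Y$ and $Y'$ cobound a convex set $Z \cong Y \times [0,\De]$; restricting to $\be$, we get that $\be$ and the corresponding subray of $Y'$ cobound a convex product $\be \times [0,\De]$, a flat half-strip. Finally I would note that since $Z$ is a convex subcomplex-like region in a cube complex and $\De$ is realized at vertices, the half-strip is actually a \emph{subcomplex} of $K$ (its edges are $1$-cells of $K$), so it is tiled by unit squares; opposite edges of a square in $K$ carry the same label, which forces $\be$ and $\be'$ to have the same sequence of edge-labels, hence the same label in the sense of the paper.

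The main obstacle I expect is the integrality/combinatorial bookkeeping rather than the soft $CAT(0)$ input: I must make sure that (a) the thickened rays $C, C'$ can genuinely be chosen to be convex \emph{subcomplexes} that split as products containing $\al, \al'$ as $\R$-factors — this uses the local structure of $K$ near a singular geodesic and finiteness of local types — and that (b) the half-strip $\be \times [0,\De]$ produced by Lemma \ref{lemdistanceattained}(3), which a priori is only guaranteed to be a convex subset, is in fact a subcomplex with $\De \in \Z_{\ge 0}$, so that the labelling argument (opposite edges of a square share a label) applies verbatim. Both points follow from the fact that $\De = d(C,C')$ is realized between vertices of $C$ and $C'$ (so the geodesics realizing it run through the $1$-skeleton, being $CAT(0)$ geodesics between vertices that track a galley of squares), together with the cocompactness/local-finiteness hypotheses already invoked in Lemma \ref{lemdistanceattained}(4). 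Once the half-strip is seen to be a genuine flat strip of unit squares, the conclusion that $\be$ and $\be'$ have equal labels is immediate.
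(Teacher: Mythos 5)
There is a genuine gap at the step where you pass from the thickened sets back to the given rays. If you apply Lemma \ref{lemdistanceattained}(4) to thickenings $C\supset\al$, $C'\supset\al'$, the estimate (\ref{eqnlineargrowthofdistance}) together with boundedness of $d(\al(t),\al')$ only yields that $d(\al(t),Y)$ stays bounded, i.e.\ that $\al$ remains within bounded distance of $Y$; it does \emph{not} yield that $Y$ contains points of $\al$, and hence your conclusion ``$Y$ contains a subray $\be\subset\al$'' does not follow. Concretely, take two flats $C,C'$ in $K$ glued along a common singular line, let $\al\subset C$ be a ray parallel to that line at distance $5$ and $\al'\subset C'$ a ray parallel to it at distance $3$ on the other side: then $\al,\al'$ are asymptotic, but $\De=d(C,C')=0$ and $Y=Y'=C\cap C'$ is the gluing line, which contains no subray of $\al$ or of $\al'$. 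Your strip would then join subrays of $Y$ and $Y'$ rather than subrays of the given rays, which is not what the corollary asserts. The repair is exactly what the paper does: no thickening is needed, because after passing to subrays beginning at vertices the singular rays $\al,\al'$ are themselves subcomplexes, so Lemma \ref{lemdistanceattained}(4) applies directly with $C=\al$, $C'=\al'$. Then $Y$ is automatically a convex subset of the ray $\al$, and your unboundedness argument (which is correct in spirit) forces $Y$ to be a subray $\be\subset\al$, and similarly for $\be'\subset\al'$; part (3) then gives the strip $Z\cong\be\times[0,\De]$.

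Two further points. First, your step (a) is itself unsupported: ``parallel sets'' are defined for complete geodesics, not rays, and neither the union of lines coarsely containing $\al$ nor the union of carriers of hyperplanes crossed by $\al$ need be a convex subcomplex splitting as a product with $\al$ as a factor; fortunately this is moot once no thickening is used. Second, your justification of (b) — that the strip is a subcomplex because ``the geodesics realizing $\De$ run through the $1$-skeleton, being $CAT(0)$ geodesics between vertices'' — is false as stated (the diagonal of a square is a $CAT(0)$ geodesic between vertices not contained in the $1$-skeleton), and $\De$ being an integer is a conclusion, not a hypothesis. The paper instead argues directly that $Z$ is a union of squares: with the initial point $p$ of $\be$ a vertex, there must be a square of $K$ inside $Z$ at the corner $p$ containing the initial part of $\ol{p\,p'}$, and iterating this along $\ol{p\,p'}$ and then along the strip shows $Z$ is a subcomplex; only then does the labelling argument (via Lemma \ref{lemproductsarestandard}, or equivalently ``opposite edges of a square carry the same label'') apply.
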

\proof
 By passing to subrays, may assume that $\al$ and $\al'$ are subcomplexes.

Applying Lemma \ref{lemdistanceattained} to $\al$ and $\al'$,
one finds that the distance between $\al$ and $\al'$ is attained
on singular subrays $\be\subset\al$, $\be'\subset\al'$, which
bound a convex subset $Z$ isometric to $\be\times [0,\De]$.

If $\De=0$ then  $\be=\be'$ and we are done, so suppose $\De>0$.
 We may assume also that the initial point $p$ of the ray $\be$ 
is a vertex of $K$.  We claim that $Z$ is a subcomplex of $K$.  To see this, 
let  $p'\in \be'$ be the initial point  of the ray $\be'$,
 and consider the segment $\ol{p\,p'}$.  Note that there
must be a square $S\subset Z$ with vertex at $p$, whose boundary
contains the initial part of $\ol{p\,p'}$.  Repeating this
reasoning, it follows that every point $x\in\ol{p\,p'}$ is contained
in a square lying in $Z$.  Further repetition shows that $Z$ is a union
of squares, and hence is a subcomplex.

The corollary now follows from  Lemma \ref{lemproductsarestandard}.
\qed

\begin{definition}
\label{defasymptoticlabels}
Suppose $\al\subset K$ is a singular ray, and
$[\al]$ denotes its asymptote class. 
  The {\em  label of $[\al]$}
is the collection $\Ga_{[\al]}$ of labels determined by the asymptote class of 
$\al$:
$$
\Ga_{[\al]}\defeq \cap\;\{\Ga_{\al'}\mid \al'\in[\al]\}.
$$
By Corollary \ref{corasymptoticlabels}, every $\al'\in[\al]$ has a subray
$\al''\subset\al'$ such that $\Ga_{\al''}=\Ga_{[\al]}$.
If $E\subset K$ is a quarter-plane, the {\em  label of $[E]$} is the intersection
$$
\Ga_{[E]}\defeq \cap\;\{\Ga_{E'}\mid E'\in[E]\},
$$
where the two quarter-planes are equivalent if they have a quarter-plane 
in common, see Lemma \ref{lemquarterplanedistance}.
\end{definition}

\begin{lemma}
\label{lemalmostcontainedimpliescontained}
Suppose $P,P'\subset K$ are standard product complexes (in particular,
$\dim P=\dim P'=2$).  If $P\subset N_r(P')$ for some $r\in (0,\infty)$, 
then $P\subset P'$.
\end{lemma}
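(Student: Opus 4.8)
The plan is to argue with hyperplanes of the cube complex $K$. Since $\Ga$ is triangle‑free, the full subgraph defining $P$, being a nontrivial join, is a complete bipartite graph $\Lambda_1\circ\Lambda_2$ with $\Lambda_1,\Lambda_2$ nonempty and edgeless; hence $P=K(\Lambda_1)\times K(\Lambda_2)$ is a metric product $T_1\times T_2$ of two unbounded leafless trees, and likewise $P'=T_1'\times T_2'$. I will show (i) every hyperplane of $K$ crossing $P$ also crosses $P'$, and then (ii) that $P\subseteq N_r(P')$ together with (i) forces $P\subseteq P'$.

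For (i), suppose $H$ crosses $P$, say with label $v\in\Lambda_1$, but $H$ misses $P'$, so $P'$ lies in one closed halfspace $\mathfrak h$ of $H$. As $H\cap P$ is a hyperplane of the product $T_1\times T_2$ carrying a $\Lambda_1$‑label, it equals $\hat e\times T_2$ for an edge $e$ of $T_1$ labelled $v$. Now $e$ separates $T_1$ into two infinite leafless subtrees; in the subtree on the side of $H$ away from $\mathfrak h$, choose a geodesic ray $\rho$ issuing from the endpoint of $e$ that crosses infinitely many $v$‑labelled edges (possible since $T_1$ is a regular tree of valence $\ge2$). For each $N$ pick a vertex $x_N\in\rho\times T_2\subset P$ lying beyond at least $N$ of these $v$‑edges; the corresponding $N$ pairwise disjoint hyperplanes of $K$ each separate $x_N$ from $\mathfrak h\supseteq P'$, so $d(x_N,P')\ge N$, contradicting $P\subseteq N_r(P')$. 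This proves (i) --- equivalently, every hyperplane missing $P'$ also misses $P$.

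For (ii), recall that a convex subcomplex of a $\cat(0)$ cube complex is the intersection of the closed halfspaces containing it, so it suffices to prove $P\subseteq\mathfrak h$ for each halfspace $\mathfrak h$ of a hyperplane $H$ with $P'\subseteq\mathfrak h$. By (i) such an $H$ does not cross $P$, so $P$ lies in one of the two closed halfspaces of $H$. If it lies in $\mathfrak h$ we are done; if it lies in the complementary halfspace, then for each $x\in P$ the geodesic from $x$ to its nearest point of $P'\subseteq\mathfrak h$ meets $H$ (possibly at an endpoint), whence $d(x,H)\le d(x,P')\le r$ and thus $P\subseteq N_r(H)$. Since $\dim K=2$, the hyperplane $H$ is a tree, and it is convex in $K$ (it is a fibre of its carrier, which is convex and isometric to $H\times[0,1]$), so composing $P\hookrightarrow N_r(H)$ with the nearest‑point projection $K\to H$ restricts on any standard flat $F\cong\R^2$ contained in $P$ (e.g. the one associated to an edge $\{a,b\}$ with $a\in\Lambda_1$, $b\in\Lambda_2$) to a $(1,2r)$‑quasi‑isometric embedding of $\R^2$ into a tree --- impossible, e.g. by comparing the pairwise distances among the four corners of a large square in $F$ with those among their images. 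Hence $P\subseteq\mathfrak h$, and $P\subseteq P'$ follows.

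The genuinely essential points are the parallel‑hyperplane bookkeeping in (i) and the ``a flat cannot lie in a bounded neighbourhood of a tree'' estimate that closes (ii); the remaining ingredients (hyperplanes of products are products with hyperplanes, carriers of hyperplanes are convex metric products, convex subcomplexes are intersections of halfspaces) are routine facts about $\cat(0)$ cube complexes.
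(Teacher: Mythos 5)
Your argument is correct in substance, but it takes a genuinely different route from the paper's. The paper disposes of this lemma with a short $\cat(0)$ convexity argument: $d_{P'}$ is a bounded convex function on $P$, geodesics in $P$ are extendible (the tree factors have no leaves), so $d_{P'}$ is constant on $P$; the nearest-point projection then makes $P$ and its image cobound a subset isometric to $P\times[0,\De]$, and $\De>0$ is impossible since $\dim P=\dim K=2$. Your proof replaces this with wall combinatorics. Step (i) is sound: the hyperplanes dual to the $v$-labelled edges crossed by your ray are pairwise disjoint and disjoint from $H$ (two distinct hyperplanes with the same label never cross, since a crossing would force a square with two adjacent $v$-sides, i.e.\ a loop at $v$ in $\Ga$), and each of them separates $x_N$ from the halfspace containing $P'$; the literal inequality $d(x_N,P')\ge N$ needs the small extra remark that disjoint hyperplanes are a definite distance apart, but any lower bound growing linearly in $N$ already contradicts $P\subset N_r(P')$. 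Your flat-into-tree estimate in (ii) is a legitimate substitute for the paper's dimension count. What the paper's route buys is brevity; what yours buys is the purely combinatorial intermediate statement that every hyperplane missing $P'$ misses $P$, which is of independent use.

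One auxiliary claim in step (ii) is false as stated and should be repaired: a convex subcomplex of a $\cat(0)$ cube complex is in general \emph{not} the intersection of the metric closed halfspaces containing it. Already for a single vertex of a single square, both hyperplanes miss the vertex, and the intersection of the two closed halfspaces containing it is the quarter square, not the vertex; the same corner phenomenon occurs for $P'$ at squares of $K$ meeting $P'$ only in a vertex. The correct standard fact is the combinatorial one: a vertex not lying in a convex subcomplex is separated from it by a hyperplane disjoint from that subcomplex (equivalently, convex subcomplexes are intersections of combinatorial halfspaces). With this substitution your proof closes: your case analysis in (ii) shows that for every hyperplane $H$ disjoint from $P'$, all of $P$ lies in the closed halfspace of $H$ containing $P'$; hence no vertex of $P$ can be separated from $P'$, so every vertex of $P$ lies in $P'$, and since $P'$ is convex (hence full) each cube of $P$ lies in $P'$ as well, giving $P\subset P'$. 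So the flaw is a misquoted auxiliary fact with a routine fix, not a structural gap.
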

\proof
The distance function $d_{P'}$ is a bounded convex function on $P$.  
Since geodesic segments in $P$ are extendible in $P$, it follows that $d_{P'}$
must be constant on $P$;  set $\De\defeq d_{P'}(P)$.
By a standard $CAT(0)$ fact,  if $f:K\lra P'$ is the nearest point retraction,
then $P$ and $r(P)$ cobound a subset isometric to $P\times [0,\De]$.
Since $\dim P=\dim K$, we must have $\De=0$, and so $P\subset P'$.
\qed

\subsection{Cube complexes and hyperplanes}
\label{sechyperplane}

We recall basic terminology and facts about CAT(0) cubical complexes. For
more details, see
\cite{sageevthesis}.

A cubical complex is a
combinatorial cell complex
whose closed cells are Euclidean $n$-dimensional cubes $[0,1]^n$
of various dimensions such that the link of each vertex is a simplicial
complex (no 1-gons or 2-gons). A theorem of Gromov \cite{hypgps}  
then tells us that a simply connected cubical complex is
$\cat(0)$ if and only if the link of each vertex is a flag complex.

Since an $n$-cube is a product of $n$ unit intervals, each $n$-cube comes
equipped with $n$ natural  projection maps to the unit interval. A {\it
hypercube} is the preimage of $\{\frac12\}$ under one of these projections;
each $n$-cube contains $n$ hypercubes.
A {\it hyperplane} in a $\cat(0)$ cube complex $X$ is a  
connected subspace
intersecting each cube in a hypercube or the emptyset.
Hyperplanes are said to {\it cross} if they intersect non-trivially;
otherwise they are said to be {\it disjoint}.

Here are some basic facts about hyperplanes in $\cat(0)$ cube complexes which
we will use throughout our arguments.
\begin{itemize}
\item each hyperplane is embedded (i.e. it intersects a given cube in a
single hypercube).
\item  Each hyperplane is a track \cite{dunwoody}, and hence separates the complex into 
precisely two components, called {\it half-spaces}. 
\item if $\{H_1,\ldots,H_k\}$ is a collection of pairwise crossing
hyperplanes, then $\cap_k H_k \neq \emptyset$.
\item each hyperplane is itself a $\cat(0)$ cube complex.
\end{itemize}

\section{Quarter-planes}
\label{secquarterplanes}

In this section, we define a complex using the quarter-planes in $K$ 
and their asymptotic incidence.  This object is analogous to the 
complex at infinity that one has for symmetric spaces and Euclidean
buildings, and corresponds to part of the Tits boundary.  
The main result is Theorem \ref{thmflatcharacterization}, which 
implies that quasi-isometries between $2$-dimensional RAAG complexes
preserve flats.

Unless otherwise indicated, in this section $\Ga$ will be a triangle-free 
defining graph and $K$ will be the associated $CAT(0)$ complex.

\begin{lemma}
\label{lemquarterplanedistance}

Suppose $E=\al\times\be$ and $E'=\al'\times \be'$ are quarter-planes
in $K$.   Then one of the 
following holds:
\begin{enumerate}
\item
(Equivalent) There is a quarter-plane $E''\subset E\cap E'$; hence
$E''$ has finite Hausdorff distance from both $E$ and $E'$.
\item
(Incident) There are constants $A,B\in(0,\infty)$
such that after relabelling the factors of the $E$ and $E'$ if necessary,
$\al$ is asymptotic to $\al'$
and for every $p\in E$, $p'\in E'$, 
\begin{equation}
\label{eqnincidentestimate}
d(p,E')\geq A\left(d(p,\al)-B\right),\quad d(p',E)\geq A\left(d(p',\al')-B\right).
\end{equation}
\item
(Divergent)  The distance function $d_{E}$ grows linearly
on $E'$, and vice-versa, i.e. there are  constants $A,B\in(0,\infty)$ such that
for all $x\in E$, $y\in E'$, 
\begin{equation}
\label{eqndivergentestimate}
d(x,E')\geq A(d(x,p)-B),\quad d(y,E)\geq A(d(x,p)-B).
\end{equation}
\end{enumerate}
\end{lemma}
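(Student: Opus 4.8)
The plan is to extract all three alternatives from a single application of Lemma~\ref{lemdistanceattained}(4) to the convex subcomplexes $E,E'\subset K$, the trichotomy being governed by the coarse shape of the minimal--distance set. So first I would set $\De\defeq d(E,E')$ and let $Y\subseteq E$, $Y'\subseteq E'$ be the (nonempty, convex) sets where this minimum is attained; by Lemma~\ref{lemdistanceattained} the nearest--point map identifies $Y$ with $Y'$ isometrically, $Y$ and $Y'$ cobound a slab $Z\cong Y\times[0,\De]$, and there is a constant $A=A(\De,K)>0$ with $d(p,E')\ge \De+A\,d(p,Y)$ for every $p\in E$ with $d(p,Y)\ge 1$, and symmetrically for $E'$. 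Since $\dim K\le 2$ (recall $\Ga$ is triangle--free) and $Z\cong Y\times[0,\De]$ embeds in $K$, whenever $Y$ is $2$--dimensional we must have $\De=0$, i.e. $Y=Y'=E\cap E'$. Also, a convex subset of a Euclidean quadrant contains no line, so $Y$ is a point, a segment, a ray, or a $2$--dimensional convex region.

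Next I would run the case analysis. (i) If $Y$ is bounded, then $d(p,Y)\ge d(p,o_E)-B$ for $p\in E$, where $o_E$ is the corner of $E$ and $B=\diam(Y)+d(o_E,Y)$; the estimate above then reads $d(p,E')\ge A(d(p,o_E)-B)$, and symmetrically on $E'$, which is the Divergent alternative~(3) (with basepoint $o_E$). (ii) If $Y$ is unbounded, it contains a ray $\rho$; its image $\rho'=r'(\rho)\subseteq Y'$ is a parallel ray, and $\rho\cup\rho'$ bounds the flat half--strip $\rho\times[0,\De]$. I claim $\rho$ is parallel to one of the two sides $\al,\be$ of $E$, and $\rho'$ to one of the sides $\al',\be'$ of $E'$, these two sides being asymptotic. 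Granting this, relabel so that $\al$ is asymptotic to $\al'$; then $Y\subset N_c(\al)$ for some $c$, so $d(p,Y)\ge d(p,\al)-c$, and the estimate becomes $d(p,E')\ge A(d(p,\al)-B)$ (and symmetrically), which is the Incident alternative~(2) --- unless $Y$ is $2$--dimensional, which forces $\De=0$. In that last situation $E\cap E'=Y$ is a convex $2$--dimensional subcomplex of the quadrant $E$ whose recession cone contains the direction of $\rho$: if that recession cone is all of $\sigma_E$ (both sides of $E$ occur), then $Y$ contains a translate $v+E$ of a sub--quadrant, whence $\hd(E,E\cap E')\le|v|<\infty$ and likewise on the $E'$ side, giving the Equivalent alternative~(1); if the recession cone is a single side direction, $Y$ is a half--strip around $\al$ and we are again in the Incident alternative~(2) by the computation just made.

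The main obstacle is the claim in step~(ii): that an unbounded $Y$ recedes only in the direction of a \emph{side} of $E$ (and of $E'$), never a ``diagonal'' direction, so that the Tits segments of $E$ and $E'$ either share just an endpoint (Incident) or coincide (Equivalent) but never overlap partially. I would prove this via the flat half--strip $S=\rho\times[0,\De]$. If $\De=0$ then $\rho\subseteq E\cap E'$ lies in a subcomplex and one can take $\rho$ singular; and a singular geodesic ray contained in the quadrant--subcomplex $E$ must be parallel to a side of $E$, since any grid path in $E$ with a non--axis direction is a strict staircase, hence strictly longer than the $\cat(0)$ geodesic. If $\De>0$, then, arguing exactly as in the proof of Corollary~\ref{corasymptoticlabels} (passing to a subray of $\rho$ starting at a vertex and sliding $S$ into the $1$--skeleton), $S$ is a subcomplex isometric to a product of trees, hence standard by Lemma~\ref{lemproductsarestandard}; in particular its core ray is parallel to a singular ray, and again to a side of $E$ by the staircase observation, and symmetrically to a side of $E'$. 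Since parallel rays have the same asymptote class, the two sides match up, as required; and when both pairs of sides match, $E$ and $E'$ are parallel quadrants (by convexity of the metric one bounds $\hd(E,E')$ in terms of the pairwise distances between corresponding sides), so $E\cap E'$ contains a quarter-plane, completing the Equivalent case.
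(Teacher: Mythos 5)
Your overall route is the paper's: apply Lemma \ref{lemdistanceattained} to the convex sets $E,E'$, classify the minimal-distance set $Y$ (bounded; unbounded but within finite Hausdorff distance of a boundary ray; two-dimensional and containing a quarter-plane), and read the three alternatives off the linear-growth estimate in Lemma \ref{lemdistanceattained}(4). Your $\De=0$ analysis (convex subcomplexes of a quadrant are grid products), your bounded case, and your derivation of (\ref{eqnincidentestimate}) and (\ref{eqndivergentestimate}) all match the paper's argument, with more detail supplied.

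The one step that does not go through as written is the case $\De>0$ with $Y$ unbounded. There you take a ray $\rho\subset Y$ and propose to argue ``exactly as in the proof of Corollary \ref{corasymptoticlabels}, passing to a subray of $\rho$ starting at a vertex.'' But that corollary and its proof start from \emph{singular} rays; your $\rho$ is, a priori, just a CAT(0) ray inside the quadrant $E$, possibly in an irrational direction, and then it need not contain any vertex at all, so there is no subray starting at a vertex, and the half-strip $S=\rho\times[0,\De]$ cannot be promoted to a subcomplex by that argument --- if $\rho$ really were diagonal, $S$ would not be a subcomplex, and ruling this out is exactly what is at stake. The missing ingredient is the paper's observation for $\De>0$: no point of $Y$ can lie in an open square of $K$, since such a square would be contained in $E$ (a subcomplex containing an interior point of a cell contains the cell) and the geodesic from that point to $E'$ would initially run inside the square, producing points of $E$ at distance $<\De$ from $E'$, contradicting $\De=d(E,E')$. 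Hence $Y$ lies in the $1$-skeleton; being convex, it is then contained in a single grid line of $E$, so it is either bounded or a ray parallel to $\al$ or $\be$ (at finite Hausdorff distance from it), and your staircase observation finishes the case with no appeal to Corollary \ref{corasymptoticlabels}. With that substitution your proof is complete and coincides with the paper's.
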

\proof
We apply  Lemma \ref{lemdistanceattained} 
with $C=E$ and $C'=E'$, and let $\De\in [0,\infty)$, $Y\subset E,\,Y'\subset E'$ and 
$Z\simeq Y\times [0,\De]\simeq Y'\times [0,\De]$ be as in that 
Lemma.

We claim that one of the following holds:

a. $Y=Y'$ is a quarter-plane

b. $Y$  (respectively $Y'$) is at finite Hausdorff
distance from one of the boundary rays $\al,\,\be$ (respectively $\al',\,\be'$)

c.  $Y,\,Y'$ are bounded.

To see this, first suppose  $\De=0$.  Then $Y=Y'=E\cap E'$ is a convex subcomplex 
of both $E$ and
$E'$.  Since $K$ is a square complex, this implies that $Y$ and $Y'$
are product subcomplexes of $E$, and the claim follows.
If $\De>0$, then since $Z$ meets $Y$ and $Y'$ orthogonally,
it follows that $Y,\,Y'$ are contained in the $1$-skeleton.  The claim
then follows immediately.  

 Lemma \ref{lemdistanceattained} now completes the proof.
\qed

\bigskip
\bigskip
Henceforth we will use the terms {\em equivalent}, {\em incident},
and {\em divergent}, for the $3$ cases in the lemma above.

\begin{definition}
\label{defquarterplanecomplex}
We define the {\em quarter-plane complex $\qp$} as follows.  It
has one vertex for each asymptote class of singular geodesic
rays, and one edge joining $[\al]$ to $[\be]$ for each 
equivalence class of quarter-planes $[\al\times\be]$.
\end{definition}
Note that this definition is motivated by the complex of chambers
at infinity that one has for  symmetric spaces of noncompact type,
and also by the Tits boundary.

\begin{lemma}
\label{lemquarterplaneclassification}
Suppose $E=\al\times\be\subset K$ is a quarter-plane.  Then precisely
one of the following holds:

\begin{enumerate}[(1)]
\item
$|\Ga_{[\al]}^\perp|=|\Ga_{[\be]}^\perp|=1$.   This implies
that $|\Ga_{[\al]}|=|\Ga_{[\be]}|=1$, which means that $\Ga_{[E]}$
is a single edge of $\Ga$ which forms a connected component of
$\Ga$.  The quarter-plane $E$ belongs to a unique cycle in  $\qp$, namely
 the $4$-cycle $\qp_F$ associated with a unique flat $F\subset K$ labelled
by $\Ga_{[E]}$.
\item
After relabelling the factors of $E$, $|\Ga_{[\al]}^\perp|=1$, $|\Ga_{[\be]}^\perp|>1$.
Then there is a unique equivalence class $[E']$  of quarter-planes 
such that $[E']$ is incident to $[E]$ at $[\al]$.  Furthermore,
any cycle $\Si\subset\qp$ containing $[E]$ must also contain 
$[E']$, and there is  a pair of flats $F,F'\subset K$ 
such that the corresponding $4$-cycles $\qp_F,\,\qp_{F'}\subset\qp$
intersect precisely in $[E]\cup[E']$.
\item
$\min(|\Ga_{[\al]}^\perp|,|\Ga_{[\be]}^\perp|)>1$.
Then there is a pair of flats $F,F'\subset K$ such that the 
corresponding $4$-cycles
$\qp_F,\,\qp_{F'}$ satisfy $\qp_F\cap \qp_{F'}=[E]$.
\end{enumerate}
\end{lemma}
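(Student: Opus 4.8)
The plan is to analyze the quarter-plane $E = \al \times \be$ by looking at the orthogonal complements $\Ga_{[\al]}^\perp$ and $\Ga_{[\be]}^\perp$, since these control which quarter-planes can be incident to $E$ at each of its two boundary rays. The key structural observation is that a standard flat $F \subset K$ labelled by an edge $\{a,b\}$ of $\Ga$ gives rise to a $4$-cycle $\qp_F$ in $\qp$: the flat $F$ contains four quarter-planes, pairwise sharing boundary rays, whose asymptote classes form the vertices of a $4$-cycle. Conversely, I would argue (using Lemma \ref{lemquarterplanedistance} and the labelling results of Section \ref{secprelim}) that any cycle in $\qp$ through $[E]$ is forced to pass through incident quarter-planes, and that the only way to "turn a corner" at a boundary ray $[\al]$ is either to stay inside a standard flat or to pass to the unique incident class determined by $\Ga_{[\al]}^\perp$.

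First I would establish the basic dictionary: for a singular ray $\al$, the label $\Ga_{[\al]}$ is a complete subgraph of $\Ga$ (hence, since $\Ga$ is triangle-free, an edge, a vertex, or empty — but for a quarter-plane factor it is nonempty and in fact, when $\al$ is a factor of a quarter-plane, $\Ga_{[\al]}$ is a single vertex, because $\Ga_{[E]} = \Ga_{[\al]} \circ \Ga_{[\be]}$ is bipartite and triangle-free). Then $\Ga_{[\al]}^\perp$ is the set of vertices adjacent to that vertex of $\Ga$, and a quarter-plane incident to $[E]$ at $[\al]$ corresponds exactly to a choice of vertex in $\Ga_{[\al]}^\perp$ (giving the label of the other factor), together with a compatible choice of asymptote class — and here one uses Corollary \ref{corasymptoticlabels} together with the local structure of $K$ to see that such incident classes are controlled. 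When $|\Ga_{[\al]}^\perp| = 1$ there is exactly one choice available, so at the ray $[\al]$ the quarter-plane $E$ is "rigid": any cycle must continue through the unique incident class $[E']$; when $|\Ga_{[\al]}^\perp| > 1$ there are several and one has more freedom.

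With this in hand, the three cases are: (1) both complements have size $1$, forcing $|\Ga_{[\al]}| = |\Ga_{[\be]}| = 1$ and $\Ga_{[E]} = \{a,b\}$ a connected component of $\Ga$; then $E$ lies in a unique standard flat $F$ and the only cycle through $[E]$ is $\qp_F$, since one is rigid at both ends. (2) One complement has size $1$: then $E$ is rigid at one boundary ray, forcing any cycle through $[E]$ to contain the unique incident class $[E']$; I would then check that $[E]$ and $[E']$ each lie in exactly two of the $4$-cycles $\qp_F$ and that there is a choice of two flats $F, F'$ whose $4$-cycles meet in precisely $[E] \cup [E']$ — this amounts to choosing the two distinct vertices in $\Ga_{[\be]}^\perp$ and the (unique) one in $\Ga_{[\al']}^\perp$, noting triangle-freeness prevents further overlap. (3) Both complements have size $>1$: now $E$ is flexible at both ends, and one simply exhibits two flats $F, F'$ containing $E$ (corresponding to different extensions on each side) whose $4$-cycles meet only in $[E]$ — again triangle-freeness is what guarantees the two $4$-cycles share no other vertex.

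The main obstacle I expect is Case (2): proving that \emph{any} cycle $\Si \subset \qp$ containing $[E]$ must contain $[E']$, i.e. that rigidity at the boundary ray $[\al]$ genuinely propagates to arbitrary cycles and not just to cycles of the form $\qp_F$. This requires understanding how edges of $\qp$ fit together at a vertex $[\al]$ with $|\Ga_{[\al]}^\perp| = 1$: I would need to show that the only edges of $\qp$ emanating from $[\al]$ (other than the one to $[\be]$) go to the single vertex $[\be']$ dictated by the orthogonal complement, using that an edge at $[\al]$ corresponds to a quarter-plane with $\al$ as a factor, that such a quarter-plane's other factor has label in $\Ga_{[\al]}^\perp$ (a singleton), and that asymptote classes of such rays are constrained by Corollary \ref{corasymptoticlabels} and the convexity in Lemma \ref{lemdistanceattained}. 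Verifying the precise intersection statements $\qp_F \cap \qp_{F'} = [E] \cup [E']$ and $\qp_F \cap \qp_{F'} = [E]$ is then a finite combinatorial check once the vertices and edges of $\qp$ near $[E]$ are identified with data in $\Ga$.
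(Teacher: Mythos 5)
Your overall strategy is the same as the paper's (there is essentially one route: control incidences at a boundary ray via the orthogonal complement of its asymptotic label, and realize the relevant cycles by flats), but there are two concrete problems. First, your ``dictionary'' is false: for a quarter-plane $E=\al\times\be$ the asymptotic label $\Ga_{[\al]}$ need \emph{not} be a single vertex, and your justification (``$\Ga_{[E]}=\Ga_{[\al]}\circ\Ga_{[\be]}$ is bipartite and triangle-free'') is a non sequitur, since every bipartite join is triangle-free and hence triangle-freeness of $\Ga$ imposes no constraint. In this section $\Ga$ is only assumed triangle-free, so $4$-cycles are allowed and both $|\Ga_{[\al]}|$ and $|\Ga_{[\be]}|$ can exceed $1$ (e.g.\ $\Ga=K_{2,3}$); even in case (2) one can have $|\Ga_{[\al]}|>1$. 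The correct derivation of $|\Ga_{[\al]}|=|\Ga_{[\be]}|=1$ in case (1) is the containments $\Ga_{[\al]}\subset\Ga_{[\be]}^\perp$, $\Ga_{[\be]}\subset\Ga_{[\al]}^\perp$, not your dictionary. Relatedly, your claim in case (2) that $[E]$ and $[E']$ ``each lie in exactly two of the $4$-cycles $\qp_F$'' is wrong: since $|\Ga_{[\be]}^\perp|>1$, they lie in infinitely many such $4$-cycles.

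Second, and more seriously, the crux of case (2) is left unproved, and the tools you name do not suffice. Knowing that the other factor $\be'$ of an incident quarter-plane $E'$ has label equal to the single vertex $w\in\Ga_{[\al]}^\perp$ does \emph{not} determine the asymptote class $[\be']$: $K$ contains infinitely many pairwise non-asymptotic lines labelled $w$, and Corollary \ref{corasymptoticlabels} only constrains labels within a fixed asymptote class. The missing step (which is the real content of the paper's proof) is a localization argument: set $\Ga'=\Ga_{[\al]}\circ\Ga_{[\al]}^\perp$ and let $K'$ be the copy of $K(\Ga')$ containing a sub-quarter-plane of $E$; then any quarter-plane incident to $E$ at $[\al]$ lies, after passing to a sub-quarter-plane, in a copy $K''$ of $K(\Ga')$, and one shows $K''=K'$ because otherwise the flat half-strip bounded by asymptotic subrays of $\al$ and $\al'$ (furnished by Corollary \ref{corasymptoticlabels}/Lemma \ref{lemdistanceattained}) has labels lying in $\Ga'$ and hence sits inside a single copy of $K(\Ga')$, contradicting disjointness of distinct copies. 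Only after this does one get uniqueness of $[E']$, from the fact that $K'\cong T\times\R$ admits exactly one asymptote class adjacent to $[\al]$ other than $[\be]$; and it is this valence count at $[\al]$, not an abstract ``rigidity'' principle, that yields ``any cycle through $[E]$ contains $[E']$'' (and likewise the valence-$2$ statement in case (1)). Without this argument your proposal restates the conclusion rather than proving it.
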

\proof {\em Case 1.}  Since $\Ga_{[\al]}\subset \Ga_{[\be]}^\perp$ and
$\Ga_{[\be]}\subset \Ga_{[\al]}^\perp$, it follows that
$|\Ga_{[\al]}|=|\Ga_{[\be]}|=1$.  Setting $\Ga_{[\al]}=\{v\}$,
$\Ga_{[\be]}=\{w\}$, we find that $w$ is the unique vertex adjacent to
$w$, and vice-versa.  Let $K'\subset K$ be the product subcomplex
associated with the edge $\ol{vw}\subset \Ga$, containing a
sub-quarter-plane of $E$; then $K'$ is a flat which determines a cycle
$\Si_0\subset\qp$.  Note that the valence at each vertex in $\Si_0$ is
$2$, and hence any cycle $\Si\subset\qp$ which intersects $\Si_0$ must
coincide with $\Si_0$.

{\em Case 2.} Let $\Ga'\subset\Ga$ be the join of $\Ga_{[\al]}$ and
$\Ga_{[\be]}=\Ga_{[\al]}^\perp$, and let $K'\subset K$ be the copy of
$K(\Ga')$ containing a sub-quarter-plane of $E$.  If $E'$ is a
quarter-plane adjacent to $E$ along $[\al]$, then the asymptotic label
of $E'$ must be contained in $\Ga'$, and so after passing to a
sub-quarter-plane of $E'$ if necessary, we may assume that
$E'=\al'\times\be'\subset K''$, where $K''$ is another copy of
$K(\Ga')$.

We claim that $K''=K'$.  Otherwise, $K'$ and $K''$ would be disjoint, and the 
asymptotic singular rays $\al,\,\al'$ would contain subrays
$\bar\al\subset\al,\,\bar \al'\subset\al'$ such that $\bar\al$ and $\bar\al'$
bound a half-strip subcomplex $Z\subset K$.  Since $\Ga_{[\al]}^\perp=\Ga_{[\be]}\subset \Ga'$,
we have that $Z$ lies in a single copy of $K(\Ga')$, which is a contradiction.

Thus $E'\subset K'$.  But then $\be'\subset K'$ is a ray adjacent to $\al$
which is not asymptotic to $\be$; there is a unique such asymptote class
in $K'$, and this
implies that $[E']$ is unique.

Let $\Ga''\subset\Ga$ be the join of $\Ga_{[\be]}$ and $\Ga_{[\be]}^\perp\supset\Ga_{[\al]}$.
Let $K''$ be the copy of $K(\Ga'')$ which contains $K'$.  Then $K''$ is a product
$\R\times T$ where $\be$ is asymptotic to the $\R$-factor, and $T$ is a tree of 
valence $\geq 4$ because $|\Ga_{[\be]}|>1$.  The remaining assertions follow
readily from this.

{\em Case 3.}  Let $\Ga'\subset \Ga$ be the join of $\Ga_{[\al]}$ and 
$\Ga_{[\al]}^\perp$, and $\Ga''\subset\Ga$ be the join of 
$\Ga_{[\be]}$ and $\Ga_{[\be]}^\perp$.   After passing to a sub-quarter-plane
of $E$ if necessary, we may assume that $E\subset K'\cap K''$, where
$K',\,K''\subset K$ are product subcomplexes associated with $\Ga'$ and $\Ga''$
respectively.  Since $\min(|\Ga_{[\al]}^\perp|,|\Ga_{[\be]}^\perp|)>1$, there
are pairs of flats
$F_1'\cap F_2'\subset K'$ and $F_1''\cap F_2''\subset K''$ 
such that the intersections $F_1'\cap F_2', \,F_1''\cap F_2''$ are half-planes
whose intersection is precisely $E$.

\qed

In the remainder of this section, we will apply results from 
\cite{quasiflats}.  We refer the reader to that paper for the definition
and properties of support sets.

\begin{lemma}\label{quasiflats2quarterplanes}
Let  $Q\subset K$ be a quasiflat.  There is  a unique cycle 
$[E_1],\ldots,[E_k]\subset\qp$ of quarter-planes  in $\qp$,
such that the union
$$
\cup_i\;E_i
$$
has finite Hausdorff distance from $Q$.  We denote this cycle
by $\qp_Q$.
\end{lemma}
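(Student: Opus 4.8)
\textbf{Proof plan for Lemma \ref{quasiflats2quarterplanes}.} The plan is to invoke the structure theorem for quasiflats from \cite{quasiflats} and then massage the resulting quarter-plane decomposition into a cycle in $\qp$. First I would recall from \cite{quasiflats} that any quasiflat $Q\subset K$ lies at finite Hausdorff distance from a finite union of quarter-planes $E_1,\ldots,E_k$ (after discarding a bounded set, which is absorbed into the Hausdorff distance bound). The first task is to arrange that this union is \emph{irredundant}: using Lemma \ref{lemquarterplanedistance}, if two of the $E_i$ are \emph{equivalent}, they have finite Hausdorff distance from a common quarter-plane, so one may be deleted without changing the Hausdorff distance class of the union; after finitely many such deletions, no two of the $E_i$ are equivalent. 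This produces a well-defined finite set of vertices and edges in $\qp$, namely the classes $[E_i]$ together with their endpoint rays $[\alpha_i],[\beta_i]$.

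Next I would show this subcomplex of $\qp$ is exactly a cycle. The key geometric input is that $\cup_i E_i$ is Hausdorff-close to $Q$, which is itself a copy of $\R^2$ up to quasi-isometry; so the union is "two-dimensionally connected at infinity" and has no free ends. Concretely, the boundary ray $[\alpha]$ of any $E_i$ must be shared with some other $E_j$: otherwise the support-set analysis of \cite{quasiflats} (or an elementary separation argument in $K$, using that a single quarter-plane separated from the rest along a ray would leave $Q$ disconnected at infinity in a way incompatible with $Q$ being a quasiflat) gives a contradiction. This forces each vertex $[\alpha_i],[\beta_i]$ appearing in the subcomplex to have valence exactly $2$ — valence at least $2$ by the no-free-ends argument, and valence at most $2$ because a higher-valence vertex would let one build a quasiflat strictly larger than $Q$ near that ray, again contradicting the structure theorem. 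A connected graph with all vertices of valence $2$ is a disjoint union of cycles; connectedness of $\cup_i E_i$ (inherited from connectedness of $Q$) rules out more than one component, so we get a single cycle $[E_1],\ldots,[E_k]$.

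For uniqueness, suppose $[E_1'],\ldots,[E_l']$ is another such cycle with $\cup_j E_j'$ Hausdorff-close to $Q$. Then $\cup_i E_i$ and $\cup_j E_j'$ are Hausdorff-close to each other, so each $E_i$ lies in a bounded neighborhood of $\cup_j E_j'$; since a quarter-plane is connected and unbounded in a controlled way, it must lie in a bounded neighborhood of a \emph{single} $E_j'$ (two quarter-planes that are incident or divergent pull apart linearly by Lemma \ref{lemquarterplanedistance}, so a quarter-plane cannot fellow-travel a union of pairwise non-equivalent quarter-planes unless it fellow-travels just one of them), and then $[E_i]=[E_j']$ by the equivalence case of Lemma \ref{lemquarterplanedistance}. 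This gives a bijection between the two cycles respecting the $\qp$-structure, so they coincide as cycles in $\qp$.

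The main obstacle I anticipate is the valence-exactly-$2$ claim — specifically ruling out valence $>2$ at a ray $[\alpha]$, i.e. showing that $Q$ cannot be Hausdorff-close to a union of quarter-planes whose incidence graph has a vertex of degree three or more. This needs the finer information packaged in the support sets of \cite{quasiflats}: degree $\geq 3$ at $[\alpha]$ would mean $Q$ fellow-travels three quarter-planes sharing the ray $\alpha$, and one must argue that the support set of such a configuration is strictly bigger than that of a plane, contradicting that $Q$ is a (two-dimensional) quasiflat rather than a higher-complexity quasiflat. The rest of the argument is bookkeeping with Lemma \ref{lemquarterplanedistance} and elementary $\cat(0)$ geometry.
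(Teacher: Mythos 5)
Your overall skeleton matches the paper's proof (invoke \cite{quasiflats} to get a finite union of quarter-planes Hausdorff-close to $Q$, discard equivalent ones, show each boundary ray class has valence exactly $2$, conclude a union of cycles, then a single cycle, and get uniqueness from Lemma \ref{lemquarterplanedistance} — your uniqueness paragraph is essentially the paper's). But the step you yourself flag as the main obstacle, valence exactly $2$, is genuinely missing, and the substitute arguments you sketch do not work. The paper handles both directions at once with a single local argument at infinity: if the ray $\al_j$ is shared by $i\neq 2$ of the quarter-planes, pick $x\in\al_j$ far out on the ray; by Lemma \ref{lemquarterplanedistance} the intersection of $Q$ with a large ball $B(x,R)$ is then uniformly quasi-isometric to an $R$-ball centered at the branch point of an $i$-pod (half-plane-like when $i=1$, branched when $i\geq 3$), and no such ball is uniformly quasi-isometric to a Euclidean $R$-ball, contradicting that $Q$ is an $(L,A)$-quasiflat. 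Your proposed lower-bound argument ("a free boundary ray would leave $Q$ disconnected at infinity") is not what goes wrong — a free ray disconnects nothing; the obstruction is that $Q$ would have half-plane-like balls. And your upper-bound idea ("the support set of a degree-$\geq 3$ configuration is strictly bigger than that of a plane") is left as a hope rather than an argument; it could perhaps be made to work via area growth, but as written it is a gap.

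A second, smaller gap is the passage from "union of cycles" to "single cycle." Connectedness of $\cup_i E_i$ as a subset of $K$ does not imply connectedness of the incidence pattern in $\qp$: for example, two flats through a common vertex have connected union but determine two disjoint $4$-cycles. (That configuration is ultimately impossible for a quasiflat, but not for the reason you give.) The paper's argument is that the support set of $Q$ has only one end, whereas a union of quarter-planes realizing two or more disjoint cycles would have at least two; some such coarse-topology input is needed here, and plain connectedness of the union does not supply it.
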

\proof
By \cite[Section 5]{quasiflats},  there is a finite collection 
$E_1,\ldots,E_k$ of quarter-planes in $K$ such that each $E_i$
is contained in the support set associated with $Q$, and for some $r\in (0,\infty)$,
$$
Q\subset N_r(\cup_i\;E_i).
$$
Note that this collection of  quarter-planes is
uniquely determined up to equivalence.  To see this, observe that  
if $E_1',\ldots,E_l'$ is another collection
of quarter-planes with 
$$
Q\subset N_{r'}(\cup_j E_j')
$$ for some $r'\in (0,\infty)$, then for each $1\leq i\leq k$, there
must be a $1\leq j\leq l$ such that $E_j'$ is equivalent to $E_i$;
otherwise Lemma \ref{lemquarterplanedistance} would imply that there
are points in $E_i$ arbitrarily far from the union $\cup_j\;E_j'$.

We now assume that the quarter-planes $E_1,\ldots,E_k$ represent
distinct equivalence classes.  

Pick $1\leq j\leq k$, and consider the quarter-plane $E_j=\al_j\times
\be_j$.  Suppose $\al_j$ is incident to $i$ of the quarter-planes in
the collection $\{E_1,\ldots,E_k\}$, where $i\neq 2$.  Pick $R\in
(0,\infty)$.  By Lemma \ref{lemquarterplanedistance}, if we choose
$x\in\al_j$ lying sufficiently far out the ray $\al_i$, then the ball
$B(x,R)$ will intersect $Q$ in a set which is uniformly
quasi-isometric to an $R$-ball $B(x',R)$ lying in an $i$-pod, where
$x'$ is a singular point of the $i$-pod. (Recall that an $i$-pod is a
tree which is a union of $i$ rays emanating from a single vertex.)
This contradicts the fact that $Q$ is a quasiflat.  Therefore
$\{E_1,\ldots,E_k\}$ determines a union of cycles in $\qp$.  But it
can only contain a single cycle, because the support set of $Q$ has
only one end.  \qed

\begin{lemma}
Every cycle $\Si\subset\qp$ arises from a quasiflat $Q\subset K$.
\end{lemma}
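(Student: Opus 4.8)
The plan is to reverse the construction in the previous lemma: starting from a cycle $\Si = [E_1], \ldots, [E_k] \subset \qp$, I want to build an explicit subset $Q \subset K$ of the form $Q = \cup_i E_i'$, where $E_i' \in [E_i]$ is a suitably chosen representative, and then verify that $Q$ is a quasiflat. First I would fix one vertex $[\al_1]$ of $\Si$ and a singular ray $\al_1$ in that class, and then walk around the cycle: given that consecutive edges $[E_i] = [\al_i \times \be_i]$ and $[E_{i+1}] = [\al_{i+1} \times \be_{i+1}]$ share the vertex $[\be_i] = [\al_{i+1}]$, I would use Corollary \ref{corasymptoticlabels} to replace $\be_i$ and $\al_{i+1}$ by asymptotic subrays lying in a common flat half-strip, and then adjust the quarter-plane representatives $E_i'$ accordingly so that $E_i'$ and $E_{i+1}'$ actually share a boundary ray (not merely asymptotic ones). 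Since the cycle is finite, finitely many such adjustments suffice, and I can arrange that all the $E_i'$ emanate from a single point $p \in V(K)$ if desired, or at least that $\cup_i E_i'$ is connected. This gives the candidate $Q$.

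The bulk of the work is then showing $Q = \cup_i E_i'$ is a quasiflat, i.e.\ that the inclusion $\Z^2 \hookrightarrow Q$ (or some fixed $2$-dimensional model) extended appropriately is a quasi-isometric embedding with image coarsely all of $Q$. The key geometric input is that along each shared ray $\gamma_i = E_i' \cap E_{i+1}'$, the two quarter-planes are glued along a full singular ray, so locally near $\gamma_i$ the union looks like a half-flat union half-flat along a line — and because $\Si$ is a \emph{cycle}, there are no free ends, so $Q$ has exactly one end and is "flat-like" at infinity. Concretely I would show: (a) $Q$ is connected and quasi-isometric to the planar region obtained by gluing $k$ Euclidean quadrants cyclically along boundary rays (a cone-like $2$-complex), and (b) this glued region is quasi-isometric to $\R^2$ when $k \geq 4$ — for $k = 4$ this is genuinely a flat, and for larger $k$ one gets a nonpositively curved piecewise-Euclidean cone which is still quasi-isometric to $\R^2$ since it is a $2$-dimensional $\cat(0)$ cone of finite cone angle $\cdot 2\pi \geq 2\pi$ hence with linear divergence. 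Uniqueness of the cycle $\qp_Q$ is already guaranteed by Lemma \ref{quasiflats2quarterplanes}, so no separate uniqueness argument is needed here; but I should check minimality of $k$ — a cycle in $\qp$ has length $\geq 4$ because $\qp$ is bipartite (vertices are ray classes, edges are quarter-plane classes, and an edge joins $[\al]$ to $[\be]$ with $\al, \be$ necessarily non-asymptotic with distinct labels), ruling out degenerate short cycles.

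The main obstacle I expect is step (b): verifying that the cyclically-glued union of quadrants along boundary rays is genuinely a quasiflat rather than, say, a "thick tripod at infinity" that fails the quasiflat condition the way an $i$-pod did in the proof of Lemma \ref{quasiflats2quarterplanes}. The point is that in a \emph{cycle} every ray class $[\al_i]$ has exactly two incident quarter-planes in the collection, so the local picture along each $\gamma_i$ is a \emph{bi}-pod $\times \R$, i.e.\ a flat strip neighborhood, not an $i$-pod with $i \neq 2$; hence the obstruction from that earlier argument does not arise. I would make this precise by exhibiting a coarsely-surjective quasi-isometric embedding $\R^2 \to Q$ directly: parametrize by "angular coordinate around the cycle" and "radial coordinate", check the distance estimates using the incidence inequalities of Lemma \ref{lemquarterplanedistance} (which control how fast points in $E_i'$ move away from $E_{i+1}'$) together with $\cat(0)$ convexity of the distance functions $d_{E_i'}$, and conclude. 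A cleaner alternative, if available, is to observe that the subgroup of $G$ generated by the labels involved acts cocompactly on a sub-$\cat(0)$-complex containing $Q$, forcing $Q$ to sit at finite Hausdorff distance from an honest standard product subcomplex — but the direct estimate is self-contained and I would present that.
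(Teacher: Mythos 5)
Your overall strategy is the same as the paper's -- assemble the quarter-planes of the cycle into a cone-like model space, observe it is biLipschitz to $\R^2$, and verify that the resulting map into $K$ is a quasi-isometric embedding using the incidence/divergence estimates of Lemma \ref{lemquarterplanedistance} -- but the step on which you build everything else is a genuine gap. You want to choose representatives $E_i'$ so that consecutive quarter-planes \emph{literally} share a boundary ray (and even all emanate from one vertex), citing Corollary \ref{corasymptoticlabels}. That corollary only gives asymptotic subrays bounding a flat half-strip of possibly positive width; it does not produce a common boundary ray, and upgrading it to one requires a real construction (you must move corners, absorb strips, and keep the result a quarter-plane representing the same class). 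Worse, the adjustments at the two ends of each $E_i$ interact, and going around the cycle there is a closing-up condition at the last gluing which you never address. The paper avoids all of this: it glues the $E_i$ \emph{abstractly} -- $W$ is the quotient of the disjoint union $\sqcup_i E_i$ by cyclic identification of boundary rays -- and maps $W\to K$ by the tautological, not necessarily continuous, choice of preimage point. Then the upper bound $d(\phi(w),\phi(w'))\le d(w,w')+C$ costs only an additive constant because the identified rays are asymptotic (finite Hausdorff distance), and the lower bound is obtained by a limiting/contradiction argument that pushes the offending points onto the nearest-point sets $Y,Y'$ of Lemma \ref{lemdistanceattained} and invokes (\ref{eqnincidentestimate}) or (\ref{eqndivergentestimate}); this last part is essentially the estimate you planned, so your verification step is fine in spirit, but as written it presupposes the unproved exact-gluing.

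Two further remarks. Your ``cleaner alternative'' -- that a subgroup acts cocompactly on a subcomplex forcing $Q$ to lie at finite Hausdorff distance from a standard product subcomplex -- cannot work: for cycles of length greater than four the whole point is that the associated quasiflat is \emph{not} Hausdorff-close to any flat or standard product subcomplex (cf.\ Theorem \ref{thmflatcharacterization}, where $4$-cycles are exactly the ones giving flats). Also, your bipartiteness claim for $\qp$ is not established by the parenthetical you give (it only rules out loops); fortunately it is not needed, since a Euclidean cone of any positive finite cone angle is biLipschitz homeomorphic to $\R^2$, which is all the paper uses.
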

\proof
Let the consecutive edges in $\Si$ be represented by quarter-planes
$E_0,\ldots,E_j\subset K$, where the indices take values in the cyclic group $\Z_{j+1}$. 
 Let $W$ be the space obtained from the
disjoint union $\sqcup_i \;E_i$ by gluing the boundary rays isometrically,
in a cyclic fashion; let $\bar E_i$ denote the image of $E_i$
in $W$ under the quotient map $\pi:\sqcup\;E_i\ra W$.  With respect to the path 
metric, $W$ is biLipschitz homeomorphic to 
$\R^2$, and the quotient map $E_i\ra \bar E_i$
is a biLipschitz embedding for each $i$. Define  $\phi:W\lra K$ by setting
$\phi(w)=p$, where $p\in \cup_i\;E_i$ is a point with $\pi(p)=w$.

Since $\Si$ is a cycle, consecutive quarter-planes are incident, and this
implies that there is a constant $C\in (0,\infty)$ such that for all
$w,w'\in W$, 
$$
d(\phi(w),\phi(w'))\leq d(w,w')+C.
$$

We claim that there are constants $L,A\in (0,\infty)$ such that for all
$w,w'\in W$,
$$
d(\phi(w),\phi(w'))\geq L^{-1}d(w,w')-A.
$$
If this were false, there would be sequences $w_k,w_k'\in W$ such
that $d(w,w')\ra \infty$, and
$$
\frac{d(\phi(w_k),\phi(w'_k))}{d(w_k,w'_k)}\lra 0.
$$
By Lemma \ref{lemquarterplanedistance}, without loss of generality we may assume 
that for some $i\in \Z_{j+1}$,
 $w_k\in \Int(E_i)$ 
and $w_k'\in \Int(E_{i+1})$ for all
$k$.  Let $Y\subset E_i$, $Y'\subset E_{i+1}$ be as in Lemma \ref{lemquarterplanedistance},
where $C=E_i$, $C'=E_{i+1}$.  Then we must have
$$
\limsup_{k\ra\infty} \frac{\max(d(w_k,Y),d(w_k',Y'))}{d(w_k,w_k')}\ra 0
$$
because of (\ref{eqnincidentestimate}) or (\ref{eqndivergentestimate}).  
But then we may replace $w_k$, $w_k'$ with sequences
lying in $Y$ and $Y'$ respectively, and this yields a contradiction.
\qed

\bigskip
\begin{definition}
\label{defquasiflatcycles}
For each quasiflat $Q\subset K$, we let $\qp_Q\subset \qp$ denote
the corresponding cycle in $\qp$. 
We denote by $\C$ be the collection of cycles in $\qp$,
and by $\N$ 
the poset of subcomplexes of  $\qp$ generated by 
elements of $\C$ under finite intersection and union.  Finally, 
$\M\subset\N$ will denote the collection
of   elements of $\N$ which are minimal
among those of dimension $1$.
\end{definition}

We will consider the collection of subsets of $X$ up to Hausdorff
equivalence. Two subsets $A,B\subset X$ are {\it Hausdorff
equivalent} if for some $r>0$ we have $N_r(A)\supset B$ and
$N_r(B)\supset A$. The equivalence class of $A$ is denoted $[A]$. We
define $[A]\cup [B]$ as $[A\cup B]$. It is not hard to check that this
is independent of the choice of representatives. Note that $A\cap B$
is generally not Hausdorff equivalent to $N_r(A)\cap N_r(B)$, so
$[A]\cap [B]$ is not defined. To remedy this, say that a collection
$\{[A_i]\}$ of Hausdorff equivalence classes is {\it coherent} if for
any finite subcollection $A_{i_1},\cdots,A_{i_k}$ there is $r_0>0$
such that for every $r>r_0$ the sets $N_r(A_{i_1})\cap\cdots
N_r(A_{i_k})$ and $N_{r_0}(A_{i_1})\cap\cdots N_{r_0}(A_{i_k})$ are
Hausdorff equivalent. In this situation define $[A_{i_1}]\cap\cdots\cap
[A_{i_k}]$ as $[N_r(A_{i_1})\cap\cdots\cap N_r(A_{i_k})]$ for large
$r$. This concept behaves well under finite unions: if each $A_{i_j}$
is written as a finite union of sets, and the collection of all of
these is coherent, then the collection $A_{i_1},\cdots,A_{i_k}$ is
coherent as well.
The usual associativity and distributivity laws apply,
e.g. $([A_{i_1}]\cup [A_{i_2}])\cap [A_{i_3}]=([A_{i_1}]\cap
[A_{i_3}])\cup ([A_{i_2}]\cap [A_{i_3}])$.

For example, the collection of (the classes of) quarter-planes in $X$ is
coherent, by Lemma \ref{lemquarterplaneclassification}.   

Now consider the collection $\mathcal {QF}$ of quasiflats in $X$,
modulo Hausdorff equivalence. By Lemma
\ref{quasiflats2quarterplanes} this collection is coherent. Let
$\mathcal P$ be the collection of subsets of $X$ modulo Hausdorff
equivalence obtained by intersecting finite collections of elements of
$\mathcal {QF}$. Every element of $\mathcal P$ 
has a representative
which is a finite union of quarter-planes and standard rays, so we
have a natural map $\mathcal P\to \mathcal N$. This map preserves
finite intersections by Lemma \ref{lemquarterplaneclassification}, and
is therefore a bijection. It follows that minimal elements of
$\mathcal P$ correspond bijectively to minimal elements of
$\mathcal N$. After removing elements of $\mathcal P$ represented
by collections of rays (call those {\it inessential}, while the other
elements are {\it essential}) and elements of $\mathcal N$ that are
0-dimensional we obtain an isomorphism 
${\mathcal M}_{\mathcal P}\to {\mathcal M}$
 between the essential minimal elements of $\mathcal P$ and
1-dimensional elements of $\mathcal M$. 

If $f:X\to X'$ is a quasi-isometry, then $f$ maps quasi-flats to
quasi-flats and induces a bijection ${\mathcal M}_{\mathcal P}(X)\to 
{\mathcal M}_{\mathcal P}(X')$. This bijection preserves inessential elements, and
therefore there is an induced bijection $\mathcal M(X)\to \mathcal
M(X')$.  

Observe that Lemma \ref{lemquarterplaneclassification} yields
a classification of elements of $\M$, i.e. a  minimal
element of $\N$ consists of either $4$ edges, $2$ edges, or $1$ edge,
according to the relevant case of 
 Lemma \ref{lemquarterplaneclassification}.  Moreover, by the next lemma,
if $\Si\in \M$, then we may determine how many 
quarter-planes it contains by determining the  number of 
 elements of $\M$ that are required to complete $\Si$ to a cycle.

\begin{lemma}
Let $E=\alpha\times\beta$ be a quarter-plane as in Lemma
\ref{lemquarterplaneclassification}(2).
Then there is a
flat $F\subset K$ such that $F=l_A\times l_B$, $l_A$ is asymptotic to
$\alpha$ in the forward direction, $l_B$ is asymptotic to $\beta$ in
the forward direction, and the quarter-plane $l_A^-\times l_B^-$ formed
by the backward directions of $l_A$ and $l_B$ also satisfies that
there is a unique class of quarter-planes incident to $[l_A^-\times
  l_B^-]$ at $[l_A^-]$.
\end{lemma}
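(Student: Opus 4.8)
The plan is to extract the flat $F$ directly from the structure of the product subcomplexes that case (2) of Lemma~\ref{lemquarterplaneclassification} produces. Recall that in that case we set $\Ga_{[\be]}=\Ga_{[\al]}^\perp$ and let $K'\subset K$ be the copy of $K(\Ga')$ containing a sub-quarter-plane of $E$, where $\Ga'$ is the join $\Ga_{[\al]}\circ\Ga_{[\be]}$; since $|\Ga_{[\al]}|=1$, the complex $K'$ is a flat. So $K'=l_A\times l'$ for some geodesic lines $l_A,l'$ with $l_A$ asymptotic to $\al$ in the forward direction. The issue is that the second factor of this particular flat $K'$ need not have $l'$ forward-asymptotic to $\be$ — rather, $\be$ is asymptotic to one of the rays determined by $l'$. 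But $\be$ itself is a singular ray in $K'$ (or has a subray which is), and we may simply take $l_B$ to be the complete geodesic in $K'$ containing a subray of $\be$, which is possible because $K'\cong\R^2$ and $\be$ is parallel to a coordinate axis. Then $F\defeq l_A\times l_B$ is a flat with the two forward-asymptoticity properties demanded.

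Next I would verify the condition on the backward quarter-plane $E^-\defeq l_A^-\times l_B^-$. The key observation is that the asymptotic labels are unchanged under reversing a standard geodesic line: $\Ga_{[l_A^-]}=\Ga_{[l_A]}=\Ga_{[\al]}$ and $\Ga_{[l_B^-]}=\Ga_{[l_B]}=\Ga_{[\be]}$, since $l_A,l_B$ are standard geodesics in the flat $F$ and each carries a single label along its whole length. Hence $|\Ga_{[l_A^-]}^\perp|=|\Ga_{[\al]}^\perp|=1$ and $|\Ga_{[l_B^-]}^\perp|=|\Ga_{[\be]}^\perp|>1$, so the quarter-plane $E^-$ falls under case~(2) of Lemma~\ref{lemquarterplaneclassification} with respect to the factor $l_A^-$, and that lemma directly gives a unique equivalence class of quarter-planes incident to $[E^-]$ at $[l_A^-]$. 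This is really just an application of Lemma~\ref{lemquarterplaneclassification}(2) to $E^-$, once we know $E^-$ is a genuine quarter-plane in $K$ with the stated labels, which it is because it is a subcomplex of the standard flat $F$.

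The one point requiring a little care — and the main (modest) obstacle — is confirming that $F=l_A\times l_B$ is actually a \emph{flat} in $K$, i.e.\ that the chosen lines $l_A$ and $l_B$ genuinely span an embedded copy of $\R^2$ rather than two lines that only diverge. Since $l_A$ lies in the flat $K'=l_A\times l'$ and $l_B$ also lies in $K'$ (being the extension of a subray of $\be\subset K'$), both lie in a common flat, and in a product flat $\R\times\R$ any line parallel to the first axis together with any line parallel to the second axis span the whole flat; one must only note that $l_A$ is parallel to the first axis (as $l_A$ is asymptotic to $\al$, a boundary ray of the sub-quarter-plane of $E$ inside $K'$) and $l_B$ is parallel to the second axis (as a subray of $\be$ is). Thus $F=K'$ up to the choice of basepoint, and in particular $F$ is a standard flat labelled by $\Ga_{[E]}$. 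With this in hand the remaining assertions are immediate from Lemma~\ref{lemquarterplaneclassification}(2) applied to $E^-$.
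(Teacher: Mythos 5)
There is a genuine gap, and it sits at the very first step. Your construction rests on the claim that in case (2) of Lemma \ref{lemquarterplaneclassification} one has $|\Ga_{[\al]}|=1$, so that the standard subcomplex $K'$ associated to $\Ga_{[\al]}\circ\Ga_{[\al]}^\perp$ is a single flat and $l_A$ is a standard geodesic carrying one label along its whole length. But case (2) does not say this: the hypothesis $|\Ga_{[\al]}^\perp|=1$ forces $|\Ga_{[\be]}|=1$ (because $\Ga_{[\be]}\subset\Ga_{[\al]}^\perp$), while it places no restriction on $|\Ga_{[\al]}|$ -- you have interchanged the roles of the two factors. For instance, in the pentagon with consecutive vertices $1,\dots,5$, take $\be$ labelled $3$ and $\al$ a ray in the tree factor whose labels alternate between $2$ and $4$; then $\Ga_{[\al]}=\{2,4\}$, $\Ga_{[\al]}^\perp=\{3\}$ and $\Ga_{[\be]}^\perp=\{2,4\}$, so this is case (2) with $|\Ga_{[\al]}|=2$. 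In fact, whenever $\Ga$ has no valence-one vertex (in particular when $\Ga$ is atomic), case (2) forces $|\Ga_{[\al]}|\geq 2$, so the sub-case your argument actually treats is the degenerate one. In the general situation $K'$ is a tree times a line, not a flat, so neither $F$ nor $l_A$ is handed to you; they must be constructed.

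The choice of the backward half of $l_A$ is precisely the content of the lemma, and it is the step your identity $\Ga_{[l_A^-]}=\Ga_{[l_A]}=\Ga_{[\al]}$ hides: that identity was justified only by the (false) constancy of the label along $l_A$, and in general the two ends of a line have unrelated asymptotic labels. If you extend $\al$ backwards by a ray carrying a single label $a\in\Ga_{[\al]}$, then $\Ga_{[l_A^-]}=\{a\}$ and $\Ga_{[l_A^-]}^\perp$ is the set of all neighbours of $a$, which typically has more than one element; then $l_A^-\times l_B^-$ falls under case (3) of Lemma \ref{lemquarterplaneclassification}, where the class of quarter-planes incident at $[l_A^-]$ is \emph{not} unique, so the conclusion fails for that choice of $F$. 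The paper's proof handles this by first passing to a sub-quarter-plane so that $\be$ carries the constant label $w$ with $\{w\}=\Ga_{[\al]}^\perp$, placing $E$ inside a standard product $T_1\times T_2$ via Lemma \ref{lemproductsarestandard}, taking $l_B$ to be a line labelled $w$ (this part of your argument is fine, since $|\Ga_{[\be]}|=1$ really does hold), and then choosing $l_A$ extending $\al$ so that \emph{every} label of $T_1$ appears infinitely often along $l_A^-$. With that choice $\Ga_{[l_A^-]}$ is the full label set of $T_1$, whose orthogonal complement is exactly $\{w\}$, so $|\Ga_{[l_A^-]}^\perp|=1$ and $|\Ga_{[l_B^-]}^\perp|>1$, and case (2) applies to the backward quarter-plane as required. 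Without some such deliberate choice of the backward labels of $l_A$, the statement you are asked to prove is simply not true of an arbitrary flat extending $E$.
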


\begin{proof}
Since $\Ga_{[\alpha]}^\perp=\Ga_{[\beta]}$ consists of one label, we
may pass to a sub-quarter-plane of $E$ if necessary so that all labels
along $\beta$ are equal. By Lemma \ref{lemproductsarestandard}, $E$ is
contained in a standard product subcomplex $T_1\times T_2$. We may
replace $T_2$ by a line $l_B$ that carries the label that appears along
$\beta$. Now let $F=l_A\times l_B$ for a line $l_A$ that extends
$\alpha$ and so that all labels in $T_1$ appear infinitely often along
$l_A^-$. Then $|\Ga_{[l_B^-]}^\perp|>1$ and $|\Ga_{[l_A^-]}^\perp|=1$, so
the claim follows from Lemma \ref{lemquarterplaneclassification}(2).
\end{proof}

\begin{theorem}
\label{thmflatcharacterization}
Suppose $\Ga, \Ga'$ are finite graphs, $\Ga$ is triangle-free, and
$K=K(\Ga)$, $K'=K(\Ga')$ are the associated $CAT(0)$ square complexes.
For every $L,A\in (0,\infty)$ there is a $D\in (0,\infty)$ such that
if $f:K\ra K'$ is an $(L,A)$-quasi-isometry and $F\subset K$ is a flat,
then $\Ga'$ is triangle-free, and 
$$
\hd(f(F),F')<D
$$
for some flat $F'\subset K'$.
\end{theorem}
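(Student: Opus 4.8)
**The plan is to deduce Theorem \ref{thmflatcharacterization} from the machinery developed above — specifically Lemma \ref{lemquarterplaneclassification}, Lemma \ref{quasiflats2quarterplanes}, and the quasi-isometry invariance of the poset $\M$.**

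The first point is to observe that a flat $F\subset K$ is characterized, among quasiflats, by the structure of its associated cycle $\qp_F\subset\qp$: it must be the $4$-cycle built from the four quarter-planes into which $F$ is divided by a pair of orthogonal singular geodesics. Conversely I want to recognize, intrinsically in terms of the poset $\M$ (equipped with its dimension function and the ``completion number'' — how many elements of $\M$ are needed to complete a given $\Si\in\M$ to a cycle), exactly which cycles of quarter-planes come from flats. By Lemma \ref{lemquarterplaneclassification} and the remark following Definition \ref{defquasiflatcycles}, every minimal $1$-dimensional element $\Si\in\M$ falls into one of three types — $1$ edge, $2$ edges, or $4$ edges — and a quarter-plane $E$ lies in a $4$-cycle coming from a flat precisely when it is of type (1), (2), or (3) in Lemma \ref{lemquarterplaneclassification}, with the ``number of quarter-planes'' in $\Si$ recoverable, as noted, from the number of elements of $\M$ required to complete $\Si$. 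So one characterizes ``cycle in $\qp$ of the form $\qp_F$ for a flat $F$'' as: a cycle $\Si$ that is a union of elements of $\M$, such that $\Si$ has length four when counted appropriately via $\M$-completion data. This description is phrased entirely in terms of $(\M,\dim,\text{completion})$, hence is preserved by the induced bijection $\M(X)\to\M(X')$.

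Next I would run the following chain. Given the $(L,A)$-quasi-isometry $f:K\to K'$: first, $f$ maps quasiflats to quasiflats (with controlled constants), so by Lemma \ref{quasiflats2quarterplanes} every quasiflat $Q$ has a well-defined cycle $\qp_Q$, and $f$ induces the bijection $\M(X)\to\M(X')$ described in the text; in particular it carries the poset data $(\N,\dim)$ isomorphically. Since $K$ contains a flat, $\qp$ contains a cycle, so $\M(X)\neq\emptyset$; transporting via $f$ and running the characterization above shows $\qp'$ contains a $4$-cycle of the correct combinatorial type, which by Lemma \ref{lemquarterplaneclassification} forces $\Ga'$ to contain an edge $e'$ whose endpoints span a full subgraph that is itself a join — i.e. $\Ga'$ is triangle-free (if $\Ga'$ had a triangle, $K'$ would be $3$-dimensional and the corresponding ``top-dimensional'' quasiflats would be $3$-dimensional, contradicting that $f^{-1}$ carries a $3$-dimensional quasiflat in $K'$ to a $2$-dimensional one in $K$; alternatively, one argues directly that no cycle in $\qp'$ could be the $f$-image of $\qp_F$ with the right dimension/completion data unless $\dim K'\le 2$). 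Then, given the flat $F\subset K$, the cycle $\qp_F$ is carried by $f$ to a cycle $\qp_{f(F)}$ in $\qp'$ which, by the transported characterization, is of the form $\qp_{F'}$ for a (unique) flat $F'\subset K'$. By Lemma \ref{quasiflats2quarterplanes} applied to the quasiflat $f(F)$, we get $\hd(f(F),\cup_i E_i')<r$ where $\cup_i E_i'=\cup(\text{quarter-planes of }\qp_{F'})$ has finite Hausdorff distance from $F'$ itself; since the constants in Lemma \ref{quasiflats2quarterplanes} and in ``$f(F)$ is a quasiflat'' depend only on $(L,A)$ and on $K,K'$ (of which there are finitely many relevant pieces of data), we obtain $\hd(f(F),F')<D=D(L,A)$.

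**The main obstacle** will be making the characterization of ``flat-cycles'' genuinely intrinsic and robust enough that it transfers across $f$ — in particular, pinning down that a $4$-cycle $\qp_F$ arising from a flat is distinguishable, purely in the language of $\M$ together with its dimension and completion data, from other cycles in $\qp$ (such as longer minimal cycles, or unions of $\M$-elements that happen to form a $4$-cycle without coming from a flat), and checking that the ``completion number'' is well-defined and finite for the relevant $\Si\in\M$. Equally delicate is the uniformity of constants: one must track that the $r$ in Lemma \ref{quasiflats2quarterplanes}, the quasiflat constants of $f(F)$, and the Hausdorff distance between $\cup_i E_i'$ and $F'$ are all bounded in terms of $(L,A)$ alone (using that there are only finitely many isometry types of the local structure, and that $G$, $G'$ act cocompactly) — this is where one invokes the cocompactness/local finiteness hypotheses in the style of Lemma \ref{lemdistanceattained}(4). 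The triangle-free conclusion for $\Ga'$ is a byproduct: since $f^{-1}$ is also a quasi-isometry and carries the top-dimensional quasiflats of $K'$ to $2$-dimensional quasiflats of $K$, $K'$ can have no $3$-cubes, so $\Ga'$ has no triangles.
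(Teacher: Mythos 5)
There is a genuine gap at the crucial closing step. Your plan reduces everything to a combinatorial characterization of ``flat cycles'' as $4$-cycles in $\qp$ (detected via $\M$ and completion data), and then asserts that the image cycle $\qp'_{f(F)}$ ``is of the form $\qp_{F'}$ for a (unique) flat $F'\subset K'$.'' But the statement that every $4$-cycle of quarter-planes in $\qp'$ is actually realized by an honest flat, at controlled Hausdorff distance from the quasiflat carrying it, is precisely the nontrivial content here; you list it among the ``main obstacles'' but never resolve it, so the characterization begs the question. The paper closes this by a different mechanism: once one knows $\qp'_{f(F)}$ is a $4$-cycle, the support set $S'$ of the quasiflat $f(F)$ is Hausdorff-close to a union of four quarter-planes glued along asymptotic rays, hence has area growth asymptotic to $\pi r^2$, and the rigidity result of \cite[Section 3]{quasiflats} then says $S'$ \emph{is} a flat. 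This single step simultaneously produces the flat $F'$ and the uniform bound $D(L,A)$ (a quasiflat lies at controlled Hausdorff distance from its support set), whereas in your route even if $F'$ existed you would still need a separate argument bounding $\hd(\cup_i E_i',F')$ in terms of $(L,A)$.

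The triangle-freeness of $\Ga'$ is also not handled correctly. Your main argument --- that the transported $4$-cycle ``forces $\Ga'$ to contain an edge whose endpoints span a join, i.e.\ $\Ga'$ is triangle-free'' --- is a non sequitur: one join edge says nothing about triangles elsewhere in $\Ga'$. More seriously, the route is circular: Lemma~\ref{quasiflats2quarterplanes} and the whole quarter-plane/$\M$ apparatus are developed only for triangle-free (hence $2$-dimensional) complexes, and they fail in a $3$-dimensional $K'$ (a $2$-flat inside a $3$-flat is a quasiflat with no useful quarter-plane decomposition, and it has a continuum of parallel copies), so you cannot invoke the bijection $\M(X)\to\M(X')$ before knowing $\dim K'\le 2$. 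Your parenthetical fallback (no quasi-isometric embedding of $\R^3$ into a $2$-dimensional $CAT(0)$ complex) would suffice but is itself left unproved. The paper avoids both problems by arguing entirely inside the quasiflat theory of $K$: a triangle in $\Ga'$ gives a $3$-flat in $K'$ containing parallel $2$-flats at arbitrarily large finite Hausdorff distance; pulling these back by a quasi-inverse yields quasiflats in $K$ with uniform constants at arbitrarily large but finite Hausdorff distance, contradicting the controlled Hausdorff distance of quasiflats from their support sets. You should establish triangle-freeness first by such an argument, and only then run the $\qp$, $\N$, $\M$ machinery on both sides.
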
 
\proof If $\Ga'$ contained a triangle, then $K'$ would contain a
$3$-flat $F$.  Then $F$ would contain pairs of $2$-flats $F',F''$
which are parallel but lie at arbitrarily large Hausdorff distance
from one another.  But applying a quasi-inverse of $f$ to $F',F''$, we
would obtain pairs $Q',Q''\subset K$ of quasi-flats with uniform
constants, lying at arbitrarily large --- but finite --- Hausdorff
distance from one another.  This contradicts the fact that $Q'$ and
$Q''$ lie at controlled Hausdorff distance from their support sets.
Thus $\Ga'$ must be triangle-free.

Let $\qp$ and $\qp'$ be the corresponding quarter-plane complexes,
$\C$, $\C'$ be the collections of cycles in $\qp$ and $\qp'$, and
$\N$, $\N'$ be the poset of subcomplexes of $\qp$ and $\qp'$
respectively, generated by $\C,\,\C'$ under finite intersection and
union.  As discussed above, $f$ induces a poset isomorphism
$f_*:\N\ra\N'$ which preserves dimension.  Thus it induces a bijection
between $\M$ and $\M'$ which also preserves the number of
quarter-planes.  If $\Si\subset \qp$ is a cycle, it is a union of a
uniquely determined elements of $\M$, and these are mapped by $f_*$ to
the unique elements of $\M'$ which give $f_*\Si\subset \qp'$.

If $F\subset K$ is a flat, then $\qp_F\in \C$ is a $4$-cycle,
so $\qp'_{f(F)}$ is a $4$-cycle in $\qp'$.  It follows that the 
area of the support set  $S'$ associated with $f(F)$  grows asymptotically like
$\pi r^2$.  By \cite[Section 3]{quasiflats}
this implies that $S'$ is a flat, and hence
$f(F)$ is at Hausdorff  distance at most $D=D(L,A)$ from a flat.
\qed

\section{Preservation of maximal product subcomplexes}
\label{secpreservationofproductcomplexes}

Using Theorem \ref{thmflatcharacterization}, in this section we deduce that maximal
product complexes are preserved by quasi-isometries.

Let  $\Ga$ and $\Ga'$ be triangle-free graphs, and $K,\,K'$ be the associated
$CAT(0)$ complexes.
  Let $f:K\lra K'$ be an $(L,A)$-quasi-isometry, where $\dim K\leq 2$.

\begin{lemma}
\label{lemtripodinparallelset}
There is a constant $D_0=D_0(L,A)\in (0,\infty)$ with the following property.
Suppose $Y\subset K$ is a  subcomplex isometric to the product of
 a tripod with $\R$.  Then the
singular geodesic $\ga\subset Y$ is mapped by $f$ to within Hausdorff
distance at most $D_0$ of a singular geodesic $\ga'\subset K'$, and $f(Y)$
lies in the $D_0$-neigborhood of the parallel set $\parallel(\ga')\subset K'$.
\end{lemma}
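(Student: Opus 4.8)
The statement concerns a subcomplex $Y \cong (\text{tripod}) \times \R$, with singular geodesic $\ga$ being the $\R$-factor through the branch point of the tripod. The key point is that $Y$ contains an abundance of flats and quarter-planes sharing the common line $\ga$, and the ``incident'' alternative of Lemma \ref{lemquarterplanedistance} (together with Theorem \ref{thmflatcharacterization}) pins down where these must go. Concretely, let the tripod have legs $l_1, l_2, l_3$ emanating from its branch point; then for each $i \neq j$ the subset $(l_i \cup l_j) \times \R \subset Y$ is a flat $F_{ij}$ (after extending the bi-infinite line, or at worst a half-flat; one can also find genuine flats in $\P(\ga)$ by extending each leg to a bi-infinite singular geodesic inside $K$, which is possible since $K$ is $CAT(0)$ with cocompact isometry group and $\ga$ has well-defined labels). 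These three flats pairwise intersect exactly in $\ga$, and $Y$ is contained in a neighborhood of their union.

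\emph{Step 1: locate the image of $\ga$.} Apply Theorem \ref{thmflatcharacterization} to $F_{12}$ and $F_{13}$: there are flats $F_{12}', F_{13}' \subset K'$ with $\hd(f(F_{12}), F_{12}') < D$ and $\hd(f(F_{13}), F_{13}') < D$. Since $F_{12} \cap F_{13}$ coarsely equals $\ga$ (indeed $F_{12}$ and $F_{13}$ diverge away from $\ga$ at a linear rate by Lemma \ref{lemquarterplanedistance}, as $\ga$ lies in the $1$-skeleton), the images $f(F_{12})$ and $f(F_{13})$ coarsely intersect along $f(\ga)$; hence $N_{D+C}(F_{12}') \cap N_{D+C}(F_{13}')$ is a nonempty unbounded set at Hausdorff distance $O(D)$ from $f(\ga)$. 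By Lemma \ref{lemstandardalternative} the two flats $F_{12}', F_{13}'$ lie in a common parallel set $\P(\ga')$ for some geodesic $\ga'$, and their coarse intersection is at bounded Hausdorff distance from $\ga'$. This $\ga'$ is forced to be singular: $f(\ga)$ is a quasigeodesic in $K'$ lying within bounded distance of $\ga'$, and since a flat in a $2$-dimensional $CAT(0)$ square complex is a standard flat whose coarse intersection with another standard flat is a standard (hence singular) geodesic, $\ga'$ may be taken to be a singular geodesic. Set $D_0$ to absorb all these constants, so $\hd(f(\ga), \ga') < D_0$.

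\emph{Step 2: locate the image of $Y$.} Now bring in $F_{23}'$ similarly; all three of $F_{12}', F_{13}', F_{23}'$ coarsely contain $\ga'$ — i.e. each contains $\ga'$ after passing to its parallel set — hence all three lie in $\P(\ga')$ by Lemma \ref{lemstandardalternative} (applied pairwise, plus the observation that a flat at bounded Hausdorff distance from $\ga'$ lies in $\P(\ga')$ once the distance threshold is met). Since $f(Y) \subset N_{LA}(f(F_{12}) \cup f(F_{13}) \cup f(F_{23})) \subset N_{LA+D}(F_{12}' \cup F_{13}' \cup F_{23}') \subset N_{D_0}(\P(\ga'))$, enlarging $D_0$ once more gives the conclusion.

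\emph{Main obstacle.} The delicate point is Step 1 — extracting a \emph{singular} geodesic $\ga'$ and controlling $\hd(f(\ga), \ga')$ uniformly in $(L,A)$. One must argue that the coarse intersection of the two image flats is not merely unbounded but is uniformly Hausdorff-close to an actual geodesic: this uses the cocompactness/local finiteness of $K'$ exactly as in Lemma \ref{lemstandardalternative}, where the stabilizer of the coarse intersection acts cocompactly, forcing it to contain a genuine line. Making the constant depend only on $(L,A)$ — not on the particular flats — requires a compactness argument over the (finitely many, up to the isometry group) configurations of pairs of standard flats meeting coarsely in a geodesic, together with the uniform divergence estimate of Lemma \ref{lemdistanceattained}(4). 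The rest is bookkeeping of additive constants.
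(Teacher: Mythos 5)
There is a genuine gap, and it occurs at the very first step. If the tripod has legs $l_1,l_2,l_3$ emanating from the branch point $o$, then the three flats $F_{ij}=(l_i\cup l_j)\times\R$ do \emph{not} pairwise intersect in $\ga=\{o\}\times\R$: one computes $F_{12}\cap F_{13}=l_1\times\R$, a half-plane. (Only the \emph{triple} intersection $F_{12}\cap F_{13}\cap F_{23}$ equals $\ga$.) Consequently your Step 1 collapses: $f(F_{12})$ and $f(F_{13})$ coarsely intersect along the image of a half-plane, not along $f(\ga)$, so the set $N_{D+C}(F_{12}')\cap N_{D+C}(F_{13}')$ is a quasi-half-plane and is not at bounded Hausdorff distance from $f(\ga)$; no singular geodesic $\ga'$ close to $f(\ga)$ can be extracted from two of the flats alone. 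The paper's proof uses precisely the ingredient you omit: it forms the \emph{triple} coarse intersection $W=N_r(F_1')\cap N_r(F_2')\cap N_r(F_3')$, which is convex and quasi-isometric to $\R$, extracts a geodesic $\ga_1\subset W$, and then works inside the parallel set $\parallel(\ga_1)$, which contains all three image flats.

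A second problem is your justification that $\ga'$ is singular. The assertion that ``a flat in a $2$-dimensional CAT(0) square complex is a standard flat'' is false (e.g.\ products of non-standard geodesics inside a standard product subcomplex), and Theorem \ref{thmflatcharacterization} only produces flats, not standard flats. For the same reason Lemma \ref{lemstandardalternative} cannot be applied to $F_{12}',F_{13}'$: its proof uses cocompactness of the stabilizers of \emph{standard} flats, which the image flats need not enjoy; and appealing to standard-flat preservation (Theorem \ref{StandardFlats}) would be circular, since that theorem is deduced later from Corollary \ref{MaxProdSubcomplexes}, which rests on the present lemma. The correct mechanism, as in the paper, is structural: since the three flats $F_1',F_2',F_3'$ are pairwise at infinite Hausdorff distance and all lie in $\parallel(\ga_1)\cong\ga_1\times Y'$ with $Y'$ an $\R$-tree, the cross-section $Y'$ must branch, and the resulting line of non-manifold points parallel to $\ga_1$ lies in the $1$-skeleton, giving the singular geodesic $\ga'$ with controlled $\hd(\ga',\ga_1)$. (A minor remark: your hedge about $(l_i\cup l_j)\times\R$ being ``at worst a half-flat'' is unnecessary, since two legs of the tripod form a bi-infinite geodesic; but this does not repair Step 1.)
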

\proof
The set $Y$ is a union
$$
Y=F_1\cup F_2\cup F_3,
$$
 where the $F_i$'s are flats intersecting in $\ga$.  By 
Theorem \ref{thmflatcharacterization},
$f(F_i)$ lies at controlled Hausdorff distance from a unique flat $F_i'\subset K'$,
and hence for $r=r(L,A)\in (0,\infty)$, the intersection 
$$
W\defeq N_r(F_1')\cap N_r(F_2')\cap N_r(F_3')
$$
is quasi-isometric to $\R$.  As $W$ is convex, it contains a geodesic $\ga_1$.
Then $\parallel(\ga_1)$  contains $F_1'\cup F_2'\cup F_3'$ which implies that
$\ga_1$ is parallel to a singular geodesic $\ga'$, where the 
Hausdorff distance $\hd(\ga',\ga_1)$ is 
controlled.
\qed

\begin{theorem}
\label{thmcontainedinproductsubcomplex}
There is a constant $D=D(L,A)\in (0,\infty)$ such that if
 $P\subset K$ is a standard product subcomplex, then its
image in $K'$ is contained in the $D$-neighborhood of
a standard product subcomplex of $K'$. 
\end{theorem}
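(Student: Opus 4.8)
The plan is to write $P$ as a union of standard flats whose images are controlled by Theorem \ref{thmflatcharacterization}, reconstruct a ``scaffold'' in $K'$ from these image flats, and then invoke Lemma \ref{lemalmostcontainedimpliescontained} to upgrade a neighborhood containment into a genuine containment in a standard product subcomplex.

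First I would recall that since $P$ is a standard product subcomplex, it is a copy of $K(\Ga')$ for some full subgraph $\Ga'=\Ga_1\circ\Ga_2$ that decomposes as a nontrivial join; concretely $P=T_1\times T_2$ where each $T_i$ is a copy of a standard subcomplex for $\Ga_i$ (a tree, since $\dim K\le 2$ and $P$ is $2$-dimensional, so each factor is $1$-dimensional). For each edge $\ol{vw}$ of $\Ga'$ with $v\in\Ga_1$, $w\in\Ga_2$, and each choice of factors, $P$ contains a standard flat $F$ labelled by $\ol{vw}$, and $P$ is the union of all such standard flats (and in fact of all the tripod-times-$\R$ subsets $Y\subset P$, when $T_1$ or $T_2$ has a branch point). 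By Theorem \ref{thmflatcharacterization} each such $F$ maps to within Hausdorff distance $D_1=D_1(L,A)$ of a flat $F'\subset K'$, and by Lemma \ref{lemtripodinparallelset} each tripod-times-$\R$ subset of $P$ maps into the $D_0$-neighborhood of a parallel set of a singular geodesic; moreover adjacent flats in $P$ (those sharing a standard geodesic) have images whose coarse intersection is a singular geodesic, so the family $\{F'\}$ fits together coherently in $K'$.

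Next I would pin down a single standard product subcomplex $P'\subset K'$ that ``carries'' all these image flats. The key point is that any two of the image flats $F_1',F_2'$ whose preimages share a standard geodesic in $P$ must coarsely intersect in a geodesic; by Lemma \ref{lemstandardalternative} they then lie in a common parallel set $\parallel(\ga')$, and a parallel set of a singular geodesic in $K'$ is itself a standard product subcomplex (it is a copy of $K(\{v'\}\circ\{v'\}^\perp)$ for the label $v'$ of $\ga'$). Iterating over the connected family of flats making up $P$ — using that $\Ga'$ is connected as a graph — one sees that all the image flats lie within a controlled neighborhood of a single standard product subcomplex $P'\subset K'$, namely the one associated with the join of the labels that appear. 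Since $P$ is covered by these flats, we obtain $f(P)\subset N_{D}(P')$ for $D=D(L,A)$.

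The main obstacle, and the step I would spend the most care on, is the ``gluing'' argument: showing that the locally defined parallel sets $\parallel(\ga')$ obtained from adjacent pairs of image flats actually all sit inside one standard product subcomplex $P'$, with a bound on the neighborhood that does not degrade as one moves across $P$. This requires knowing that a standard geodesic shared by two flats in $P$ maps to (within bounded distance of) a standard geodesic in $K'$ with a well-defined label, and that two standard product subcomplexes of $K'$ sharing a standard geodesic and each containing one of the image flats must coincide or be contained in a common one — this is exactly where Lemma \ref{lemalmostcontainedimpliescontained} does the work, promoting ``$F'\subset N_r(P')$'' to ``$F'\subset P'$'' once we know $\dim F'=\dim P'=2$. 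Combining these local statements along the connected complex $P$, with the uniformity of the constants coming from local finiteness and cocompactness of $\isom(K')$, yields the theorem.
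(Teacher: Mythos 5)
Your overall strategy --- cover $P$ by standard flats and tripod$\times\R$ pieces, push these forward with Theorem \ref{thmflatcharacterization} and Lemma \ref{lemtripodinparallelset}, and then assemble the image flats into a single standard product subcomplex --- is the same skeleton as the paper's proof, but the assembly step as you wrote it has a genuine gap. Knowing only that \emph{consecutive} image flats coarsely share a line, and then gluing along chains of such adjacencies, does not place everything in one standard product subcomplex: if $\Ga'$ is the path $a$--$b$--$c$--$d$, the standard flats for $ab$, $bc$, $cd$ form exactly such a chain (consecutive ones share a standard geodesic, and each consecutive pair lies in a common parallel set), yet no standard product subcomplex contains all three, since $\{a,b,c,d\}$ spans no join subgraph. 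For the same reason your claim that two standard product subcomplexes of $K'$ sharing a standard geodesic ``must coincide or be contained in a common one'' is false. What is actually needed --- and what the paper proves in its third case --- is that \emph{every} label arising from the images of horizontal geodesics of $P=T_1\times T_2$ commutes with \emph{every} label arising from the images of vertical ones, so that all labels together span a join subgraph $\Ga_0\subset\Ga'$; this uses the fact that each horizontal geodesic of $P$ crosses each vertical one, spanning a standard flat whose image is Hausdorff close to a flat of $K'$. Only after that does a connectivity argument (the paper uses chains of image flats overlapping in quarter-planes) place all the image flats in a single copy of $K(\Ga_0)$. Your phrase ``the join of the labels that appear'' presupposes precisely this global commutation without proving it.

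A second, smaller problem is circularity and misattribution in the supporting lemmas. At this stage one cannot say that a standard geodesic of $P$ maps near a \emph{standard} geodesic of $K'$ ``with a well-defined label'': Lemma \ref{lemtripodinparallelset} only produces a \emph{singular} geodesic (whose labels may vary, so its parallel set need not be a copy of $K(\{v'\}\circ\{v'\}^\perp)$), and it requires a tripod, which does not exist when $P$ is a single flat --- that case the paper handles separately via Lemma \ref{lemproductsarestandard}. The statement that standard flats go near standard flats is Theorem \ref{StandardFlats}, proved \emph{after} and by means of the present theorem, so it is unavailable. Likewise Lemma \ref{lemstandardalternative} is stated for standard flats in $K$ (its proof uses cocompactness of their stabilizers) and cannot be quoted for the image flats in $K'$, which are not yet known to be standard; to see that two image flats sharing a coarse line lie in a common parallel set one should argue directly from Lemma \ref{lemdistanceattained}, noting that a convex subset of a flat at finite Hausdorff distance from a quasi-line contains a genuine line. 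Lemma \ref{lemalmostcontainedimpliescontained} is fine for the final upgrade, but it is not where the difficulty lies; the crux is the join step above.
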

\proof
The subcomplex $P$ is associated with a subgraph $\Ga'\subset \Ga$,
where $\Ga'$ is a join $\Ga'=A\circ B$, where both $A$ and $B$ are nonempty.

{\em Case 1. $|A|=|B|=1$ and $P$ is a single flat.}  By 
Theorem \ref{thmflatcharacterization} we know that $f(P)$
lies in a neighborhood of controlled thickness around a flat
$F'\subset K'$.  But every flat in $K'$ is contained in a product subcomplex
by Lemma \ref{lemproductsarestandard}.

{\em Case 2. $|A|=1$, $|B|>1$, and $P$ is a parallel set strictly
larger than a single flat.}  Here $P\simeq T\times\R$, where $T$ is
tree of valence at least $4$ everywhere.  Let $\ga\subset P$ be a
singular geodesic of the form $\pt \times \R$.  Then each point $p\in P$ 
lies in a subset
$Y\subset P$ isometric to tripod x $\R$, where $\ga\subset Y$ is the
singular locus of $Y$.  Applying Lemma \ref{lemtripodinparallelset},
we conclude that $f(Y)$ is contained in a controlled neighborhood of
the parallel set of a singular geodesic $\ga'\subset K'$, where $\hd(\ga',f(\ga))$
is controlled.

{\em Case 3. $\min(|A|,|B|)>1$, and $P$ is a product where both
factors are strictly larger than a line.}  Let $\H,\V$ be the
collections of horizontal and vertical singular geodesics in $Y$.  By
applying Lemma \ref{lemtripodinparallelset}, we see that these map to
within controlled distance of singular geodesics in $K'$; we let
$\H'$, $\V'$ be the corresponding sets of singular geodesics.  Since
any pair $\al'\in\H',\be'\in \V'$ spans a flat plane in $K'$, their
labels must lie in a join subcomplex of the defining graph $\Ga'$.  Let
$A\subset \Ga'$ (respectively $B\subset \Ga'$) be the set of vertices
of $\Ga'$ which arise as a label of some $\al'\in\H'$ (respectively
$\be'\in\V'$).  Then $A\cup B$ spans a join subgraph $\Ga_0\subset
\Ga'$.  The collection of flats $\f'$ spanned by pairs $\al'\in \H',
\,\be'\in \V'$, must lie in a single connected component of
$p^{-1}(\bar K(\Ga_0)))\subset K'$ because if $F,F'\in\f'$, then there
is a chain of flats
$$
\{F=F_1,\ldots,F_k=F'\}\subset \f'
$$
such that $F_i\cap F_{i+1}$ contains a quarter-plane for each $1\leq i<k$.
Thus $f(P)$ lies in a controlled neighborhood of $p^{-1}(\bar K(\Ga_0)))$.
\qed

\bigskip
Note that in the situation of Theorem \ref{thmcontainedinproductsubcomplex},
the image $f(P)$ need not lie at finite Hausdorff distance from a standard
product subcomplex: consider the case when the defining graph of $K$ is the
star of a single vertex, and $P\subset K$ is a standard flat.  However,
maximal standard product subcomplexes are preserved:

\begin{corollary}\label{MaxProdSubcomplexes}
There is a constant $D_1=D_1(L,A)\in (0,\infty)$
such that if  $P\subset K$ is a maximal standard product subcomplex,
then 
$$
\hd(f(P),P')<D
$$
for some maximal standard product subcomplex $P'\subset K'$.
\end{corollary}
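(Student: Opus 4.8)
The plan is to deduce the corollary from Theorem \ref{thmcontainedinproductsubcomplex} by a symmetry argument, using a quasi-inverse $g:K'\ra K$ of $f$, together with Lemma \ref{lemalmostcontainedimpliescontained} to upgrade ``contained in a neighborhood of'' to honest containment at the level of standard product subcomplexes. First I would fix a quasi-inverse $g$ of $f$ (with constants depending only on $L,A$), so that $d(g\circ f,\id_K)$ and $d(f\circ g,\id_{K'})$ are bounded by some $C=C(L,A)$. Let $P\subset K$ be a maximal standard product subcomplex. By Theorem \ref{thmcontainedinproductsubcomplex} applied to $f$, there is a standard product subcomplex $P'\subset K'$ with $f(P)\subset N_D(P')$; enlarging $P'$ if necessary, I may assume $P'$ is a \emph{maximal} standard product subcomplex (a product subcomplex is contained in a maximal one, and enlarging only helps the containment).

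Next I would run the same argument in the other direction: applying Theorem \ref{thmcontainedinproductsubcomplex} to the quasi-isometry $g$ and the standard product subcomplex $P'$, there is a standard product subcomplex $P''\subset K$ with $g(P')\subset N_{D}(P'')$. Then
$$
P\subset N_C(g(f(P)))\subset N_{C}(g(N_D(P')))\subset N_{C+LD+A}(g(P'))\subset N_{C+LD+A+D}(P''),
$$
so $P\subset N_{r}(P'')$ for $r=r(L,A)$. Now the key point: $P$ is a \emph{maximal} standard product subcomplex, and by Lemma \ref{lemalmostcontainedimpliescontained} the inclusion $P\subset N_r(P'')$ forces $P\subset P''$; by maximality of $P$ this gives $P=P''$. (Here I use that both $P$ and $P''$ are $2$-dimensional standard product subcomplexes, so Lemma \ref{lemalmostcontainedimpliescontained} applies.) Hence $g(P')\subset N_D(P)$.

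Finally I would combine the two containments $f(P)\subset N_D(P')$ and $g(P')\subset N_D(P)$. From the latter, $f(g(P'))\subset N_{LD+A}(f(P))$, and since $d(f\circ g,\id_{K'})\le C$ we get $P'\subset N_{LD+A+C}(f(P))$. Together with $f(P)\subset N_D(P')$ this yields $\hd(f(P),P')<D_1$ for $D_1=D_1(L,A)$, and $P'$ is maximal by construction. The main obstacle is the bookkeeping to ensure that the ``enlarge $P'$ to a maximal product subcomplex'' step is compatible with the neighborhood bounds; this is routine because containment of product subcomplexes only improves the one-sided neighborhood inclusion, and the decisive rigidity input (that a neighborhood-containment of maximal standard product subcomplexes is an honest containment) is exactly Lemma \ref{lemalmostcontainedimpliescontained}.
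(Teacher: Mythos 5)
Your proposal is correct and follows essentially the same route as the paper's proof: take a quasi-inverse $g$, apply Theorem \ref{thmcontainedinproductsubcomplex} in both directions, use Lemma \ref{lemalmostcontainedimpliescontained} plus maximality of $P$ to conclude $P=P''$, and then translate the two one-sided containments into a Hausdorff distance bound. Your write-up just makes explicit the final bookkeeping (applying $f$ to $g(P')\subset N_D(P)$) that the paper states more tersely.
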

\proof
Let $g:K'\lra K$ be a quasi-inverse for $f$, with quasi-isometry constants
controlled by $(L,A)$.

By Theorem \ref{thmcontainedinproductsubcomplex}, we know that $f(P)\subset N_D(P')$,
where $P'\subset K'$ is a standard product complex; without loss of 
generality we may suppose that $P'$ is a maximal standard product complex.
Applying Theorem \ref{thmcontainedinproductsubcomplex} to $g$, we conclude
that $g(P')\subset N_D(P_1)$, where $P_1\subset K$ is a standard product complex.
However, this implies that $P$ lies in a finite neighborhood of $P_1$; hence
$P\subset P_1$ by Lemma \ref{lemalmostcontainedimpliescontained}, and by the maximality of $P$, we get $P=P_1$.
It follows that $P'$ lies in a controlled neighborhood of $P'$, and hence
$f(P)$ lies at controlled Hausdorff distance from a maximal standard product complex
in $K'$.
\qed

\begin{corollary}
\label{cor4cycles}
The graph $\Ga$ has $4$-cycles iff $\Ga'$ has $4$-cycles.
\end{corollary}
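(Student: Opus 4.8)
The plan is to characterize ``$\Ga$ has a $4$-cycle'' purely in terms of the quarter-plane complex, and then to invoke the quasi-isometry invariance already built into the proof of Theorem~\ref{thmflatcharacterization}. Concretely, I would show: \emph{a triangle-free graph $\Ga$ contains a $4$-cycle if and only if $\qp$ contains a complete bipartite subgraph $K_{3,3}$.} Since the proof of Theorem~\ref{thmflatcharacterization} shows that an $(L,A)$-quasi-isometry $\phi\colon K\to K'$ induces a dimension-preserving isomorphism $\phi_*\colon\N\to\N'$ taking cycles to cycles and preserving the number of quarter-planes in minimal $1$-dimensional elements, and since the presence of a $K_{3,3}$ in $\qp$ is detected inside $\N$ (a $K_{3,3}$ is the union of the nine $4$-cycles it contains, each of which is $\qp_F$ for a flat $F$, and its combinatorial type --- which pairs of these $4$-cycles share which edge --- is recorded by $\N$), it follows that $\qp$ contains a $K_{3,3}$ iff $\qp'$ does, which with the characterization yields the Corollary.

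For the forward direction of the characterization, note that a $4$-cycle $v\,w\,v'\,w'$ in a triangle-free graph is automatically induced, so $\{v,v'\}\circ\{w,w'\}$ is a full join subgraph of $\Ga$ and the associated standard subcomplex is $K(\{v,v'\}\circ\{w,w'\})\cong K(\langle v,v'\rangle)\times K(\langle w,w'\rangle)$, a product of two trees of valence $4$. Every product $\al\times\be$ of a ray in the first factor with a ray in the second is a genuine quarter-plane of $K$, and (since this standard subcomplex is convex and distances in each tree factor agree with distances in $K$) non-asymptotic rays in a tree factor remain non-asymptotic in $K$; hence the boundaries of the two factors span a complete bipartite subgraph of $\qp$ with both sides infinite, in particular a $K_{3,3}$.

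The converse is where the real work lies, and is the step I expect to be the main obstacle. Suppose $\qp$ contains a $K_{3,3}$ with sides $\{[\al_1],[\al_2],[\al_3]\}$ and $\{[\be_1],[\be_2],[\be_3]\}$. Each $[\al_i]$ has at least three incident edges of $\qp$; by Lemma~\ref{lemquarterplaneclassification}, cases (1) and (2) force $\qp$-valence exactly $2$ at a ray end, so each quarter-plane of the $K_{3,3}$ falls under case (3), whence $|\Ga_{[\al_i]}^\perp|>1$ and $|\Ga_{[\be_j]}^\perp|>1$. One must then show this is impossible when $\Ga$ has no $4$-cycle. In that case every full join subgraph $A\circ B$ of $\Ga$ has $\min(|A|,|B|)=1$, so each ray end $[\al]$ with $|\Ga_{[\al]}^\perp|>1$ is an end of a standard geodesic of some label $v$ with $|v^\perp|\ge 2$; all quarter-planes through $[\al]$ lie in the unique copy of $K(\{v\}\circ v^\perp)\cong\R_v\times K(v^\perp)$ containing that geodesic's axis, and a quarter-plane $[\al]\,[\be]$ exists only if the axis of the standard geodesic through $[\be]$ lies in that same copy. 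Tracking these incidences, three vertices on one side of a $K_{3,3}$ would force three standard geodesics with distinct asymptote classes to lie in the common intersection of three distinct copies of a $2$-dimensional standard product subcomplex; but two distinct such copies meet in at most a single flat, which contains geodesics of at most one asymptote class in any fixed label --- a contradiction. This bookkeeping with overlapping $2$-dimensional product subcomplexes (and the verification that ``$K_{3,3}\subseteq\qp$'' is genuinely expressible in $\N$) is the bulk of the argument.

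Alternatively, one may run the whole proof through Corollary~\ref{MaxProdSubcomplexes}: $\Ga$ has a $4$-cycle iff $K$ has a maximal standard product subcomplex isometric to a product of two branching trees, and $\phi$ together with a quasi-inverse carries maximal standard product subcomplexes to maximal standard product subcomplexes at uniformly bounded Hausdorff distance, so the content is the same --- that a product of two branching trees is not quasi-isometric to any $\R\times T$ with $T$ a tree --- which is exactly what the $K_{3,3}$ dichotomy above encodes.
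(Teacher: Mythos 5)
Your last paragraph is essentially the paper's own proof: the paper deduces the corollary in three lines from Corollary~\ref{MaxProdSubcomplexes}, observing that $\Ga$ has no $4$-cycles iff every maximal join subgraph is a vertex star iff no maximal standard product subcomplex is quasi-isometric to a product of two branching trees (the only outside input being the standard fact that such a product is not quasi-isometric to any $\R\times T$). Your main route, via a $K_{3,3}$ in the quarter-plane complex, is a genuinely different and in principle workable characterization, but the step you yourself flag as the crux has a real gap: the claimed contradiction is false as stated. From the edges $a_i\sim b_j$ of a putative $K_{3,3}$ (with, say, all $a_i$ ends of standard $v$-geodesics and each $b_j$ an end of a standard $u_j$-geodesic, $u_j$ adjacent to $v$) one only gets that each pairwise intersection $P_{a_i}\cap Q_{b_j}$ of the corresponding copies of $K(\Star v)$ and $K(\Star u_j)$ is nonempty, hence a single standard flat. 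Nothing forces ``three standard geodesics with distinct asymptote classes to lie in the common intersection of three distinct copies'': the relevant $v$-lines are different lines for different $j$, and a single copy of $K(\Star u)\cong T\times\R$ happily contains $v$-lines in infinitely many distinct parallelism classes, so the fact that two copies meet in at most one flat produces no contradiction by itself. The genuine obstruction is that, when $\Ga$ has no $4$-cycles, two disjoint copies of $K(\Star v)$ cannot both meet two distinct copies of $K(\Star u)$ (or of $K(\Star u)$ and $K(\Star u')$). This is a coset statement ($g\langle \Star v\rangle$, $h\langle\Star u\rangle$, etc.) and needs an actual argument, e.g.\ via normal forms: writing the four incidences out, one is forced to produce an element admitting reduced words in which every letter from the link of $v$ other than $u$ precedes every letter from the link of $u$ other than $v$, and also reduced words with the opposite order; since no such pair of letters commutes precisely because $\Ga$ has no $4$-cycles (and no triangles), this is impossible. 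So the gap is fillable, but the filling is exactly the missing heart of your converse.

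Two smaller points. First, your parenthetical claim that the presence of a $K_{3,3}$ ``is recorded by $\N$'' also needs justification: the paper only provides a dimension-preserving poset isomorphism $\N\to\N'$ together with the edge counts of minimal elements, and recovering the full incidence combinatorics of nine $4$-cycles (which vertices, not just which edges, are shared) from that data is not immediate. Second, none of this effort is needed for the corollary itself: Corollary~\ref{MaxProdSubcomplexes} already transports maximal standard product subcomplexes across the quasi-isometry with bounded Hausdorff error, and distinguishing $T_1\times T_2$ (both trees branching) from $\R\times T$ up to quasi-isometry is far lighter than the $K_{3,3}$ dichotomy --- for instance, by Lemma~\ref{lemquarterplaneclassification}, in the first case there are pairs of flats whose coarse intersection is a quarter-plane, while in the second any two flats coarsely intersect along a line.
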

\proof
$\Ga$ has no $4$-cycles iff the  maximal join subgraphs of $\Ga$ are 
contained in stars
of vertices iff $K$ contains  no maximal product subcomplex quasi-isometric
to a product of two tri-valent trees.  Thus by Corollary \ref{MaxProdSubcomplexes}
the property of having $4$-cycles in the defining graph 
is a quasi-isometry invariant property of RAAG's, among
the class of RAAG's with triangle-free defining graphs.

\qed

\begin{theorem}\label{StandardFlats}
Assume that $\Gamma_1,\Gamma_2$ are connected finite graphs with all
vertices of valence $>1$ and no cycles of length $<5$. Then there is a
constant $D_2=D_2(L,A)\in (0,\infty)$ such that if $S_1\subset K_1$ is a
standard flat and $f:K_1\to K_2$ is an $(L,A)$-quasi-isometry, then
there exists a standard flat $S_2\subset K_2$ such that $f(S_1)$ and
$S_2$ are at Hausdorff distance $\le D_2$.
\end{theorem}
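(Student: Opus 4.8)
The plan is to realize the standard flat $S_1\subset K_1$ as an intersection of two maximal standard product subcomplexes, to transport this description to $K_2$ using the preservation results proved in the last two sections, and then to read a standard flat off the combinatorics of $\Gamma_2$. The argument rests on two elementary facts about any connected finite graph $\Gamma$ with all valences $>1$ and no cycles of length $<5$. \emph{(a)} Since $\Gamma$ has no $4$-cycle, any full subgraph of $\Gamma$ that decomposes as a nontrivial join has a one-vertex factor, hence lies in a closed star $\Star(u)$; and since $\Gamma$ also has no triangle and no vertex of valence $\le 1$, the closed stars $\Star(u)$, $u\in V(\Gamma)$, are pairwise distinct and are exactly the full subgraphs that are maximal among those decomposing as a nontrivial join. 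Consequently the maximal standard product subcomplexes of $K(\Gamma)$ are precisely the copies of the subcomplexes $K(\Star(u))$. \emph{(b)} For $u\ne u'$ the full subgraph $\Star(u)\cap\Star(u')$ is the edge $\{u,u'\}$ when $u\sim u'$ (by triangle-freeness $u,u'$ have no common neighbour), and has at most one vertex when $u\not\sim u'$ (by $4$-cycle-freeness $u,u'$ have at most one common neighbour); and the intersection of two standard subcomplexes with labels $\Star(u),\Star(u')$ is either empty or a standard subcomplex with label $\Star(u)\cap\Star(u')$.

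On the source side, $S_1$ is the copy through some vertex of the standard flat $K(e)$ for an edge $e=\{v,w\}$ of $\Gamma_1$. Let $P_v,P_w\subset K_1$ be the copies of $K(\Star(v)),K(\Star(w))$ that contain $S_1$; by \emph{(a)} these are maximal standard product subcomplexes, by \emph{(b)} their intersection is $S_1$ (since $\Star(v)\cap\Star(w)=e$), and $P_v\ne P_w$. I would then transport this picture across $f$. By Theorem \ref{thmflatcharacterization} there is a flat $F_2\subset K_2$ with $\hd(f(S_1),F_2)<D$, $D=D(L,A)$, and by Corollary \ref{MaxProdSubcomplexes} there are maximal standard product subcomplexes $P_v',P_w'\subset K_2$ with $\hd(f(P_v),P_v')<D$ and $\hd(f(P_w),P_w')<D$. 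From $S_1\subset P_v$ we get $F_2\subset N_{2D}(P_v')$; the proof of Lemma \ref{lemalmostcontainedimpliescontained} applies verbatim with the flat $F_2$ in place of a standard product complex (it is a $2$-dimensional convex subset with extendible geodesics, and $\dim K_2\le 2$), yielding $F_2\subset P_v'$, and likewise $F_2\subset P_w'$; hence $F_2\subset P_v'\cap P_w'$.

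It remains to identify $P_v'\cap P_w'$. If $P_v'=P_w'$, then $f(P_v)$ and $f(P_w)$ both lie in $N_D(P_v')$, so $P_v$ lies in a bounded neighbourhood of $P_w$ by the quasi-isometry inequality for $f$; by Lemma \ref{lemalmostcontainedimpliescontained} this forces $P_v\subset P_w$, whence $P_v=P_w$ by maximality, contradicting $P_v\ne P_w$. So $P_v'\ne P_w'$; by \emph{(a)} write $P_v',P_w'$ as copies of $K(\Star(u')),K(\Star(u''))$. Distinct standard subcomplexes with the same label are disjoint, so since $P_v'\cap P_w'\ne\emptyset$ we must have $u'\ne u''$; and by \emph{(b)} the nonempty set $P_v'\cap P_w'$ is a standard subcomplex with label $\Star(u')\cap\Star(u'')$. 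If $u'\not\sim u''$ this label has at most one vertex, so $P_v'\cap P_w'$ has dimension $\le 1$ and cannot contain the flat $F_2$ --- a contradiction. Hence $u'\sim u''$, the label is the edge $\{u',u''\}$, and $P_v'\cap P_w'$ is a standard flat $S_2\subset K_2$. Finally, $F_2\subset S_2$ with $F_2$ convex and $F_2,S_2$ each isometric to $\R^2$ forces $F_2=S_2$, so $\hd(f(S_1),S_2)<D$; thus $D_2=D$ works.

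The only step where I expect to do real work is establishing the combinatorial input \emph{(a)}--\emph{(b)}: that, under the standing hypotheses on $\Gamma$, the maximal standard product subcomplexes are exactly the copies of the $K(\Star(u))$ --- this needs the no-triangle, no-$4$-cycle and valence-$>1$ conditions simultaneously, through a short argument about when one closed star is contained in another --- together with the standard fact that the intersection of standard subcomplexes is a standard subcomplex labelled by the intersection of the labels. Adapting the proof of Lemma \ref{lemalmostcontainedimpliescontained} from a standard product complex to an arbitrary flat is immediate, and the remaining manipulations are routine bookkeeping with quasi-isometry constants; the geometric substance has already been isolated in Theorem \ref{thmflatcharacterization} and Corollary \ref{MaxProdSubcomplexes}.
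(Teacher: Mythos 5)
Your proposal is correct and follows essentially the same route as the paper: the paper likewise writes $S_1$ as the intersection of the two parallel sets (maximal standard product subcomplexes) containing it, transports them with Corollary \ref{MaxProdSubcomplexes}, and identifies the intersection of the images as a standard flat via Lemma \ref{2 parallel sets}, whose content is precisely your star combinatorics (a)--(b). The only divergence is the endgame: the paper notes that the coarse intersection of the image parallel sets is a plane and applies the trichotomy of Lemma \ref{2 parallel sets} together with Lemma \ref{lemdistanceattained}, whereas you invoke Theorem \ref{thmflatcharacterization} to get an honest flat $F_2$ near $f(S_1)$ and show it is exactly contained in $P_v'\cap P_w'$ via the argument of Lemma \ref{lemalmostcontainedimpliescontained} --- a legitimate variant that trades the coarse-intersection bookkeeping for an exact containment argument.
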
 

In the proof we will need the following lemma. Note that maximal
product subcomplexes in $K_1,K_2$ have the form $T\times \R$ for a
4-valent infinite tree $T$, and these are precisely the parallel sets
of standard geodesics.

\begin{lemma} \label{2 parallel sets}
Let $P,P'$ be two maximal product subcomplexes in $K_1$
  (or $K_2$). Then precisely one of the following holds.
\begin{enumerate}[(1)]
\item $P=P'$,
\item $P\cap P'$ is a standard flat. Moreover, every standard flat can
  be represented as the intersection of two parallel sets.
\item There is no quarter-plane contained in Hausdorff neighborhoods
  of both $P$ and $P'$.
\end{enumerate}
\end{lemma}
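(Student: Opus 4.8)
The plan is to analyze the relative position of two maximal product subcomplexes $P = T \times \R$ and $P' = T' \times \R$ purely in terms of $CAT(0)$ geometry, using the structure theory already developed in the excerpt. Since $P$ and $P'$ are convex subcomplexes of the $CAT(0)$ complex $K_i$, I would apply Lemma \ref{lemdistanceattained} with $C = P$, $C' = P'$, obtaining $\Delta = d(P,P')$, the convex sets $Y \subset P$, $Y' \subset P'$ where the distance is attained, and the convex slab $Z \simeq Y \times [0,\Delta]$ joining them. The case division will be driven by $\Delta$ and by the geometry of $Y$.

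First suppose $\Delta = 0$. Then $Y = Y' = P \cap P'$ is a nonempty convex subcomplex of both $P$ and $P'$, hence a convex subcomplex of $K_i$ that is a product subcomplex of $P$ (here I use that $K_i$ is a square complex and the factors of a product subcomplex are trees). If $P \cap P'$ is $2$-dimensional and splits as a product of two subtrees each strictly larger than a line, then by Lemma \ref{lemproductsarestandard} it is contained in a standard product subcomplex $Q$; but $Q$ is then a product subcomplex containing $P$ and $P'$, and since $P, P'$ are maximal this forces $P = Q = P'$, giving case (1). Otherwise $P \cap P'$ is either a single standard flat (the factors of the intersection are a line cross a line), giving case (2); or $P \cap P'$ is contained in a single standard geodesic or is bounded, in which case I claim it cannot contain a quarter-plane, and I need the sharper conclusion (3), which I will reduce to the $\Delta > 0$ analysis below. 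For the ``moreover'' clause in (2): given a standard flat $S = \ell_A \times \ell_B$ with label $\ol{ab} \subset \Gamma_i$, the hypotheses (connected, no short cycles, valence $> 1$) guarantee the stars of $a$ and $b$ each contain a vertex not in $\ol{ab}$, and the parallel sets of $\ell_A$ and $\ell_B$ are maximal product subcomplexes whose intersection is exactly $S$; this is a direct computation with labels using that there are no $4$-cycles.

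Now suppose $\Delta > 0$. Since $Z$ meets $P$ and $P'$ orthogonally, $Y$ and $Y'$ lie in the $1$-skeleton of $K_i$, hence $Y$ is either a bounded set, or (up to finite Hausdorff distance) a singular geodesic contained in $P$. If $Y$ is bounded then by part (4) of Lemma \ref{lemdistanceattained} the distance $d_{P'}$ grows linearly away from $Y$ on $P$, so the $r$-neighborhoods of $P$ and $P'$ have intersection of uniformly bounded diameter for every $r$; in particular no quarter-plane sits in Hausdorff neighborhoods of both, giving case (3). If $Y$ is unbounded, it is at finite Hausdorff distance from a singular geodesic $\gamma \subset P$; by Corollary \ref{corasymptoticlabels} (applied to the two ends of the slab $Z$, or rather to rays in $Y$ and $Y'$) the geodesics $Y$ and $Y'$ have subrays with equal labels in both directions, so $Y$ and $Y'$ are parallel standard geodesics and $Z$ together with a neighborhood in $P$ and $P'$ would produce a product subcomplex containing parts of both $P$ and $P'$ sharing a standard geodesic — but again part (4) forces $d_{P'}$ to grow linearly off $Y$, so neighborhoods of $P$ and $P'$ meet in a set Hausdorff-close to $Y$, a single line, which contains no quarter-plane: case (3). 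Finally I must check exclusivity: (1) clearly excludes the others; (2) and (3) are mutually exclusive because a standard flat contains quarter-planes lying in both $P$ and $P'$.

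The main obstacle I expect is the bookkeeping in the $\Delta = 0$ subcase to cleanly separate ``$P \cap P'$ is a standard flat'' from ``$P \cap P'$ is one-dimensional but not a flat'' and to verify that the latter really lands in case (3) rather than giving some intermediate behavior — this requires knowing that a one-dimensional convex product subcomplex of $P$ that is not a full standard flat must actually be (Hausdorff-close to) a standard geodesic and then invoking the linear-growth estimate, which in turn relies on the cocompact isometry group hypothesis of Lemma \ref{lemdistanceattained}(4). The other delicate point is the ``moreover'' in (2): one must exhibit, for each standard flat, two genuinely maximal product subcomplexes meeting exactly in it, and this is where the atomic-type hypotheses on $\Gamma_i$ (no short cycles, minimum valence $2$) are essential, since in the presence of a $4$-cycle the relevant parallel sets could be larger and intersect in more than the flat.
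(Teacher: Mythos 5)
Your coarse setup (applying Lemma \ref{lemdistanceattained}, noting that for $\Delta>0$ the sets $Y,Y'$ lie in the $1$-skeleton and that the linear-growth estimate forces the coarse intersection to be Hausdorff-close to $Y$, hence (3)) and your treatment of the ``moreover'' clause are in line with the paper. The heart of the lemma, however, is the $\Delta=0$ case, and there your argument has two genuine gaps. First, in the subcase where $P\cap P'$ is $2$-dimensional with both tree factors larger than a line, the inference ``$P\cap P'$ is contained in a standard product subcomplex $Q$, hence $Q$ contains $P$ and $P'$'' is a non sequitur: containing the intersection gives no control whatsoever over $P$ or $P'$. (In fact this subcase is vacuous: by Lemma \ref{lemproductsarestandard}, a product subcomplex with both factors branching has at least two labels on each side of a join, producing a $4$-cycle in $\Gamma_i$, which is excluded --- but you neither prove vacuity nor give a valid argument.) Second, and more seriously, in the ``line cross line'' subcase you conclude case (2) without showing that the flat is a \emph{standard} flat. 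A convex flat subcomplex of a parallel set $T\times\R$ need not be standard (take the product of the $\R$-factor with a line in $T$ whose edge labels vary), and standardness is precisely what conclusion (2) asserts and what is used in the proof of Theorem \ref{StandardFlats}. Nothing in your argument rules this out, because you never use that $P,P'$ are parallel sets of standard geodesics, nor the hypotheses on $\Gamma_i$, in this part of the proof.

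The paper closes both gaps with one observation you skipped: when $\Delta=0$ the intersection contains a vertex $p$, and one may take $P,P'$ to be the parallel sets of standard geodesics $\ell,\ell'$ through $p$, labelled by vertices $v,v'$ of the defining graph. Then $P\cap P'$ is the standard subcomplex through $p$ associated with $\overline{\Star}(v)\cap\overline{\Star}(v')$: if $v=v'$ one gets (1); if $v,v'$ are adjacent, the absence of triangles makes this intersection exactly the edge $vv'$, so $P\cap P'$ is the standard flat labelled by that edge, giving (2); if $v,v'$ are non-adjacent, the absence of $4$-cycles leaves at most one common neighbor, so $P\cap P'$ is at most a standard geodesic and (3) follows from Lemma \ref{lemdistanceattained}(4) exactly as in your $\Delta>0$ case. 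Your remaining steps (mutual exclusivity, and exhibiting each standard flat as the intersection of the parallel sets of two perpendicular standard geodesics in it) are fine as stated.
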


\begin{proof}
If $S$ is a standard flat then $S=P\cap P'$ where $P,P'$ are parallel
sets of two perpendicular standard geodesics in $S$. Now let $P,P'$ be
two arbitrary parallel sets. Let $\Delta=d(P,P')$ and let $Y\subset P$
and $Y'\subset P'$ be as in Lemma \ref{lemdistanceattained}. If
$\Delta>0$ then $Y,Y'$ are contained in the 1-skeleta of $P,P'$ so (3)
holds. If $\Delta=0$ then $P\cap P'$ contains a vertex, say $p\in K_1$, and
we may assume that $P,P'$ are parallel sets of standard geodesics
$\ell,\ell'$ through $p$. The standard geodesics through $p$ are in
1-1 correspondence with the vertices of $\Gamma_1$. Say $\ell,\ell'$
correspond to $v,v'$. If $v=v'$ then $P=P'$ and (1) holds. If $v$ and
$v'$ are adjacent, then $P\cap P'$ is the standard flat that contains
$\ell$ and $\ell'$ (and corresponds to the edge joining $v$ to
$v'$). If $v$ and $v'$ are not adjacent then $P\cap P'=\{p\}$ and (3)
holds. 
\end{proof}

\begin{proof}[Proof of Theorem \ref{StandardFlats}]
Write $S_1=P\cap P'$ where $P,P'$ are parallel sets in $K_1$. By
Corollary \ref{MaxProdSubcomplexes}, $f(P)$ and $f(P')$ are Hausdorff
equivalent to parallel sets $Q$ and $Q'$. We must have $Q\neq Q'$
since $P\neq P'$, and the coarse intersection of $Q$ and $Q'$ is a
plane. Lemma \ref{2 parallel sets} implies that $S_2=Q\cap Q'$ is a
standard flat. By Lemma \ref{lemdistanceattained}, $S_2$ is the coarse
intersection between $Q$ and $Q'$, so it follows that $f(S_1)$ and
$S_2$ are Hausdorff equivalent.
\end{proof}

 \section{Flat space}
\label{secflatspace}

In this section we discuss two more $\cat(0)$ spaces associated with certain 
RAAG's -- an alternate model space, and flat space.
Flat space was introduced in \cite{charneydavis}, where it was called
the modified Deligne complex.   \cite{buildingsarecat0} showed that it
was a right-angled building whose apartments are modelled on the
right-angled Coxeter group $W$ with the same defining graph as the RAAG.
In this section we give a more
explicit description of the same object, and discuss some specific features
that will be needed later. 

 \subsection{Defining the exploded torus space and flat space}
 
Let $G=G(\Gamma)$ be the right angled Artin group given by a graph $\Gamma$. To
keep things simple, in the rest of the paper we will make the following
assumption on $\Gamma$:
\begin{itemize}
\item $\Gamma$ is connected, every vertex has valence $>1$, and there
  are no cycles of length $<5$.
\end{itemize}
As there are no 3-cycles, the group $G$
is 2-dimensional, and the fact that there are no 4-cycles will
guarantee that the ``flat space'' $\F$ discussed below is well-defined
and Gromov hyperbolic.

Let $\overline K=\overline K(\Gamma)$ be the presentation complex for
$G$ and let $K=K(\Gamma)$ be its universal cover. Then $K$ is a square
complex which is $\cat(0)$ thanks to our assumption that $\Gamma$ has
no 3-cycles. The complex $K$ is the standard space associated to $G$.

We will now construct a space $X=X(\Gamma)$, the {\it exploded
space}, on which $G$ acts freely and cocompactly. It is somewhat more
convenient to describe its quotient
$\overline{X}=\overline{X}(\Gamma)$, a space whose fundamental group
is $G$. Let $\Gamma^\prime$ denote the first barycentric subdivision
of $\Gamma$. We define a singular fibration $p: \overline{X}^{(1)}\to
\Gamma^\prime$ so that:

\begin{itemize}

\item the fiber over each vertex of $\Gamma^\prime$ which corresponds to the 
midpoint of an edge of $\Gamma$ is a 2-torus
\item the fiber over any other point of $\Gamma^\prime$ is a circle.

\end{itemize}
The fibration has the following local structure.  For any point
 $x\in\Gamma^\prime$ which is not the midpoint of an edge of $\Gamma$,
 the local structure is the product structure. That is, there exists a
 neighborhood $V$ of $x$ such that $p^{-1}(V)\cong V\times S^1$.  For
 a midpoint of an edge of $\Gamma$, the local structure is as follows.
 Let $A_1\cong S^1\times [0,1)$ and $A_2 \cong S^1\times [0,1)$ be two
 half open annuli. Let $T$ be a torus with two distinguished simple
 closed curves $c_1$ and $c_2$ meeting at a single point. Let $Y$ be
 the quotient space of $T\sqcup A_1\sqcup A_2$ obtained by identifying
 the boundary curve of $A_i$ with $c_i$ via a homeomorphism. Now for
 the midpoint $x$ of an edge in $\Gamma$, there exists a neighborhood
 $V$ of $x$ such that $p^{-1}(V)\cong Y$, so that $p^{-1}(x)\cong T$
 and the for any other $y\in V$, $p^{-1}(y)=S^1\times \{t\}$, a core
 circle of one of the annuli $A_i$.

\begin{figure}
\begin{center}
 \includegraphics{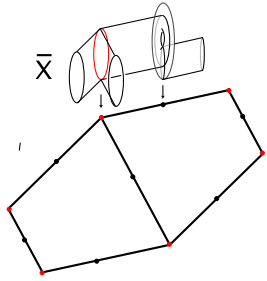}
 \end{center}
 \caption{Constructing the exploded torus space.}
 \end{figure}

Now note that since for any 2-torus, the two curves along which the
annuli are attached meet at a single point, the singular fibration
$p:\overline{X}^{(1)}\to \Gamma^\prime$ has a section $f:\Gamma^\prime\to
\overline{X}^{(1)}$. Let $C(\Gamma^\prime)$ denote the cone on
$\Gamma^\prime$. We now form the identification space
$\overline{X}\equiv(\overline{X}^{(1)}\sqcup C(\Gamma^\prime))/f$ obtained
by attaching $C(\Gamma^\prime)$ to $\overline{X}^{(1)}$ along $f$.

 By construction, we have that the fundamental group of $\overline{X}$
 is precisely the right angled artin group $G$ associated to $\Gamma$
 (simply choose the cone point $o$ of $C(\Gamma)$ as the basepoint and
 for each vertex $v$ of $\Gamma$ let $\alpha_v$ be the loop which runs
 from $o$ to $v$, along the edge $[o,v]$, around the circle associated
 to $v$, and back along $[o,v]$. Then one sees that the loops
 $\alpha_v$ generate the fundamental group and the relations are
 precisely the commutator relations dictated by $\Gamma$.)
 
We let $X$ denote the universal cover of $\overline{X}$ and let
$\rho:X\to \overline{X}$ denote the covering map. By $X^{(1)}$ denote
$\rho^{-1}(\overline X^{(1)})$. Given a torus $T$ in
$\overline{X}$ which is the fiber over the midpoint of an edge of
$\Gamma$, $\rho^{-1}(T)$ is a union of planes. Such planes are
referred to as {\it standard flats}. For each edge $e$ of
$\Gamma^\prime$, let $A_e\subset \overline{X}$ denote the annulus
which is the closure of the union of fibers lying over the interior of
$e$. A component in $X$ of $\rho^{-1}(A_e)$ is called a {\it
standard strip}.  

We may obtain $\ol{K}$ from $\ol{X}$ by collapsing each annulus to a
closed geodesic, and the cone on $\Gamma^\prime$ to a point; this defines
a surjective piecewise linear map with contractible point inverses, and hence is
a homotopy equivalence.  Likewise we get a proper homotopy equivalence
$X\ra K$ by collapsing the interval factor of each standard strip to a point,
and collapsing each copy  of $C(\Gamma^\prime)$ in $X$ to a point.

Let $\F=\F(\Gamma)$ denote the quotient of $X$ obtained by collapsing each
standard flat to a point and each standard strip $S\cong R\times I$ to
an arc by projection onto the $I$-factor. We refer to $\F=\F(\Gamma)$ as the
{\it flat space} associated to $G$. All standard flats in $X$
are represented by points in $\F$, and we can think of $\F$ as
obtained from the (discrete) set of standard flats by connecting the
dots in just the right way to obtain a very useful space (as we shall
see). 

Denote by $\F^{(1)}\subset\F$ the image of $X^{(1)}\subset X$ under the
quotient map. There is an induced action of $G$ on $\F$; let
$\pi:\F\to\overline\F$ be the quotient map. Thus $\overline\F$ can be
constructed from $\overline X$ by collapsing the tori to points and
collapsing the annuli to arcs, and therefore $\overline \F$ can be
identified with the cone $C(\Gamma)$ on $\Gamma$. We will equip
$\overline \F$ with the triangulation in which the base of the cone is
the barycentric subdivision $\Gamma'$ and $\overline X=C(\Gamma)$ is
given the cone triangulation. Note that the image of $\F^{(1)}$ in
$\overline F$ is the base of the cone. We will also equip $\F$ with
the triangulation obtained by pulling back via $\pi:\F\to\overline\F$.

There are three types of vertices
in $\F$:

\begin{itemize}
\item vertices of $\F$ which are obtained from crushing flats to
points are called {\it flat vertices} (in $\overline\F$ these
correspond to the subdivision vertices in $\Gamma'$),
\item vertices of $\F$ which are the cone points of copies of
$C(\Gamma)$ are called {\it cone vertices}, and
\item the remaining vertices are called {\it singular vertices}; these
are the vertices which correspond to the original vertices of $\Gamma$
\end{itemize}

An arc joining two flat vertices which consists of two neighboring
edges in $\F$ is called an {\it  full edge} of $\F$.

The action of $G$ on $\F$ is simplicial, the stabilizers of flat
vertices are isomorphic to $\zz$, the stabilizers of the full edges
are infinite cyclic, and $\F/G \cong C=C(\Gamma)$.  Thus this action
endows $G$ with the structure of a (simple) complex of groups, in
which the underlying complex is $C$.

We endow $X$ and $\F$ with a polyhedral metric as follows. Each torus
fiber is given the structure of a flat square 2-torus, in which each
of the designated curves along which fiber annuli are attached has
length 1. The standard annuli are viewed as quotients of the unit
square with a pair of opposite edges identified. Thus
$\overline{X}^{(1)}$ has the structure of locally $\cat(0)$ square
complex. Therefore, we just need to metrize the attached cone
$C(\Gamma^\prime)$.  Note that each 2-simplex in $C(\Gamma^\prime)$ is
of the form $(o,m,v)$, where $o$ is the cone vertex, $m$ is the
midpoint of an edge and $v$ is a vertex of $\Gamma$. We endow such a
simplex $(o,m,v)$ with the metric of an isosceles right triangle whose
legs have length 1, so that the right angle is at the vertex $v$. It
is now easy to check that the space $X$ satisfies the $\cat(0)$ condition
(i.e. the links have no loops of length less than $2\pi$). Note
further that given an edge of $\Gamma$, the cone on $\Gamma$ now
consists of precisely two isosceles right triangles, forming a unit
cube, so that in fact, $C(\Gamma^\prime)$ has the structure of a
locally $\cat(0)$ square complex.

Consequently, $X$ is a $\cat(0)$ square complex. Since $\F$ is built out
of copies of $C(\Gamma^\prime)$, we can similarly endow $\F$ with the
structure of a $\cat(0)$ square complex (for a discussion of links see
Observation \ref{obslink} below). From now on, we will abuse
notation slightly and refer to $C(\Gamma^\prime)$ simply as
$C(\Gamma)$ or simply $C$.

\begin{theorem}[No flats in flat space]
The flat space $\F$ is Gromov hyperbolic.
\end{theorem}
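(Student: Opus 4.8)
The plan is to obtain hyperbolicity from the Flat Plane Theorem. Since $G$ acts cocompactly by isometries on the complete $\cat(0)$ complex $\F$, it is enough to show that $\F$ contains no flat plane, that is, no isometrically embedded copy of $\R^2$: indeed a cocompact $\cat(0)$ space with no flat plane is Gromov hyperbolic (see \cite{hypgps}), and properness of the action is not needed for this direction. So suppose, for contradiction, that $P\subset\F$ is a flat plane.

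The first step is to show that any such $P$ must contain an entire $2$-cell of $\F$, and therefore a cone vertex. Being $2$-dimensional, $P$ is not contained in the $1$-skeleton, so it meets the interior of some square $\sigma$. Fix $y\in\Int(\sigma)\cap P$. A neighbourhood of $y$ in $\F$ is isometric to a Euclidean disk, and $y$ is an interior point of the flat plane $P$; since $P$ is convex, its space of directions $\Sigma_yP$ is isometrically embedded in $\Sigma_y\F$, and as both are circles of length $2\pi$ this embedding is onto. Consequently $P$ contains a neighbourhood of $y$ inside $\Int(\sigma)$ (run short geodesics out of $y$: they stay in $P$ by convexity and in $\Int(\sigma)$ since the latter is Euclidean near $y$). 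Thus $\Int(\sigma)\cap P$ is open in $\Int(\sigma)$; it is also closed, since $P$ is closed, and nonempty, so $\Int(\sigma)\subset P$, whence $\sigma\subset P$. Now every square of $\F$ arises as a union of two triangles $(o,v,m)\cup(o,w,m)$ of a copy of $C(\Gamma')$, and hence has a cone vertex $o$ among its corners; therefore $o\in P$.

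The second step is a link computation. No collapsing takes place near a cone vertex in passing from $X$ to $\F$, and cone vertices have trivial stabiliser, so $\operatorname{Lk}_\F(o)$ is isometric to the link of the cone point of $C(\Gamma')$: combinatorially this is $\Gamma$ (via its barycentric subdivision), metrized so that a cycle of combinatorial length $n$ in $\Gamma$ has metric length $n\pi/2$ (cf. Observation \ref{obslink}). A flat plane through $o$ would force $\Sigma_oP$ to be an isometrically embedded circle of length $2\pi$ in $\operatorname{Lk}_\F(o)$, hence an embedded cycle of metric length $2\pi$, i.e. one coming from a cycle of at most $4$ edges in $\Gamma$. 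This is impossible, since $\Gamma$ has girth $\ge 5$. Hence $\F$ has no flat plane, and by the Flat Plane Theorem $\F$ is Gromov hyperbolic.

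The step I expect to require the most care is the setup rather than the argument itself: applying the Flat Plane Theorem to a cocompact but non-proper (indeed non-locally-finite) $\cat(0)$ complex, and pinning down the local structure of $\F$ at its three kinds of vertices, especially verifying that nothing is collapsed near cone vertices so that $\operatorname{Lk}_\F(o)$ is exactly a rescaled copy of $\Gamma$ (Observation \ref{obslink}) — this is where the hypothesis that $\Gamma$ has no $4$-cycle enters. (As a side remark, the same considerations identify $\F$ with the Charney--Davis modified Deligne complex, a right-angled building whose apartments are cubical Davis complexes of the right-angled Coxeter group on $\Gamma$ \cite{buildingsarecat0}; one could alternatively deduce hyperbolicity from the fact that this Coxeter group is word-hyperbolic when $\Gamma$ has no induced $4$-cycle, together with the fact that a flat in a $\cat(0)$ building lies in an apartment, but the argument above avoids building theory.)
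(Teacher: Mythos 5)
There is a genuine gap at the bridging step, and it is exactly the one you flagged as delicate. The Flat Plane Theorem states that a \emph{proper} (locally compact) cocompact $\cat(0)$ space is hyperbolic iff it contains no isometrically embedded copy of $\R^2$, and the hypothesis of properness is needed precisely for the direction you use: the proof that ``no flat plane $\Rightarrow$ hyperbolic'' proceeds by taking a sequence of fatter and fatter geodesic triangles, translating them into a fixed compact set by cocompactness, and extracting a limiting flat via Arzel\`a--Ascoli -- which requires local compactness of the space (the trivial direction, ``hyperbolic $\Rightarrow$ no flat,'' is the one that needs nothing). Your parenthetical ``properness of the action is not needed for this direction'' has it backwards: the issue is not properness of the $G$-action but properness of $\F$ itself, and $\F$ is emphatically not locally compact -- the link of every flat vertex is the infinite join $\Z*\Z$ and the link of every singular vertex is $L*\Z$ (Observation \ref{obslink}). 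Without local compactness, cocompactness alone does not let you promote ``no embedded flat plane'' to hyperbolicity by citing \cite{hypgps}, and you give no substitute argument (one can have non-hyperbolic $\cat(0)$ spaces containing flat strips or disks of unbounded size but no complete flat plane; ruling this out for $\F$ is not addressed by your link computation at cone vertices). So the proof as written does not establish the theorem.

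Your local analysis is, however, sound and uses the same geometric input as the paper: the only place girth $\geq 5$ enters is the link of a cone vertex, where a closed geodesic of length $2\pi$ would force a cycle of length $\le 4$ in $\Gamma$. The paper exploits this slack differently and avoids all compactness issues: it replaces each Euclidean triangle $(o,m,v)$ by a constant-curvature $-\epsilon$ triangle with angles $\pi/2$ at $v$, $\pi/4$ at $m$, and $\pi/4-\epsilon/2$ at $o$, so only the angles at cone vertices shrink; since cycles through a cone-vertex link have combinatorial length $\geq 5$ (hence angular length $\geq 5\pi/2-5\epsilon\geq 2\pi$), the link conditions still hold, and one obtains a $\cat(-\epsilon)$ metric on $\F$ that is bi-Lipschitz (hence quasi-isometric) to the original. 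Hyperbolicity follows directly, with no appeal to properness. If you want to salvage your approach you would need either this kind of global negative-curvature argument or a version of the flat plane theorem valid for non-locally-compact cocompact $\cat(0)$ complexes, which is not the standard statement and would itself require proof.
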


\begin{proof} Fix a small $\epsilon>0$.
Perturb the metric on $\F$ by replacing each Euclidean triangle
$(o,m,v)$ by a triangle with constant curvature $-\epsilon$ whose
angles are $\pi/2$ at $v$, $\pi/4$ at $m$ and $\pi/4-\epsilon/2$ at
$o$. Since $\Gamma$ has no cycles of length $<5$ it follows that this
is a $CAT(-\epsilon)$ metric on $\F$ which is quasi-isometric to the
original metric.
\end{proof}

To summarize what we have done so far, we have the following.

\begin{proposition}\label{action}
The RAAG $G$ acts on $\F$ by isometries and the following holds:
\begin{enumerate}
\item $C$ has the structure of a $\cat(0)$ square complex, with each 
square having two singular vertices, one flat  vertex and one cone vertex. 
\item The quotient space $\F/G$ is $C$. We will denote the quotient
  map $\F\to C$ by $\pi$. We lift the labeling of the vertices of $C$
  (flat, cone, singular)
  to the vertices of $\F$ via $\pi$.
\item There is a fundamental domain $\tilde{C}\subset\F$ for the action of $G$
  such that $\pi:\tilde{C}\to C$ is a homeomorphism. We will identify
  $\tilde{C}=C$. 
\item Stabilizers of flat vertices are $\Z^2$.
\item Stabilizers of singular vertices are $\Z$.
\item Stabilizers of cone vertices are trivial.
\end{enumerate}
\end{proposition}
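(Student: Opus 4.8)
The plan is to verify each of the six assertions of Proposition~\ref{action} directly from the construction of $\ol{X}$, $X$, and $\F$ carried out above, since the Proposition is essentially a bookkeeping summary of that construction. First I would record the basic setup: $G$ acts on its universal cover $X$ by deck transformations, this descends to an action on $\F$ because the collapsing operations (crushing standard flats to points, collapsing standard strips onto their $I$-factors) are $G$-equivariant, and the collapsing maps are defined by data (the annulus/torus fibration $p$, the section $f$) that was built on $\ol X$ and then lifted. Thus the action on $\F$ is simplicial with respect to the triangulation pulled back via $\pi:\F\to\ol\F$, and $\ol\F=\F/G$ is identified with $C(\Gamma)$ as already observed in the text.

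For item (1), I would note that $\ol X=C(\Gamma')$ (after the collapses), that $\Gamma'$ is the barycentric subdivision of $\Gamma$, and that each edge of $\Gamma$ contributes exactly two $2$-simplices $(o,m,v)$, $(o,m,v')$ of $C(\Gamma')$, which the chosen isosceles-right-triangle metric glues into a unit square with corners labelled: two singular vertices ($v,v'$), one flat vertex ($m$), and one cone vertex ($o$). The link condition (no short loops, using that $\Gamma$ has no cycles of length $<5$) was already checked when metrizing $X$, so $C$ is a $\cat(0)$ square complex. Items (2) and (3) are immediate: (2) restates $\F/G\cong C$ with the labelling lifted through $\pi$; for (3) one produces $\tilde C$ as the image in $\F$ of a fundamental domain for $G\acts X$, namely the copy of $C(\Gamma')$ sitting over the basepoint, and observes $\pi$ restricts to a homeomorphism onto $C$ because $G$ acts freely permuting these copies.

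For the stabilizer computations (4)--(6), I would translate vertex types in $\F$ back to subspaces of $X$. A flat vertex of $\F$ is the image of a standard flat $F\subset X$, which is a plane covering a torus fiber of $p$; since that torus represents an edge $\ol{vw}$ of $\Gamma$, its fundamental group is the $\Z^2=\langle v,w\rangle$ subgroup, and the stabilizer of $F$ in $G$ — hence of the corresponding flat vertex — is exactly this $\Z^2$ (using that distinct standard flats, being lifts of distinct or translated tori, have trivial pairwise intersection of stabilizers beyond what their common cyclic edge-stabilizers force, and that the action on the set of standard flats is the action on $G/\Z^2$). A singular vertex of $\F$ corresponds to an original vertex $v$ of $\Gamma$, whose preimage under the relevant collapse is a lift of the core circle associated to $v$; its stabilizer is the cyclic $\langle v\rangle\cong\Z$. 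A cone vertex of $\F$ is the image of a cone point of a copy of $C(\Gamma')$ in $X$, and since $G$ acts freely on $X$ permuting these cone points, the stabilizer is trivial. The main obstacle, modest as it is, will be justifying that the stabilizer of a flat vertex is \emph{exactly} $\Z^2$ and not larger — i.e.\ that the $G$-stabilizer of a standard flat in $X$ coincides with the $\Z^2$ stabilizing the corresponding torus in $\ol X$; this follows from the standard fact that for a full subgraph $\Gamma'\subset\Gamma$ (here a single edge) the stabilizer in $G$ of a lift of $\bar K(\Gamma')$ is the conjugate of $G(\Gamma')$ carrying that lift, combined with the fact that standard flats in $X$ biject $G$-equivariantly with standard flats in $K$ via the proper homotopy equivalence $X\to K$ described above. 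Everything else is direct inspection of the cone-on-$\Gamma'$ local model, so I would keep the exposition brief and point back to the construction rather than re-deriving the $\cat(0)$ and link properties.
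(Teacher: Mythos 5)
Your proposal is correct and follows essentially the same route as the paper, which offers no separate proof of Proposition \ref{action} but presents it as a summary of the construction of $X$, $\F$, and the metric on $C(\Gamma')$ ("To summarize what we have done so far\dots"). Your verification of the stabilizer statements via the standard covering-space fact that the stabilizer of a lift of $\bar K(\Gamma')$ is the corresponding conjugate of $G(\Gamma')$ is exactly the bookkeeping the paper leaves implicit.
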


From the point of view of complexes of groups, we can say the
following.

\begin{obs}\ 

\begin{itemize}
\item $C$ is an ordered simplicial complex (after adding
  cone-to-flat diagonals in all squares), with cone vertex
  initial, and flat vertices terminal (i.e. the edges are oriented
  from cone to singular to flat).
\item $\F/G$, as an orbispace, is $C$ with the trivial label
  on the cone vertex, $\Z$ on the singular vertices, and $\Z^2$ on the
 flat vertices. 
\end{itemize}
\end{obs}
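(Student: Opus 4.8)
The plan is to read both assertions directly off the explicit description of $C = C(\Ga')$ given above, together with Proposition \ref{action}. Recall that $\Ga'$ is the first barycentric subdivision of $\Ga$, so $\Ga'$ is a simplicial complex whose vertices are the vertices of $\Ga$ (the \emph{singular} vertices) together with one midpoint $m_e$ for each edge $e$ of $\Ga$ (the \emph{flat} vertices), and whose $1$-simplices are the half-edges $\{v,m_e\}$ with $v$ an endpoint of $e$. Since the cone on a simplicial complex is again simplicial, $C$ is a simplicial complex, with nondegenerate simplices $\{o\}$, $\{v\}$, $\{m_e\}$, $\{v,m_e\}$, $\{o,v\}$, $\{o,m_e\}$ and $\{o,v,m_e\}$. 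Comparing with Proposition \ref{action}(1), the square of $C$ associated with an edge $e=\{v,w\}$ of $\Ga$ is precisely the union of the two triangles $\{o,v,m_e\}$ and $\{o,w,m_e\}$ glued along the edge $\{o,m_e\}$; since $o$ is the cone vertex and $m_e$ is a flat vertex, this simplicial structure is exactly the square structure of Proposition \ref{action}(1) with a cone-to-flat diagonal adjoined in each square.

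To make $C$ an \emph{ordered} simplicial complex, I would partially order the vertices by declaring $\text{cone}<\text{singular}<\text{flat}$ and then check that this restricts to a total order on every simplex, which amounts to checking that no simplex of $C$ contains two vertices of the same type. This is immediate: $\{o\}$ is the unique cone vertex; an edge of $\Ga'$ has the form $\{v,m_e\}$ and so joins a singular vertex to a flat vertex; and by Proposition \ref{action}(1) each triangle of $C$ has exactly one cone, one singular and one flat vertex. Hence $C$ is an ordered simplicial complex in which $o$ is initial and the flat vertices are terminal, and each edge is oriented from its smaller endpoint to its larger one, i.e.\ the orientations are $\text{cone}\to\text{singular}$, $\text{singular}\to\text{flat}$ and $\text{cone}\to\text{flat}$. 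This gives the first bullet.

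For the second bullet, Proposition \ref{action}(2) identifies $\F/G$ with $C$ via the quotient map $\pi$, and the resulting orbispace (simple complex of groups) structure assigns to each cell of $C$ the $G$-stabilizer of a lift to $\F$. By Proposition \ref{action}(4)--(6) these local groups are $\Z^2$ at each flat vertex, $\Z$ at each singular vertex, and trivial at the cone vertex; the edge and face groups are then forced by the inclusion maps (for instance a full edge has group $\Z$, as already observed in the construction). Since the cone vertex carries the trivial group, the orbispace is recorded completely by $C$ together with the labels $\Z$ on singular vertices and $\Z^2$ on flat vertices, which is the assertion. There is no substantial obstacle here: the statement is a bookkeeping consequence of the construction of $\F$ and of Proposition \ref{action}, and the only point that needs a moment's care is the well-definedness of the vertex ordering, which reduces to the observation---a consequence of the structure of the barycentric subdivision and of Proposition \ref{action}(1)---that no simplex of $C$ repeats a vertex type.
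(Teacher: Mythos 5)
Your proposal is correct and follows exactly the route the paper intends: the observation is stated without proof as immediate bookkeeping from the construction of $C(\Gamma')$ and Proposition \ref{action}, and your writeup simply makes that bookkeeping explicit (the barycentric subdivision ensures no simplex repeats a vertex type, so the ordering cone $<$ singular $<$ flat works, and the vertex labels come from the stabilizers in Proposition \ref{action}(4)--(6)). No gaps; nothing further is needed.
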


\begin{obs}\label{obslink}\ 
\begin{itemize}
\item The link in $\F$ of every cone vertex is $\Gamma$.
\item The link in $\F$ of every flat vertex is $\Z*\Z$ (the join of two 
infinite countable sets of vertices) and the
  stabilizer $\Z^2$ is acting in the obvious way: $(1,0)$ translates
  one $\Z$ by 1 and fixes the other $\Z$, and $(0,1)$ fixes the first
  $\Z$ and translates the second by 1. In particular, the complement
  of the set of all flat vertices in $\F$ is connected.
\item The link in $\F$ of every singular vertex $v$ is the join $L*\Z$ where
  $L$ is the link of the image vertex in $\Gamma$. The stabilizer $\Z$
  fixes $L$ and translates $\Z$. The link of $v$ in $\F^{(1)}$ is $L$.
\end{itemize}
\end{obs}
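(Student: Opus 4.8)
The plan is to compute each of the three links in $\F$ by developing the corresponding link in the quotient orbispace $C=\overline\F=\F/G$, using the square structure of $C$ from Proposition~\ref{action}(1) and the vertex stabilizers from Proposition~\ref{action}(4)--(6). Concretely, if $\tilde x\in\F$ lies over $x\in C$ with stabilizer $G_{\tilde x}$, then for each simplex $\bar\sigma$ of $C$ containing $x$ the simplices of $\F$ lying over $\bar\sigma$ and incident to $\tilde x$ form a single $G_{\tilde x}$-orbit, equivariantly identified with $G_{\tilde x}/G_{\tilde\sigma}$ for a chosen lift $\tilde\sigma$; thus $\mathrm{lk}_\F(\tilde x)$ is the $G_{\tilde x}$-development of $\mathrm{lk}_C(x)$ with local groups the stabilizers of the simplices of $C$ at $x$, and every link is then read off by a coset count. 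The first step is therefore to list those stabilizers: in $C=C(\Gamma)$ every edge and every triangle contains the cone vertex $o$ except for the edges joining a singular vertex to the flat vertex of an incident edge, and $o$ has trivial stabilizer; hence all edge and triangle stabilizers are trivial apart from these singular--flat edges, each of which has stabilizer the infinite cyclic group $\Z$ generated by the corresponding standard generator (this being the stabilizer of the relevant full edge, and obviously contained in the singular vertex's stabilizer).

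For a cone vertex $\tilde o$ this is immediate: $G_{\tilde o}$ is trivial, so $\mathrm{lk}_\F(\tilde o)\cong\mathrm{lk}_C(o)$, and the square structure of $C(\Gamma)$ shows that $\mathrm{lk}_C(o)$ has one vertex for each vertex of $\Gamma$ and one for each edge of $\Gamma$, with a link-edge from $v$ to $e$ whenever $v$ is an endpoint of $e$; i.e.\ it is the barycentric subdivision of $\Gamma$, which is $\Gamma$ once the subdivision (flat-type) vertices are forgotten. For a flat vertex $\tilde m$ over the midpoint of $e=\ol{vv'}$, with $G_{\tilde m}=\Z^2=\langle v,v'\rangle$, the link $\mathrm{lk}_C(m)$ is a path of length two with a cone vertex in the middle and singular vertices of ``type $v$'' and ``type $v'$'' at the ends; developing, the two singular--flat edges (stabilizers $\langle v\rangle$, $\langle v'\rangle$) each contribute a $\Z$'s worth of singular link-vertices, while the cone--flat edge and both triangles (trivial stabilizers) contribute a free $\Z^2$'s worth, arranged so that the cone link-vertices appear as the subdivision vertices of the complete bipartite graph between the two $\Z$'s; after forgetting those subdivision vertices this is $\Z*\Z$, with $(1,0)=v$ fixing the type-$v$ copy of $\Z$ and translating the type-$v'$ copy (and vice versa for $(0,1)=v'$), as asserted. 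For a singular vertex $\tilde v$ over $v\in\Gamma$, with $G_{\tilde v}=\Z=\langle v\rangle$, the link $\mathrm{lk}_C(v)$ is the star $\{*\}*L$ with $L=\mathrm{lk}_\Gamma(v)$; developing, each singular--flat edge has stabilizer all of $\langle v\rangle$ and so contributes a single flat link-vertex fixed by $G_{\tilde v}$ (these matching the edges of $\Gamma$ at $v$, hence $L$), the cone--singular edge contributes a free $\langle v\rangle$'s worth of cone link-vertices, and the triangles join every flat link-vertex to every cone link-vertex, giving $\mathrm{lk}_\F(\tilde v)=L*\Z$ with $\langle v\rangle$ fixing $L$ and translating $\Z$. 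Since $\F^{(1)}$ is the image of $X^{(1)}$ and hence contains no cone vertices and no triangles, $\mathrm{lk}_{\F^{(1)}}(\tilde v)$ is just the set of flat link-vertices with no edges, i.e.\ $L$.

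The ``in particular'' of the second bullet I would deduce from the connectedness of $\Z*\Z$: the flat vertices form a locally finite, pairwise non-adjacent set of vertices of $\F$, so any path in $\F$ between two points that are not flat vertices meets only finitely many of them, and near each such vertex the path can be rerouted through its (connected) link so as to avoid it; hence $\F$ minus all flat vertices is path-connected.

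I do not anticipate a serious difficulty: the only place to be careful is the identification of the simplex stabilizers in the first step, after which the three link computations are each a short coset count and the connectivity statement is soft. The one bookkeeping convention to keep in mind is that, in the triangulation of $\F$ with cone-to-flat diagonals, the link of a cone vertex is literally the barycentric subdivision of $\Gamma$ and the link of a flat vertex is literally $\Z*\Z$ subdivided at the cone link-vertices, so the identifications ``$=\Gamma$'' and ``$=\Z*\Z$'' are understood after forgetting those subdivision vertices.
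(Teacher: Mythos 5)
Your computation is correct and amounts to the paper's own (implicit) justification: the paper states this Observation without proof because it follows directly from the construction of $\F$ as the $G$-development of the fundamental domain $C(\Gamma)$ with the vertex stabilizers recorded in Proposition \ref{action}, and your coset-counting of lifts of the singular--flat edges (stabilizer a conjugate of $\langle v\rangle$) versus the simplices through the cone point (trivial stabilizer) is exactly that argument. The only caveat, which you already flag, is the harmless convention that ``$=\Gamma$'' and ``$=\Z*\Z$'' are understood after forgetting the subdivision vertices coming from the chosen triangulation of $\F$.
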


Each singular vertex has two natural objects associated with it: the
track and the singular star. We describe these below.

\subsection{The singular star associated to a singular vertex}

Fix a singular vertex $v\in\F$. Consider its star $S_v$ in
$\F^{(1)}$. By Observation \ref{obslink}, $S_v$ can be identified with the star of
$v$ in (barycentrically subdivided) $\Gamma$. Thus $S_v$ is the union
of all edges with one vertex at $v$ and the other vertex flat. We call
$S_v$ the {\it singular star} associated to $v$. Recall that the
parallel set $P(\gamma)$ of a geodesic line $\gamma$ in a $\cat(0)$
space is the union of all geodesic lines parallel to $\gamma$. The
vertex $v$ represents a geodesic line in $X$ and its parallel set will
be denoted by $P_v$.

\begin{obs}
Recall that $p:X\to\F$ is the quotient map.  Then
$P_v=p^{-1}(S_v)$. Conversely, every parallel set in $X$ of a standard
line (i.e. a line in a standard strip) arises in this way for a
suitable singular vertex $v$. Moreover, the parallel set is the union
of flats and strips that coarsely contain the given standard
line. Abstractly, a parallel set in $X$ is isomorphic to $T\times\R$
where $T$ is the universal cover of the 1-complex obtained from $S_v$
by wedging a circle to every vertex other than $v$.
\end{obs}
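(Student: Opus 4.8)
The plan is to unwind the quotient map $p\colon X\to\F$. Write $n=\deg_\Gamma(v_0)$, where $v$ lies over the vertex $v_0\in\Gamma$. Since $p$ crushes each standard flat to a (flat) vertex and each standard strip $\R\times I$ to the arc $I$ by forgetting the $\R$-factor, and since $p$ identifies no two distinct standard lines (collapsing a strip keeps its two boundary lines distinct, and two standard lines lie in no common flat), the fibre $p^{-1}(v)$ is a single standard line $\ell_v$: the common ``circle-fibre'' side of the $n$ standard strips $S_1,\dots,S_n$ lying over the $n$ edges of $\Gamma$ at $v_0$. This $\ell_v$ is the geodesic line that $v$ represents, and $P_v=P(\ell_v)$. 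By Observation \ref{obslink}, $S_v$ is $v$ together with its $n$ edges to the adjacent flat vertices $f_1,\dots,f_n$ and those vertices; here $p^{-1}(f_i)$ is the standard flat $F_i$, which contains the side $\ell_i:=S_i\cap F_i$ (a lift of $c_1$, parallel to $\ell_v$), and $p^{-1}$ of the open edge $vf_i$ is the interior of $S_i$. Since each $F_i$ in turn has countably many standard strips attached along $c_1$-lines parallel to $\ell_v$, each again carrying a standard flat, the assertion $P_v=p^{-1}(S_v)$ is equivalent to the statement that $P(\ell_v)$ is exactly the union of all standard flats and standard strips that coarsely contain $\ell_v$ --- which is also the ``moreover'' clause of the Observation.

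The easy half is that every standard flat or strip $Z$ with $\ell_v\subseteq N_r(Z)$ lies in $P_v$: each $S_i$ is a flat strip with side $\ell_v$; every point of $F_i\cong\R^2$ lies on a line parallel to $\ell_i$, hence --- parallelism being an equivalence relation on geodesic lines --- parallel to $\ell_v$; and one propagates this along the recursion, using that parallel lines in a common flat, or bounding a common flat strip, are parallel. The reverse inclusion is the crux: a geodesic line $\gamma$ parallel to $\ell_v$ must lie in this union, i.e.\ cannot ``leak out''. Here I would follow the flat strip $\Sigma$ bounded by $\gamma$ and $\ell_v$: a neighbourhood of $\ell_v$ in $X$ is $\ell_v\times(\text{$n$-pod})$, so $\Sigma$ enters one prong $S_i$; inside $F_i$ the trace of $\Sigma$ is a band parallel to the $c_1$-direction, which can leave $F_i$ only across a $c_1$-line carrying an attached strip (feeding the recursion) or across a $c_2$-line --- and the latter is impossible, since the link of $X$ at a point of a $c_2$-line is a ``$3$-page book'' (two faces of $F_i$ and one face of the attached strip, all spine angles equal to $\pi$), in which a flat half-strip transverse to the $c_2$-direction cannot develop into the attached strip. \emph{This link-level control of how flat half-strips propagate across the standard lines of a flat is the main obstacle}; it is cleanest to package it once via the centralizer action below.

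Granting this, the remaining assertions follow. Every standard line $\ell$ equals $p^{-1}(v)$ for $v=p(\ell)$, which is the converse statement. For the splitting, the centralizer of the generator $g_{v_0}$ is $Z(g_{v_0})=\langle g_{v_0}\rangle\times G(\operatorname{Lk}(v_0))=\Z\times F_n$ --- free of rank $n$ on the neighbours of $v_0$, since $\operatorname{Lk}(v_0)$ has no edges ($\Gamma$ being triangle-free) --- and it preserves $P_v=\operatorname{Min}(g_{v_0})$, the $\Z$-factor translating the $\ell_v$-direction and the $F_n$-factor acting freely on the cross-section $Y$ (freely because $G$ acts freely on $X$). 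Thus $P_v\cong Y\times\R$, and $Y$ is a tree since $P_v$ is a $\cat(0)$ $2$-complex. Quotienting by $Z(g_{v_0})$ collapses $\ell_v$ to a point and sends the $S_i$, $F_i$ to the corresponding cells of $\overline\F$, so $Y/F_n$ is the $1$-complex $H$ obtained from $S_v$ by wedging a circle at each flat vertex $f_i$ (the circle coming from the $g$-translations along the $c_1$-direction of $F_i$). Since $\pi_1(H)=F_n$ acts freely on the simply connected tree $Y$, we get $Y=\widetilde H=:T$, i.e.\ $P_v\cong T\times\R$, as claimed.
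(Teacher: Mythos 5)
Your overall picture is right, and several ingredients are sound (the identification $p^{-1}(v)=\ell_v$, $p^{-1}(f_i)=F_i$, $p^{-1}(\mathrm{int}\,[v,f_i])=\mathrm{int}\,S_i$; the easy inclusion that every flat or strip coarsely containing $\ell_v$ lies in $P(\ell_v)$; the centralizer $\Z\times F_n$ and the final deck-transformation identification of the cross-section with $\widetilde H$ \emph{once} the cross-section is known). But there are two genuine problems. First, your claimed equivalence is false: since each edge of $S_v$ is the image of exactly one strip and each flat vertex of exactly one flat, $p^{-1}(S_v)$ is literally the finite union $\ell_v\cup S_1\cup\cdots\cup S_n\cup F_1\cup\cdots\cup F_n$; the strips attached to $F_i$ along its other $c_1$-lines, and the flats beyond them, map to edges and vertices incident to \emph{other} singular vertices, hence outside $S_v$. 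So $p^{-1}(S_v)$ is strictly smaller than the union of all flats and strips coarsely containing $\ell_v$, and is not $T\times\R$ with $T$ infinite. The honest move here is to point out that the displayed equality must be read as: $P_v$ is the preimage under $p$ of the parallel set in $\F$ (the union of all stalling paths through $v$), equivalently the component containing $\ell_v$ of the preimage of $S_v$ taken from $\overline\F=C(\Gamma)$; this is consistent with the paper's next assertion that $p(P_v)$ is a subcomplex of $\F^{(1)}$ whose quotient by the stabilizer is $S_v$. Declaring the literal equality ``equivalent'' to the moreover clause papers over this and proves neither.

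Second, and more seriously, the crux you yourself identify --- that no line parallel to $\ell_v$ leaves the union $U$ of flats and strips coarsely containing it --- is never actually established. The link remark you give is not an argument: at a point of a $c_2$-line the link is three arcs of length $\pi$ between two poles, so a single geodesic running in the $w$-direction \emph{can} turn into the attached strip with angle $\pi$ on both sides; ruling out leakage requires using the two-dimensionality, e.g.\ by showing $U$ is locally convex (hence convex) and that every direction orthogonal to the line direction at a point of $U$ points back into $U$, then applying convexity of $d_U$ to a parallel line, or some equivalent projection argument. Your final paragraph does not ``package'' this: the centralizer preserves $P_v$ no matter what $P_v$ is, the assertion $P_v=\operatorname{Min}(g_{v_0})$ is unproved (it is essentially the same statement as the crux, since a priori $g_{v_0}$ could fix only part of the cross-section), and the identification $Y/F_n\cong H$ silently assumes $P_v=U$, i.e.\ assumes that the cross-section $Y$ is exactly the tree spanned by the known flats, strips and lines. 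As it stands the hard inclusion $P(\ell_v)\subset U$, which carries the whole Observation (and the $T\times\R$ description), is missing.
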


The image of a parallel set in $X$ under the quotient map $X\to\F$
will be referred to as a {\it parallel set} in $\F$ (even though it is
not the union of lines parallel to a fixed line). It is a subcomplex
of $\F^{(1)}$, and the quotient by the stabilizer is $S_v$.

\begin{obs}\label{obs cones}
The intersection between $C(\Gamma)\subset\F$ and a nontrivial
translate of it is contained in a singular star.
\end{obs}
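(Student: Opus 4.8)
\textbf{The plan} is to identify $C\cap gC$ with a fixed-point set and then compute that fixed-point set from the explicit description of $\F$. Write $C=\tilde C\subset\F$ for the strict fundamental domain of Proposition \ref{action}(3), so that the quotient map $\pi$ is injective on $C$. If $g\neq 1$ and $x\in C\cap gC$, then $x$ and $g^{-1}x$ both lie in $C$ and represent the same $G$-orbit, so injectivity of $\pi|_C$ forces $g^{-1}x=x$. Hence
$$
C\cap gC \;=\; C\cap\operatorname{Fix}(g).
$$
Since $G$ preserves the labelling of vertices by type and the vertices of any simplex of $\F$ have pairwise distinct types, $g$ fixes a simplex pointwise as soon as it preserves it; hence $\operatorname{Fix}(g)$ is a subcomplex. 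As $g\neq 1$ and cone vertices have trivial stabilizer (Proposition \ref{action}(6)), $\operatorname{Fix}(g)$ — and therefore $\Sigma\defeq C\cap\operatorname{Fix}(g)$ — contains no cone vertex. Every positive-dimensional simplex of $C=C(\Gamma)$ contains the cone point $o$, except the simplices of the base $\Gamma'$ of the cone (edges joining a singular vertex to a flat vertex, and their endpoints). So $\Sigma$ is a subcomplex of $\Gamma'\subset C$, where the singular vertices are the vertices of $\Gamma$ and the flat vertices are the midpoints of edges of $\Gamma$.

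\textbf{The main step} is then to show that a $g$-fixed subcomplex $\Sigma$ of $\Gamma'$, with $g\neq 1$, must lie in a single singular star $S_v$. For this I would use the standard fact (see e.g.\ \cite{charney}) that special subgroups of a RAAG satisfy $\langle S\rangle\cap\langle T\rangle=\langle S\cap T\rangle$; combined with the construction of $\F$ this says that $\operatorname{Stab}(\sigma)$ is the special subgroup $\langle\theta(\sigma)\rangle$ on the vertex set $\theta(\sigma)\subseteq V(\Gamma)$ — a single vertex for a singular vertex, the two endpoints of $e$ for the flat vertex $m_e$, the empty set for $o$ — so that $\operatorname{Stab}(\sigma)\cap\operatorname{Stab}(\sigma')=\langle\theta(\sigma)\cap\theta(\sigma')\rangle$. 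Now split into two cases. If $\Sigma$ contains no singular vertex, then it contains no edge of $\Gamma'$ (each such edge has a singular endpoint), so $\Sigma$ is a finite set of flat vertices $m_{e_1},\dots,m_{e_k}$; since $g\neq 1$ lies in $\langle e_1\rangle\cap\cdots\cap\langle e_k\rangle=\langle e_1\cap\cdots\cap e_k\rangle$, the edges $e_i$ have a common vertex $u$, and each $m_{e_i}$ lies in $S_v$ for $v$ the singular vertex of type $u$. If $\Sigma$ does contain a singular vertex $v$, of type $u$, then it contains no other singular vertex (two distinct ones would force $g\in\langle u\rangle\cap\langle u'\rangle=1$), so every edge of $\Sigma$ is incident to $v$ and lies in $S_v$; and for every flat vertex $m_e\in\Sigma$ one has $g\in\langle u\rangle\cap\langle e\rangle\neq 1$, forcing $u\in e$, so $m_e$ is adjacent to $v$ in $\Gamma'$ and lies in $S_v$. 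Either way $\Sigma\subseteq S_v$.

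\textbf{The main obstacle} is this last case analysis, and especially the control of the flat vertices of $\Sigma$: $\operatorname{Fix}(g)$ itself is typically infinite — for $g$ a standard generator it contains an entire singular star together with infinitely many of its translates — and it is essential that we intersect with the fundamental domain $C$, which meets each $G$-orbit of simplices exactly once, to keep $\Sigma$ inside one star. I would also emphasize that the argument uses neither the $CAT(0)$ geometry nor the Gromov hyperbolicity of $\F$: only that $C$ is a strict fundamental domain and that the simplex stabilizers of $C$ are the expected special subgroups with the expected inclusions. Thus the real content reduces to the parabolic-intersection identity for RAAGs quoted above.
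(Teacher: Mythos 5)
The paper records this Observation without proof, so there is no argument of the authors to compare against; your proof is correct and complete, and it is a reasonable way to substantiate the claim. The two pillars are exactly the right ones: (a) Proposition \ref{action}(3) makes $C$ a strict fundamental domain, so $C\cap gC=C\cap\operatorname{Fix}(g)$, and type-preservation plus the fact that each simplex has vertices of pairwise distinct types makes $\operatorname{Fix}(g)$ a subcomplex; since cone vertices have trivial stabilizer, this forces $C\cap gC$ into the base $\Gamma'$ of the cone; (b) the identification of the simplex stabilizers of $C$ with the special subgroups $\langle u\rangle$ and $\langle u,w\rangle$, together with the parabolic-intersection identity $\langle S\rangle\cap\langle T\rangle=\langle S\cap T\rangle$, pins the fixed subcomplex inside a single star $S_v$. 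One small point worth making explicit: the identification in (b) is for the particular fundamental domain based at the identity vertex (the cone attached at the basepoint in the construction of $X$); for a general translate the stabilizers are the corresponding conjugates, which is harmless since the Observation is equivariant. It may also be worth noting the geometric paraphrase that the authors presumably had in mind: a flat (resp.\ singular) vertex of $C\cap gC$ corresponds to a standard flat (resp.\ standard line) in $X$ passing through both cone points, i.e.\ through the vertices $1$ and $g$ of $K$; two distinct standard flats through both must intersect in a standard line through both, so all such flats contain a common standard line and the corresponding vertices lie in the star of its singular vertex. Your stabilizer computation is precisely the algebraic counterpart of this, with $\langle e_1\rangle\cap\cdots\cap\langle e_k\rangle=\langle e_1\cap\cdots\cap e_k\rangle$ playing the role of the statement that the flats share a line.
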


\subsection{The track associated to a singular vertex}

Again let $v\in\F$ be a singular vertex. Let $e_1,\cdots,e_k$ be the
edges that have one endpoint at $v$ and the other endpoint flat (thus
the singular star $S_v$ is the union of the $e_i$'s). Then $C(\Gamma)$
contains $k$ squares that have $v$ as a vertex. These squares can be
enumerated $S_1,\cdots,S_k$ so that $e_i$ is one of the sides of
$S_i$, $i=1,\cdots,k$. Note that by construction, these squares share
a common edge $e$ that is incident to $v$ and whose other endpoint is
the cone point, so that $e$ is distinct from the edges
$e_1,\cdots,e_k$.  Let $\tau_v$ be the hyperplane in $C(\Gamma)$
transverse to $e$; this is well-defined since $C(\Gamma)$ is a square
complex, see Section \ref{sechyperplane}.  This is the {\it track associated to $v$}.

\begin{figure}[here]
\begin{center}
\includegraphics{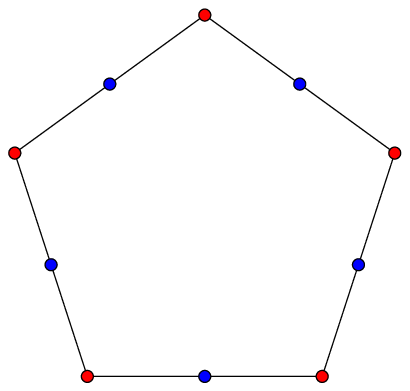}\qquad\qquad\includegraphics{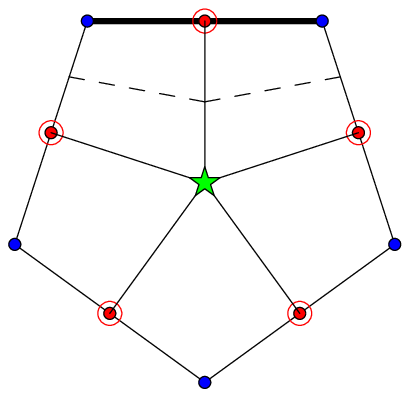}
\end{center}
\begin{caption}{$\Gamma$ is the pentagon. Also pictured is the track
    and the singular star (pictured in black) corresponding to a singular vertex}
\end{caption}
\end{figure}

\begin{figure}[here]
\centerline{\includegraphics{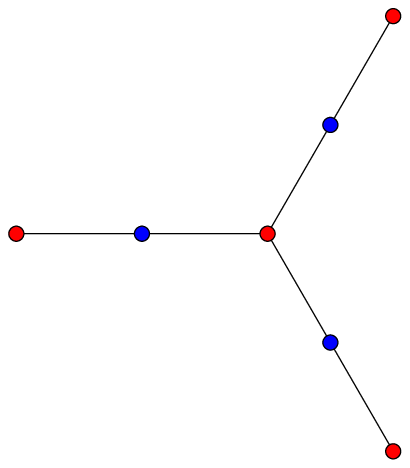}\qquad\qquad\includegraphics{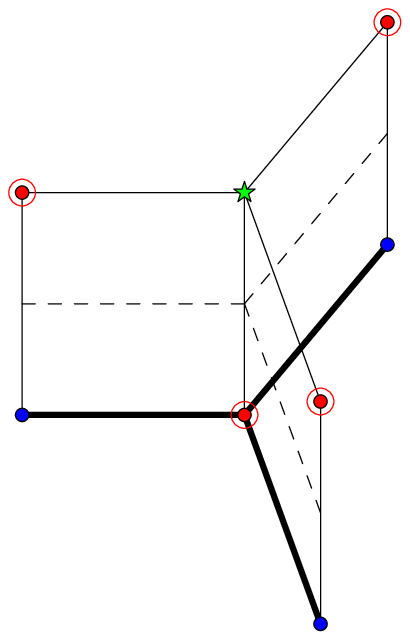}}
\begin{caption}{$\Gamma$ is the tripod. Also pictured is the track
    and the singular star corresponding to the central singular vertex}
\end{caption}
\end{figure}

\begin{obs}\label{tracks}\ 
Recall that $\pi:\F\to C(\Gamma)$ is the quotient map.
\begin{itemize}
\item $\pi^{-1}(\tau_v)$ is a collection of hyperplanes in $\F$. Each
  hyperplane is a convex subset of $\F$.
\item Conversely, any hyperplane in $\F$ is a component of
  $\pi^{-1}(\tau_v)$ for a suitable singular vertex $v$.
\end{itemize}
\end{obs}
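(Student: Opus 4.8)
The plan is to use that the quotient map $\pi\colon\F\to C=C(\Gamma)$ is a cubical map which restricts to a homeomorphism on each square, and then to read everything off from the local combinatorics of hyperplanes in a square complex. First I would record the relevant properties of $\pi$: by Proposition~\ref{action}(3) the copy $C\subset\F$ maps homeomorphically onto $C=\F/G$, and since $\F=G\cdot C$ and $G$ acts cellularly, every square of $\F$ is a $G$-translate of a square of $C$; hence $\pi$ is a homeomorphism on each square, carries midcubes to midcubes, and preserves the relation ``opposite edges of a square''. In particular each square of $\F$ contains at most one midcube of $\pi^{-1}(\tau_v)$, since $\tau_v$ meets each square of $C$ in at most one midcube.

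Next I would identify the hyperplanes of $C$. Every square of $C$ is of the form $(o,v,m,w)$ with $o$ the cone vertex, $v,w$ singular, and $m$ the flat vertex sitting at the midpoint of the edge $vw$ of $\Gamma$, and it is the unique square of $C$ containing either of the edges $[v,m],[w,m]$. In this square $[o,v]$ is opposite $[w,m]$ and $[v,m]$ is opposite $[o,w]$, so the hyperplane of $C$ dual to $[o,v]$ is exactly $\tau_v$, while the hyperplane dual to $[v,m]$ is $\tau_w$. Consequently every edge of $C$ is dual to some $\tau_v$, so the hyperplanes of $C$ are precisely the $\tau_v$, one for each vertex $v$ of $\Gamma$.

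For the first bullet, fix a singular vertex $v$. Because $\pi$ preserves ``opposite edges'', if an edge $\tilde e$ of $\F$ satisfies that $\pi(\tilde e)$ is dual to $\tau_v$, then so does every edge dual to the $\F$-hyperplane $\hat H$ through the midcube dual to $\tilde e$; hence $\pi(\hat H)\subseteq\tau_v$, i.e.\ $\hat H\subseteq\pi^{-1}(\tau_v)$. Combined with the one-midcube-per-square property established above — which forces two midcubes of $\pi^{-1}(\tau_v)$ that meet to be dual to a common edge of $\F$, hence to lie in a common hyperplane — this shows that $\pi^{-1}(\tau_v)$ is a disjoint union of complete $\F$-hyperplanes, its connected components being exactly those $\hat H$ with $\pi(\hat H)\subseteq\tau_v$. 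Each such $\hat H$ is convex because $\F$ is $\cat(0)$ and hyperplanes in $\cat(0)$ cube complexes are convex (see \cite{sageevthesis}; e.g.\ the two halfspaces bounded by $\hat H$ are convex and $\hat H$ is their common frontier). For the converse statement, an arbitrary hyperplane $\hat H$ of $\F$ is dual to some edge $\tilde e$, and by the previous paragraph $\pi(\tilde e)$ is dual to $\tau_v$ for a unique singular vertex $v$; then $\hat H$ is a component of $\pi^{-1}(\tau_v)$ by what was just proved.

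The main obstacle is making precise the third step, namely that $\pi^{-1}(\tau_v)$ is a union of \emph{complete} hyperplanes rather than midcube fragments and that its components are hyperplanes; this hinges entirely on $\pi$ being a square-wise homeomorphism, which is why that property is isolated first. The remaining ingredients — the local picture of squares in $C$ and the convexity of hyperplanes in $\cat(0)$ cube complexes — are routine.
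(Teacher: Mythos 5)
Your proof is correct, and it follows the route the paper implicitly intends: the paper states this as an Observation without proof, taking for granted exactly the facts you verify — that $\pi$ is a square-wise isomorphism (via Proposition \ref{action}(3)), that the tracks $\tau_v$ are precisely the hyperplanes of $C(\Gamma)$, and that hyperplanes of a $\cat(0)$ square complex are generated by the opposite-edge relation and are convex. Your write-up is a careful fleshing-out of that reasoning, with the one-midcube-per-square point correctly handling why the components of $\pi^{-1}(\tau_v)$ are whole hyperplanes.
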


Also denote by $R_i$ the closure of the
  component of $S_i-\tau_v$ that contains
  $e_i$. Define the {\it thickened track associated to $v$} to be 
$$R_v:=R_1\cup\cdots\cup R_k$$

\begin{obs}\label{product}\ 
\begin{itemize}
\item
$R_v$ is a product neighborhood of $P_v$, that is, $(R_v,P_v)\cong
  (P_v\times I,P_v\times\{0\})$.
\item
Let $\tilde R_v$ be a component of $\pi^{-1}(R_v)$. Then $\tilde R_v$
  contains a unique component $\tilde P_v$ of
  $\pi^{-1}(P_v)$. Moreover, $(\tilde R_v,\tilde P_v)\cong (\tilde
  P_v\times c(\Z),\tilde P_v\times\{c\})$, where $c(\Z)$ is the cone
  on $\Z$ with cone point $c$.
\item
Under this identification, the pointwise stabilizer $\Z$ of $\tilde
P_v$ acts on $\tilde R_v\cong \tilde
  P_v\times c(\Z)$ by fixing $\tilde P_v$ and the cone point,
  and translating the
  base of the cone.
\end{itemize}
\end{obs}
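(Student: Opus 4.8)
I would prove the three assertions in order: the first is an elementary computation in the square complex $C=C(\Gamma)$ near $v$, and the other two are obtained by pulling that picture back along $\pi:\F\to C$.

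\emph{Product structure of $R_v$ in $C$.} The edge $e$ is the unique edge of $C$ joining $v$ to the cone point, and the squares of $C$ containing $v$ are exactly the $k$ squares $S_1,\dots,S_k$, where $S_i$ corresponds to the edge of $\Gamma$ at $v$ lying in the direction of $e_i$; each $S_i$ has $e$ and $e_i$ as two of its sides, meeting at a right angle at $v$. Since $\Gamma$ is a simplicial graph, distinct squares $S_i$ and $S_j$ have in common exactly the edge $e$. The hyperplane $\tau_v$ meets $S_i$ in the midcube parallel to $e_i$, and the rectangle $R_i$ (the closure of the component of $S_i\setminus\tau_v$ containing $e_i$) has sides $e_i$, the half $\sigma$ of $e$ incident to $v$, the midcube $\tau_v\cap S_i$, and one further side; it carries an evident product structure $R_i\cong e_i\times[0,1]$ in which $e_i=e_i\times\{0\}$ and $\sigma=\{v\}\times[0,1]$. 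As the $R_i$ are glued precisely along $\sigma$ and $P_v=\bigcup_i e_i$ is the union of the $e_i$ glued precisely at $v$, these product structures are compatible and assemble to an isometry of pairs $(R_v,P_v)\cong(P_v\times I,\,P_v\times\{0\})$. (Here $P_v$ denotes the singular star $S_v$, viewed in the fundamental domain $C$.)

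\emph{Pulling back to $\F$.} Recall that $\pi^{-1}(P_v)$ is a disjoint union of subcomplexes of $\F^{(1)}$, each a parallel set in $\F$ — this is the content of the earlier observation identifying $p^{-1}(S_v)$ with a parallel set in $X$ and its image with a parallel set in $\F$ — and that $\pi^{-1}(R_v)$ is the union of the $G$-translates of $R_v$. The retraction $(p,t)\mapsto(p,0)$ of $R_v=P_v\times I$ onto $P_v$ moves points only along the collar direction, so it agrees with all of its $G$-translates on overlaps (which are contained in lifts of $P_v$, where it is the identity) and assembles into a $G$-equivariant deformation retraction of $\pi^{-1}(R_v)$ onto $\pi^{-1}(P_v)$; hence each component $\tilde R_v$ of $\pi^{-1}(R_v)$ contains a unique component $\tilde P_v$ of $\pi^{-1}(P_v)$ and retracts onto it. To promote this to the product statement I would analyze $\pi^{-1}(R_v)$ along $\tilde P_v$ using Observation \ref{obslink}. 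Over a point in the interior of a flat edge $e_i$ of $\tilde P_v$ the collar is a single arc transverse to $e_i$, and since $e_i$ lies in a unique square of $\F$ the preimage there is a single sheet, locally $\tilde P_v\times I$. The new feature occurs at a singular vertex $\hat v$ of $\tilde P_v$: the collar direction of $R_v$ points across the cone edge $e$, and $\hat v$ is adjacent in $\F$ to countably many cone vertices — precisely the $\Z$ in the link $L*\Z$ of $\hat v$ — so the lifts of the half-edge $\sigma$ at $\hat v$ form a cone $c(\Z)$ on $\Z$ with cone point $\hat v$; combining these with the $k$ flat directions (recorded by $\tilde P_v$, since the link of $\hat v$ in $\F^{(1)}$ is $L$) identifies $\pi^{-1}(R_v)$ near $\hat v$ with $S_v\times c(\Z)$. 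These local product descriptions are compatible along the edges and flat vertices of $\tilde P_v$ — here one uses that full edges have infinite cyclic stabilizer and the description in Observation \ref{obslink} of how the vertex stabilizers act — and patch to a global isomorphism $(\tilde R_v,\tilde P_v)\cong(\tilde P_v\times c(\Z),\,\tilde P_v\times\{c\})$.

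\emph{The $\Z$-action, and the main obstacle.} For the third assertion, Observation \ref{obslink} identifies the pointwise stabilizer of $\tilde P_v$ as the infinite cyclic group fixing $L$ and translating the countable set $\Z$; under the identification $\tilde R_v\cong\tilde P_v\times c(\Z)$ just constructed, $L$ corresponds to the directions internal to $\tilde P_v$ and the translated $\Z$ to the base of the cone $c(\Z)$, so this group fixes $\tilde P_v=\tilde P_v\times\{c\}$ and the cone point and translates the base of the cone, as claimed. The genuinely delicate point is the last step of the second paragraph: one must check that the countably many cone directions attached at the different singular vertices of $\tilde P_v$ are canonically matched by the local product structures along the flat edges and flat vertices, so that the cone factor $c(\Z)$ is globally well-defined and nothing extra is glued in. This is exactly where the link computations of Observation \ref{obslink} and the structure of the $G$-action do the work; everything else is routine bookkeeping.
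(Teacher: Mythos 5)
Your first bullet is fine, and your identification of $P_v$ with the singular star $S_v$ inside the fundamental domain is the right reading. The problem is in the second paragraph, at exactly the point where the content of Observation~\ref{product} lies. Your claim that a flat edge $e_i$ of $\tilde P_v$ ``lies in a unique square of $\F$'', so that the preimage of $R_v$ over its interior is ``a single sheet, locally $\tilde P_v\times I$'', is false: by Observation~\ref{obslink} the link of a flat vertex is $\Z*\Z$ and the link of a singular vertex is $L*\Z$, so an edge joining a singular vertex $\hat u$ to a flat vertex $F$ lies in countably many squares of $\F$, one for each of the $\Z$ cones at $\hat u$ (each translate $gC$ containing $\hat u$ contains the whole star of $\hat u$ and contributes its unique square at $F$, and these squares are distinct because distinct cones meet only in a singular star, Observation~\ref{obs cones}). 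Hence over the interior of every edge of $\tilde P_v$ the set $\pi^{-1}(R_v)$ is already a ``book'' with $\Z$ pages, i.e.\ locally an interval times $c(\Z)$ --- which also contradicts your own (correct) local picture $S_v\times c(\Z)$ at the singular vertex. So $\tilde R_v$ is assembled from $\Z$-many flaps $gS_v\times I$ attached along the star of each singular vertex, not from one sheet with cones appearing only at singular vertices.

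Because of this, the step you defer in your last paragraph --- matching the $\Z$-many collar directions at the different singular vertices so that a single global cone factor $c(\Z)$ emerges and ``nothing extra is glued in'' --- is not routine bookkeeping; it is the whole statement, and your proposal neither carries it out nor identifies the mechanism that makes it work. Concretely: the collar direction of $R_v$ at the flat vertex $m_i$ runs along the half of the edge $[m_i,w_i]$, so in $\F$ the flap coming from a cone $o$ at a singular vertex $\hat u$ adjacent to $F$ sticks out of $F$ along the half-edge toward one of the $\Z$-many singular vertices over $w_i$ (namely the fourth vertex of the unique square of the cone $o$ at $F$). For each $\hat u$ adjacent to $F$ this assignment (cone at $\hat u$) $\mapsto$ (half-edge at $F$ on the $w_i$-side) is a bijection, so the flaps at the various singular vertices adjacent to $F$ are canonically matched in pairs by sharing these half-edges; and since $\tilde P_v$ is a tree (two distinct flats in a parallel set coarsely contain at most one common standard line over $v$), these local matchings propagate without monodromy, so the flaps organize into exactly $\Z$ global sheets, each a copy of $\tilde P_v\times(0,1]$, which is what gives $(\tilde R_v,\tilde P_v)\cong(\tilde P_v\times c(\Z),\tilde P_v\times\{c\})$ and, together with the action of $\stab(\hat u)\cong\Z$ on the cones at $\hat u$, the third bullet. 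Your write-up never mentions the $w_i$-side half-edges at flat vertices or the simple connectivity of $\tilde P_v$, so the key gluing argument is missing, and the incorrect ``single sheet'' picture would block a correct completion along the route you describe.
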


\begin{lemma}\label{parallel sets}\  
\begin{itemize}
\item
Every parallel set in $\F$ separates $\F$. The set of complementary
components is acted on freely and transitively by the $\Z$ subgroup
fixing  the parallel set pointwise.
\item For any two distinct cone vertices in $\F$ there is a parallel
  set that separates them.
\end{itemize}
\end{lemma}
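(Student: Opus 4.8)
The statement has two parts, and the plan is to deduce both from the product-neighborhood description of parallel sets in Observation~\ref{product} together with the hyperplane separation facts recalled in Section~\ref{sechyperplane}. For the separation claim, recall from Observation~\ref{tracks} that every parallel set $\tilde P_v$ in $\F$ is the core of a component $\tilde R_v$ of $\pi^{-1}(R_v)$, and by Observation~\ref{product} we have $\tilde R_v\cong \tilde P_v\times c(\Z)$. The hyperplane $\tilde\tau_v$ (the corresponding component of $\pi^{-1}(\tau_v)$) lives inside $\tilde R_v$ and, being a hyperplane in the $\cat(0)$ square complex $\F$, it is a track and hence separates $\F$ into exactly two half-spaces. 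First I would identify $\tilde P_v$ with the deformation retract of the (closed) half-space obtained by collapsing the $c(\Z)$-factor appropriately; more precisely $\tilde P_v$ is a convex subcomplex carrying the same separation behavior as $\tilde\tau_v$, since $\tilde R_v$ is a product neighborhood of $\tilde P_v$ and $\tilde\tau_v$ is parallel to $\tilde P_v$ inside it. So removing $\tilde P_v$ from $\F$ disconnects it.

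\textbf{Transitivity of the $\Z$-action on complementary components.} By Observation~\ref{obslink}, the link of a flat vertex is $\Z*\Z$ and the link of a singular vertex $v$ is $L*\Z$ where $L$ is the link of the image vertex in $\Gamma$. Combined with Observation~\ref{product}, the components of $\tilde R_v\setminus\tilde P_v$ are exactly the "leaves" $\tilde P_v\times\{z\}$ for $z$ ranging over the endpoints of the cone $c(\Z)$, i.e.\ they are indexed by $\Z$, and the pointwise-stabilizing $\Z$ acts on these by translation — hence freely and transitively on the local components near $\tilde P_v$. The point to check is that distinct local components near $\tilde P_v$ land in distinct global components of $\F\setminus\tilde P_v$, and that there are no further global components. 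For the first, one uses that $\tilde P_v$ is convex, so a geodesic in $\F$ from a point in one leaf to a point in another must pass through $\tilde P_v$; for the second, any path in $\F\setminus\tilde P_v$ starting near $\tilde P_v$ stays in a single component, and since $\Z$ acts transitively on leaves and $\F$ is covered by translates of $C$, one accounts for all of $\F\setminus\tilde P_v$. Thus the set of complementary components is a $\Z$-torsor.

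\textbf{Separating two cone vertices.} Let $o_1\neq o_2$ be cone vertices of $\F$. Consider the geodesic segment $[o_1,o_2]$ in the $\cat(0)$ space $\F$. Since $o_1,o_2$ are distinct cone points and each copy of $C(\Gamma)$ meets a nontrivial translate only in a singular star (Observation~\ref{obs cones}), the segment $[o_1,o_2]$ must leave the copy of $C$ containing $o_1$ through a point of $\F^{(1)}$; in fact it crosses at least one hyperplane of the form $\tilde\tau_v$. Pick such a hyperplane $\tilde\tau_v$ crossed by $[o_1,o_2]$ that is nearest to $o_1$; then $o_1$ and $o_2$ lie in opposite half-spaces of $\tilde\tau_v$, hence on opposite sides of the parallel set $\tilde P_v$ by the product-neighborhood identification above. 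The one subtlety is to verify that $[o_1,o_2]$ genuinely crosses a track and does not merely graze the $1$-skeleton tangentially; this follows because $o_1,o_2$ are vertices of $\F$ and the geodesic between distinct vertices in a $\cat(0)$ cube complex crosses a nonempty, finite set of hyperplanes, each of which separates them.

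\textbf{Main obstacle.} The routine parts are the hyperplane/track facts; the step that needs the most care is showing that the $\Z$-action is transitive on \emph{all} complementary components of a parallel set, i.e.\ that the product-neighborhood picture near $\tilde P_v$ already sees every global component. This requires combining convexity of $\tilde P_v$ with the explicit link structure (Observation~\ref{obslink}) to rule out "extra" components far from $\tilde P_v$, and is where I would spend the bulk of the argument.
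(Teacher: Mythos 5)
Your outline for the second bullet matches the paper's: the geodesic between the two cone vertices crosses some hyperplane in $\pi^{-1}(\tau_v)$, and the associated parallel set then separates them via the product neighborhood of Observation \ref{product}. The first bullet, however, has a genuine gap exactly at the step you yourself flag as the main obstacle. You need to show that two distinct local ``leaves'' of $\tilde R_v\setminus \tilde P_v$ lie in distinct global components of $\F\setminus\tilde P_v$, and your proposed justification --- convexity of $\tilde P_v$, so that ``a geodesic in $\F$ from a point in one leaf to a point in another must pass through $\tilde P_v$'' --- does not work. Convexity of $\tilde P_v$ only constrains geodesics between points \emph{of} $\tilde P_v$; it gives no information about a geodesic between two points outside $\tilde P_v$, and in any case separation is a statement about \emph{all} paths, not just geodesics: even if the geodesic between the two points crossed $\tilde P_v$, some other path might not. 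As written, the claim that such a geodesic must cross $\tilde P_v$ is essentially the separation statement you are trying to prove. A related slip occurs earlier, where you transfer the separation behavior of a single hyperplane $\tilde\tau_v$ (which has exactly two half-spaces) to $\tilde P_v$; these are not the same, since $\F\setminus\tilde P_v$ has $\Z$-many components, and knowing that each nearby hyperplane separates $\F$ into two pieces does not by itself prevent a path outside $\tilde P_v$ from running from one leaf around to another.

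The ingredient the paper uses, and which is missing from your argument, is the simple connectivity of $\F$: since $\tilde P_v$ is connected and has the product neighborhood $\tilde P_v\times c(\Z)$, a Dunwoody-style track argument shows that the components of $\F\setminus \tilde P_v$ correspond bijectively to the components of $(\tilde P_v\times c(\Z))\setminus \tilde P_v$, i.e.\ to the leaves, on which the pointwise stabilizer $\Z$ visibly acts freely and transitively. (The easy half --- that every global component contains some leaf --- follows just from connectedness of $\F$, since the closure of any component must meet $\tilde P_v$ and hence the neighborhood; it is the injectivity of leaves into components that requires $\pi_1(\F)=1$.) So to repair your proof, replace the convexity/geodesic step with this simple-connectivity argument: if a path outside $\tilde P_v$ joined two distinct leaves, closing it up through the product neighborhood would produce a loop crossing the two-sided strip $\tilde P_v\times[z,z']$ transversally in an essentially single point, contradicting simple connectivity.
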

\proof
A parallel set
$P_v\subset \F$ has a product neighborhood $P_v\times c(\Z)$ as described in 
Observation \ref{product}.  It follows from the simple connectivity of $\F$
that the connected components of $\F\setminus P_v$ are in bijective
correspondence with the connected components of $(P_v\times c(\Z))\setminus P_v$;
the latter are in bijective correspondence with the $\Z$ subgroup fixing
$P_v$ pointwise.  
For the second assertion, consider the geodesic joining the two
vertices. This geodesic must cross some hyperplane, and hence some
component of some $\pi^{-1}(\tau_v)$ (see Observation
\ref{tracks}). The associated parallel set separates between the two
cone vertices (see Observation \ref{product}).
\qed

\subsection{Coarse distance and parallel sets}

We will be interested in paths and loops in $\F^{(1)}$; these will
correspond to sequences of standard flats in $X$. There is a
straighforward notion of distance between standard flats. Suppose that
$F$ and $F^\prime$ are standard flats. The flats $F$ and $F^\prime$
correspond to two vertices in $\F$.  Note that since $\F^{(1)}$
is a bipartite graph, the usual path distance in $\F^{(1)}$ between
 $F,\,F^\prime$ is even;  let $D(F,F^\prime)$ denote half this
path  distance in $\F^{(1)}$.  
We will be interested in a
somewhat different notion of distance on $\F^{(1)}$, called {\it coarse
distance}, which we now describe.
 We say that the coarse distance
between $F$ and $F^\prime$ is 1, $D^\infty(F,F^\prime)=1$, if they
coarsely intersect in a line, or equivalently, if belong to the same parallel
set (cf. Lemma \ref{lemstandardalternative}). We will say that $D^\infty(F,F^\prime)=n$, 
if their exists a
sequence of standard flats $F=F_0,...,F_n=F^\prime$ such that
$D^\infty(F_i,F_{i+1})=1$ and $n$ is the smallest number for which
there exists such a sequence. Note that this then defines a metric on
the set of flat vertices of $\F$. The reason this metric is natural is
that it is preserved by quasi-isometries: the binary relation on $\F^{(0)}$
of belonging to the same parallel set is quasi-isometry invariant,
by Lemma \ref{lemstandardalternative} and Theorem \ref{StandardFlats}.

The metric $D^\infty$ can be seen directly in $\F$ as follows. Suppose
that $e$ and $f$ are two full edges of $\F$ meeting at a flat vertex
$v$. We say that $e$ and $f$ define a {\it legal turn at $v$} if $e$
and $f$ have cyclic stabilizers with trivial intersection. Otherwise,
they have the same cyclic stabilizer and we say that they define an
{\it illegal turn at $v$}.  Now suppose that $v$ and $w$ are two flat
vertices of $\F$. Suppose that $\alpha$ is a full edge path in $\F$
joining $v$ and $w$. Then the {\it coarse length} of $\alpha$,
$length_\coarse(\alpha)$ is computed by counting the number of legal
turns along $\alpha$ and adding 1. Thus, the coarse distance between
vertices is simply the minimal coarse length of a path between them.

If a path in $\F^{(1)}$ has no legal turns in it, then in fact all the
flat vertices along it correspond to flats which are all contained
in the same parallel set. We call such a path a {\it stalling path}. The union of
all stalling paths containing a given singular vertex is thus a
parallel set in $\F$; it corresponds in $X$ to the union of all
flats and strips coarsely containing a given singular line.

%
%

\section{Dual disk diagrams}
\label{secdualdiskdiagrams}

We consider $\F$ as a $\cat(0)$ square complex. Let $\H$ denote the union
of hyperplanes in $\F$. We call a component of $\F-\H$ a {\it
block}. Notice that each block contains a unique vertex of $\F$.
Suppose that $\alpha$ is a closed full edge path in $\fone$. Then
$\alpha:S^1 \to \fone$ extends to a map $\Delta: D^2\to \F$. Now we
may make $\Delta$ transverse to the hyperplanes of $\F$. Thus
$\A=\Delta^{-1}(\H)$ is a union of embedded arcs and simple closed curves in
$D=D^2$, where each arc and closed curve is a component of the preimage of a single hyperplane. 
As in \cite{sageevthesis}, one may assume that there are in
fact no simple closed curves and that each pair of arcs meets in at
most a single point. Since $\F$ is a 2-complex, we may further assume
that there are no triangular regions: every collection of three arcs
contains a disjoint pair. The pair $(D,\A)$ is called a {\it dual disk
diagram} for $\alpha$. Sometimes we will abuse notation and just refer
to $D$ as the dual disk diagram. In \cite{sageevthesis} it is shown that one can get
from any diagram to any other through a sequence of triangular
moves. Since there are no triangles, we see that the dual disk diagram
is unique.

A {\it region} of $D$ refers to the closure of a component of $D -
(\partial D \cup \A)$. Note that each region $D$ is associated to a
vertex of $X$, namely the vertex whose block the region is mapped
to. By a {\it boundary region} we mean a region that intersects
$\partial D$. Regions that are not boundary regions are called {\it internal} regions.  The union 
of all internal regions is called the {\it
core} of the diagram.  By a {\it corner} of $D$ we mean a triangular
region bounded by two arcs of $\A$ and a subarc of $\partial D$. A
corner corresponds to $\alpha$ running around the corner of a square
of $\F$. Since $\alpha$ is a path of full edges, this means that the
corner of the square is a flat vertex and the turn at the vertex is a
legal turn.

We now need to discuss some complications which can occur in general disk diagrams, 
but which will not appear in our setting, because our boundary
map $\al$ may be assumed to be an embedding.  

\subsection{Spurs} 
A {\it spur} of $D$ consists of a collection of nested arcs, each disjoint from all other arcs, so that
one of the  arcs has endpoints on neighboring boundary edges (this arc bounds a region with no curves or arcs in it) ; see
figure below.

In this case, we see that the original full edge path has some
backtracking. Since we will always be dealing with paths that have no
backtracking, we can assume that there are no spurs.

\begin{center}
\includegraphics{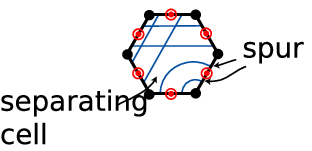}
\end{center}

\subsection{Separating cells} 
Suppose that  the intersection $P\cap \partial D$ 
of a closed cell $P$
with the boundary of  of $D$ has two distinct connected arc
components $\ga,\,\ga'$, so that $P$ separates $D$. Then the path 
represented by $\partial D$ is not embedded. To see this, note that
since $\partial D$ is a  path of full edges, $\ga$ and $\ga'$ must each
contain a vertex, and these vertices must map to the same vertex
in $\F$.    Because we will be dealing with embedded paths, we can
assume that there are no separating cells.

The upshot of the above is that if the edge loop is embedded, then the
disk diagram associated to it has no spurs or separating cells. An
embedded full edge loop will be called a {\it cycle}.

Recall that there are three types of vertices: singular, flat, and
cone vertices. Since each region of $D$ is mapped to a block, which is
uniquely associated to a vertex of $\F$, we have three types of
regions, which we also call singular, flat and cone regions.

\begin{obs}\label{obs1} Every square in $\F$ has two diagonally opposite singular
  vertices, one cone vertex and one flat vertex. 
\end{obs}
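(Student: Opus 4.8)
The plan is to reduce everything to the structure of a single copy of $C=C(\Gamma')$ sitting inside $\F$, and then to the combinatorics of one $2$-cell of $C$. Recall from Proposition \ref{action} that $\F/G\cong C$, that the labels ``cone'', ``singular'', ``flat'' on the vertices of $\F$ are by definition the pullbacks under $\pi\colon\F\to C$ of the corresponding labels on $C$, and that the fundamental domain $\tilde C$ is identified with $C$. So it suffices to establish: (i) every square of $\F$ is carried by $\pi$ combinatorially isomorphically onto a square of $C$; and (ii) every square of $C$ has exactly one cone vertex, exactly one flat vertex, and two singular vertices, the two singular vertices being diagonally opposite. Granting (i) and (ii), the vertex pattern of a square of $C$ pulls back to the same pattern on each square of $\F$, which is the assertion.

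Claim (ii) is read off directly from the construction of $C=C(\Gamma')$ in Section \ref{secflatspace}. Its $2$-cells correspond to the edges of $\Gamma$: for an edge $e=\{v_1,v_2\}$ with midpoint $m_e$, the corresponding $2$-cell is the union of the two isosceles right triangles $(o,m_e,v_1)$ and $(o,m_e,v_2)$ glued along their common leg $(o,m_e)$, where $o$ is the cone point. This union is a square whose boundary runs through the vertices in the cyclic order $v_1,\,m_e,\,v_2,\,o$; hence $o$ (cone) is opposite $m_e$ (flat), and the two singular vertices $v_1,v_2$ are opposite each other. This is also the content of Proposition \ref{action}(1), and it matches the ``cone-to-flat diagonals'' remark recorded just after it.

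For claim (i) I would argue that every square of $\F$ lies in a single $G$-translate $gC$ of $C$. Indeed, a square is connected and $2$-dimensional, while by Observation \ref{obs cones} the intersection of two distinct translates of $C$ lies in a singular star and is therefore $1$-dimensional; so the interior of any square of $\F$, being connected, is contained in one translate $gC$, and hence so is the closed square since $gC$ is closed. Then $\pi$ restricted to $gC$ is $g^{-1}$ followed by the identification $C\cong\tilde C$, an isomorphism of square complexes, so it carries our square isomorphically onto a square of $C$. The only point needing a moment's care is precisely this non-folding statement --- that $\pi$ does not identify two vertices or two edges of a single square --- and it is exactly what Observation \ref{obs cones} provides; alternatively one can see it from the product-neighborhood descriptions of parallel sets and thickened tracks in Observations \ref{obslink} and \ref{product}. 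Everything else is an unwinding of the definitions of Section \ref{secflatspace}, so I do not expect a substantive obstacle.
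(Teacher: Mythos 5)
Your argument is correct and is essentially the paper's own (implicit) reasoning: the observation is stated without proof because it follows immediately from Proposition \ref{action}(1) and the fact that $\F$ is tiled by $G$-translates of $C(\Gamma')$ with the vertex labels pulled back via $\pi$, which is exactly what your steps (i) and (ii) unwind. Note only that your step (i) is really built into the construction --- the square structure on $\F$ is by definition induced from the copies of $C(\Gamma')$ --- so the connectedness/Observation \ref{obs cones} argument, while fine, is more than is needed.
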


\begin{center}
\includegraphics{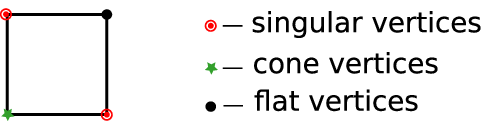}
\end{center}

Below is an example of a dual disk diagram. The yellow cells are the
boundary cells and the remaining cells form the core.

\begin{center} 
\includegraphics{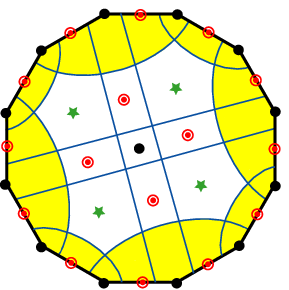}
\end{center}

\begin{obs}\label{obs3}
Since corners cannot contain singular vertices, all corner regions are
flat regions. Also, all cone regions are interior.
\end{obs}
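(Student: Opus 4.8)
\emph{Plan.} The plan is to read off both assertions from the local combinatorics of the $\cat(0)$ square complex $\F$. The two ingredients are: (i) by Observation \ref{obs1} every square of $\F$ has its two singular vertices diagonally opposite and its cone and flat vertices diagonally opposite, so the two edges of a square joining a singular vertex to a flat vertex are exactly the two edges incident to its flat vertex (and likewise the two singular-to-cone edges are the two edges at its cone vertex); and (ii) $\F^{(1)}$ is assembled entirely out of singular--flat edges, since its image in $\ol\F=C(\Ga)$ is the base $\Ga'$ of the cone and every edge of $\Ga'$ joins an edge-midpoint of $\Ga$ (a flat vertex) to a vertex of $\Ga$ (a singular vertex); in particular the cycle $\al$, and hence every edge of $\partial D$, lies in the union of singular--flat edges. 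I will also use, as in the construction of the dual disk diagram, that after making $\Delta$ transverse to $\H$ each region of $D$ maps into the closed block $\ol B_w$ of a single vertex $w$ of $\F$, and that each block contains exactly one vertex.

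\emph{Corner regions are flat.} A corner region of $D$ corresponds to $\al$ running around a corner of a square $S$ of $\F$: near that corner $\al$ traverses two edges $e,e'$ of $S$ meeting at a vertex $w$, and the corner region maps into $\ol B_w$. By (ii), $e$ and $e'$ are singular--flat edges of $S$; by (i), two singular--flat edges of a square meet only at the flat vertex of that square. Hence $w$ is the flat vertex of $S$ -- in particular it is neither singular nor the cone vertex -- so the corner region is a flat region. (The same reasoning shows there is no corner region at a cone vertex, since $\al$ traverses no singular--cone edge.)

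\emph{Cone regions are interior.} It suffices to show that no boundary region is a cone region. If $R$ is a boundary region, mapping into $\ol B_w$, then $R\cap\partial D\neq\emptyset$, so $\ol B_w$ meets $\Delta(\partial D)\subset\F^{(1)}$; thus it is enough to check that for a cone vertex $o$ the closed block $\ol B_o$ is disjoint from $\F^{(1)}$. This is a one-square check: in a square $S\ni o$, coordinatize $S=[0,1]^2$ with $o=(0,0)$, the two singular vertices at $(1,0)$ and $(0,1)$, and the flat vertex at $(1,1)$; then the two hyperplanes of $S$ are $\{x=\tfrac12\}$ and $\{y=\tfrac12\}$, one has $\ol B_o\cap S=\{x\le\tfrac12,\ y\le\tfrac12\}$, and the two singular--flat edges of $S$ are $\{x=1\}$ and $\{y=1\}$, which are disjoint from $\ol B_o\cap S$. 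Since $\F^{(1)}\cap S$ consists of these two edges and $\ol B_o$ is the union of the pieces $\ol B_o\cap S$ over squares $S\ni o$, we get $\ol B_o\cap\F^{(1)}=\emptyset$; hence $w$ is not a cone vertex and $R$ is not a cone region.

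\emph{Main subtlety.} There is essentially no hard step; the only point to be careful about is that $\F^{(1)}$ does meet $\H$ -- each singular--flat edge is crossed by one hyperplane at its midpoint, so the arcs of $\A$ genuinely have endpoints on $\partial D$ -- and one must make sure this incidence does not push $\F^{(1)}$ into a cone block, which is exactly what the coordinate computation above rules out.
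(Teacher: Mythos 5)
Your proof is correct and takes essentially the same route as the paper, which treats this as an immediate observation: a corner region comes from the full-edge boundary path turning around a corner of a square, and by Observation \ref{obs1} two singular--flat edges of a square meet only at its flat vertex, while cone vertices (and their blocks) are disjoint from $\F^{(1)}$, so no cone region can touch $\partial D$. Your coordinate check of the cone block and the explicit appeal to uniqueness of the vertex in each block simply spell out that one-line reasoning.
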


\begin{obs}\label{obs2} 
Observation \ref{obs1} can be transported to the disk diagram to tell
us what types the regions are; around each intersection point of two
arcs in the diagram, we have two diagonally opposite singular regions,
one cone region and one flat region. In particular, this tells us that
(under our standing assumption that our paths are embedded) the core
is always not empty.
\end{obs}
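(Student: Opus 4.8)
The plan is to push the local square structure of $\F$ (Observation \ref{obs1}) through the transverse map $\Delta\colon D^2\to\F$ defining the dual disk diagram, and then read off both conclusions. The key steps are: (i) give every region of $D$ a \emph{type} via the vertex sitting in its image block; (ii) analyze the four local regions at a crossing of two arcs; (iii) deduce the statement about cone regions; (iv) deduce that the core is nonempty by first showing $\A$ must contain a crossing.

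For (i)--(iii): each region $R$ of $D$ maps into a single block of $\F$, hence its image meets a unique vertex of $\F$, and I take the type of $R$ (singular, flat, or cone) to be that vertex's type. Let $x$ be an interior point at which arcs $a_1\subset\Delta^{-1}(h_1)$, $a_2\subset\Delta^{-1}(h_2)$ cross, for hyperplanes $h_1,h_2$ of $\F$. Since $\Delta$ is transverse to $\H$, a disk neighborhood of $x$ is carried to a neighborhood of a point of $h_1\cap h_2$, the two arcs going locally to $h_1$ and $h_2$, and the four local sectors at $x$ landing one apiece in the four local sectors at that point. As $\F$ is a $2$-dimensional $\cat(0)$ square complex, $h_1\cap h_2$ is a single point, the centre of a square $S\subset\F$, whose four local sectors are the four open quadrants of $S$, one in the block of each vertex of $S$. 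By Observation \ref{obs1} the four vertices of $S$ are two diagonally opposite singular vertices, one flat vertex, and one cone vertex; transporting types back, the four local regions at $x$ are two diagonally opposite singular regions, one flat region, and one cone region. This is the first assertion.

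For the core: it suffices to produce one crossing point of two arcs of $\A$, since by the above such a point carries a local cone region, which by Observation \ref{obs3} is interior, so the core is nonempty. First, $\A=\Delta^{-1}(\H)\neq\emptyset$, because $\alpha$ has at least one edge and the midpoint of any edge of $\alpha$ lies on that edge's dual hyperplane. Suppose, for contradiction, $\A$ has no crossing. Then its arcs are pairwise disjoint, hence nested, and an innermost arc $a$ bounds a subdisk of $D$ containing no other arc; that subdisk meets $\partial D$ in an arc $\sigma$ containing no endpoint of $\A$ in its interior (such an endpoint would force another arc either to enter the arc-free subdisk or to cross $a$), so $\sigma$ consists of a single vertex of $\alpha$ together with the two adjacent half-edges. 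Thus $a$ has its two endpoints on neighbouring boundary edges, i.e. $\A$ contains a spur, contradicting the absence of spurs in the dual diagram of an embedded loop. Hence $\A$ has a crossing and the core is nonempty.

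I expect the routine part to be the first assertion, which is essentially bookkeeping once transversality and Observation \ref{obs1} are in hand. The step that needs care is the last one: that the dual diagram of an embedded (non-backtracking) cycle must contain a crossing of arcs. The main obstacle there is the planar argument extracting a spur from a hypothetical crossing-free arrangement, and in particular making airtight the claim that an innermost arc in such an arrangement has both endpoints on a pair of neighbouring boundary edges.
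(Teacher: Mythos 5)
Your proposal is correct and follows exactly the route the paper intends (the statement is given there as an observation with no written proof): transport Observation \ref{obs1} through the transverse map $\Delta$ at a crossing of two arcs, and then invoke Observation \ref{obs3} that cone regions are interior to conclude the core is nonempty. The one step the paper leaves entirely implicit --- that the diagram of an embedded, non-backtracking cycle must contain at least one crossing of arcs --- you supply correctly via the innermost-arc argument, which in a crossing-free diagram produces an arc with endpoints on adjacent boundary edges bounding an arc-free region, i.e.\ a spur, contradicting the paper's standing no-spur assumption for embedded cycles.
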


\begin{obs}\label{obs4}
If $A$ is an arc of the disk diagram, then by Observation \ref{obs1}, we see
that the regions immediately to one side of $A$ alternate singular and
cone regions, while the other side of $A$ alternate singular and flat
vertices. The latter sequence of regions gives us a stalling path in
$\F^{(1)}$. Thus the vertices associated to this sequence of regions
all lie in the same parallel set.
\end{obs}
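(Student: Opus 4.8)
The plan is to recognize the arc $A$ as a piece of a track of $\F$ and then read off the claimed structure from the product description of its thickened neighbourhood. By construction of the dual disk diagram, $A$ maps into a single hyperplane $h$ of $\F$, and by Observation \ref{tracks} this $h$ is a component of $\pi^{-1}(\tau_v)$ for some singular vertex $v$. First I would locate $h$ relative to the parallel set: writing $\tilde R_v$ for the component of $\pi^{-1}(R_v)$ containing $h$, Observation \ref{product} gives $(\tilde R_v,\tilde P_v)\cong(\tilde P_v\times c(\Z),\tilde P_v\times\{c\})$, and under this identification the pieces of $\pi^{-1}(\tau_v)$ lying in $\tilde R_v$ are parallel copies of $\tilde P_v$, one inside each cone segment $\tilde P_v\times[c,z]$. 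Thus $h$ is such a copy, and on the side of $h$ containing the central copy $\tilde P_v\times\{c\}$ the only vertices of $\F$ abutting $h$ are precisely the vertices of the parallel set $\tilde P_v$.

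Next I would translate this back to the disk diagram. The regions of $D$ abutting $A$ on that side are exactly the blocks of these vertices of $\tilde P_v$, met in the order in which they lie along $\tilde P_v$. Since $\F^{(1)}$ is bipartite with parts the flat and the singular vertices, they alternate between flat and singular regions, and two consecutive ones span the edge of $\tilde P_v\subset\F^{(1)}$ dual to the arc crossing $A$ between them; this is the claimed full edge path $f_1,\sigma_1,f_2,\sigma_2,\dots$ lying inside $\tilde P_v$. On the other side of $h$ one sees cone and singular regions alternating, by the same argument or directly from Observations \ref{obs1} and \ref{obs2}. This gives the first two sentences of the statement.

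To finish, I would check the path is a stalling path. Every singular vertex of $\tilde P_v$ has label $v$, since $\tilde P_v$ covers the singular star $S_v$, whose only singular vertex is $v$; so at each flat vertex $f_j$ of the path the two neighbouring singular vertices $\sigma_{j-1},\sigma_j$ have the same label. The stabilizer in $G$ of a full edge $[f,\sigma,f']$ is the infinite cyclic stabilizer of its middle vertex $\sigma$ --- because $\stab(\sigma)$ fixes pointwise the link of $\sigma$ in $\F^{(1)}$ (Observation \ref{obslink}), hence fixes every flat vertex adjacent to $\sigma$ --- and this $\Z$, inside the $\zz$ stabilizing $f$, is the coordinate summand determined by the label of $\sigma$. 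Hence the two full edges of the path meeting at $f_j$ have the same stabilizer, so the turn at $f_j$ is illegal. The path therefore has no legal turns, it is a stalling path, and since the union of the stalling paths through a singular vertex is a parallel set, all of its flat --- and singular --- vertices lie in the single parallel set $\tilde P_v$.

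The step I expect to be the main obstacle is the first one: pinning down where $h$ sits inside $\tilde R_v$ relative to $\tilde P_v$, since $\tilde R_v$ and $\tau_v$ are described only up to the homeomorphism of Observation \ref{product} and neither is literally a subcomplex of $\F$. If exploiting that identification proves awkward, the fallback is purely diagrammatic: traverse $A$, and using Observation \ref{obs2} together with the fact that the region on a fixed side of $A$ is constant between consecutive crossing points, check case by case that the region types alternate as claimed; then identify the singular regions on the flat side as lifts of the single vertex $v$ by noting that every square of $\F$ crossed by $h$ lies over one of the finitely many squares of $C(\Gamma)$ incident to $v$.
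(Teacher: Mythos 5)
Your argument is correct and follows the route the paper intends: the paper states this as an observation without proof, the implicit justification being exactly what you spell out --- the arc lies in a single hyperplane, i.e.\ a component of $\pi^{-1}(\tau_v)$ for one singular vertex $v$ (Observation \ref{tracks}), so Observation \ref{obs1} transported to the diagram gives the alternation, and the flat/singular regions all carry lifts of $v$ and the star $S_v$, forcing every turn to be illegal and the vertices to lie in the single parallel set $\tilde P_v$. Your fallback diagrammatic check (squares crossed by $h$ lie over the squares of $C(\Gamma)$ at $v$) is in fact the most direct way to see this and matches the paper's intent; the detour through Observation \ref{product} is fine but not needed.
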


\subsection{Existence of 1-shells or 2-shells (by the seashore).} 
Following McCammond and Wise, we define a boundary region of a disk to
be an {\it $i$-shell} if it has $i$ internal edges. In the example
above, the core of the diagram has four 2-shells. Since the core of
the diagram is a C(4)-T(4) complex, we can apply Greendlinger's lemma. 
We give a slightly different statement suited to
our needs and we for the sake of completeness, we include a proof here
(for deeper delvings into such results see \cite{MW}).

\begin{lemma}\label{Shells}
Let $D$ be a disk tiled by polygons $P_1,\cdots,P_k$ in such a way
that every interior vertex is incident to at least 4 polygons and so
that each polygon has at least 4 sides. Then one of the following
holds.
\begin{enumerate}
\item $k=1$ (i.e. there is one 0-shell)
\item there are at least two 1-shells
\item there is at least one 1-shell and at least two 2-shells
\item there are at least four 2-shells
\end{enumerate}
\end{lemma}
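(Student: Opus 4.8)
The natural approach is a combinatorial Gauss--Bonnet / curvature counting argument. The plan is to assign to each polygon $P_j$ a curvature $\kappa(P_j) = 2\pi - \sum_{\text{interior angles of }P_j}(\pi - \text{angle})$ after first declaring each corner of a polygon to contribute in the standard combinatorial way: give every interior vertex of $D$ angle $2\pi/(\text{number of incident polygons})$ and every boundary vertex a suitable angle, so that the total curvature of the disk is $2\pi$. Concretely I would use the discrete Gauss--Bonnet formula
$$
\sum_{j} \kappa(P_j) = 2\pi,
$$
where $\kappa(P_j) = 2\pi - \pi\cdot(\text{number of sides of }P_j) + \sum_{\text{corners }c\text{ of }P_j} \angle_c$, and $\angle_c$ is $2\pi/d(v)$ when the corner is at an interior vertex of degree $d(v)\ge 4$, while boundary corners are assigned angle $\le \pi$ (the precise choice is flexible; one standard choice gives each interior boundary edge a bounded contribution). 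The hypotheses $C(4)$ (each polygon has $\ge 4$ sides) and $T(4)$ (each interior vertex meets $\ge 4$ polygons) are exactly what force $\kappa(P_j)\le 0$ for every interior polygon.

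With that in place, the heart of the argument is a case analysis on the total boundary curvature, which must sum to $2\pi$ and is concentrated on boundary regions. A boundary region $P$ with $i$ internal edges (an $i$-shell) has a controlled maximal curvature: a $0$-shell gives $k=1$ (case (1)); a $1$-shell contributes at most $\pi$ to the boundary curvature; a $2$-shell contributes at most $\pi/2$; and $i$-shells with $i\ge 3$ contribute $\le 0$. Since the only positive contributions come from $0$-, $1$-, and $2$-shells, and they must add up to (at least) $2\pi$, we are forced into one of cases (2), (3), (4): either two $1$-shells ($\le \pi + \pi = 2\pi$ suffices, and equality/near-equality forces exactly this configuration), or one $1$-shell plus two $2$-shells ($\pi + \pi/2 + \pi/2$), or four $2$-shells ($4\cdot\pi/2$). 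One must be a little careful to track the contributions of boundary edges and boundary vertices that are not inside shells, and to check that the estimates for $1$- and $2$-shells are sharp enough that no other combination reaches $2\pi$; this bookkeeping is the main obstacle, but it is entirely elementary once the angle assignment is fixed.

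An alternative, more self-contained route — which is probably what the authors intend given the ``by the seashore'' allusion to McCammond--Wise — is a direct combinatorial induction on $k$: peel off a shell (which exists by a minimality or Euler-characteristic argument), and analyze how removing it affects the count of $1$- and $2$-shells in the smaller diagram, showing the conclusion is preserved. The subtlety there is that removing a shell can create new interior vertices of low degree, so one must argue that the $C(4)$--$T(4)$ conditions, or a suitable substitute, persist — or else phrase the induction on the number of hyperplane arcs in the original dual diagram rather than on abstract polygonal disks. I expect the clean Gauss--Bonnet version above to be the shortest to write, with the only real work being the sharp per-shell curvature bounds and verifying no forbidden combination of shell types sums to $2\pi$.
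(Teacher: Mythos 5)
Your plan is essentially the paper's own argument: the paper also runs a combinatorial Gauss--Bonnet count (phrased as ``boundary deficit minus interior excess equals $2\pi$''), showing each polygon $P_i$ carries a score $C_i-n_i+4$ (your curvature in units of $\pi/2$) whose total is at least $4$, with positive score possible only for $0$-, $1$-, and $2$-shells contributing $4$, $2$, and at most $1$ respectively, which forces exactly the four listed configurations. So the proposal is correct and follows the same route, with your per-shell bounds $\pi$ and $\pi/2$ matching the paper's scores.
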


\begin{proof}
A {\it corner} of $P_i$ is an unordered pair of adjacent edges. Thus
an $n$-gon has $n$ corners (assuming $n\geq 3$). We say
that a corner of $P_i$ is contained in $\partial D$ if the interiors of
the corresponding two edges do not intersect any other $P_j$'s.
Denote by $n_i$ the number of sides of $P_i$ and by $C_i$ the number
of corners of $P_i$ contained in $\partial D$.

View each $P_i$ as a square complex with $n_i$ squares so that
curvature is concentrated at one interior vertex. The excess angle at
this vertex is
$(\pi/2)n_i-2\pi$. Thus the total interior excess is at least
$$(\pi/2)\sum n_i-2k\pi$$ (it might be larger if there are interior
verices incident to $>4$ polygons).

The boundary deficit is $\pi/2$ for each corner, i.e. at most 
$$(\pi/2)\sum C_i$$ (it might be smaller if there are boundary
vertices incident to $>2$ polygons). Recall that Gauss-Bonnet says:

$$(\mbox{boundary deficit})-(\mbox{interior excess})=2\pi$$

Thus

$$(\pi/2)\sum C_i - (\pi/2)\sum n_i + 2k\pi \geq 2\pi$$
and hence

$$\sum C_i \geq 4 + \sum (n_i-4)$$

The statement now follows quickly. To each $P_i$ assign the score of
$C_i-n_i+4$ points. The inequality says that the sum of all points is
$\ge 4$. If $P_i$ is assigned 4 points, it is a 0-shell. If it is assigned 2
points it is a 1-shell, and if it is assigned 1 point it is a
2-shell. (An interior triangle would get 1 point but we are assuming
there are no triangles. All other polygons get a nonpositive number of
points.)
\end{proof}

\begin{remark}
Suppose that there are precisely two 1-shells and no 2-shells. Then
the polygons can be renumbered so that $P_1$ and $P_k$ are 1-shells
and $P_i$ and $P_j$ share an edge iff $|i-j|\leq 1$. In other words,
the polygons form a {\it ladder}. 
\end{remark}

\subsection{Shells, short cuts and taut cycles}

A  cycle $\alpha$ is said to have an {\it i-cut} if there are flat vertices 
$v, w $ on  $\alpha$ such that the coarse length of any path along the cycle 
between $v$ and $w$ is greater than $i$, but for 
which there exists full edge path of length $i$ in $\F^{(1)}$ joining $v$ 
and $w$. A cycle is said to be {\it taut} if there exists no $i$-cut with $i\leq 2$. 

Now suppose that $D$ is a disk diagram for $\alpha$ and let $D^\prime$ be 
its core. If $D^\prime$ is not a single cell, then  by Lemma \ref{Shells}, 
$D^\prime$ has a 1-shell or a 2-shell. As we see in the figure below, we 
then obtain a 1-cut or a 2-cut for $\alpha$.

\begin{center}
\begin{figure}[here]
\begin{center}
\includegraphics{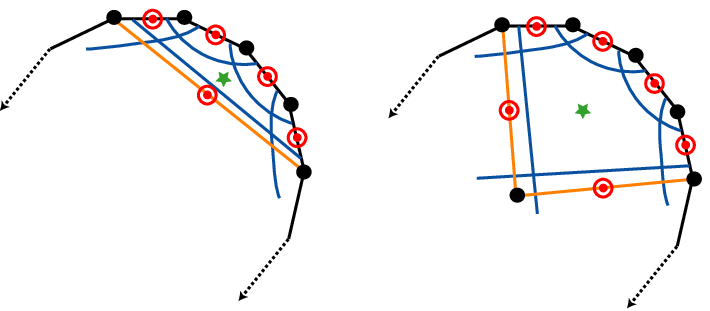}
\end{center}
\caption{1-cuts and 2-cuts arising from shells.}
\end{figure}
\end{center}

Thus we obtain the following lemma.

\begin{lemma}(Diagrams for taut cycles) Suppose that $\alpha$ is a taut cycle. Then the diagram for $\alpha$ has a core consisting of a single cell.
\label{TautDiagram}
\end{lemma}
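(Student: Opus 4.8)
The statement asserts that if $\alpha$ is a taut cycle then its (unique) dual disk diagram $D$ has core consisting of a single cell. The plan is to argue by contradiction: suppose the core $D'$ of the diagram $D$ for $\alpha$ is \emph{not} a single cell. Then $D'$ is a disk tiled by polygons in which every interior vertex is incident to at least $4$ polygons and every polygon has at least $4$ sides (this is exactly the $C(4)$--$T(4)$ structure established in the discussion of dual disk diagrams, using Observation \ref{obs2} to see that the core is nonempty and that each arc-intersection has the right local picture). Hence Lemma \ref{Shells} applies, and since $k>1$ we are in case (2), (3), or (4); in each case $D'$ contains at least one $1$-shell or $2$-shell.

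\textbf{Main step.} The heart of the argument is translating a shell of the core into a short cut of $\alpha$. A boundary region $R$ of $D'$ which is an $i$-shell (for $i\in\{1,2\}$) has exactly $i$ internal edges and therefore $n_i - i$ edges lying on $\partial D'$. Reading off the sequence of regions along one side of each bounding arc of $R$ gives, by Observation \ref{obs4}, a stalling path in $\F^{(1)}$: all the flat vertices encountered lie in a single parallel set. Concatenating the (at most two) arcs bounding the shell $R$ together with the internal edge(s) produces a full edge path in $\F^{(1)}$ of coarse length at most $i\le 2$ between two flat vertices $v,w$ which lie on $\alpha$. On the other hand, because the $\partial D'$-portion of $R$ consists of several edges of $\alpha$ (the shell is a genuine region of the diagram, not the whole core), the coarse length of the sub-path of $\alpha$ between $v$ and $w$ running the "long way" around is strictly greater than $i$ — here one uses that legal turns along $\alpha$ correspond to corners of the diagram, so the portion of $\alpha$ cut off by $R$ has positive coarse length, plus the contributions of the stalling paths along the two arcs. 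This exhibits an $i$-cut of $\alpha$ with $i\le 2$, contradicting tautness. The figure "1-cuts and 2-cuts arising from shells" is precisely the picture to make this bookkeeping concrete.

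\textbf{Where the difficulty lies.} The main obstacle is the careful combinatorial accounting in the previous paragraph: one must check that the short cut produced really has coarse length $\le 2$ while the complementary arc of $\alpha$ has coarse length $> 2$, i.e. that $v\ne w$, that $v,w$ are genuinely distinct flat vertices on $\alpha$, and that the two sub-paths of $\alpha$ determined by $v,w$ are not degenerate. This is where the hypotheses that $\alpha$ is embedded (a \emph{cycle}, so no spurs and no separating cells) and that there are no triangular regions get used — they guarantee the shell $R$ is attached to $\partial D'$ along a connected arc of at least the expected length, and that passing from $D'$ back out to $D=\partial D$ does not collapse the configuration. Once the $i$-cut is in hand the contradiction with the definition of taut is immediate, and the lemma follows.
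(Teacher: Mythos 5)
Your proposal is correct and follows essentially the same route as the paper: the paper also argues that if the core $D'$ is not a single cell, then Lemma \ref{Shells} produces a $1$-shell or $2$-shell, and (as in the figure ``1-cuts and 2-cuts arising from shells'', using the stalling paths of Observation \ref{obs4} along the internal arcs) such a shell yields a $1$-cut or $2$-cut of $\alpha$, contradicting tautness. Your write-up just spells out the bookkeeping that the paper delegates to the figure.
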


%
%

\section{Rigidity of taut cycles}
\label{secrigiditytautcycles}

The aim of this section is to prove the the quasi-isometric
preservation of taut cycles.  Recall that if $\phi:X_\Gamma\to
X_\Gamma$ is a quasi-isometry, Theorem \ref{StandardFlats} tells us
that standard flats are coarsely preserved by $\phi$. Since no two
standard flats are coarsely equivalent, $\phi$ induces a bijection
$\ps$ on the flat vertex set of $\F$. Moreover, since coarse equivalence is
preserved by quasi-isometries, we have that $D^\infty$ is preserved.

\begin{theorem}[Taut cycle rigidity]
Suppose that $\phi$ and $\ps$ are as above. Then $\ps$ carries taut
cycles to taut cycles. In particular, full edges that lie along taut
cycles are carried to full edges.
\label{TautCycleRigidity}
\end{theorem}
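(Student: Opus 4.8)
The plan is to transport the taut cycle $\alpha$ to a candidate cycle $\alpha'$ through the images of its flat vertices, using only the coarse data that $\ps$ is already known to preserve, and then to prove $\alpha'$ is taut by a dual disk-diagram argument that pushes any short cut of $\alpha'$ back to a short cut of $\alpha$. First I would record the coarse preliminaries. Let $\bar\phi$ be a quasi-inverse of $\phi$; it induces the inverse bijection $\ps^{-1}$ of the set of flat vertices of $\F$. By Lemma~\ref{lemstandardalternative} and Theorem~\ref{StandardFlats}, both $\ps$ and $\ps^{-1}$ preserve the relation ``$F$ and $F'$ lie in a common parallel set'', equivalently $D^\infty(F,F')=1$; hence $\ps$ is an isometry of the set of flat vertices of $\F$ with the metric $D^\infty$. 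Since $\Gamma$ has no triangles, three standard flats that are pairwise at $D^\infty$-distance $1$ all lie in a single parallel set (three edges of $\Gamma$ that pairwise meet have a common vertex), so a parallel set is exactly a maximal set of pairwise-$D^\infty$-$1$ flats, and $\ps$ carries parallel sets to parallel sets. The same observation shows that along a full-edge path the turn at a flat vertex $v$ is illegal if and only if the two flat vertices adjacent to $v$ are at $D^\infty$-distance $1$; so the pattern of legal and illegal turns along a full-edge cycle is determined by $D^\infty$ and is preserved by $\ps$.

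Next I would build the candidate cycle. List the flat vertices of $\alpha$ as $v_1,\dots,v_n$ in cyclic order; between two consecutive legal turns $\alpha$ traverses a maximal stalling segment, which I call a \emph{side}, and a flat vertex carrying a legal turn I call a \emph{corner}. Put $w_i=\ps(v_i)$. Since $D^\infty(w_i,w_{i+1})=1$, join $w_i$ to $w_{i+1}$ by the stalling segment inside their common parallel set; concatenating these gives a closed full-edge path $\alpha'$, and by the turn-pattern statement its legal turns occur exactly at the images of the corners of $\alpha$, so $\alpha'$ has the same number of corners and sides as $\alpha$. It then suffices to show that $\alpha'$ is an embedded, taut cycle whose flat vertex set is exactly $\{w_1,\dots,w_n\}$; the ``in particular'' clause follows, since consecutive flat vertices of a cycle are joined by full edges, so each full edge $v_iv_{i+1}$ of $\alpha$ then maps to the full edge $w_iw_{i+1}$ of $\alpha'$.

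To prove $\alpha'$ is such a cycle I would argue by contradiction. If $\alpha'$ is non-embedded, or a side of it carries a flat vertex other than the $w_i$, or $\alpha'$ has an $i$-cut with $i\le2$, then --- just as in the proof of Lemma~\ref{TautDiagram}, forming the dual disk diagram of Section~\ref{secdualdiskdiagrams} and using Greendlinger's lemma (Lemma~\ref{Shells}) to peel off a $1$- or $2$-shell of its core --- one extracts flat vertices $x,y$ on $\alpha'$ joined by a full-edge path $\delta$ of length $\le2$ in $\F^{(1)}$ while both arcs of $\alpha'$ between $x$ and $y$ have coarse length $>2$. Since $x$ and $y$ lie on sides of $\alpha'$, they are at $D^\infty$-distance $\le1$ from corners $w_a,w_b$ of $\alpha'$, whose preimages $v_a,v_b$ are, by the turn-pattern correspondence, far apart along $\alpha$ in both directions; and transporting $\delta$ back by $\ps^{-1}$ gives a bounded-length short cut between $v_a$ and $v_b$. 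The remaining step, which is the crux, is to upgrade this coarse short cut to an honest $i$-cut of $\alpha$ with $i\le2$; this contradicts the tautness of $\alpha$ and shows $\alpha'$ is an embedded taut cycle with flat vertex set $\{w_i\}$, so that $\ps$ carries $\alpha$, together with its full edges, onto a taut cycle.

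The hard part will be precisely this upgrade. Because $\ps$ is, a priori, merely a $D^\infty$-isometry, pulling $\delta$ back shows only that $v_a$ and $v_b$ are at bounded $D^\infty$-distance, which by itself gives no control on the length of a full-edge path joining them --- two flats in a common parallel set may be joined only by a long stalling path. Closing this gap --- realizing the coarse short cut by an actual full-edge path of length $\le2$, with no slack accumulating between coarse length, $D^\infty$, and full-edge length --- is where I expect essentially all the work to go: a careful analysis of dual disk diagrams in $\F$ (filling the relevant bigon, discarding internal triangles, and repeatedly peeling off $1$- and $2$-shells), using crucially that $\Gamma$ has girth $\ge5$. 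That same analysis is also what pins down that the sides of $\alpha'$ are single full edges, so that $\ps$ really does induce a bijection from the flat vertices of $\alpha$ onto those of $\alpha'$.
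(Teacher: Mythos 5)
Your outline follows the paper's strategy quite closely: transport $\alpha$ to a candidate loop through the $w_i=\ps(v_i)$ by stalling geodesics inside the common parallel sets, fill it with a dual disk diagram, apply Lemma \ref{Shells}, and pull any resulting short cut back through $\ps^{-1}$. But the step you explicitly defer --- ``upgrading the coarse short cut to an honest $i$-cut of $\alpha$ with $i\le 2$'' --- is not a technicality to be postponed; it is the paper's key technical input, Lemma \ref{QuasiCuts} (quasi-cuts yield cuts), proved \emph{before} the theorem and invoked at every stage (to show the $\beta_i$'s assemble into an embedded cycle, to show the core of the diagram is a single cell, and to show the resulting loop is taut). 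Since $\ps$ only preserves the relation of lying in a common parallel set, everything you transport back to $\alpha$ is a concatenation of a bounded number of stalling segments, each of arbitrary full-edge length, meeting $\alpha$ only at its endpoints; tautness of $\alpha$ by itself says nothing about such paths, so no contradiction is available until one proves that a quasi-cut of coarse length $\le 3$ forces an actual 1- or 2-cut. That proof is not ``repeatedly peeling off shells'': it is a case analysis on the diagram bounded by $\beta\cup\alpha_1$ exploiting that shells cannot produce corners in the interior of a stalling segment, so each shell either creates a cut inside the non-stalling arc (contradicting tautness directly), or is anchored at the junction vertices $v,w,p,q$, with the hardest case (corners at both $v$ and $w$, both end segments of length 1) handled by re-routing the argument through the other side $\alpha_2$ of the original cycle. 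None of this appears in your proposal, so as written the proof has a genuine gap exactly where you predicted the work would be.

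Two smaller points of bookkeeping that the missing lemma also governs. First, your reduction extracts from a shell of $\alpha'$ a cut path of coarse length $\le 2$ and then adds two connecting segments to the nearest corners, giving a pulled-back path of coarse length up to $4$, which is outside the range the paper's Lemma \ref{QuasiCuts} covers; the paper instead uses a separating arc of the diagram together with Observation \ref{obs4}, which produces a \emph{single} stalling path (coarse length $1$) between two boundary vertices of $\beta$, hence a pulled-back quasi-cut of coarse length $\le 3$. Second, the conclusion that the sides of $\alpha'$ are single full edges (the ``in particular'' clause) is obtained in the paper from the statement that the core of the diagram for $\beta$ is one cell, which again rests on Lemma \ref{QuasiCuts}; your turn-pattern observation, while essentially correct (and a reasonable supplement), does not by itself deliver this.
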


We first prove a lemma that tells us that ``quasi'' cuts of length at
most 3 actually give 1-cuts or 2-cuts.

\begin{lemma}[quasi-cuts yield cuts] Let   $\alpha$ be a cycle.
Suppose that there exist non-adjacent flat vertices $v$ and $w$ along
$\alpha$, subdividing $\alpha$ into two paths $\alpha_1$ and
$\alpha_2$. Suppose further that $v$ and $w$ are joined by a path $\beta$ so that
\begin{enumerate}
\item $length_\coarse(\beta)\leq 3$
\item $v$ and $w$ are the only flat vertices of $\alpha$ which lie in $\beta$. 
\end{enumerate}
Then $\alpha$ is not taut. 
\label{QuasiCuts}
\end{lemma}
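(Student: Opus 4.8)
\emph{Strategy.} The plan is to argue by contradiction, so suppose $\alpha$ is taut. By Lemma \ref{TautDiagram} the dual disk diagram $D_\alpha$ then has core a single cell $P_0$. The idea is to cut $\alpha$ along $\beta$, realize the two resulting ``bigons'' by dual disk diagrams, reassemble them into $D_\alpha$, and extract a contradiction from the fact that the core is a single cell. Orient $\alpha=\alpha_1\cdot\alpha_2$, where $\alpha_1,\alpha_2\colon v\to w$ are the two arcs of $\alpha$ cut off by $v$ and $w$; by the non-adjacency hypothesis each $\alpha_i$ contains a flat vertex distinct from $v$ and $w$.

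\emph{Normalizing $\beta$.} Since the coarse length of $\beta$ is at most $3$, the path $\beta$ meets at most four flat vertices $v=u_0,u_1,u_2,u_3=w$, consecutive ones joined by stalling subpaths, and it has at most two legal turns; by hypothesis~(2), $u_1,u_2\notin\alpha$. Consequently any point of $\beta\cap\alpha$ other than $v,w$ is a singular vertex lying on a full edge that $\beta$ does not share with $\alpha$; peeling off an innermost such subloop and inducting on the (bounded) number of edges of $\beta$, one reduces to the case where $\beta$ is a cycle meeting $\alpha$ only at $v$ and $w$. Writing $\bar\beta$ for $\beta$ traversed from $w$ to $v$, the loops $\sigma_1\defeq\alpha_1\cdot\bar\beta$ and $\sigma_2\defeq\beta\cdot\alpha_2$ are then non-degenerate cycles; let $D_1,D_2$ be their dual disk diagrams.

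\emph{The contradiction.} Along the shared subpath $\beta$ the arc systems of $D_1$ and $D_2$ match up, each legal turn of $\beta$ being the endpoint of the unique arc dual to the hyperplane crossed there; hence $D_1\cup_\beta D_2$ is a disk with boundary $\alpha_1\cdot\alpha_2=\alpha$ carrying an arc system, and after the routine removal of any triangles or closed curves created along $\beta$ (a finite, local process, since $\beta$ is short) it becomes a dual disk diagram for $\alpha$, so by uniqueness it is $D_\alpha$. In particular the image $\beta'$ of $\beta$ is a simple chord of $D_\alpha$ from $v$ to $w$, separating $D_\alpha$ into the two pieces $D_1$ and $D_2$. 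Any region of $D_i$ that is internal in $D_i$ touches neither $\alpha_i\subset\partial D_\alpha$ nor $\beta'$, hence is internal in $D_\alpha$ and therefore equals $P_0$. But the single cell $P_0$ lies in at most one of the two pieces --- in neither if $\beta'$ passes through $P_0$ --- so at least one of $D_1,D_2$ has empty core, contradicting Observation \ref{obs2}. Hence $\alpha$ is not taut.

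\emph{Expected difficulty.} The two delicate points are: (i) the normalization of $\beta$, where hypothesis~(2) is indispensable (without it $\beta$ could run along a long stretch of $\alpha$ and some $\sigma_i$ would be trivial); and (ii) controlling the degenerate configurations --- a piece $D_i$ with empty core (a ``ladder''), or, in the variant of the argument that applies Lemma \ref{Shells} directly to $D_1$ and reads a $1$- or $2$-cut of $\alpha$ off a shell, a shell of $D_1$ carried entirely by $\beta$ rather than anchored on $\alpha_1$. Both are excluded using that $\beta$ has coarse length at most $3$ (so it has at most two corners and cannot ``wrap around'' a shell) together with the non-adjacency of $v$ and $w$ (which keeps the relevant sub-arcs of $\alpha$ coarsely long, so the short paths produced are genuine $1$- or $2$-cuts). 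I expect step (ii) to be the crux.
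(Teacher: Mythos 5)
Your strategy is genuinely different from the paper's: the paper applies Lemma \ref{Shells} directly to the dual disk diagram of the single loop $\beta\cup\alpha_1$ and converts shells into $1$- or $2$-cuts of $\alpha$, with a case analysis governed by where corners can sit on the stalling segments of $\beta$; you instead glue the diagrams $D_1,D_2$ of the two loops $\alpha_1\bar\beta$ and $\beta\alpha_2$ along $\beta$ and try to contradict Lemma \ref{TautDiagram} by counting internal regions. The final count would indeed give a contradiction, \emph{provided} the glued arc system is already a reduced diagram for $\alpha$ --- and that is exactly the step you have not established; it is a genuine gap.

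Concretely, gluing the two transverse extensions along $\beta$ does produce an (a priori unreduced) diagram for $\alpha$, but reducedness of $D_1$ and $D_2$ is not inherited: a hyperplane crossed more than once by $\beta$ can produce a closed curve, and --- a case you never mention --- two concatenated arcs can cross once inside $D_1$ and once inside $D_2$, producing a bigon. The moves needed to remove such configurations are not ``local along $\beta$'': the offending arcs may run through all of $D_1$ and $D_2$, and the disks they bound can contain precisely the internal regions you are counting. After even one reduction move, the identification of $D_\alpha$ with $D_1\cup_\beta D_2$, the existence of the separating chord $\beta'$, and the survival of an internal region of each $D_i$ as an internal region of $D_\alpha$ no longer follow from uniqueness of the reduced diagram, so the contradiction with the single-cell core is not justified. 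Repairing this would require showing that under the hypotheses (tautness of $\alpha$, $length_\coarse(\beta)\leq 3$, condition (2)) no closed curves or bigons arise in the glued diagram, or recasting the count in a form stable under reduction; either way you are pushed back toward the paper's route of reading cuts off shells of the diagram for $\beta\cup\alpha_1$. A secondary, smaller point: your normalization by ``peeling off innermost subloops'' shortens $\beta$ at its self-intersections, but it cannot remove a touching of $\beta$ with $\alpha$ at a singular vertex, which makes $\sigma_1,\sigma_2$ non-embedded and needs a separate short argument before Observation \ref{obs2} can be invoked.
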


\begin{proof}

We give the argument when $length_\coarse(v,w)=3$. The argument when
$length_\coarse(v,w)<3$ is similar and indeed simpler. So we suppose
that $\beta$ is broken up by two flat vertices $p$ and $q$ into three
stalling subpaths $\beta_1, \beta_2$ and $\beta_3$ as in Figure
\ref{DoubleIceCream}

\begin{figure}[here]
\begin{center}
\includegraphics{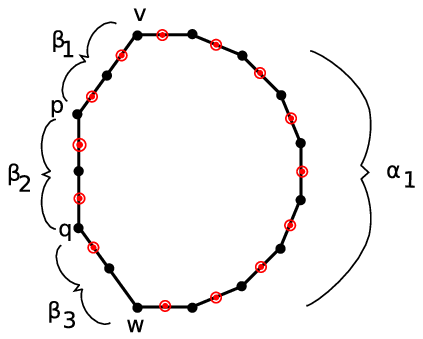}
\end{center}
\caption{A dual disk diagram for a stalling cut.}
\label{DoubleIceCream}
\end{figure}

We let $D$ denote a dual disk diagram for the loop
$\gamma=\beta\cup\alpha_1$. Note that by assumption, this loop is indeed a cycle. We will apply Lemma \ref{Shells} to $D$.
Now we must consider some cases.

{\bf \underline{Case 1: $\beta_1$  and $\beta_3$ are both
    longer than 1.}} Since there are no corners along stalling sections
    of $\beta$, it follows that $D^\prime$ is not a single cell so we
    are in cases 2,3, or 4 of Lemma \ref{Shells}. Now a shell gives at
    least two consecutive corners along $\partial D$. It follows that
    if a shell produces corners along $\beta$, it is either a sequence
    of corners that begins at $v$ or $w$ and continues into $\alpha_1$
    or the sequence of corners is simply the pair of corners at $p$
    and $q$ (in this case, $\beta_2$ is of length 1).

{\bf There exist 1-shells (case 2 or 3 of Lemma \ref{Shells}).} 
Let us suppose we have a 1-shell in $D^\prime$. Now a 1-shell produces
a sequence of corners of length at least three, so it cannot occur at
$p$ and $q$.  Also, both of the endpoints of the 1-cut produced by
this shell cannot be along the non-stalling section of $D$, for
otherwise $\alpha$ would not be taut. Thus, one of the endpoints of
the 1-cut must be along the stalling sections $\beta_1$ or
$\beta_3$. But since these stalling section can only have a corner at
$v$ and $w$, the 1-cut provides a 2-cut between $v$ or $w$ and some
vertex $z$ in the non-stalling section (see Figure
\ref{OneCutAtCorner}). Thus $\alpha$ is not taut and we are done.
\begin{figure}[here]
\begin{center}
\includegraphics{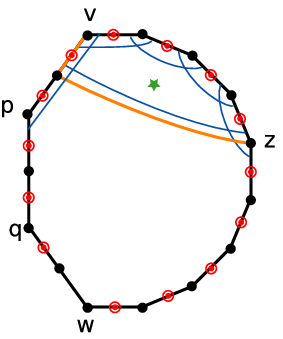}
\end{center}
\caption{A 1-cut yielding a 2-cut for $\alpha$.}
\label{OneCutAtCorner}
\end{figure}

{\bf There exist no 1-shells (case 4 of Lemma \ref{Shells}).} 
In this case there are at least four 2-shells. Since only two of these
can produce corners at the vertices $v$ and $w$, and only one can
produce corners at $p$ and $q$, we must have a 2-shell which produces
a 2-cut in the non-stalling section, namely along $\alpha_1$. This
produces a 2-cut for $\alpha$ and we are done.

{\bf \underline{Case 2: Only one of $\beta_1, \beta_3$ has
    length 1.}}  Assume without loss of generality that $\beta_3$ has
length 1, and $\beta_1$ is longer.  We proceed as in Case 1. If there
exist 1-shells, there are at least two 1-shells or a 1-shell and two
2-shells. It follows that one of them cannot be along
$\beta_1\cup\beta_2$.  As before, these shells cannot occur entirely
along $\alpha_1$. Thus we are in the case in which there are only two
$i$-shells, $i\leq 2$; that is, we have two 1-shells. One of these
must have its first corner at $v$ and we get a 2-cut as in Figure
\ref{OneCutAtCorner}.

If we have only 2-shells, then there are 4 of them, and hence one of
them must occur entirely along the  $\alpha_1$,
producing a 2-cut for $\alpha_1$, contradicting the tautness of
$\alpha$.

{\bf \underline{Case 3: both $\beta_1$ and $\beta_3$ have length 1.}}
Now if $D$ does not have a corner at both $v$ and $w$, then we may
proceed as before. So suppose that there are corners both at $v$ and
$w$. We now return to our original loop $\alpha=\alpha_1\cup
\alpha_2$. Let $v_1$ be the flat vertex immediately adjacent to $v$
along $\alpha_1$ and let $v_2$ be the flat vertex immediately adjacent
to $v$ along $\alpha_2$. Let $v_3$ be the vertex along $\beta$
immediately adjacent to $v$. We define $w_1,w_2$ and $w_3$ similarly
as neighboring vertices of $w$.  Now since there is a corner at $v$,
it follows that the path $[v_3,v,v_1]$ is non-stalling. Thus, the path
$[v_3,v,v_2]$ is a stalling path. Similarly, the path $[w_3,w,w_2]$ is
a stalling path. ( See Figure \ref{NotBothLegal}.)

\begin{figure}[here]
\begin{center}
\includegraphics{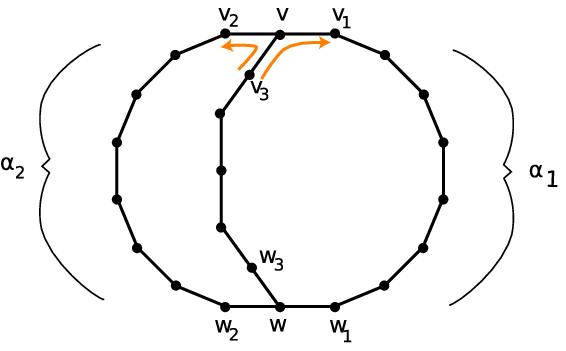}
\end{center}
\caption{One of the paths of length 2 running through $v$ must be stalling.}
\label{NotBothLegal}
\end{figure}

We thus let $\alpha_2^\prime $ be the subarc of $\alpha_2$ spanned by
$v_2$ and $w_2$, and let $\beta^\prime$ be
$\beta\cup[v,v_2]\cup[w,w_2]$. We then consider a dual disk diagram
for $\alpha_2\prime\cup\beta^\prime$ and we are back in Case 1 with
$\alpha_1$ replaced by $\alpha_2^\prime$ and $\beta$ replace by
$\beta^\prime$.
\end{proof}

\bigskip\bigskip

\begin{proof}[Proof of Theorem \ref{TautCycleRigidity}]
Suppose that $v_1,..,v_n$ are the flat vertices along a taut cycle
$\alpha$. Let $w_i=\ps(v_i)$. We know that $D^\infty(w_i,w_{i+1})=1$,
so that we can choose a full edge geodesic $\beta_i$ joining $w_i$ to
$w_{i+1}$, such that flat vertices along it lie in the same
parallelism class. We then join the $\beta_i$'s together to get a
closed loop $\beta$. We claim that this loop is a cycle. First of all,
by construction, the $\beta_i$'s are embedded, and at the $w_i$'s we
have turns, so that there is no backtracking. Secondly, if distinct
$\beta_i$'s met along a vertex which was not one of the endpoints,
then we could apply Lemma \ref{QuasiCuts} to conclude that $\alpha$ is
not taut. So now consider a dual disk diagram $D$ for for $\beta$ and
as usual let $D^\prime$ denote the core of $D$. We claim that
$D^\prime$ consists of a single cell.

 If $D^\prime$ has more than one cell, we would then have an arc $a$
in $D$ which separates $D$ into two regions, each of which contains a
1-shell or 2-shell of $D^\prime$.  Now by Observation \ref{obs4},
there would be a stalling path joining non-adjacent vertices $z_1$ and
$z_2$ of $\beta$. Now if we have $z_1=w_i$ and $z_2=w_j$, for some
$i,j$, then we have that $D^\infty(v_i,v_j)=1$ and we apply Lemma
\ref{QuasiCuts} to conclude that $\alpha$ was not taut.  Now if
$z_1,z_2\not\in\{w_1,...,w_n\}$, then let $w_i$ be a closest such
vertex to $z_1$ and $w_j$ be a closest such vertex to $z_2$. Letting
$z_1^\prime=\ps^{-1}(z_1)$ and $z_2^\prime=\ps^{-1}(z_2)$. We see that
the path $\gamma=[v_i, z_1^\prime, z_2^\prime, v_j]$ satisfies the
conditions of Lemma \ref{QuasiCuts}. We can apply Lemma
\ref{QuasiCuts} in a similar manner when only one of the $z_i$'s is in
$\{w_1,...,w_n\}$.

We thus have that $D^\prime$ consists of a single cell. This means
that in fact the $w_i$ define a closed edge loop $\beta$. Now we
wish to show that this loop is taut. Suppose not. Then we have a 1-cut
or a 2-cut of $\beta$. Pulling such a cut back via $\ps^{-1}$ gives a
quasi-cut as in Lemma \ref{QuasiCuts} thus implying that $\alpha$ is
not taut, a contradiction.
\end{proof}

\section{$\F$-rigidity}
\label{secfrigidity}

We next need to argue that if $F_1,F_2$ are two standard flats in $\F$
that intersect in a line, then their images
$\phi_\#(F_1),\phi_\#(F_2)$ intersect in a line as well. {\it A
priori}, we only know that they intersect coarsely in a line,
i.e. that they belong to the same parallel set. If there is a taut
cycle that crosses both $F_1$ and $F_2$ then the fact that
$\phi_\#(F_1)\cap \phi_\#(F_2)$ is a line follows from taut cycle
rigidity. However, there are pairs of flats where there are no such
taut cycles. This is of course the case if the graph contains valence
1 vertices, but there are no such vertices by our standing assumption
on $\Gamma$. For a more subtle failure see the
example below.

\subsection{Lemmas about graphs}
\label{seclemmasaboutgraphs}

\begin{definition}
Let $\gamma$ be an embedded cycle  in the defining graph $\Gamma$. An {\it
$i$-shortcut} is an edge-path $\beta$ in $\Gamma$ of length $i$ whose
endpoints are in $\gamma$ and whose distance in $\gamma$ is $>i$. 
\end{definition}

\begin{definition}
A cycle $\gamma$ is {\it tight} if it does not admit any 1-shortcuts
nor any 2-shortcuts.  
\end{definition}

\begin{remark}
If $\gamma$ is a cycle in $\Gamma$, we can lift it to a cycle
$\hat\gamma$ in the barycentric subdivision
$\Gamma'\subset \F$. Then $\gamma$ is tight in $\Gamma$ iff
$\hat\gamma$ is taut in $\F$.
\end{remark}

\begin{example}
Let $\Gamma$ be the atomic graph obtained from the 1-skeleton of a
dodecahedron by doubling along one of the pentagons representing a
face. Then every tight cycle is contained in one of the two copies of
the dodecahedron, so that a pair of edges that share a common vertex
but are not contained in the same dodecahedron give rise to flats that
intersect but are not both crossed by a tight cycle.
\end{example}

\begin{definition}
Let $v$ be a vertex of $\Gamma$ and $Lk(v)$ the link of $v$ in
$\Gamma$. The {\it Whitehead graph} at $v$ is the graph $Wh(v)$ whose
vertex set is $Lk(v)$ and $e,e'\in Lk(v)$ (represented by edges with
initial vertex $v$) are connected by an edge iff there is a tight cycle
in $\Gamma$ that enters $v$ along $\overline e$ and leaves along
$e'$. 
\end{definition}

In the above example, the Whitehead graph at every valence 3 vertex is
the complete graph (i.e. a triangle), while at the valence 4 vertices
it is obtained from the complete graph by removing an edge (i.e. it is
a square with one diagonal).

\begin{lemma}\label{whitehead}
$Wh(v)$ is connected iff $v$ is not a cut vertex.
\end{lemma}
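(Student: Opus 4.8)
The plan is to prove the two implications separately; both rely only on $\Gamma$ being connected with no $3$- or $4$-cycles (the standing hypotheses of this section).

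\emph{The easy implication, ``$v$ a cut vertex $\Rightarrow$ $Wh(v)$ disconnected''.} Write $\Gamma\setminus\{v\}=C_1\sqcup\cdots\sqcup C_k$ with $k\ge 2$. Since $\Gamma$ is connected each $C_i$ contains a neighbour of $v$, so $Lk(v)=\bigsqcup_i\bigl(Lk(v)\cap C_i\bigr)$ with every piece nonempty. If $e,e'\in Lk(v)$ span an edge of $Wh(v)$ there is a tight cycle entering $v$ along $\overline e$ and leaving along $e'$; deleting $v$ from this (embedded) cycle leaves a path in $\Gamma\setminus\{v\}$ between the two neighbours, so they lie in the same $C_i$. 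Thus $Wh(v)$ has no edge between distinct pieces $Lk(v)\cap C_i$, and is disconnected.

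\emph{The hard implication, ``$v$ not a cut vertex $\Rightarrow$ $Wh(v)$ connected''.} I would argue by contradiction: suppose $Lk(v)=A\sqcup B$ with $A,B$ nonempty and no edge of $Wh(v)$ between them, i.e.\ no tight cycle runs from $A$ through $v$ into $B$. Since $\Gamma\setminus\{v\}$ is connected, for any $a\in A$, $b\in B$ there is an embedded cycle through $a,v,b$, so one may choose $\gamma^*$ of \emph{minimal length among all} cycles running from a vertex of $A$, through $v$, to a vertex of $B$ --- this over-all minimality (not for fixed $a,b$) is the crucial device. Write $\gamma^*=(v,x_0,x_1,\dots,x_n,v)$ with $x_0\in A$, $x_n\in B$. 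Then $x_0\cdots x_n$ must be a geodesic in $\Gamma\setminus\{v\}$ (a shorter path would give a shorter cycle of the same type), the absence of $3$- and $4$-cycles forces $n\ge 3$ (two distinct neighbours of $v$ are at distance $\ge 3$ in $\Gamma\setminus\{v\}$) and guarantees that any two vertices at distance exactly $2$ in $\Gamma$ have a \emph{unique} common neighbour. The goal is to show $\gamma^*$ is tight, which contradicts the hypothesis and proves the claim. To show this I would first exclude $1$-shortcuts: a chord among the $x_i$ contradicts that $x_0\cdots x_n$ is a geodesic, and a chord $\{v,x_i\}$ with $1\le i\le n-1$ places $x_i$ in $A$ or in $B$ and splits $\gamma^*$ into a strictly shorter cycle of the ``$A$-through-$v$-to-$B$'' type --- again a contradiction. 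Then exclude $2$-shortcuts $y\!-\!z\!-\!w$: one may assume $z\notin\gamma^*$ (otherwise the shortcut contains a chord or has $d_{\gamma^*}(y,w)\le 2$). If neither $y$ nor $w$ equals $v$, then $y=x_i$, $w=x_j$ and $x_i\!-\!z\!-\!x_j$ is a length-$2$ path in $\Gamma\setminus\{v\}$, forcing $|i-j|\le 2$ and hence $d_{\gamma^*}(y,w)\le 2$. If $y=v$ (the case $w=v$ being symmetric), then $z$ is a neighbour of $v$ off $\gamma^*$, so $z\in A$ or $z\in B$, and splicing the edge $vz$ with the appropriate arc of $\gamma^*$ again yields a strictly shorter ``$A$-through-$v$-to-$B$'' cycle, contradicting minimality. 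Hence $\gamma^*$ has no $1$- or $2$-shortcut and is tight.

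\emph{Main obstacle.} The substance is the hard implication, and inside it the $2$-shortcut analysis. The point that makes it go through is the global minimality in the choice of $\gamma^*$, which converts every putative shortcut into a genuinely shorter cycle of the same type, together with a disciplined use of the girth hypothesis to control common neighbours; the small cases (e.g.\ $n=3$, where $\gamma^*$ is a pentagon and no shortcut is even possible) are subsumed.
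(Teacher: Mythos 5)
Your proof is correct and takes essentially the same route as the paper's: the forward direction is the same easy observation, and for the converse both arguments pick a shortest embedded cycle entering $v$ along one purported component of $Wh(v)$ and leaving along the other, then show that minimality together with the girth hypothesis rules out all $1$- and $2$-shortcuts, so the cycle is tight and yields the contradictory edge of $Wh(v)$. Your write-up merely makes explicit the shortcut-replacement case analysis that the paper compresses into a single sentence.
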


\begin{proof}
It is clear that $Wh(v)$ is disconnected if $v$ is a cut vertex. For
the converse, suppose $Wh(v)$ is disconnected, and let
$e_1,\cdots,e_p\in Lk(v)$ be the vertices of one of the components of
$Wh(v)$, and let $f_1,\cdots,f_q$ be all the other vertices of
$Lk(v)$. Suppose $v$ is not a cut vertex. Then there are embedded cycles in
$\Gamma$ that enter $v$ along some $\overline e_i$ and leave along
some $f_j$. Let $C$ be a shortest such cycle. We claim that $C$ is tight.  To
see this, suppose $\be$ is an $i$-shortcut for $C$ for $i\leq 2$.  Using the
fact that $\Ga$ has no $4$-cycle, it follows that one may 
replace a subpath of $C$ with $\be$ to obtain  a shorter cycle $C'$ which 
contains some $\overline e_i$ and some $f_j$.  Thus
 some $e_i$ is connected to some $f_j$ in $Wh(v)$, a
contradiction. 
\end{proof}

\begin{lemma}
An atomic graph contains no cut vertices, or separating closed edges.
\end{lemma}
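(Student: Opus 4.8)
The plan is to reduce both assertions to condition (3) of Definition~\ref{defatomic} --- the absence of separating closed vertex stars --- using only that $\Gamma$ is connected, triangle-free, and has every vertex of valence $\geq 2$. Here ``separating closed edge'' means a closed edge $\overline{uv}$ (an edge of $\Gamma$ together with its two endpoints) whose removal, i.e.\ deletion of $u$, $v$ and all incident edges, disconnects $\Gamma$; the reading of ``separating edge'' as a bridge follows a fortiori, since an endpoint of a bridge has valence $\geq 2$ and hence is a cut vertex.

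First I would rule out cut vertices. Suppose $v$ were a cut vertex, so that $\Gamma\setminus\{v\}$ has components $A_1,\dots,A_k$ with $k\geq 2$. The idea is to enlarge the separating set $\{v\}$ to the closed star $\overline{\Star}(v)=\{v\}\cup\operatorname{Lk}(v)$ without absorbing any $A_i$. Since $\Gamma$ has no triangles, $\operatorname{Lk}(v)$ is an independent set; so if some $A_i$ were contained in $\operatorname{Lk}(v)$, it would be a connected graph with no edges, hence a single vertex $w$, and then every neighbor of $w$ in $\Gamma$ would be $v$, forcing $w$ to have valence $1$, against (1). Therefore each $A_i\setminus\operatorname{Lk}(v)$ is nonempty, and since $\Gamma$ has no edges between distinct $A_i$'s, the graph $\Gamma\setminus\overline{\Star}(v)$ has at least two nonempty components. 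Thus $\overline{\Star}(v)$ is a separating closed vertex star, contradicting (3).

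Next I would handle separating closed edges by the same device. Suppose $\overline{uv}$ is a separating closed edge, so $\Gamma\setminus\{u,v\}$ has components $B_1,\dots,B_k$ with $k\geq 2$. Set $N=\operatorname{Lk}(u)\setminus\{v\}$; since $\Gamma$ has no triangles, $N$ is independent and disjoint from $\{u,v\}$, and $\overline{\Star}(u)=\{u,v\}\cup N$. Exactly as before, a $B_j$ contained in $N$ would be a single vertex all of whose neighbors are $u$, contradicting (1); hence each $B_j\setminus N$ is nonempty, and since $\Gamma$ has no edges between distinct $B_j$'s, the graph $\Gamma\setminus\overline{\Star}(u)$ is disconnected. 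This again contradicts (3), and the lemma follows.

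The step I expect to require the most care is precisely this enlargement of the minimal separating set ($\{v\}$, respectively $\{u,v\}$) to a closed vertex star: one must check that passing to $\overline{\Star}(v)$, respectively $\overline{\Star}(u)$, does not swallow a complementary component. This is exactly where conditions (1) and (2) enter --- a complementary component contained in a link would be a connected graph with no edges, hence a single vertex, which would then be a leaf of $\Gamma$, contradicting condition (1).
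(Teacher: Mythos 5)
Your proof is correct and is essentially the paper's own argument: the paper likewise enlarges the separating vertex or closed edge $Y$ to the closed star of a vertex $v\in Y$ and observes that a complementary component swallowed by $\overline{\Star}(v)$ would be a single vertex of valence $1$ (using triangle-freeness and condition (1)), contradicting condition (3). The only difference is cosmetic: the paper treats the two cases uniformly, while you run the same argument twice.
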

\proof
Let $Y\subset \Ga$ be either a vertex or a closed edge, and suppose $v\in Y$
is a vertex.  Note that if $C\subset \Ga\setminus Y$ is a connected component,
then $C$ cannot be contained in the the closed star of $v$, because it would
then lie in a single edge of $\Ga$, which clearly contradicts the assumption
that every vertex has valence at least $2$.
\qed

\begin{lemma}\label{no cuts}
Suppose the edges of an atomic graph $\Gamma$ are colored in three
colors, black, white, and gray, and the following holds:
\begin{enumerate}
\item any
  two gray edges share a vertex, and
\item the edges of any tight cycle are either all black or gray, or
  they are all white or gray.
\end{enumerate}
Then the edges of $\Gamma$ are either all black or gray, or else they
are all white or gray.
\end{lemma}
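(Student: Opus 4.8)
The plan is to argue by contradiction: suppose there is a black edge $b$ and a white edge $w$ (by hypothesis no edge is simultaneously forced to be both colors, but we assume both colors genuinely occur somewhere, with gray edges being the ``ambiguous'' ones). The idea is to use the tight-cycle hypothesis (2) together with the combinatorial structure of atomic graphs — in particular connectedness, the absence of short cycles, and Lemma \ref{whitehead} (no cut vertices, so every Whitehead graph $Wh(v)$ is connected) — to propagate a single color globally. First I would set up the right notion of ``reachability'': say two edges $e,e'$ are \emph{cycle-linked} if there is a tight cycle containing both. Since every edge of a tight cycle is black-or-gray or white-or-gray, the cycle-linked relation cannot connect a (strictly) black edge to a (strictly) white edge. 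So it suffices to show that, modulo gray edges, the cycle-linked relation is ``coarsely connected'' on the edge set of $\Gamma$.

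The key steps, in order: (i) Every edge of $\Gamma$ lies on some tight cycle. This uses the atomic hypotheses: pick an edge $e=\{u,v\}$; since $\Gamma$ is connected with no cut vertices (previous lemma) and no short cycles, one can find an embedded cycle through $e$, and then take a shortest such cycle — the no-$4$-cycle argument from the proof of Lemma \ref{whitehead} shows a shortest embedded cycle through a prescribed edge is tight. (ii) At each vertex $v$, the connectedness of $Wh(v)$ (Lemma \ref{whitehead}, since atomic graphs have no cut vertices) means: given any two edges $e_1,e_2$ at $v$, there is a chain $e_1=f_0,f_1,\dots,f_m=e_2$ of edges at $v$ such that consecutive ones are connected by a tight cycle through $v$. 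Hence, at every vertex, all the non-gray edges incident to $v$ have the same color — because a tight cycle through $v$ using $f_{i-1}$ and $f_i$ forces those two edges into a common color class (black-or-gray or white-or-gray), and running along the Whitehead chain transmits the color across the whole star of $v$, up to the gray edges. (iii) Now define a color on each vertex $v$: ``black'' if some non-gray edge at $v$ is black, ``white'' if some non-gray edge at $v$ is white — by (ii) this is well-defined unless all edges at $v$ are gray, and in the latter case hypothesis (1) (any two gray edges share a vertex) combined with every vertex having valence $\ge 2$ quickly forces $\Gamma$ to be very small, a case one checks directly. (iv) Finally, propagate across edges: if $e=\{u,v\}$ is non-gray, then $e$ itself gives the same color to $u$ and to $v$; if $e$ is gray, then — using step (i), $e$ lies on a tight cycle $C$, and the two edges of $C$ adjacent to $e$ are incident to $u$ and $v$ respectively and lie in the same color class as the rest of $C$ — one deduces $u$ and $v$ get the same vertex-color. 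Since $\Gamma$ is connected, the vertex-coloring is globally constant, and therefore every edge is black-or-gray, or every edge is white-or-gray.

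The main obstacle I expect is step (ii)/(iii): making precise how a \emph{single} tight cycle through $v$ forces two specified edges at $v$ into the same color class, and then correctly chaining this through the Whitehead graph while handling gray edges (a tight cycle could use gray edges, so the ``color class'' it determines might be ambiguous — one has to note that a tight cycle with \emph{some} black edge lies in the black-or-gray class, contributing genuine information, whereas an all-gray tight cycle contributes none, and argue the all-gray case cannot disconnect the propagation). A secondary nuisance is the degenerate case where many edges are gray: one must verify, using hypothesis (1) that the gray edges pairwise meet (so they form a triangle or a star) and that $\Gamma$ is atomic (hence large, with all valences $\ge 2$ and no short cycles), that there are still ``enough'' non-gray edges and tight cycles to carry the argument — or else that the whole graph already has all edges gray, in which case the conclusion is trivial.
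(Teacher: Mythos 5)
There is a genuine gap, and it sits exactly at the point you flag as ``the main obstacle'': your steps (ii)--(iv) need a black edge and a white edge to be forced into a common color class whenever they are linked by a chain of tight cycles, but hypothesis (2) never transmits color through a gray intermediary. If a Whitehead chain at $v$ runs black edge $e_1$, gray edge $f$, white edge $e_2$, then the tight cycle joining $e_1$ to $f$ may be entirely black-or-gray and the one joining $f$ to $e_2$ entirely white-or-gray, with no contradiction; so connectedness of $Wh(v)$ alone does not yield your claim that all non-gray edges at a vertex share a color, and the same leak occurs in step (iv), where the edges of the tight cycle through a gray edge that are adjacent to its endpoints may themselves be gray. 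Your proposed fix (distinguishing all-gray tight cycles from those containing a colored edge) does not address this, since the problematic cycles do contain a colored edge. The ingredient you never use, and which the paper's proof is built on, is that hypothesis (1) together with triangle-freeness forces \emph{all} gray edges to share a single common vertex $v_0$, so the gray edges lie inside one closed star; then the atomic condition (no separating closed vertex stars, all valences $\ge 2$) shows the union $G$ of closed gray edges does not separate $\Gamma$. One then joins a black edge to a white edge by a path avoiding $G$ entirely; at the vertex $z$ where the color changes, \emph{every} incident edge is black or white (no gray at all), and only then does the Whitehead-graph connectivity at $z$ (Lemma \ref{whitehead}, using that atomic graphs have no cut vertices) produce a tight cycle through one black and one white edge, contradicting (2). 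Without confining the gray edges to one star and routing around them, your propagation scheme can be blocked.

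Two smaller points. First, your step (i) (every edge lies on a tight cycle) is not needed in the paper's argument and would require its own care (a shortest embedded cycle through a prescribed edge need not obviously stay through that edge after cutting along a $2$-shortcut whose middle vertex meets the retained arc). Second, your step (iii) fallback --- that a vertex all of whose edges are gray forces $\Gamma$ to be ``very small'' --- is not right: since gray edges pairwise meet and $\Gamma$ is triangle-free with valences $\ge 2$, the only vertex that can be entirely gray is the common vertex $v_0$ itself, and $\Gamma$ can be arbitrarily large; this case is harmless but not for the reason you give.
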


\begin{proof}
Suppose not. Thus there is an edge $b$ colored black, and
there is an edge $w$ colored white. All gray edges have
a vertex $v$ in common (by assumption (1) and because there are no
triangles). Let $G$ be the union of all (closed) gray edges. We
now claim that $G$ does not separate $\Gamma$. Indeed, take $x,y\in
\Gamma - G$. If $x,y\notin St(v)$ then $x,y$ can be joined by a path
missing $St(v)$ and hence missing $G$. Say $x\in St(v)-G$. If $x$ is a
vertex, then it is adjacent to $v$ and it is incident to an edge $e$
different from $[x,v]$ (since otherwise $[x,v]$ would be a separating
edge). Thus $x$ can be joined by a path in the complement of $G$ to
the complement of $St(v)$. If $x$ is not a vertex, then it is an
interior point of some edge $[x',v]$ and it can again be joined by a
path in the complement of $G$, first to $x'$ and then to the
complement of $St(v)$ as before. Similar considerations apply to $y$
and our claim is proved.

Now join $b$ and $w$ by a path missing $G$. Thus all edges along
this path are black or white, and there is a vertex $z$ along this path
where color changes from black to white. Therefore all edges incident to
$z$ are black or white and at least one is black and at least one is
white. By Lemma \ref{whitehead} there is a tight cycle passing through
$z$ and at $z$ crossing one white and one black edge. But this
contradicts our assumption (2).
\end{proof}

\begin{lemma}\label{edges to isos}
Let $\Gamma,\Gamma'$ be graphs with no vertices of valence $<2$ and
  no cycles of length $<4$. Suppose $F:{\mathcal
  E}(\Gamma)\to {\mathcal E}(\Gamma')$ is a bijection of the sets
  of
  edges of $\Gamma$, $\Gamma'$ such that
\begin{itemize}
\item if $e_1,e_2\in {\mathcal E}(\Gamma)$ share a vertex then
  $F(e_1),F(e_2)\in {\mathcal E}(\Gamma')$ share a vertex.
\end{itemize}
Then there is a graph isomorphism $\phi:\Gamma\to\Gamma'$ that induces
$F$.
\end{lemma}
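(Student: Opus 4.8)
The plan is to run a Whitney-type argument: reconstruct each of $\Gamma,\Gamma'$ from its edge set equipped with the relation ``two edges share a vertex'', and then check that the bijection $F$ intertwines the two reconstructions.

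First I would establish the reconstruction. Fix a graph $\Gamma$ as in the statement and, for a vertex $v$, let $\mathrm{St}(v)\subset{\mathcal E}(\Gamma)$ be the set of edges incident to $v$; since $\Gamma$ has minimal valence $\ge 2$ we have $|\mathrm{St}(v)|\ge 2$, each edge lies in exactly two of the sets $\mathrm{St}(v)$, and $\mathrm{St}(u)\cap\mathrm{St}(v)$ has at most one element. The point where I would use the absence of short cycles is this: any three pairwise-adjacent edges share a common vertex (otherwise their three pairwise intersection points span a triangle), and hence --- since distinct edges meet in at most one vertex --- any set of pairwise-adjacent edges of size $\ge 3$ is contained in a unique $\mathrm{St}(v)$, necessarily with $\deg v\ge 3$; moreover each $\mathrm{St}(v)$ is maximal among pairwise-adjacent subsets of ${\mathcal E}(\Gamma)$. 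Conversely a maximal pairwise-adjacent subset of size $2$ is exactly $\mathrm{St}(v)$ for a unique valence-$2$ vertex $v$, and size-$1$ subsets are never maximal since the minimal valence is $\ge 2$. Thus $v\mapsto\mathrm{St}(v)$ is a bijection from $V(\Gamma)$ onto the set of maximal pairwise-adjacent subsets of ${\mathcal E}(\Gamma)$, and the incidence of a vertex with an edge is recovered as membership. The same applies to $\Gamma'$.

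Now I would transport this dictionary through $F$. Because $F$ carries adjacent edges to adjacent edges, it carries pairwise-adjacent subsets to pairwise-adjacent subsets. Assuming --- this is the crux, discussed below --- that $F$ in fact restricts to a bijection from the maximal pairwise-adjacent subsets of ${\mathcal E}(\Gamma)$ onto those of ${\mathcal E}(\Gamma')$, the rule $F(\mathrm{St}(v))=\mathrm{St}(\phi(v))$ defines a bijection $\phi\colon V(\Gamma)\to V(\Gamma')$. This $\phi$ induces $F$: an edge $e=\{u,w\}$ is the unique common element of $\mathrm{St}(u)$ and $\mathrm{St}(w)$, so $F(e)$ lies in $\mathrm{St}(\phi(u))\cap\mathrm{St}(\phi(w))$; since $\phi$ is a bijection $\phi(u)\ne\phi(w)$, and since distinct vertices of $\Gamma'$ span at most one edge this forces $F(e)=\{\phi(u),\phi(w)\}$. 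Hence $\phi$ is a graph isomorphism inducing $F$, and it is unique, because any such isomorphism must send the two endpoints of $e$ to the two endpoints of $F(e)$, and the endpoints of $e$ correspond to the two maximal pairwise-adjacent subsets containing $e$.

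The main obstacle is the bracketed claim: that the one-sided hypothesis on $F$ actually forces $F$ (equivalently $F^{-1}$) to preserve adjacency in both directions, so that $F$ is an isomorphism of the edge-adjacency structures and in particular carries maximal pairwise-adjacent subsets onto maximal ones. A priori a one-sided bijection could merge a non-adjacent pair of edges of $\Gamma$ onto an adjacent pair of $\Gamma'$; to rule this out I would exploit the absence of short cycles together with minimal valence $\ge 2$ and the bijectivity of $F$: the stars cover ${\mathcal E}(\Gamma)$ with each edge in exactly two of them, distinct stars overlap in at most one edge, and the short-cycle hypothesis prevents ``nearby'' stars from sharing common neighbours; feeding this into a counting/covering (Hall-type) argument should pin down the image under $F$ of the family of stars of $\Gamma$ to be precisely the family of stars of $\Gamma'$. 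Once this matching of stars is in hand, the reconstruction of the two preceding paragraphs applies verbatim and produces the desired isomorphism.
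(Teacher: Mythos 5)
Your reconstruction of vertices as maximal pairwise-adjacent families of edges is correct, and it is essentially the paper's own construction: the paper defines $\phi(v)$ directly as the unique vertex of $\Gamma'$ lying on all the edges $F(e)$ with $e\ni v$ (existence because $\Gamma'$ has no triangles, uniqueness because $F$ is injective), and then produces the inverse map ``by reversing the roles of $\Gamma$ and $\Gamma'$.'' So up to packaging you agree with the paper on everything except the step you yourself single out as the crux --- and that step is a genuine gap, not a deferred verification. Worse, it cannot be closed in the stated generality: the one-sided hypothesis does not force $F$ to carry stars onto stars. Take $\Gamma=C_6$ with consecutive edges $e_1,\dots,e_6$, take $\Gamma'=K_{2,3}$ with parts $\{a,b\}$ and $\{x,y,z\}$, and set $F(e_1),\dots,F(e_6)=ax,\,ay,\,by,\,bx,\,bz,\,az$. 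Every consecutive pair of images shares a vertex, both graphs have minimal valence $2$ and no cycles of length $<4$, and $F$ is a bijection of the two $6$-element edge sets, yet $C_6\not\cong K_{2,3}$. Exactly the phenomenon you worried about occurs: the non-adjacent edges $e_1,e_4$ are sent to the adjacent edges $ax,bx$. Hence no Hall-type counting argument can pin the image of the family of stars of $\Gamma$ to the family of stars of $\Gamma'$; with only the forward implication the lemma is simply false, so the proposed route to the ``bracketed claim'' is a dead end.

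What is true, and what is actually available where the lemma is used (Step 2 of the proof of Theorem \ref{frigidity}), is the symmetric statement: one should assume in addition that $F^{-1}$ sends adjacent edges to adjacent edges. In the application this is automatic, since adjacency of edges corresponds to the relation that the two standard flats coarsely intersect in a line, and that relation is preserved by the quasi-isometry and by its quasi-inverse alike. With the two-sided hypothesis your star reconstruction (or the paper's shorter version of it) goes through, and note that the two-sidedness is needed at a second, easy-to-miss point as well: to see that $\phi(v_1)\neq\phi(v_2)$ when $v_1,v_2$ span an edge --- in the $C_6\to K_{2,3}$ example two adjacent vertices of $C_6$ are sent to the same vertex $b$ --- which is what makes $\phi$ simplicial and ultimately an isomorphism. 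So the correct repair is to strengthen (or, in context, verify) the hypothesis to be symmetric, not to try to derive the symmetry from the one-sided condition.
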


\begin{proof}
Let $v$ be a vertex of $\Gamma$. Consider the set of all edges
$e_1,\cdots,e_k$ in $\Gamma$ that contain $v$. By our assumptions on $\Gamma$,
$k\geq 2$ and any two of the edges $F(e_1),\cdots,F(e_k)$ share a
vertex. Since $\Gamma'$ has no cycles of length $<4$ it follows that
there is a unique vertex $w\in\Gamma'$ contained in all
$F(e_i)$. Define $\phi(v):=w$. By construction, if $v_1,v_2$ span an edge
then $\phi(v_1),\phi(v_2)$ span an edge. Thus $\phi:\Gamma\to\Gamma'$
is a simplicial map. Reversing the roles of $\Gamma,\Gamma'$ provides
a simplicial map $\phi':\Gamma'\to\Gamma$ and it easy to see that
$\phi\phi'$ and $\phi'\phi$ are identity.
\end{proof}

\subsection{$\F$-rigidity}
\label{subsecfrigidity}

The following is the main theorem in the paper. It says that the flat
space $\F$ is a quasi-isometry invariant of the RAAG $G$. The notation should be
self-explanatory, e.g. $\F_1=\F(\Gamma_1)$.

\begin{theorem}\label{frigidity}
Let $\Gamma_1,\Gamma_2$ be two atomic graphs. Let $f:G_1\to G_2$ be a quasi-isometry
between the associated RAAGs. Then there is a label-preserving isometry
$\phi:\F_1\to\F_2$ between the associated flat spaces such that for each
standard flat (i.e. a flat vertex) $v$
we have that $f(v)$ is contained in a Hausdorff neighborhood of the
flat $\phi(v)$.

In particular, $\phi$ takes each cone in $\F_1$ isometrically to a cone in
$\F_2$, and 
$\Gamma_1\cong\Gamma_2$. 
\end{theorem}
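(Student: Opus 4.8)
The plan is to prove the equivalent combinatorial statement, Theorem~\ref{thmcombinatorial}: the bijection induced by $f$ on flat vertices extends to an isometry of flat spaces. As recalled before Theorem~\ref{TautCycleRigidity}, $f$ induces a bijection $\ps:\F_1^{(0)}\to\F_2^{(0)}$ between the sets of flat vertices which preserves the relation $D^\infty(\cdot,\cdot)=1$ (``lying in a common parallel set'', by Theorem~\ref{StandardFlats} and Lemma~\ref{lemstandardalternative}), and which by Theorem~\ref{TautCycleRigidity} carries taut cycles to taut cycles; a quasi-inverse of $f$ induces $\ps^{-1}$, which has the same properties. I will also use that, since $\Gamma_2$ is triangle-free, any family of standard flats in $K_2$ that is pairwise $D^\infty$-adjacent lies in a single parallel set: the pairwise coarse intersections are standard geodesics whose labels, in the absence of triangles, are forced to agree and be parallel.

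The main step is to show that for every cone vertex $o_1\in\F_1$ there is a cone vertex $o_2\in\F_2$ so that $\ps$ carries the flat vertices of the cone $C_1=C(\Gamma_1)$ at $o_1$ bijectively onto the flat vertices of the cone $C_2=C(\Gamma_2)$ at $o_2$. Identify the flat vertices of $C_1$ with $\mathcal E(\Gamma_1)$; then two flat vertices $F_e,F_{e'}$ of $C_1$ are $D^\infty$-adjacent iff $e,e'$ share a vertex, and the taut cycles contained in $C_1$ are exactly the lifts of tight cycles of $\Gamma_1$. Since $\Gamma_1$ is atomic it is $2$-connected (no cut vertices, no separating closed edges), so every edge of $\Gamma_1$ lies on an embedded cycle, and a shortest embedded cycle through a prescribed edge is tight --- this is the argument already used in the proof of Lemma~\ref{whitehead}, via the absence of cycles of length $<5$. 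For a tight cycle $\gamma$, the image $\ps(\hat\gamma)$ is a taut cycle in $\F_2$, so by Lemma~\ref{TautDiagram} its dual disk diagram has a single-cell core, which by Observations~\ref{obs3} and~\ref{obs2} must be a cone region; hence $\ps(\hat\gamma)$ lies in a uniquely determined cone $C_2(o_2^\gamma)$, and all the flats $\ps(F_e)$ with $e\in\gamma$ pass through the corresponding vertex $p_2^\gamma\in K_2$.

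The coherence step --- showing that the cone $C_2(o_2^\gamma)$ is the same for all tight cycles $\gamma$ of $\Gamma_1$ --- is the part I expect to be the main obstacle, since it is precisely where ``doubled dodecahedron''--type phenomena must be excluded. Once it is done, then, because every edge of $\Gamma_1$ lies in a tight cycle, all the $\ps(F_e)$ with $e\in\mathcal E(\Gamma_1)$ lie in the common cone $C_2$, and applying the same to $\ps^{-1}$ gives a bijection onto the flat vertices of $C_2$. The input is the graph-theoretic fact that if $\ps(F_e)$ and $\ps(F_{e'})$ contain a common standard geodesic of $K_2$ then $D^\infty(F_e,F_{e'})=1$, so $e$ and $e'$ share a vertex; as $\Gamma_1$ has no triangles, any set of such edges forms a single vertex star. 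Fixing a reference tight cycle $\gamma_0$ and the cone $C_2:=C_2(o_2^{\gamma_0})$, I would colour each edge $e$ of $\Gamma_1$ black if $\ps(F_e)$ is a flat of $C_2$, white if $\ps(F_e)$ lies in the ``opposite'' cone cut out by a tight cycle, and grey on the overlap. Hypothesis (1) of Lemma~\ref{no cuts} holds because grey edges form a vertex star, and hypothesis (2) holds because the flats of any single tight cycle lie in one cone; Lemma~\ref{no cuts} --- whose proof passes through the connectedness of every Whitehead graph, Lemma~\ref{whitehead}, valid since $\Gamma_1$ has no cut vertex --- then forces every edge to be black or grey, i.e.\ $\ps(F_e)\in C_2$ for all $e$.

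Granting the main step, the rest is assembly. Restricting $\ps$ to the flat vertices of a cone $C_1(o_1)$ yields a bijection $\mathcal E(\Gamma_1)\to\mathcal E(\Gamma_2)$ preserving the relation of sharing a vertex; Lemma~\ref{edges to isos} upgrades this to a graph isomorphism, hence to a label-preserving isometry $C_1(o_1)\to C_2(o_2)$ (as $C(\Gamma)$ is the cone on $\Gamma'$). These cone isometries agree on overlaps: two cones of $\F_1$ meet inside a single singular star (Observation~\ref{obs cones}), on which the common value is pinned down by the labels of the standard flats $\ps(F_e)$, which are intrinsic to $K_2$ and independent of the cone used to read them off. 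Gluing, one obtains a label-preserving simplicial bijection $\phi:\F_1\to\F_2$, which is an isometry because the metric of flat space is the standard one on each square. For each flat vertex $v$, $f(v)$ lies in a uniform neighbourhood of the flat $\phi(v)$ by Theorem~\ref{StandardFlats}. Finally $\phi$ sends cone vertices to cone vertices, hence each cone of $\F_1$ isometrically onto a cone of $\F_2$; restricting to one cone and passing to links of the cone vertices (Observation~\ref{obslink}) gives $\Gamma_1\cong\Gamma_2$.
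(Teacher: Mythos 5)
Your overall architecture is the same as the paper's: use Theorem \ref{TautCycleRigidity} to see that the image of each tight cycle of $\Gamma_1$ lands in a single cone of $\F_2$, prove a coherence statement saying that one cone works for all tight cycles, then assemble via Lemma \ref{edges to isos}, Observation \ref{obs cones} and an extension to singular vertices. The assembly part of your argument matches the paper's Steps 2--3 and is fine. The problem is exactly at the step you yourself flag as ``the main obstacle'': the coherence step, and the colouring you propose there does not verify the hypotheses of Lemma \ref{no cuts}. You colour an edge $e$ grey when $\ps(F_e)$ lies both in the reference cone $C_2$ and in some other cone $C_2(o_2^\gamma)$ coming from a tight cycle $\gamma$. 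Hypothesis (1) of Lemma \ref{no cuts} then requires any two grey edges to share a vertex, and you justify this by asserting that ``grey edges form a vertex star.'' But Observation \ref{obs cones} only says that each individual intersection $C_2\cap C_2(o_2^\gamma)$ lies in \emph{some} singular star of $C_2$; different tight cycles $\gamma$ may a priori map into different cones meeting $C_2$ along \emph{different} singular stars, so two grey edges need not share a vertex. Asserting that all these intersections sit in one star is essentially the coherence statement you are trying to prove (it is precisely what fails to be obvious in doubled-dodecahedron--type examples), so as written the key step is circular. A pairwise version (grey $=$ in $C_2\cap C'$ for one fixed second cone $C'$) repairs hypothesis (1) but breaks hypothesis (2) and leaves edges uncoloured, since a third tight cycle may map into a cone different from both $C_2$ and $C'$.

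The paper's device for making Lemma \ref{no cuts} applicable is different and is the missing idea: argue by contradiction, suppose two tight cycles map into distinct cones $C',C''$, and use Lemma \ref{parallel sets} to choose a \emph{parallel set} $P\subset\F_2$ separating $C'$ from $C''$, writing $\F_2=X'\cup X''$ with $X'\cap X''=P$. Colour $e$ black, white or grey according to whether $\ps(F_e)$ lies in $X'$, $X''$ or $P$. Because $X'\cup X''$ is all of $\F_2$, every edge is coloured; because all grey flats lie in the \emph{single} parallel set $P$, they pairwise coarsely intersect in a line, so (pulling back through $\ps^{-1}$ and using that $\Gamma_1$ is triangle-free) the grey edges pairwise share a vertex -- hypothesis (1); and since the flats along any tight cycle lie in one cone, which meets $P$ only inside a singular star and is connected off $P$, each tight cycle is black-or-grey or white-or-grey -- hypothesis (2). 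Lemma \ref{no cuts} then forces all edges to one side, contradicting the existence of a genuinely black edge (from the cycle into $C'$) and a genuinely white edge (from the cycle into $C''$). Without this separating-parallel-set trick, or some substitute argument for why the grey set is a star, your proof of the main step does not go through; the remaining portions of your proposal (every edge lies on a tight cycle via Lemma \ref{whitehead}, the identification of taut cycles in a cone with tight cycles of $\Gamma_1$, and the final gluing and extraction of $\Gamma_1\cong\Gamma_2$) agree with the paper.
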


\begin{proof}
By Theorem \ref{StandardFlats} we have a function
$\phi:\F_1^\flat\to\F_2^\flat$ defined on the set of flat vertices and
satisfying the statement about the Hausdorff neighborhood. This map is
a bijection. We now extend this map to an isometry.

{\bf Step 1.} Let $C(\Gamma_1)\subset\F_1$ be the fundamental domain (see
Proposition \ref{action}(2)). We claim that $\phi$ takes all flat
vertices in $\Gamma_1\subset C(\Gamma_1)$ into a translate of the
fundamental domain $C(\Gamma_2)\subset \F_2$. By Theorem
\ref{TautCycleRigidity} we know that $\phi$ sends all flat vertices
along a taut cycle in $\Gamma_1\subset C(\Gamma_1)$ into a single cone
(and this cone is unique, see Observation \ref{obs cones}). Since
every flat vertex in $\Gamma_1$ lies along a taut cycle it suffices to
prove that there is a cone that contains the images of the flat
vertices along any taut cycle in $\Gamma_1\subset C(\Gamma_1)$.

So suppose this is not true, and that for two different taut cycles in
$\Gamma_1$ the images of the flat vertices are contained in different
cones $C',C''\subset \F_2$. Choose a parallel set $P\subset \F_2$
that separates between $C'$ and $C''$ (see Lemma
\ref{parallel sets}) so that we can write $\F_2=X'\cup X''$ with
$C'\subset X'$, $C''\subset X''$ and $X'\cap X''= P$. Now
color the edges (i.e. the flat vertices) of $\Gamma_1$ into black,
white or gray according to whether they are mapped into $X'$, $X''$
or $X'\cap X''$. Note that 
any two gray edges share a vertex (the corresponding flats coarsely
intersect in a line; for edges of $\Gamma_1$ this happens only when the
edges share a vertex). Now
by Lemma \ref{no cuts} we see that all flat vertices in $\Gamma_1$ are
mapped to either $X'$ or to $X''$ contradicting our choices.

Thus, after composing with an element of $G_2$, we may assume that
$\phi$ maps the flat vertices in $C(\Gamma_1)$ to flat vertices in
$C(\Gamma_2)$. By considering the inverse map, $\phi$ restricts to a bijection
${\mathcal E}(\Gamma_1)\to {\mathcal E}(\Gamma_2)$ between the sets of
edges of $\Gamma_1$ and $\Gamma_2$.

This analysis can be applied to any translate of $C(\Gamma_1)$. As a
result of Step 1, we can extend $\phi$ to the cone vertices so that
it induces a bijection between the sets of cone vertices and any
adjacent pair of a flat and a cone vertex maps to an adjacent pair.

{\bf Step 2.} Now consider $\phi$ restricted to the ``base of the
cone'', that is, the map ${\mathcal E}(\Gamma_1)\to {\mathcal
  E}(\Gamma_2)$.
Apply Lemma \ref{edges to isos} to deduce that
$\phi$ extends to an isomorphism $\phi:\Gamma_1\to\Gamma_2$. Adjacent
edges map to adjacent edges since only in this situation the
corresponding flats coarsely intersect in a line, so the lemma
applies. 

{\bf Step 3.} It remains to extend $\phi$ to the singular vertices. Let $v$
be a singular vertex, say in $C(\Gamma_1)$. Then there is a unique way to
define $\phi(v)$ so that $\phi$ preserves adjacency with the flat
vertices in $C(\Gamma_1)$ (this is Step 2). We need to verify that when
we regard $v$ as a vertex in a different cone we obtain the same
$\phi(v)$. There are $\Z$ cones that contain $v$ (see Proposition
\ref{action}) and all these cones also contain all flat vertices
adjacent to $v$, so the claim follows.
\end{proof}

\section{The proofs of corollaries \ref{corqiiffisom}, \ref{cormostow}, 
and \ref{coroutisom}}\label{proofs of corollaries}

{\em Proof of Corollary \ref{corqiiffisom}.}  Suppose $\Ga$ and $\Ga'$
are atomic graphs, and $\phi:K\ra K'$ is a quasi-isometry.  Let 
$\phi_0:V\ra V'$ be the bijection provided by  Theorem \ref{thmmain1}.
Pick $p\in K$ and let $p'\defeq \phi_0(p)$.  Then $\phi_0$ establishes
a bijection between the standard geodesics (respectively standard flats)
passing through $p$ and 
the standard geodesics (respectively standard flats) passing through $p'$.
This induces an isomorphism $\Ga\ra \Ga'$ of the defining graphs.
\qed

\bigskip
{\em Proof of Corollary \ref{cormostow}.}
Let $\psi:G\ra G'$ be an isomorphism.  Then we obtain an isometric action
$G\acts K'$, which is discrete and cocompact.  Hence by the fundamental
lemma of geometric group theory, there is a $G$-equivariant
quasi-isometry  
$$
\phi:K\lra K'.
$$
Let $\phi_0:V\lra V'$ be the bijection furnished by Theorem \ref{thmmain1}.

Consider a standard flat $F=\al\times\be\subset K$.  By Theorem \ref{thmmain1}, there
is a standard flat $F'=\al'\times\be'\subset K'$ such that $\phi_0$ maps $F\cap V$
bijectively to $F'\cap V'$, and respects the product structures.
Since $\phi_0\restr_{F\cap V}$ is equivariant with respect to the stabilizer of 
$F$, it follows that the induced mappings $(\al\cap V)\ra(\al'\cap V')$ and 
$(\be\cap V)\ra (\be'\cap V')$
are also $\stab(F)$-equivariant.  Since $\stab(F)$ acts transitively
on $\al\cap V$, $\be\cap V$, $\al'\cap V'$, and $\be'\cap V'$, the map
$\phi_0\restr_{F\cap V}$ is the restriction of an isometry $F\ra F'$.  
Hence $\phi_0$ is the restriction of an isometry $K\ra K'$.
\qed

\bigskip
{\em Proof of Corollary \ref{coroutisom}.}
Let $\bar\psi:\bar K\ra \bar K'$ be a homotopy equivalence.
Then $\bar\psi$ induces an isomorphism $G\simeq G'$ and
we may lift $\bar\psi$ to a quasi-isometry
$$
\psi:K\lra K'
$$
which is $G$-equivariant (where we identify $G$ and $G'$ using the
isomorphism above). By  Corollary \ref{cormostow} there is a
$G$-equivariant isometry $\phi:K\ra K'$ at bounded distance from 
$\psi$, and this descends to an isometry 
$$
\bar\phi:\bar K\lra \bar K'
$$
which is homotopic to $\bar\psi$.

\qed

\section{Further implications of Theorem \ref{thmmain1}}
\label{secfurtherimplications}

At first sight, one might think that the map
$\phi_0:V\ra V'$ in the conclusion of Theorem \ref{thmmain1} 
must be the restriction of an isometry, since it preserves so
much structure; however, this turns out not to be the case.
In this section we single out part of the structure of the 
bijection $\phi_0$ which efficiently distinguishes between
quasi-isometries, namely we associate with each parallel set
$P\subset K$ a biLipschitz homeomorphism of a copy of the 
integers.  In the next section we will see that any biLipschitz
homeomorphism can arise this way.

\bigskip
Let $\phi:K\ra K'$ be a quasi-isometry, where $K$ and $K'$
are associated with atomic RAAG's, and let $\phi_0:V\ra V'$
be the map of Theorem \ref{thmmain1}.  

Let $\P$ and $\P'$ denote the collections of maximal standard product
subcomplexes in $K$ and $K'$, respectively.
Since $K$ and $K'$ are atomic, each $P\in \P$ or $P'\in \P'$ is the parallel set for 
a standard geodesic.   Therefore $P$ splits isometrically as a product
of complexes $P=\R\times T$.  

\begin{definition}
\label{defzp}
For each parallel set $P\in \P$, we define $\R_P$ to be the 
 $\R$-factor in the splitting
$$
P=\R\times T,
$$
and let $\Z_P$ to be the set of vertices of $\R_P$,
equipped with the induced metric.
\end{definition}

\begin{lemma}
\label{lemparallelsetpreservation}
For every $P\in \P$ there is a $P'\in \P'$ such that $\phi_0$ maps
$V\cap P$ bijectively to $V'\cap P'$, preserving the product structure.
\end{lemma}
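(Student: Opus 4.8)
\emph{Setup and the target parallel set.} Write $P=\R_P\times T$, where $T$ is an infinite tree with no leaves; since $\Ga$ is atomic, $P$ is the parallel set of the standard geodesics it contains (those labelled by the ``$T$--direction'' vertex), and maximal standard product subcomplexes are exactly such parallel sets. First I would produce $P'$: by Corollary \ref{MaxProdSubcomplexes} there is a maximal standard product subcomplex $P'\subset K'$ with $\hd(\phi(P),P')<\infty$, and $P'$ is unique, since $\hd(P_1,P_2)<\infty$ for $P_1,P_2\in\P'$ gives $P_1\subset N_r(P_2)$, hence $P_1\subset P_2$ by Lemma \ref{lemalmostcontainedimpliescontained}, hence $P_1=P_2$. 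Fix a splitting $P'=\R_{P'}\times T'$.

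\emph{The bijection.} Every vertex of $P$ lies on a standard flat $F\subset P$ of the form $\R_P\times\ell$ with $\ell\subset T$ a monochromatic line, so $V\cap P=\bigcup_F(V\cap F)$ over the standard flats $F\subset P$. For such an $F$, Theorem \ref{StandardFlats} gives a standard flat $F'\subset K'$ with $\hd(\phi(F),F')<\infty$; since $F\subset P$, $F'$ lies in a bounded neighborhood of $\phi(P)$, hence of $P'$, so $F'\subset P'$ by Lemma \ref{lemalmostcontainedimpliescontained}. By Theorem \ref{thmmain1} this $F'$ is precisely the standard flat whose vertex set contains $\phi_0(V\cap F)$, so $\phi_0(V\cap F)\subset V'\cap F'\subset V'\cap P'$; taking the union over $F$ yields $\phi_0(V\cap P)\subset V'\cap P'$. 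For the reverse inclusion I would run the same argument for a quasi-inverse of $\phi$, whose associated vertex bijection is $\phi_0^{-1}$ by the uniqueness clause of Theorem \ref{thmmain1}: this produces $P''\in\P$ with $\phi_0^{-1}(V'\cap P')\subset V\cap P''$, whence $V\cap P\subset V\cap P''$; as a standard product subcomplex is the convex hull of its vertices this forces $P\subset P''$, hence $P=P''$ by maximality of $P$. Thus $\phi_0$ restricts to a bijection $V\cap P\to V'\cap P'$. The same reasoning, applied to standard flats in place of maximal product subcomplexes, shows that $\phi_0$ restricts to a bijection $V\cap F\to V'\cap F'$ for every standard flat $F\subset P$, and that $F\mapsto F'$ is a bijection between the standard flats contained in $P$ and those contained in $P'$.

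\emph{Compatibility with the product structures.} The key observation is that among the standard geodesics contained in $P$, the $\R_P$--geodesics $\R_P\times\{b\}$ are exactly those lying in at least two standard flats of $P$ (one $\R_P\times\ell$ per monochromatic line $\ell\ni b$, and there are at least two), whereas every other standard geodesic of $P$ lies in a unique standard flat of $P$. Since $\phi_0$ sends standard geodesics of $P$ to standard geodesics of $P'$ and induces a bijection of the standard flats of $P$ with those of $P'$, it sends $\R_P$--geodesics to $\R_{P'}$--geodesics. Now fix a standard flat $F=\R_P\times\ell\subset P$ with image $F'=\R_{P'}\times\ell'$; inside $F$ the two families of coordinate standard geodesics are separated by the same criterion, so $\phi_0\colon V\cap F\to V'\cap F'$ carries each family onto a family of coordinate geodesics of $F'$, and a bijection of integer lattices taking every horizontal and every vertical line onto such a line is a product map $(a,c)\mapsto(g_F(a),k_F(c))$. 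If $b_1,b_2\in T$ span an edge, applying this to the standard flat containing both $\R_P\times\{b_i\}$ shows the $\R_{P'}$--coordinate of $\phi_0(a,b_1)$ and of $\phi_0(a,b_2)$ depend on $a$ in the same way; since $T$ is connected the resulting map $g\colon\Z_P\to\Z_{P'}$ is independent of $b$, so $\phi_0(a,b)=(g(a),h(b))$ for a bijection $h\colon V(T)\to V(T')$, which is the asserted preservation of the product structure.

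The main obstacle is the last step: nothing said so far prevents $\phi_0$ from shearing the product $\R_P\times T$, and ruling this out requires the intrinsic characterization of the $\R_P$--direction \emph{inside $P$} (membership in at least two standard flats of $P$) together with the already established fact that $\phi_0$ respects the standard flats of $P$; the remainder is bookkeeping with Corollary \ref{MaxProdSubcomplexes}, Theorem \ref{StandardFlats}, Theorem \ref{thmmain1}, and Lemma \ref{lemalmostcontainedimpliescontained}.
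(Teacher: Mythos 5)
Your proof is correct, and it leans on the same underlying input as the paper (Theorem \ref{thmmain1} together with the coarse preservation of flats and product subcomplexes), but the implementation differs in two places worth noting. To locate $P'$, the paper does not invoke Corollary \ref{MaxProdSubcomplexes}: it looks at the standard geodesics of $P$ parallel to the $\R$-factor, notes via Theorem \ref{thmmain1} that each such vertex set lands on a standard geodesic of $K'$, and that any two of the image geodesics are parallel (parallel geodesics are at finite Hausdorff distance, and $\phi$ coarsely preserves this), so $\phi_0(V\cap P)$ lies in a single parallel set; the quasi-inverse then gives equality with $V'\cap P'$. Your route through Corollary \ref{MaxProdSubcomplexes}, Theorem \ref{StandardFlats}, and Lemma \ref{lemalmostcontainedimpliescontained} is equally valid and about the same length. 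Where you genuinely add content is the final step: the paper disposes of the product structure with the single sentence ``since $\phi_0$ preserves standard geodesics, it follows \dots,'' whereas you supply the argument that is actually needed to exclude shearing or factor-swapping --- the intrinsic characterization of the $\R_P$-geodesics as those standard geodesics of $P$ lying in at least two standard flats of $P$ (using valence $\ge 2$ and triangle-freeness), the induced type-preserving bijections on the standard geodesics and flats of $P$, the lattice observation inside each flat, and propagation over edges of $T$ by connectedness; this is a correct and welcome expansion of the paper's terse conclusion. The only points you leave tacit --- that $\phi_0(V\cap\ga)$ is all of $V'\cap\ga'$ rather than merely contained in it, and that the image geodesic is actually contained in $F'$ (hence in $P'$) rather than just Hausdorff-close to it --- follow from the same quasi-inverse and convex-hull arguments you already deploy, so they are routine rather than gaps.
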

\proof
Let $\ga\subset P$ be a standard geodesic parallel to the $\R$-factor
of $P$.  Then Theorem \ref{thmmain1} implies that $V\cap \ga$
is mapped bijectively by $\phi_0$ to $V'\cap \ga'$, for some 
standard geodesic $\ga'\subset K'$.  Since any two geodesics
$\ga_1'$, $\ga_2'$ obtained this way are parallel, it follows
that $\phi_0(P\cap V)$ is contained in a parallel set.
Applying the same reasoning to the inverse implies that $\phi_0(V\cap P)=V'\cap P'$
for some $P'\in \P'$.

Since $\phi_0$ preserves standard geodesics, it follows that
$\phi_0\restr_{V\cap P}$ preserves product structure.
\qed

\bigskip
By abuse of notation we use $\phi$ to denote the induced bijection
$\P\ra\P'$ given by Lemma \ref{lemparallelsetpreservation}.
By the lemma, for each $P\in \P$, we obtain a bijective quasi-isometry
 $\phi_{\Z_P}:\Z_P\ra \Z_{\phi(P)}$, where the quasi-isometry constants are 
controlled by those of $\phi$.

\begin{lemma}
\label{lemphip}
\begin{enumerate}[(1)]
\item If $\phi$ is an isometry, then $\phi_{\Z_P}:\Z_P\ra \Z_{\phi(P)}$ is an isometry
for every $P\in \P$.
\item
If $\phi$ is induced by an element of the commensurator $\comm(G)$, 
then for every $P\in \P$, the map $\phi_{\Z_P}:\Z_P\ra\Z_{P'}$ is equivariant
with respect to cocompact isometric actions on $\Z_P$ and $\Z_{P'}$.
\end{enumerate}
\end{lemma}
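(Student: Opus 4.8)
The plan is to use the uniqueness clause of Theorem~\ref{thmmain1} to transfer the relevant property of $\phi$ --- being an isometry in (1), being $\psi$-equivariant in (2) --- to the combinatorial bijection $\phi_0$, and then to extract the conclusions about $\phi_{\Z_P}$ from the canonical de Rham splitting $P=\R_P\times T_P$ (here $\R_P$ is the unique Euclidean de Rham factor and $T_P$ is a tree of valence $\ge 4$, hence de Rham irreducible and non-Euclidean, so the two factors cannot be interchanged by an isometry).

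For (1), I would first observe that an isometry $\phi:K\ra K'$ between atomic RAAG complexes carries $V$ onto $V'$: a point of $K$ is a vertex precisely when its link has girth $>2\pi$ (the link of a vertex is $\Ga$ with edges of length $\pi/2$, so has girth $\ge 5\pi/2$, whereas the link of a point interior to an edge or square has girth $2\pi$). Such a $\phi$ preserves parallel sets, hence carries a maximal standard product subcomplex $P\cong T\times\R$ ($T$ an infinite $4$-valent tree) to another subcomplex of the same form, which by the remark before Theorem~\ref{StandardFlats} is again a maximal standard product subcomplex; within such a subcomplex $\phi$ carries standard geodesics to standard geodesics (both being the geodesics passing through a vertex whose parallel set is the whole subcomplex), and by Lemma~\ref{2 parallel sets} it also carries standard flats to standard flats. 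Thus $\phi|_V:V\ra V'$ is a bijection satisfying conditions (1)--(3) of Theorem~\ref{thmmain1}, so $\phi_0=\phi|_V$ by the uniqueness clause. Since $\phi$ restricts to an isometry $P\ra\phi(P)$ respecting de Rham splittings, the induced map on $\R$-factors --- which is by definition $\phi_{\Z_P}$ --- is an isometry $\Z_P\ra\Z_{\phi(P)}$.

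For (2), an element of $\comm(G)$ is represented by an isomorphism $\psi:G_1\ra G_2$ of finite-index subgroups of $G$, and the induced quasi-isometry $\phi:K\ra K$ can be arranged to be exactly $\psi$-equivariant, $\phi(gx)=\psi(g)\phi(x)$ for $g\in G_1$ (define $\phi$ on the free orbit $G_1 x_0$ by $gx_0\mapsto\psi(g)x_0$ and extend $G_1$-equivariantly over a fundamental domain). Then, for each $g\in G_1$, the bijection $v\mapsto\psi(g)^{-1}\phi_0(gv)$ of $V$ again satisfies (1)--(3) of Theorem~\ref{thmmain1}: it lies within $D$ of $v\mapsto\psi(g)^{-1}\phi(gv)=\phi(v)$, and it preserves standard flats and geodesics, being a composite of maps ($g\cdot$, then $\phi_0$, then $\psi(g)^{-1}\cdot$) each of which does; by uniqueness it equals $\phi_0$, so $\phi_0$ is $\psi$-equivariant and hence so is the induced bijection $\P\ra\P'$. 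Now fix $P\in\P$, the parallel set of a standard geodesic with label $v$, so $\stab_G(P)=G(\mathrm{St}(v))=\langle v\rangle\times\langle\{v\}^\perp\rangle$ acts on $\R_P$ through its projection onto $\langle v\rangle\cong\Z$ (the other factor fixing $\R_P$ pointwise), hence acts transitively on $\Z_P$ by translations. Put $H_P:=\stab_{G_1}(P)$; it has finite index in $\stab_G(P)$, so acts cocompactly by translations on $\Z_P$. From $\psi$-equivariance and $gP=P$ one checks that $\psi$ maps $H_P$ isomorphically onto $\stab_{G_2}(\phi(P))$, which likewise acts cocompactly by translations on $\Z_{\phi(P)}$. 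Finally, combining $\psi$-equivariance of $\phi_0$ with the fact (Lemma~\ref{lemparallelsetpreservation}) that $\phi_0$ respects the product decompositions $V\cap P=\Z_P\times(\text{vertices of }T_P)$ and reading off first coordinates yields $\phi_{\Z_P}(g\cdot z)=\psi(g)\cdot\phi_{\Z_P}(z)$ for $g\in H_P$ and $z\in\Z_P$, which is the asserted equivariance of $\phi_{\Z_P}$ with respect to the cocompact isometric actions of $H_P$ on $\Z_P$ and of $\psi(H_P)$ on $\Z_{\phi(P)}$.

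I expect the main obstacle to be the passage from $\phi$ to $\phi_0$ in both parts --- checking that the twisted competitors ($\phi|_V$ in (1), and $v\mapsto\psi(g)^{-1}\phi_0(gv)$ in (2)) really meet the hypotheses of the uniqueness clause of Theorem~\ref{thmmain1}. For (1) this relies on the elementary but slightly fiddly fact that isometries of atomic RAAG complexes preserve vertices, maximal standard product subcomplexes, and standard flats; once these identifications are in place, the remainder is bookkeeping with de Rham decompositions and with centralizers of generators in a RAAG.
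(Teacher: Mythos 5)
Your treatment of (2) is essentially the paper's proof with the details filled in: the paper chooses an equivariant representative $\phi$ of the commensurator element, asserts that $\phi_0$ is then equivariant under the finite-index subgroup (your derivation of this from the uniqueness clause of Theorem \ref{thmmain1} is exactly the intended justification), and finishes by restricting to $\stab(P)\cap H$, which acts cocompactly; part (1) is simply declared immediate there, and your route through the de Rham splitting of $P=\R\times T$ is a reasonable way to spell it out.

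There is, however, one step in your part (1) that is wrong as stated: the metric characterization of vertices. The link of a vertex of $K$ is not $\Ga$ (that is the link of a \emph{cone} vertex in the flat space $\F$, cf.\ Observation \ref{obslink}); it is the graph on the $2|V(\Ga)|$ directions $v^{\pm}$ in which each edge $\{v,w\}$ of $\Ga$ contributes a complete bipartite graph $K_{2,2}$ (coming from the four quadrants of the standard flat through that vertex), with edges of length $\pi/2$. Hence the vertex link contains closed geodesics of length exactly $2\pi$, so its girth is $2\pi$ --- the same as for points interior to edges (bigons made of two arcs of length $\pi$) and for points interior to squares (a circle of length $2\pi$). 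The girth criterion therefore distinguishes nothing, and as written the first step of your part (1) fails. The fact you need is nevertheless true and easily repaired: an isometry induces isometries of links, and the three strata are told apart by counting branch points --- the link of a vertex has $2|V(\Ga)|\ge 10$ points of valence at least $3$, the link of an edge-interior point has exactly two, and the link of a square-interior point has none. With this substitution, $\phi$ carries $V$ onto $V'$, your verification of conditions (1)--(3) of Theorem \ref{thmmain1} for $\phi\restr_V$ goes through, and the rest of your argument for both parts is sound.
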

\proof
Assertion (1) is immediate.

There is an isomorphism  $\al:H\ra H'$ where $H\subset G$ and $H'\subset G'$
are finite index subgroups, such that $\phi:K\ra K'$ is $H$-equivariant, where
$H$ acts freely, cocompactly, and isometrically on $K'$ via the isomorphism $\al$.
Then $\phi_0:V\ra V'$ is also $H$-equivariant.  Hence if $h\in H$ stabilizes
$P\in \P$, then it also stabilizes the parallel set $\phi(P)\in\P'$.  
In other words, the map $\phi_0$ restricts
to a $\stab(P)\cap H$-equivariant mapping $V\cap P\ra V'\cap \phi(P)$.  Since 
$\stab(P)\cap H$ acts cocompactly on $P$,  assertion (2) follows.
\qed

\section{Quasi-isometric flexibility and the proof of Theorem~\ref{thmflexibility}}
\label{secflexibility}

{\em The homomorphism $\aut(G)\ra \comm(G)$ is injective.}
Suppose $\al\in \aut(G)$.  By Corollary \ref{cormostow}
there is an isometry $\phi:K\ra K$ which induces $\al$, i.e.
we identify $G$ with a subgroup of $\isom(K)$.  If 
$\al\in \ker(\aut(G)\ra\comm(G))$, then $\phi$ commutes
with a finite index subgroup of $G$, and therefore has
bounded displacement,
$$
\sup_{p\in K}\;d(\phi p,p)<\infty.
$$
If follows that $\phi$ maps each standard flat to itself.
Since the intersection of the standard flats passing through 
a vertex $p\in K$ is precisely $p$, it follows that $\phi_0$
fixes every vertex, and is therefore the identity map.

\bigskip
{\em The homomorphism $\comm(G)\ra \qi(G)$ is injective.}
Suppose $\al\in \comm(G)$.  Then $\al$ can be represented
by an isomorphism $f:G_1\ra G_2$, where the $G_i$'s are finite
index subgroups of $G$.  Therefore there is a quasi-isometry
$\phi:K\ra K$ which is $G_1$-equivariant, where we identify
$G_1$ with $G_2$ via $f$, and use the corresponding action on the
second copy of $K$.  If $\al\in \ker(\comm(G)\ra \qi(G))$,
then $\phi$ is at bounded distance from the identity.
Letting $\phi_0:V\ra V$ be the bijection given by Theorem \ref{thmmain1},
we may argue as in the preceding paragraph to conclude
that $\phi_0=\id_V$.  It follows that $\al=\id$.

\bigskip
{\em $\aut(G)$ has infinite index in $\comm(G)$.}
Pick a vertex $v\in \Ga$, and a positive integer $k$.
Let $\al:G\ra\Z_k$ be the homomorphism which 
sends the generator $v\in G$ to $1\in \Z_k$,
and the other generators to $0\in \Z_k$.  
Let $G_k\subset G$ be the kernel of $\al$, and $\bar K_k$
be the $k$-fold cover of $\bar K$ given by $\bar K_k\defeq K/G_k$.
One may describe $\bar K_k$ as follows.  Begin with $\bar K(\Star v)$,
which is a product $S^1\times B_j$, where $B_j$ is a bouquet
of $j$-circles, where $j$ is the number of vertices adjacent
to $v$.  To obtain $\bar K$ from $\bar K(\Star v)$, one 
glues on a copy of $\bar K(\Ga_1)$, where $\Ga_1\subset\Ga$
is the graph obtained by deleting $v$ and the  edges incident to $v$.
To obtain $\bar K_k$, one  first passes to the $k$-fold cyclic
cover of $\bar K(\Star v)$, and then glues on $k$-copies of
$\bar K(\Ga_1)$.

Let $\Ga_k$ be the graph obtained by taking $k$ copies of
$\Ga$, and gluing them together along the $k$ copies of $\Star(v)$.
In fact $\bar K(\Ga_k)$ is  homotopy equivalent to $\bar K_k$;
to see this, map $\bar K_k$ to $\bar K( G_k)$ by taking $(k-1)$ of
the copies of  $[0,1]\times B_j\subset \bar K_k$ and collapsing them
to copies of $B_j$ (by collapsing the interval factors).
In particular, $G_k$ is isomorphic to the RAAG $G(\hat G_k)$.
Since $\hat \Ga_k$ is not atomic, this shows that the atomic
condition is not commensurability invariant among RAAG's.

Note that the permutation group $S_k$ of the set $\Z_k$ acts
isometrically on $\bar K(\Ga_k)$ by permuting the copies of $\bar
K(\Ga_1)$, and hence we get a homomorphism $S_k\ra \out(G_k)\ra
\comm(G)/\inn(G)$.  For each element $\al\in S_k$, we may lift the
corresponding homotopy equivalence $\psi:\bar K_k\ra \bar K_k$ to a
quasi-isometry $\phi:K\ra K$ which preserves a parallel set $P\subset
K$ covering $\bar K(\Star v)$.  Moreover, the induced map
$\phi_{\Z_P}:\Z_P\ra \Z_P$ will be equivariant with respect to the
action of $k\Z$ on $\Z_P$ by translations, and descends to the
permutation of $\Z_k=\Z/k\Z$ corresponding to $\al$.

Now consider the collection $\C$ of elements $\phi\in \comm(G)$ 
obtained this way, as $k$ varies over the positive integers, and
$\al$ varies over the permutation group of $\Z_k$.  If $\aut(G)$
had finite index in $\comm(G)$, we could finite a finite collection
$f_1,\ldots,f_i\in \C$ such that for each  $\phi\in \C$ there is a
$\psi\in \isom(K)$ such that $\phi\circ \psi=f_j$ for some $j\in \{1,\ldots,i\}$.
This means that  $\psi^{-1}(P)=f_j^{-1}(P)=:\bar P$, and  that 
$$
\phi_{\Z_P}    \circ\psi_{\Z_{\bar P}}:\Z_{\bar P}\ra \Z_{\bar P}
$$
agrees with $(f_j){\Z_{\bar P}}:\Z_{\bar P}\ra \Z_{\bar P}$.
However, by part 1 of Lemma \ref{lemphip}, $\psi_{\Z_{\bar P}}$
is an isometry.  This clearly contradicts the fact that $\phi$
can come from any permutation $\al$  of $\Z_k$, for any $k$.

\bigskip
\begin{remark}
\label{remnogeometricaction}
Let $H$ denote the group of isometries of $K(\Ga_k)$ covering
the permutation action $S_k\acts \bar K(\Ga_k)$.  Then there is 
a short exact sequence 
$$
1\lra G(\Ga_k)\simeq G_k\lra H\lra S_k\lra 1.
$$
Thus $H$ is commensurable with $G(\Ga)$.  However, there is no
geometric action of $H$ on $K(\Ga)$.  This can be deduced by examining
the subgroup of $H$ which stabilizes a parallel set $P$, 
and observing that the induced action on $\Z_P$ is not
conjugate to an isometric action.
\end{remark}

\bigskip
{\em $\comm(G)$ has infinite index in $\qi(G)$.}
The construction is similar to the proof that $[\comm(G):\aut(G)]=\infty$.

Pick $v\in \Ga$, and look at the infinite cyclic cover $\bar K_\infty$
corresponding
to the homomorphism $G\ra \Z$ which sends $v$ to $1$ and the other
generators to $0$.   Then $\bar K_\infty$ can be obtained from 
$\R\times B_j$ by gluing on infinitely many copies of $\bar K(\Ga_1)$.
As in the preceding paragraphs, we may produce homotopy equivalences
by ``permuting the copies of $\bar K(\Ga_1)$''.  By lifting these homotopy 
equivalences we may obtain quasi-isometries
$\phi:K\ra K$ which preserve a parallel set $P$
covering $\bar K(\Star v)$, and hence obtain a bijective
quasi-isometry $\phi_{\Z_P}:\Z_P\ra \Z_P$.  It is not hard to see that
any bijective quasi-isometry $\Z_P\ra \Z_P$ may be obtained in this
way.  

If $[\qi(G):\comm(G)]$ were finite, there would only be finitely
many possibilities for the $\phi_{\Z_P}$'s up to  pre-composition
by maps of the form $\psi_P$, where $\psi$ comes from an element
of the commensurator.  In view of Lemma \ref{lemphip}, this is clearly
not the case.

\bibliography{artin}
\bibliographystyle{alpha}

\end{document}